\documentclass[11pt]{article}
\usepackage{mathrsfs}
\usepackage{amsfonts}
\usepackage{xcolor}
\usepackage{bm}
\usepackage{appendix}
\usepackage{extarrows}
\usepackage{indentfirst,amssymb,amsmath,graphicx,amsthm,amsfonts,mathrsfs,multicol,amscd}
\usepackage{pgfplots}
\usepackage{verbatim}
\usetikzlibrary{patterns,arrows,positioning,calc,fadings,shapes,decorations.markings}
\usepackage{tikz}
\usepackage{amsmath}
\allowdisplaybreaks[4]
\usepackage{geometry, color}
\numberwithin{equation}{section}

\newcommand{\R}{\mathbb{R}}

\newtheorem{theorem}{Theorem}[section]

\newtheorem{lemma}[theorem]{Lemma}
\newtheorem{proposition}[theorem]{Proposition}

\theoremstyle{definition}
\newtheorem{definition}[theorem]{Definition}
\newtheorem{remark}{Remark}

\title{Existence, non-degeneracy and local uniqueness of multi-peak solutions to the fractional Schr\"{o}dinger equation
 with nearly critical  exponent in $\mathbb{R}^N$}

\author{Yanyan Guo
\thanks{School of Mathematics and Statistics, Central China
Normal University, Wuhan, 430079, P. R. China. Email: \texttt{yanyangcx@126.com}.}
\ \ 
Ying Li
\thanks{School of Mathematics and Statistics, Central China
Normal University, Wuhan, 430079, P. R. China. Email: \texttt{yingli18162357559@mails.ccnu.edu.cn}.}
\ \ 
Zhongyuan Liu
\thanks {School of Mathematics and Statistics,
Henan University, Kaifeng 475004, China.  Email: \texttt{liuzy@henu.edu.cn}.}
\ \  
Pingping Yang\thanks{School of Mathematics and Statistics, Central China
Normal University, Wuhan, 430079, P. R. China. Email: \texttt{ypp15623175603@mails.ccnu.edu.cn}.}
}

\date{}

\begin{document}

\maketitle

\begin{center}
		\begin{minipage}{13cm}
			\par
			\small

  {\bf Abstract:} In this paper, we consider the following fractional Schr\"{o}dinger equation 
  \begin{equation*}
    \left\{
    \begin{array}{lcl}
         (-\Delta)^{s}u+V(x)u=u^{{p_s}-\epsilon}\ \ \ &\hbox{in}\ \mathbb{R}^N,\\
         u>0\ \ \ &\hbox{in}\ \mathbb{R}^N,
    \end{array}
    \right.
\end{equation*}
where $0<s<1$,  $\epsilon>0$,  $p_s=\frac{N+2s}{N-2s}$,  $N>4s$ and $V(x)\in C^1(\mathbb{R}^N)\cap L^\infty (\mathbb{R}^N)$ is non-negative. 
We first use the Lyapunov-Schmidt reduction method to construct multi-peak solutions to the above equation provided that 
$V(x)$ possesses $k$ stable critical points. Then we  establish  
the non-degeneracy and local uniqueness of the multi-peak solutions, for $\frac{1}{2}<s<1$, $N\geq 6s$, via the blow-up argument based on various local Pohozaev identities.
Due to the  nonlocal nature  of the fractional Laplacian, we need to perform a detailed analysis of the approximate solutions and build  various  local Pohozaev identities for the 
corresponding harmonic extension  of $u$. In contrast to the local case $(s=1)$, this approach not only entails developing refined estimates for several integrals in the local Pohozaev identities, but also applying such identities in a markedly different way.

			\vskip2mm
			\par
			{\bf Keywords:} Fractional Schr\"{o}dinger equation,  Multi-peak solutions, Non-degeneracy, Local uniqueness
			\vskip2mm
			\par
            \end{minipage}
                
            \end{center}

\section{Introduction and main results}
\subsection{Motivation and known results}
In this paper, we are concerned with  the following fractional Schr\"{o}dinger equation involving  nearly critical exponent:
\begin{equation}\label{eq:1}
    \left\{
    \begin{array}{lcl}
         (-\Delta)^{s}u+V(x)u=u^{{p_s}-\epsilon}\ \ \ &\hbox{in}\ \mathbb{R}^N,\\
         u>0\ \ \ &\hbox{in}\ \mathbb{R}^N,
    \end{array}
    \right.
\end{equation}
where $0<s<1$,  $\epsilon>0$,   $p_s=2^*_s-1$,  $2^*_s=\frac{2N}{N-2s}$, $N>4s$ and $V(x)\in C^1(\mathbb{R}^N)\cap L^\infty (\mathbb{R}^N)$ is non-negative.  
$(-\Delta)^s$ is the fractional Laplace operator in $\mathbb{R}^N$ defined by
\begin{equation}\label{eq-def}
    (-\Delta)^s u(x)=c (N,s)P.V.\int_{\R^N}\frac{u(x)-u(z)}{|x-z|^{N+2s}}\,dz,
\end{equation}
where P.V. denotes the Cauchy principal value and $c(N, s)$ is a positive constant depending only on $N$ and $s$.

As is well known, fractional Laplacians serve as the infinitesimal generators of L\'{e}vy stable diffusion processes.
They arise in the study of anomalous diffusion phenomena, including plasma transport, flame propagation, chemical
reactions in liquids, population dynamics, geophysical fluid flows and American options in finance  (see \cite{Ap,Be}).
A prominent feature of the fractional Laplacian is its nonlocality, which makes it more challenging to handle than the classical Laplacian.
In recent years, the equations involving the fractional Laplace have attracted much attention. 
The literature concerning the fractional Laplacian is very vast, and  it is impossible
to comprehensively list all relevant works on these problems. Here we only mention several recent works on fractional Schr\"{o}dinger equations that are closely relevant to the present work.
%

    Fractional Schrödinger equations have recently attracted considerable attention; see, e.g., \cite{DDDV,La,La1}. They play a fundamental role in the study of the stability and blow-up of solitary wave solutions to nonlinear dispersive equations. 
 In the celebrated work, Frank and Lenzmann \cite{FL} considered the following fractional Schr\"{o}dinger equation in one dimension, that is, 
\begin{equation}\label{a1.3}
    (-\Delta)^s u(x)+u=u^p\quad\text{in}\quad\mathbb{R},
\end{equation}
where $p>1$. The authors established uniqueness and nondegeneracy of ground states of \eqref{a1.3}.
The counterpart of these results in higher dimensions was obtained by Frank, Lenzmann and Silvestre \cite{FLS}, see also \cite{FV} for $s$ close to $1$.
Fall, Mahmoudi and Valdinoci \cite{FMV} studied the singularly perturbed fractional Schr\"{o}dinger equation 
\begin{equation}\label{a1.4}
   \epsilon^{2s} (-\Delta)^s u(x)+V(x)u=u^p\quad\text{in}\quad\mathbb{R}^N,
\end{equation}
where $N>2s$, $1<p<p_s$. They gave some necessary and sufficient conditions on $V(x)$
 to ensure the concentration of solutions of \eqref{a1.4} as $\epsilon\rightarrow0$. More precisely, they 
 showed that the concentration points must be critical points of $V(x)$. If the potential $V(x)$ is coercive and has a unique global minimum $x_0$, 
 then ground state solutions of \eqref{a1.4} concentrate at $x_0$.
D\'{a}vila, del Pino and Wei \cite{DPW} applied the Lyapunov-Schmidt reduction method to obtain the existence 
of concentrating solutions, such as multiple spikes and clusters, under suitable assumptions on the potential $V(x)$. 
Alves and Miyagaki \cite{AlM} used the penalization method to study the existence and concentration of positive solutions to \eqref{a1.4} for general subcritical nonlinearity $f(u)$.
Deng, Peng and Yang \cite{DPY} investigated   concentrating positive solutions of \eqref{a1.4} 
with  various assumptions on the asymptotic behavior of $V(x)$ at infinity, 
including different decay rates of 
$V(x)$. For more results concerning \eqref{a1.4},  we refer the interested readers to
\cite{Am,BKS,CW, CZ,DDDV,dMS,YGuo-2020,Se,SZ} and the references therein.

In a very recent work, Cassani and Wang \cite{CaW} studied the asymptotic behavior and local uniqueness
of the ground states  of \eqref{eq:1} as $\epsilon\rightarrow0$. Specifically, the authors proved that the ground state solution $u_\epsilon$ 
blows up with the exact rate 
\[
\lim\limits_{\epsilon\rightarrow0^+}\epsilon\|u_\epsilon\|_{L^\infty}^{\frac{4s}{N-2s}}=A_{N,s}\Big(V(x_0)+\frac{1}{2s}x_0\cdot\nabla V(x_0)\Big).
\]
In general,  the maximum points $\{x_\epsilon\}$ of $u_\epsilon$ are not necessarily bounded as $\epsilon\rightarrow0$. Under certain strong assumptions on $V(x)$, they further established that 
the maximum point $x_\epsilon$ of $u_\epsilon$ converges to a global minimum point of $V(x)$. 
Furthermore, they also obtained local uniqueness of ground state solutions for radial potential $V(x)$.
It should be mentioned that the main results in \cite{CaW} can be regarded as the nonlocal analogues of those in \cite{PW92,W1}.
Nevertheless, the nonlocal feature of the fractional operator gives rise to a host of technique difficulties. To overcome the lack of 
localization, the authors developed various delicate estimates for ground state solutions. For other related results, 
see  \cite{BDG,BGM,BM17,BM19,BMS} and references therein.

\subsection{Problem setup and main results}

Let $D^s(\R^N)$ be the completion of $C_0^\infty(\R^N)$ under the norm $\|(-\Delta)^{\frac{s}{2}}u\|_{L^2(\R^N)}$, where 
$$
\|(-\Delta)^{\frac{s}{2}}u\|_{L^2(\R^N)}^2=\int_{\R^N}|\xi|^{2s}|\mathcal{F}u(\xi)|^2\,d\xi,
$$
and $\mathcal{F}u(\xi)$ is the Fourier transform of $u$. It follows from \cite{DPV} that 
$D^s(\R^N)\subseteq L^{p_s+1}(\R^N)$ and 
$$
\|(-\Delta)^{\frac{s}{2}}u\|_{L^2(\R^N)}^2=\frac{1}{2}c(N,s)\int_{\R^N}\int_{\R^N}\frac{|u(x)-u(z)|^2}{|x-z|^{N+2s}}\,dx\,dz.
$$
Then the natural energy space for problem \eqref{eq:1} is 
$$
H_V^s(\R^N)=\Big\{u\in D^s(\R^N):\int_{\R^N}V(x)u^2\,dx<+\infty\Big\},
$$
where the norm in $H_V^s(\R^N)$ is defined by 
$$\displaystyle\|u\|_{H_V^s(\R^N)}=\left(\|(-\Delta)^{\frac{s}{2}}u\|_{L^2(\R^N)}^2+\int_{\R^N}V(x)u^2\,dx\right)^{\frac{1}{2}}.$$

To bypass the nonlocal nature of the fractional Laplacian, we recall the Caffarelli-Silvestre extension \cite{CS} to lift the problem to the half-space $\mathbb{R}^{N+1}_+=\{(x,t):x\in\mathbb{R}^N, t>0\}$. For every $u\in H_V^s(\R^N)$, there exists a unique $s$-harmonic extension $\widetilde{u}$ satisfying

\begin{equation}\label{a1.5}
\left\{
    \begin{array}{ll}
         div(t^{1-2s}\nabla \widetilde{u})=0, &(x,t)\in \mathbb{R}^{N+1}_+,\\
         \widetilde{u}(x,0)=u(x), &x\in \mathbb{R}^N,
    \end{array}
    \right.
\end{equation}
and
\begin{equation}\label{a1.6}
        (-\Delta )^s u(x) =-\kappa_s^{-1}\lim\limits_{t\to 0^+}t^{1-2s}\partial_t \widetilde{u}(x,t), \ \ \ 
\end{equation}
where $\kappa_s=\frac{2^{1-2s}\Gamma(1-s)}{\Gamma(s)}$ and $\widetilde{u}(x,t)$ is defined by
$$
\widetilde{u}(x,t)=\int_{\mathbb{R}^N} P_s(x-z,t)u(z)\,dz, \ \  (x,t)\in \mathbb{R}^{N+1}_+,
$$
with
$$
P_s(x,t)=d_{s,N}\frac{t^{2s}}{(|x|^2+t^2)^{\frac{N+2s}{2}}}
$$
and the constant $d_{s,N}$ chosen such that $\int_{\mathbb{R}^N} P_s(x,1)\,dx=1$. Then, under suitable regularity assumptions, $(-\Delta)^su$ is the  Dirichlet-to-Neumann map  for this problem. 
Since the explicit value of $\kappa_s$ is not particularly relevant in our framework,  we may, for simplicity, assume that $\kappa_s=1$ throughout this paper. 

Note that  $V(x)$ is nonnegative. Then there is no variational framework  for  problem \eqref{eq:1}  on $H_V^s(\R^N)$.
Thus we cannot apply variational methods directly to find 
higher energy concentrating solutions of \eqref{eq:1} for $\epsilon>0$ sufficiently small.
%
In the present paper, we employ a perturbation approach to investigate the existence of higher energy concentrating solutions for \eqref{eq:1}. 
To present our main results, we first introduce some notations. Let
$$
U_{\lambda,\zeta}(x)={\gamma_{s,N}}\left(\frac{\lambda}{1+\lambda^2|x-\zeta|^2}\right)^{\frac{N-2s}{2}},\ \ \lambda>0, \ \  \zeta\in \mathbb{R}^N,
$$
where $$\gamma_{s,N}=\frac{2^{\frac{N-2s}{2}}\Gamma(\frac{N+2s}{2})}{\Gamma(\frac{N-2s}{2})}.$$
It is well known that $U_{\lambda,\zeta}$ are  the only positive solutions of the problem (see \cite{L})
\begin{equation}\label{eq1}
    (-\Delta)^s u=u^{\frac{N+2s}{N-2s}},\ \ u>0 \;\;\; \hbox{in}\;\ \mathbb{R}^N.
\end{equation}
Moreover, D\'avila, del~Pino and  Sire \cite{Davila-2013} established the non-degeneracy result of $U_{\lambda,\zeta}$. 
More precisely, they showed that all bounded solutions of the linearized problem
	\begin{equation}\label{eq2}
		\begin{cases}
			(-\Delta)^s Z=p_s U_{\lambda,\zeta}^{p_s-1}Z\;\; \text{in}\ \mathbb{R}^N, \\
			Z\in D^{s}(\mathbb{R}^N),
		\end{cases}
	\end{equation}
	are linear combinations of $ Z_{\lambda,\zeta}^0$ and $Z_{\lambda,\zeta}^h$, where 
\[Z_{\lambda,\zeta}^0=\frac{\partial U_{\lambda,\zeta} }{\partial\lambda},\quad  Z_{\lambda,\zeta}^h=\frac{\partial U_{\lambda,\zeta} }{\partial{\zeta_h}},\;h=1,\cdots,N.
\]

Let  $\xi_i$, $i=1,\cdots, k$ be $k$ distinct points in $\mathbb{R}^N$ and  $d=\min\limits_{ i\neq j,i,j=1,\cdots,k}|\xi_i-\xi_j|$. 
We fix $\delta=\frac{d}{10}$.  Let  $\eta$ be a smooth cut-off function  satisfying $\eta(x)=1$ for $|x|\le \delta$, 
$\eta(x)=0$ for $ |x|\ge 2\delta$ and $0\le \eta\le 1$. Setting $\eta_i(x)=\eta(x-{\xi_i})$, $i=1,\cdots,k$, we define 
$$
W_{\lambda_i, \xi_i}=\eta_iU_{\lambda_i, \xi_i},~~ {\bm W}_{\bm \lambda, \bm\xi}=\sum_{i=1}^kW_{\lambda_i, \xi_i},
$$
where $\bm\lambda=(\lambda_1,\cdots, \lambda_k)\in\R^k$, $\bm\xi=(\xi_1,\cdots, \xi_k)\in\R^{kN}$.

Our first result concerning the existence of multi-peak solutions is the following.
\begin{theorem}\label{Mth}
    Let $N>4s$ and $0<s<1$. Assume that $V(x)$ is a bounded nonnegative $C^1(\mathbb{R}^N)$ function and $\xi^*_i$, $i=1,2,\cdots,k$ are the $k$ different stable critical points of $V(x)$ with $V(\xi^*_i)>0$. Then there exists  $\epsilon_0>0$ such that for $\epsilon\in (0,\epsilon_0)$, 
    problem \eqref{eq:1} admits a $k$-peak
solution $u_\epsilon$ with  the form 
\begin{equation}\label{1a}
u_\epsilon={\bm W_{\bm\lambda_\epsilon,\bm\xi_\epsilon}}+\phi_\epsilon=\sum_{i=1}^kW_{\lambda_i^\epsilon,\xi_i^\epsilon}+\phi_\epsilon,
\end{equation}
where $\bm\lambda_\epsilon=(\lambda_1^\epsilon,\cdots,\lambda_k^\epsilon)$, $\bm\xi_\epsilon=(\xi_1^\epsilon,\cdots,\xi_k^\epsilon)$ satisfying, for $i=1,2,\cdots,k$, $\lambda_i^\epsilon\approx\epsilon^{-\frac{1}{2s}}$, 
$\xi_i^\epsilon\to \xi_i^*$ and $\|\phi_\epsilon\|_*\to 0$ as $\epsilon\to 0$ $($the norm $\|\cdot\|_*$ is defined in \eqref{eq-norm*}$)$.
\end{theorem}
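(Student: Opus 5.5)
We argue by a Lyapunov--Schmidt reduction around the approximate solution ${\bm W}_{\bm\lambda,\bm\xi}$. We restrict the parameters to the configuration set
\[
\lambda_i=\epsilon^{-\frac{1}{2s}}\Lambda_i,\quad \Lambda_i\in[a,b]\subset(0,\infty),\qquad |\xi_i-\xi_i^*|\le \theta,
\]
with $0<a<b$ and $\theta$ small to be fixed. Throughout we work in a weighted $L^\infty$ framework: $\|\cdot\|_*$ measures $\phi$ against $\sum_i \lambda_i^{\frac{N-2s}{2}}(1+\lambda_i|x-\xi_i|)^{-\frac{N-2s}{2}-\tau}$, and a companion norm $\|\cdot\|_{**}$ measures right-hand sides against the analogous weight with exponent $\frac{N+2s}{2}+\tau$. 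The weights must be adapted to the slow algebraic decay of nonlocal quantities; in particular $(-\Delta)^s W_{\lambda_i,\xi_i}$ has tails like $|x|^{-N-2s}$ away from $\xi_i$, since $W$ is a cut-off bubble.

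\textbf{Step 1 (error estimate).} Write $u={\bm W}+\phi$. The equation becomes
\[
L\phi=l_\epsilon+N(\phi),\qquad L\phi=(-\Delta)^s\phi+V\phi-(p_s-\epsilon){\bm W}^{p_s-1-\epsilon}\phi .
\]
The error $l_\epsilon$ collects three contributions: the term $V{\bm W}$; the term ${\bm W}^{p_s-\epsilon}-{\bm W}^{p_s}$, of size $\epsilon\log\lambda\, U^{p_s}$ plus $\epsilon\log(1+\lambda|x-\xi|)$ terms; and the commutator $(-\Delta)^s(\eta U)-\eta(-\Delta)^s U$. We estimate $\|l_\epsilon\|_{**}\le C\epsilon^{\sigma}$ for some $\sigma>0$. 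This is where $N>4s$ is used, since it guarantees $V U_{\lambda}\in L^2$-type control: in the $**$ norm, $VU$ carries the factor $\lambda^{-2s}\sim\epsilon$ times a weight that is admissible only if $N-2s>2s$. The commutator term is handled with the integral representation \eqref{eq-def}, splitting the integral into the regions $|x-z|\lesssim\delta$ and its complement.

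\textbf{Step 2 (linear theory and fixed point).} For the projected problem
\[
L\phi=h+\sum_{i,j} c_{ij}W_{\lambda_i,\xi_i}^{p_s-1}Z^j_{\lambda_i,\xi_i},\qquad \phi\perp W^{p_s-1}Z^j_{\lambda_i,\xi_i},
\]
we prove the a priori bound $\|\phi\|_*\le C\|h\|_{**}$, uniformly in the configuration set, by contradiction and blow-up. After rescaling at $\xi_i$, the limit solves \eqref{eq2} and satisfies the orthogonality conditions, so the nondegeneracy of \cite{Davila-2013} forces it to vanish. The weighted sup bound away from the peaks comes from comparing $\phi$ with convolution against the Riesz kernel $|x|^{-(N-2s)}$; here the potential term is harmless because $V\ge0$. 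Existence then follows from Fredholm theory, and the contraction mapping theorem gives $\phi_{\bm\lambda,\bm\xi}$ with $\|\phi\|_*\le C\epsilon^\sigma$, depending $C^1$ on the parameters.

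\textbf{Step 3 (reduced problem).} It remains to make every $c_{ij}$ vanish. Testing the equation against $Z^j_{\lambda_i,\xi_i}$, equivalently expanding the reduced energy, gives for the $\lambda$-direction
\[
\lambda_i^{-1}\Big(A\,\epsilon-B\,V(\xi_i)\lambda_i^{-2s}\Big)+o(\epsilon\lambda_i^{-1})=0 ,
\]
with constants $A,B>0$. For the $\xi$-direction we obtain
\[
C\,\nabla V(\xi_i)\lambda_i^{-2s}+o(\lambda_i^{-2s})=0 .
\]
Since $V(\xi_i^*)>0$, the first equation has the nondegenerate zero $\Lambda_i^{2s}=BV(\xi_i^*)/A$. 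Because $\xi_i^*$ is a stable critical point of $V$ (nonzero local Brouwer degree), a degree argument on the product of these sets produces a zero of the reduced map, which yields $\lambda_i^\epsilon\approx\epsilon^{-\frac1{2s}}$ and $\xi_i^\epsilon\to\xi_i^*$. Positivity of $u_\epsilon$ follows from the smallness of $\phi$ in the weighted norm together with the maximum principle for the fractional Laplacian; alternatively one solves with $u_+^{p_s-\epsilon}$ and then applies the maximum principle.

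\textbf{Main obstacle.} The hardest part is the $\xi$-expansion in Step 3, together with the matching error bounds. The interaction between peaks and the cut-off commutator decay only algebraically, like $\lambda^{-(N-2s)}$ and $\lambda^{-N}$ respectively. We must verify that these, together with the contribution $\int \phi\,\cdot$ of size $\|\phi\|_*\cdot\epsilon^{\sigma}$, are $o(\lambda^{-2s})$. This requires $N-2s>2s$ once more, and I would check it first by computing $\int V\,U_{\lambda,\xi}\,\partial_{\xi}U_{\lambda,\xi}$ via a Taylor expansion of $V$. Since $V$ is only $C^1$, this yields merely $\nabla V(\xi)\lambda^{-2s}(1+o(1))$, which is nonetheless enough for the degree argument.
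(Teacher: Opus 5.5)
The overall scheme matches the paper: the weighted $L^\infty$ reduction via Lemma~\ref{l1} and Proposition~\ref{Prop2.4}. The dilation equation also matches, obtained by testing against $\partial_{\lambda_i}W_{\lambda_i,\xi_i}$, which is harmless because $|\partial_\lambda U_{\lambda,\xi}|\le C\lambda^{-1}U_{\lambda,\xi}$.

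\textbf{The gap is in the translation equation of Step~3.} You bound the $\phi$-contribution by $\|\phi\|_*\epsilon^{\sigma}$. But testing against $\partial_{\xi_{i,m}}W_{\lambda_i,\xi_i}$ costs a factor $\lambda_i$, since $|\partial_\xi U_{\lambda,\xi}|\le C\lambda U_{\lambda,\xi}$.

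Already the single term $\int V\phi\,\partial_{\xi_{i,m}}W_{\lambda_i,\xi_i}$ can be bounded, from what $\|\phi\|_*$ controls, only by $O(\lambda_i^{1-2s}\|\phi\|_*)$ at best. (Step~1 of Lemma~\ref{l1} records the even cruder $O(\lambda_j^{1-s}\|\phi\|_*)$.) The main term is $c\,\partial_mV(\xi_i)\lambda_i^{-2s}$, so you would need $\|\phi\|_*=o(\lambda^{-1})=o(\epsilon^{\frac1{2s}})$.

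However, $\phi$ is driven by $V\bm W$ and by the $\epsilon$-error. Their $\|\cdot\|_{**}$-norms are of order $\epsilon^{1-\frac{\sigma}{2s}}$ and $\epsilon|\log\epsilon|$ when $N>6s$, and $\epsilon^{\frac{N-2s-2\sigma}{4s}}$ when $4s<N\le 6s$. None of these is $o(\epsilon^{\frac1{2s}})$ when $s\le\frac12$. The last one also fails, for instance, for $N=3$ and $\frac12\le s<\frac34$.

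So the equation $C\nabla V(\xi_i)\lambda_i^{-2s}+o(\lambda_i^{-2s})=0$ is not established in the theorem's range $0<s<1$, $N>4s$. This is exactly the obstruction the paper points out: the error term is accurate enough for the dilations but not for the translations.

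\textbf{The missing idea is to test with $\partial_{x_m}u$ instead of $\partial_\xi W$.} That is, use a local Pohozaev identity for the Caffarelli--Silvestre extension: multiply \eqref{eq-3.1} by $\partial_{x_m}\widetilde u$ and integrate over the half-ball $\mathfrak{B}^+_\rho(\xi_j)$. The bulk terms then collapse to $\frac12\int_{B_\rho(\xi_j)}\partial_mV\,u^2$. In this integral the cross term $\int\partial_mV\,W_{\lambda_j,\xi_j}\phi$ costs only $O(\lambda_j^{-2s}\|\phi\|_*)$ for $N>6s$, with no extra factor $\lambda_j$.

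All remaining terms live on $\partial''\mathfrak{B}^+_\rho(\xi_j)$ and $\partial B_\rho(\xi_j)$, away from the peaks. There they are $O(\lambda_j^{2s-N})+O(\lambda_j^{-2\sigma}\|\phi\|_*^2)+O(\lambda_j^{-2\sigma}\|\mathcal R\|_{**}\|\phi\|_*)$. Controlling $\int_{\partial''\mathfrak{B}^+_\rho}t^{1-2s}|\nabla\widetilde\phi|^2$ requires a Caccioppoli argument and a good radius $\rho\in(2\delta,5\delta)$, as in Lemma~\ref{newlemGA.6}.

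You then still have to show that these identities, together with \eqref{eq3.5}, force every multiplier $c_i^l$ to vanish. This uses $\int W_{\lambda_j,\xi_j}^{p_s-1}Z_{j,l}\,\partial_{x_m}u\approx -c\lambda_j^2\delta_{lm}$, together with an integration by parts that moves $\partial_{x_m}$ off $\phi$.

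\textbf{A smaller point.} To absorb the quadratic term $\lambda_i^{-1}\|\phi\|_*^2$ in the $\lambda$-equation you need $\|\phi\|_*=o(\epsilon^{1/2})$. This is where $N>4s$ genuinely enters, and your bound $\|\phi\|_*\le C\epsilon^{\sigma}$ with unspecified $\sigma$ does not provide it.
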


\begin{remark}
Compared with the results in \cite{CaW}, our assumptions  on the potential $V(x)$ here are substantially weaker. 
Indeed, in contrast to \cite{CaW}, we do not require $V(x)$ to be  positive or of class  $C^2$.
In fact, we only need certain local conditions near the critical points of $V(x)$.  
\end{remark}

\begin{remark}
To address the slow decay of $U_{\lambda,\xi}$ in low dimensions $({N\leq6s})$,  
we suitably truncate these functions near the concentration points, which yields sufficiently accurate approximate solutions.
\end{remark}

The second part of this paper focuses on the qualitative properties of the multi-peak solutions given in Theorem \ref{Mth}. 
Generally speaking,  the qualitative properties of concentrating solutions to nonlinear elliptic problems have long been a classic 
and appealing research topic,
see \cite{CH,CLL,CLP,CPY,CeG,DLY,Gl,G05,Gp,GS,GMPY,LPP,LTZ,LPW20}. 
Concerning \eqref{eq:1}, Cassani and Wang \cite{CaW}  established the local uniqueness of ground state solutions for radial potentials
 $V(|x|)$. Motivated by  \cite{CaW}, we  proceed to investigate the qualitative properties of the multi-peak solutions. 
Precisely,  we mainly deal with the non-degeneracy and local uniqueness of multi-peak solutions. 
For this purpose,  we introduce  the linearized problem  of \eqref{eq:1} at the peak solution $u_\epsilon$:
  \begin{equation}\label{eqNDin}
          \begin{cases}
             (-\Delta)^s\omega_\epsilon+V(x)\omega_\epsilon-(p_s-\epsilon)u_\epsilon^{p_s-1-\epsilon}\omega_\epsilon=0 \quad\text{in}\ \R^N,\\
             \omega_\epsilon\in H_V^s(\mathbb{R}^N).
             \end{cases}
         \end{equation}  
We say that the solution $u_\epsilon$ is non-degenerate if  
any solution $\omega_\epsilon$ to  \eqref{eqNDin} satisfies $\omega_\epsilon\equiv0$. 
    
Now we state the result on the non-degeneracy of multi-peak solutions.

\begin{theorem}\label{NDth}
    Let $N\geq6s$, $\frac{1}{2}<s<1$. Assume that $V(x)$ is a bounded nonnegative $C^1(\mathbb{R}^N)$ function and $\xi^*_i$, $i=1,2,\cdots,k$ are the $k$ different   non-degenerate critical points  of $V(x)$ with $V(\xi^*_i)>0$ and $V(x)\in C^2(B_{5\delta}(\xi^*_i))$.
Then there exists  $\epsilon_0>0$ such that for $\epsilon\in (0,\epsilon_0)$, 
     the $k$-peak solutions $u_\epsilon$ constructed in Theorem \ref{Mth} are non-degenerate. 
\end{theorem}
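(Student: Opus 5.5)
We argue by contradiction and use a blow-up argument combined with local Pohozaev identities for the harmonic extension. Suppose there are $\epsilon_m\to0$ and nontrivial solutions $\omega_m$ of \eqref{eqNDin} with $u_m=u_{\epsilon_m}$. We normalize $\|\omega_m\|_{L^\infty(\R^N)}=1$; this is allowed since $\omega_m$ is bounded by the decay estimates on $u_\epsilon$ and a Moser-type iteration. Write $\lambda_{i,m}=\lambda_i^{\epsilon_m}\approx\epsilon_m^{-1/(2s)}$ and $\xi_{i,m}=\xi_i^{\epsilon_m}\to\xi_i^*$. We rescale around each peak:
\[
\widetilde\omega_{i,m}(y)=\omega_m\big(\lambda_{i,m}^{-1}y+\xi_{i,m}\big).
\]
Since $\|\phi_\epsilon\|_*\to0$ and $\lambda^{\epsilon}$ in the exponent contributes $\lambda^{-\epsilon(N-2s)/2}\to1$ (because $\epsilon\log\lambda\to0$), the rescaled equation converges to $(-\Delta)^s Z=p_sU_{1,0}^{p_s-1}Z$. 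Uniform Hölder estimates for the extension give local convergence. Using the nondegeneracy result of \cite{Davila-2013}, we get
\[
\widetilde\omega_{i,m}\to a_{i,0}\,\partial_\lambda U_{\lambda,0}\big|_{\lambda=1}+\sum_{h=1}^N a_{i,h}\,\partial_{\zeta_h}U_{1,\zeta}\big|_{\zeta=0}.
\]
The goal is to show that all $a_{i,h}=0$ and $a_{i,0}=0$, and then that $\omega_m\to0$ uniformly, contradicting the normalization.

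\textbf{Step 1: $a_{i,h}=0$.} We use the local Pohozaev identity of translation type on $B^+_\rho(\xi_{i,m})\subset\R^{N+1}_+$, applied to the extensions $\widetilde u_m$ and $\widetilde\omega_m$. Differentiating the identity for $u_m$ (multiplier $\partial_{x_h}\widetilde u$) in the direction of $\omega_m$ gives
\[
\int_{B_\rho(\xi_{i,m})}\partial_h V\,u_m\omega_m\,dx=\text{boundary integrals on }\partial''B^+_\rho .
\]
The left side, after expanding $\partial_hV(x)=\sum_l\partial_{hl}V(\xi_{i,m})(x-\xi_{i,m})_l+o(|x-\xi_{i,m}|)$ (this is where $C^2$ near $\xi_i^*$ is used), equals
\[
c\,\lambda_{i,m}^{-1-\frac{N-2s}{2}-2s}\cdots\sum_l\partial_{hl}V(\xi_i^*)a_{i,l}+o(\cdot).
\]
The boundary terms must be shown to be of strictly smaller order. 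For this we need sharp decay of $\widetilde u_m$ and $\widetilde\omega_m$ away from the peaks, namely $|\widetilde\omega_m|\lesssim\lambda^{-(N-2s)/2}$ and $|\nabla\widetilde\omega_m|$ similarly on $\partial''B^+_\rho$, together with control of the weighted terms $t^{1-2s}|\nabla\widetilde u|^2$ near $t=0$. The requirements $s>1/2$ and $N\ge6s$ enter exactly to make these tail integrals of lower order. Nondegeneracy of $D^2V(\xi_i^*)$ then yields $a_{i,l}=0$.

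\textbf{Step 2: $a_{i,0}=0$.} We use the scaling Pohozaev identity, with multiplier $(x-\xi_{i,m})\cdot\nabla\widetilde u+\frac{N-2s}{2}\widetilde u$, again linearized in $\omega_m$. The bulk terms give
\[
\epsilon_m\!\int u_m^{p_s-\epsilon_m}\omega_m
\quad\text{and}\quad
\int\Big(2sV+(x-\xi)\cdot\nabla V\Big)u_m\omega_m .
\]
Both are of order $\lambda^{-2s}\cdot\lambda^{-(N-2s)/2}\cdot\lambda^{\dots}$, with coefficient $a_{i,0}$ times an explicit constant. This constant is nonzero by the balance $\epsilon\lambda^{2s}\to c\,V(\xi_i^*)$ obtained in the reduction, since $V(\xi_i^*)>0$. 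Here we also need the identity $\int U^{p_s}\partial_\lambda U\neq$ the competing term, i.e. the same computation that determined $\lambda_i^\epsilon$ in Theorem \ref{Mth} must have nonzero derivative.

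\textbf{Step 3: conclusion and main obstacle.} With all coefficients zero, $\omega_m\to0$ uniformly on $B_{R\lambda_{i,m}^{-1}}(\xi_{i,m})$ for each $R$. Outside these balls we write $\omega_m=\int G(x,y)(p_s-\epsilon)u_m^{p_s-1-\epsilon}\omega_m$, using the Green function of $(-\Delta)^s+V$ or comparison with $(-\Delta)^s$, since $V\ge0$. The potential $u_m^{p_s-1}$ is small away from the peaks, so $\|\omega_m\|_\infty\to0$, a contradiction. The main obstacle is Step 1/2's boundary estimates. Because of nonlocality, $\omega_m$ is not small on $\partial B_\rho$ in a pointwise local sense, and the extension is involved. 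I would first establish the integral representation bound
\[
|\omega_m(x)|\lesssim\sum_j\frac{\lambda_j^{-(N-2s)/2}}{|x-\xi_j|^{N-2s}}+o(1)\,\lambda^{-(N-2s)/2}
\]
away from peaks, then transfer it to $\widetilde\omega_m$ via the Poisson kernel, averaging over $\rho\in(\delta,2\delta)$ to handle gradient terms on $\partial''B^+_\rho$.
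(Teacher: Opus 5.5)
There is a genuine gap in Step 1. When you Taylor-expand $\partial_h V$ about $\xi_{i,m}$, you drop the zeroth-order term. That term contributes $\partial_hV(\xi_{i,m})\int_{B_\rho(\xi_{i,m})}u_m\omega_m$ to the left-hand side. With your normalization, $\int_{B_\rho}u_m\omega_m=\lambda_{i,m}^{-\frac{N+2s}{2}}\big(a_{i,0}\int_{\R^N}U_{1,0}Z_0+o(1)\big)$, and $\int_{\R^N}U_{1,0}Z_0=-s\int_{\R^N}U_{1,0}^2\neq0$. The Hessian term you want to isolate is only of order $\lambda_{i,m}^{-\frac{N+2s}{2}-1}$.

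Theorem \ref{Mth} gives $\xi_i^\epsilon\to\xi_i^*$ with no rate. So $|\nabla V(\xi_{i,m})|$ may be far larger than $\lambda_{i,m}^{-1}$, and the dropped term then swamps the one you keep. Even once $a_{i,0}=0$ is known, the term is $|\nabla V(\xi_{i,m})|\cdot o(\lambda_{i,m}^{-\frac{N+2s}{2}})$. You therefore need at least $|\nabla V(\xi_i^\epsilon)|=O(\epsilon^{\frac{1}{2s}})$, and the same rate is needed for $\partial_hV(\xi_{i,m})\int\phi_{\epsilon_m}\omega_m$.

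The paper supplies this before any blow-up, in Lemma \ref{thm1.1}:
\begin{itemize}
\item For $N\geq 6s$ the reduction is sharpened to $\|\phi_\epsilon\|_*=O(\epsilon^{1-\frac{\sigma}{2s}})$; the bottleneck is $\mathcal{R}_3$ in Lemma \ref{lem2.6}.
\item This is inserted into the translation Pohozaev identity for $u_\epsilon$ itself, \eqref{eq:20251104}, to get $\nabla V(\xi_i^\epsilon)=o(\epsilon^{\frac{1}{2s}})$.
\item That step needs $\epsilon^{1-\frac{\sigma}{2s}}=o(\epsilon^{\frac{1}{2s}})$, i.e. $\sigma<2s-1$. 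This is where $s>\frac12$ really enters, not in the tail integrals.
\end{itemize}
The same issue forces your two steps into the opposite order. In the dilation identity the $a_{i,l}$-contributions are $O(\lambda_{i,m}^{-\frac{N+2s}{2}-1})$ by oddness, so $a_{i,0}=0$ can be obtained first. The translation identity cannot absorb a nonzero $a_{i,0}$. Accordingly, the paper bounds $d_{i,0}$ (Lemma \ref{lem4.4}) before $d_{i,l}$ (Lemma \ref{lem4.04}).

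With these two repairs your route is a legitimate variant of the paper's. You normalize in $L^\infty$ and work with qualitative blow-up limits. The paper instead normalizes $\|\omega_\epsilon\|_*=1$, writes $\omega_\epsilon=\sum_i d_{i,0}\lambda_iZ_{i,0}+\sum_{i,l}d_{i,l}\lambda_i^{-1}Z_{i,l}+\omega_\epsilon^*$ with $\|\omega_\epsilon^*\|_*=O(\epsilon^{1-\frac{\sigma}{2s}})$, and proves explicit bounds on the $d_{i,l}$.

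The price of the qualitative version is that the bulk integrals over $B_\rho$ (over $B_{\lambda\rho}$ after rescaling) must be passed to the limit by dominated convergence. For that you need the uniform decay $|\omega_m|\lesssim\sum_j(1+\lambda_{j,m}|x-\xi_{j,m}|)^{-(N-2s)}$, which the Green-function iteration gives under $\|\omega_m\|_{L^\infty}=1$. You also need $N>4s+1$, which follows from $N\geq 6s$ and $s>\frac12$, for integrability of $|y|\,U_{1,0}(y)(1+|y|)^{-(N-2s)}$. The bound you display is the $\|\cdot\|_*$-normalized one. With your normalization it would not make the boundary terms negligible unless $N>6s+2$.

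Finally, in Step 2 the term $\epsilon\int u_m^{p_s-\epsilon}\omega_m$ is of lower order, because $\int_{\R^N}U_{1,0}^{p_s}Z_0=0$. The coefficient of $a_{i,0}$ is simply $-2sV(\xi_i^*)\int_{\R^N}U_{1,0}Z_0=2s^2V(\xi_i^*)\int_{\R^N}U_{1,0}^2\neq0$, so no balance argument is needed there.
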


Next, we give the local uniqueness result for the multi-peak solutions in Theorem \ref{Mth}.
 We define the local uniqueness of the peak solution $u_\epsilon$
as follows: 
if  $u_\epsilon^{(1)}$ and $u_\epsilon^{(2)}$ are any two peak solutions to \eqref{eq:1} taking the form \eqref{1a}, then $u_\epsilon^{(1)} \equiv u_\epsilon^{(2)}$.



\begin{theorem}\label{thm1.3}
Suppose  $N\geq 6s$, $\frac{1}{2}<s<1$ and  $V(x)$ satisfies the same  assumptions  as in Theorem \ref{NDth}. 
 Then there exists  $\epsilon_0>0$ such that for $\epsilon\in (0,\epsilon_0)$, the $k$-peak solutions $u_\epsilon$
 with the form  \eqref{1a} is locally unique.
 \end{theorem}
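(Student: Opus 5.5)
We argue by contradiction with the standard blow-up scheme built on local Pohozaev identities for the Caffarelli--Silvestre extension. Suppose there are $\epsilon_m\to0$ and two distinct $k$-peak solutions $u^{(1)}_{\epsilon_m}\neq u^{(2)}_{\epsilon_m}$ of the form \eqref{1a}. Set
\[
\eta_m=\frac{u^{(1)}_{\epsilon_m}-u^{(2)}_{\epsilon_m}}{\|u^{(1)}_{\epsilon_m}-u^{(2)}_{\epsilon_m}\|_{L^\infty(\R^N)}} .
\]
Then $\|\eta_m\|_{L^\infty}=1$ and $\eta_m$ solves $(-\Delta)^s\eta_m+V\eta_m=C_m(x)\eta_m$, where
\[
C_m(x)=(p_s-\epsilon_m)\int_0^1\big(\tau u^{(1)}_{\epsilon_m}+(1-\tau)u^{(2)}_{\epsilon_m}\big)^{p_s-1-\epsilon_m}\,d\tau .
\]
Write $\widetilde\eta_m$ for its $s$-harmonic extension. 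We also need the two sets of parameters to be close: comparing the Pohozaev identities satisfied by $u^{(1)}$ and $u^{(2)}$ near each $\xi_i^*$ (translation and dilation type) with the expansions from Theorem \ref{Mth}, we show that $\lambda^{(1)}_{i}$ and $\lambda^{(2)}_{i}$ agree to leading order and $|\xi^{(1)}_i-\xi^{(2)}_i|=o(\lambda^{-1})$. This lets us use one common rescaling.

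\textbf{Step 1 (local limits).} Rescale $\bar\eta_{m,i}(y)=\eta_m(\lambda_i^{(1)}{}^{-1}y+\xi_i^{(1)})$. Using the uniform decay estimates in the $\|\cdot\|_*$ norm and the nondegeneracy result of \cite{Davila-2013}, we get $\bar\eta_{m,i}\to b_{i,0}\,\partial_\lambda U_{1,0}+\sum_{h}b_{i,h}\,\partial_{\zeta_h}U_{1,0}$ locally uniformly, for each $i=1,\dots,k$.

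\textbf{Step 2 ($b_{i,h}=0$ for $h\ge1$).} Apply the local Pohozaev identity of translation type, i.e.\ multiply the extended equation by $\partial_{x_h}\widetilde u$ on a half-ball $B^+_\rho(\xi_i^{(1)},0)$, to both solutions and subtract. The difference of the boundary and volume terms yields
\[
\int \partial_h V\,(u^{(1)}+u^{(2)})\,\eta_m=\text{(boundary terms on }\partial''B^+_\rho).
\]
Expanding $\partial_hV$ around $\xi_i^*$ with $V\in C^2$, the leading term is $\lambda^{-1}\sum_l\partial_{hl}V(\xi_i^*)\,b_{i,l}$ times a nonzero constant. Non-degeneracy of $\xi_i^*$ then forces $b_{i,l}=0$.

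\textbf{Step 3 ($b_{i,0}=0$).} Apply the dilation-type Pohozaev identity, with multiplier $\langle (x-\xi_i,t),\nabla\widetilde u\rangle+\frac{N-2s}{2}\widetilde u$. The $\epsilon$-dependent term $\frac{\epsilon}{(p_s+1-\epsilon)^2}\int u^{p_s+1-\epsilon}$ and the $V$-term $s\int V u^2$ balance exactly at $\lambda\approx\epsilon^{-1/(2s)}$. Differentiating this balance in $\lambda$ gives a nonvanishing coefficient in front of $b_{i,0}$. Here $N\ge 6s$ is used so that $\int U^2$ converges with enough room, which keeps the error terms lower order.

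\textbf{Step 4 (global contradiction).} With all $b$'s zero, $\eta_m=o(1)$ on $B_{R\lambda^{-1}}(\xi_i)$. Outside these balls, $C_m\le C\sum_i U_{\lambda_i,\xi_i}^{p_s-1}$ is small relative to $|x-\xi_i|^{-2s}$. A comparison argument via the Green representation $\eta_m=(-\Delta)^{-s}\big[(C_m-V)\eta_m\big]$, together with $V\ge0$, gives $\|\eta_m\|_{L^\infty}=o(1)$. This contradicts $\|\eta_m\|_{L^\infty}=1$.

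\textbf{Main obstacle.} The hardest part is controlling the boundary integrals on $\partial''B^+_\rho=\{|(x,t)-(\xi_i,0)|=\rho,\ t>0\}$ involving $t^{1-2s}|\nabla\widetilde\eta_m|\,|\nabla\widetilde u|$ in Steps 2--3. Unlike the case $s=1$, $\widetilde u$ and $\widetilde\eta_m$ are not small away from the peaks in a pointwise sense that is easy to obtain. We would first derive weighted bounds of the form $\int_{\partial''B^+_\rho} t^{1-2s}|\nabla\widetilde\eta_m|^2=O(\lambda^{-(N-2s)})$ by averaging over $\rho\in(\delta,2\delta)$ and bounding by energy integrals on annular half-regions. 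These energy integrals are in turn estimated through the Poisson kernel representation and the decay $|u|\lesssim\lambda^{-(N-2s)/2}$ away from the $\xi_i$. The restriction $s>\frac12$ is expected to enter here, so that these gradient estimates and the $C^1$ control of $V$ suffice to make the error terms $o(\lambda^{-1})$, respectively $o(\epsilon)$.
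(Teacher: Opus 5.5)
\textbf{Verdict: genuine gap, in Step~2.} Your overall architecture matches the paper's: normalized difference, blow-up onto the kernel, translation and dilation local Pohozaev identities for the extensions, then a Green-function contradiction. Replacing the paper's explicit decomposition $\psi_\epsilon=\sum_i d^{(1)}_{i,0}\lambda^{(1)}_iZ^{(1)}_{i,0}+\sum_{i,l}d^{(1)}_{i,l}(\lambda^{(1)}_i)^{-1}Z^{(1)}_{i,l}+\psi^*_\epsilon$ by an $L^\infty$-normalized compactness limit is harmless. Iterating the Green representation gives $|\eta_m|\le C\sum_i(1+\lambda_i|x-\xi_i|)^{-(N-2s)}$, which justifies dominated convergence in the Pohozaev integrals and your Step~4.

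The gap is Step~2, together with the unjustified preliminary claim $|\xi^{(1)}_i-\xi^{(2)}_i|=o(\lambda^{-1})$. The leading term of the translation identity has order $\lambda^{-\frac{N+2s}{2}-1}$, one power of $\lambda$ below the scale $\lambda^{-\frac{N+2s}{2}}$ of the other terms. So every error must be $o(\lambda^{-1})$ in relative size, and with only what Theorem~\ref{Mth} provides two are not:
\begin{itemize}
\item[(i)] $\partial_hV(\xi^{(1)}_i)\int(u^{(1)}+u^{(2)})\eta_m\approx 2\,\partial_hV(\xi^{(1)}_i)\,\lambda_i^{-\frac{N+2s}{2}}\int U_{1,0}\bar\eta_{m,i}$, where $\int U_{1,0}\bar\eta_{m,i}\to b_{i,0}\int U_{1,0}\,\partial_\lambda U_{1,0}=-s\,b_{i,0}\int U_{1,0}^2\neq0$. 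This forces you to know $\nabla V(\xi^{(m)}_i)=o(\epsilon^{\frac{1}{2s}})$, i.e.\ $|\xi^{(m)}_i-\xi^*_i|=o(\lambda^{-1})$. Your expansion of $\partial_hV$ around $\xi^*_i$ silently uses the same fact.
\item[(ii)] The terms $\int\partial_hV\,(\phi^{(1)}+\phi^{(2)})\eta_m$, which have relative size $\|\phi_\epsilon\|_*$.
\end{itemize}
Proposition~\ref{Prop2.4} only gives $\|\phi_\epsilon\|_*\le C\epsilon^{\frac12(1+\tau_0)}\approx\lambda^{-s(1+\tau_0)}$, which is not $o(\lambda^{-1})$ in general. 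With that bound, the translation identity for each $u^{(m)}$ only yields $\nabla V(\xi^{(m)}_i)=O(\|\phi_\epsilon\|_*)$. So ``comparing with the expansions from Theorem~\ref{Mth}'' closes neither (i) nor (ii).

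\textbf{The missing ingredient is the sharpened reduction of Lemma~\ref{thm1.1}.}
\begin{itemize}
\item For $N\ge 6s$ one has $\|\mathcal R\|_{**}=O(\epsilon^{1-\frac{\sigma}{2s}})$; the term $V\bm W_{\bm\lambda,\bm\xi}$ is the bottleneck. Rerunning the contraction then gives $\|\phi_\epsilon\|_*=O(\epsilon^{1-\frac{\sigma}{2s}})$.
\item Choosing $\sigma<2s-1$, which is possible exactly because $s>\frac12$, makes this $o(\epsilon^{\frac1{2s}})$.
\item Insert this into \eqref{eq:20251104}. There $\lambda^{2s-N}$ and $\lambda^{-2\sigma}\|\phi_\epsilon\|_*^2$ are $O(\epsilon^2)=o(\epsilon^{1+\frac1{2s}})$, again using $s>\frac12$. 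This yields $\nabla V(\xi_i)=o(\epsilon^{\frac1{2s}})$, hence $|\xi^{(m)}_i-\xi^*_i|=o(\lambda^{-1})$ by non-degeneracy of $\xi^*_i$.
\end{itemize}
This is where $N\ge6s$ and $s>\frac12$ really enter. They are not needed for the convergence of $\int U_{1,0}^2$, which only requires $N>4s$, nor mainly for the boundary integrals.

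\textbf{Secondary issue: your planned boundary bound is too weak.} You propose $\int_{\partial''\mathfrak{B}^+_\rho}t^{1-2s}|\nabla\widetilde\eta_m|^2=O(\lambda^{-(N-2s)})$. Since $|\eta_m|=O(\lambda^{-(N-2s)})$ away from the peaks, the natural order in your normalization is $O(\lambda^{-2(N-2s)})$. With your bound, Cauchy--Schwarz controls the boundary terms only by $\lambda^{-(N-2s)}$ at best, because the extension of $u$ has size at least $\lambda^{-\frac{N-2s}{2}}$ there. That is not $o(\lambda^{-\frac{N+2s}{2}-1})$ when $N\le 6s+2$. You need a Caccioppoli estimate on an annular half-region combined with a pointwise bound for $\widetilde\eta_m$ there, as in Lemmas~\ref{newlemGA.6} and~\ref{F0514lem1}.
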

 
 \begin{remark}
 In contrast to Theorem \ref{Mth},  stronger regularity conditions on  $V(x)$ near its critical points are 
 required to guarantee the non-degeneracy and local uniqueness of  the peak solutions. 
 These stronger conditions are needed to derive refined estimates for $u_\epsilon$. 
 To be precise, we employ  blow-up analysis combined with  several local Pohozaev identities to establish  refined estimates
 of $\bm\lambda_\epsilon, \bm\xi_\epsilon$ and $\phi_\epsilon$, which has its own interest. 
\end{remark}


\subsection{ The strategy of the proof}

The proof of Theorem\,\ref{Mth} relies on the finite-dimensional Lyapunov-Schmidt reduction method. As is well known, 
this method allows us to reduce the construction of solutions to a finite dimensional
variational problem. Roughly speaking, the finite-dimensional reduction argument consists of two key steps.
The first step is to construct appropriate approximate solutions, thereby reducing the original problem to a finite-dimensional one.
Here to obtain  sufficiently accurate approximate solutions, we need to appropriately truncate  the standard bubble $U_{\lambda,\xi}$ near the concentrating points, 
which is crucial to address the slow decay of the bubble in low dimensions. However, this poses substantial challenges in deriving estimates of the approximate solutions, owing to the non-locality of the fractional Laplacian.
Unlike the local case $(s=1)$, we perform a refined analysis of the approximate solutions.
 Moreover, although problem \eqref{eq:1} can be regarded  as a subcritical problem,  the fact that  $V(x)$ is merely  nonnegative 
 implies that \eqref{eq:1}  generally admits no variational structure  on the  Sobolev space $H_V^s(\mathbb{R}^N)$, 
 and thus variational methods cannot be applied directly to  \eqref{eq:1}.
 For this reason, we work in appropriate weighted $L^\infty$ spaces, following the approach in \cite{WY1}, to carry out the finite-dimensional reduction procedure.
 
The second step is to solve the corresponding reduced finite-dimensional  problem, thereby obtaining a genuine solution.
 The standard approach for this step is to derive sharp estimates for the error term 
$\phi_\epsilon$  obtained in the first step,  which works well for high dimensions  $(N>6s)$.
 In the current setting, the estimate of the error term is sufficiently accurate for dilation parameters $\lambda_i$ 
$(i=1,\cdots,k)$, but it is not a higher order term with respect to the translation parameters  $\xi_i$  $(i=1,\cdots,k)$. 
To overcome this obstacle, we will adopt  the local Pohozaev identities technique developed in \cite{CLL1,PWY}  to solve the corresponding  reduced finite-dimensional  problem.
Our results demonstrate that translation and dilation parameters play distinct roles in the construction of concentrating solutions.
 We expect that  the novel ideas and methods developed in this paper can be applied to the construction of peak solutions for  a broad class of nonlocal and higher-order elliptic equations with critical nonlinearities.

We will employ  the blow-up analysis based on Pohozaev identities to establish  the non-degeneracy and local uniqueness of peak solutions. 
It is well-known that blow-up analysis is a fundamental technique to obtain deeper insights into asymptotic behavior of solutions. 
 However, the standard blow-up analysis framework is not applicable in the present setting.
 To study qualitative properties of peak solutions, we exploit concentration properties of such solutions to derive sharp estimates for them. 
 Since Pohozaev identities reveal essential relationships inherent to peak solutions, 
 we utilize this blow-up approach combined with several local Pohozaev identities to obtain the desired refined estimates for peak solutions.
 Furthermore, unlike the case of bounded domains, we refine these estimates via a suitable projection, rather than relying on the Green function. 
Our findings shed new light on the qualitative properties of peak solutions for  general critical  fractional elliptic problems, and that the techniques 
 developed herein  have great potential for further extension and wide applicability to related research problems.

The rest of the paper is organized as follows. In Section 2,  we compute the fractional Laplacians of the approximate solutions. 
 In Sections 3, we perform the Lyapunov-Schmidt reduction argument to reduce the construction of peak solutions to
a finite dimensional variational problem. 
In Section 4, we solve the reduced finite-dimensional problem via local Pohozaev identities and prove Theorem \ref{Mth}.
 Section 5 establishes the non-degeneracy of peak solutions (Theorem \ref{NDth}), and 
 Section 6 proves the local uniqueness of such solutions (Theorem \ref{thm1.3}).
 The energy expansions and auxiliary technical estimates are collected   in Appendices A and B. Unless otherwise stated, $C$ 
 denotes a generic positive constant independent of  $\epsilon,\;\bm \lambda$ and $\bm\xi$.


\section{Preliminaries}

In this section, we compute the fractional Laplacians of our approximate solutions. 
This calculation plays an important role in deriving estimates for the error term and the energy expansions.
Recall that  ${\bm W}_{\bm\lambda, \bm\xi}=\displaystyle\sum_{i=1}^kW_{\lambda_i, \xi_i}$ are approximate solutions.
To  compute $(-\Delta)^s {\bm W}_{\bm\lambda, \bm\xi}$,  it suffices to compute $(-\Delta)^s W_{\lambda_i, \xi_i}$ for  $i=1,\cdots,k$.

\begin{lemma}\label{lem-J}
Let $0<s<1$ and $N> 4s$.  Then for each $i=1,\cdots,k$ and $x\in \mathbb{R}^N$,
\[
(-\Delta)^s W_{\lambda_i, \xi_i}(x)=\eta_i(x) U_{\lambda_i, \xi_i}^{p_s}(x)+\Upsilon_i(x)
\]
and
$$
|\Upsilon_i(x)|\leq C\frac{\lambda_i^{-\frac{N-2s}{2}}}{(1+|x-\xi_i|)^{N+2s}},
$$
where $C>0$ is a constant independent of $\lambda_i$ and $\xi_i$.
\end{lemma}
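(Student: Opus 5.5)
Without loss of generality I take $\xi_i=0$ and write $\lambda=\lambda_i$, $U=U_{\lambda,0}$, $\eta=\eta_i$. The starting point is the product rule for the fractional Laplacian:
\[
(-\Delta)^s(\eta U)(x)=\eta(x)(-\Delta)^sU(x)+U(x)(-\Delta)^s\eta(x)-c(N,s)\int_{\R^N}\frac{(\eta(x)-\eta(z))(U(x)-U(z))}{|x-z|^{N+2s}}\,dz .
\]
Since $(-\Delta)^sU=U^{p_s}$ by \eqref{eq1}, the first term is exactly $\eta U^{p_s}$. So
\[
\Upsilon_i=U(-\Delta)^s\eta-c(N,s)\,\mathcal B(\eta,U),
\]
where $\mathcal B$ is the bilinear integral above, and the task is to bound these two pieces. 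Two facts will be used throughout:
\[
U(x)\le C\lambda^{-\frac{N-2s}{2}}|x|^{-(N-2s)}\ \text{ for } |x|\ge\delta,\qquad U\le C\lambda^{\frac{N-2s}{2}}\ \text{everywhere}.
\]

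\textbf{First term.} For fixed smooth compactly supported $\eta$, the standard estimate gives $|(-\Delta)^s\eta(x)|\le C(1+|x|)^{-N-2s}$. Far away, $|x|\ge\delta$, the bound on $U$ then yields
\[
|U(-\Delta)^s\eta|\le C\lambda^{-\frac{N-2s}{2}}(1+|x|)^{-2N}\le C\lambda^{-\frac{N-2s}{2}}(1+|x|)^{-N-2s}.
\]
Near the origin, $|x|<\delta$, the function $U$ is large and this crude bound fails. There I would not split the product, but instead write
\[
(-\Delta)^s(\eta U)-\eta U^{p_s}=(-\Delta)^s\big((\eta-1)U\big)
\]
for $|x|<\delta$, using $\eta(x)=1$ there. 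The function $g:=(1-\eta)U$ vanishes on $B_\delta$. Hence for $|x|<\delta/2$,
\[
(-\Delta)^s g(x)=-c(N,s)\int_{|z|\ge\delta}\frac{g(z)}{|x-z|^{N+2s}}\,dz,
\]
which is bounded by $C\lambda^{-\frac{N-2s}{2}}\int_{|z|\ge\delta}|z|^{-(N-2s)}|z|^{-N-2s}\,dz\le C\lambda^{-\frac{N-2s}{2}}$. This settles $|x|<\delta/2$ directly, with no need for the product-rule decomposition there.

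\textbf{Remaining region $|x|\ge\delta/2$.} Here I would write $\Upsilon=(-\Delta)^s(\eta U)-\eta U^{p_s}$, use $\eta U^{p_s}\le C\lambda^{-\frac{N+2s}{2}}|x|^{-N-2s}$, and estimate $(-\Delta)^s(\eta U)(x)$ by splitting the integral into three pieces.
\begin{itemize}
\item $|z-x|<\delta/4$: use the second-order symmetric difference. Here $\eta U$ is smooth with $|D^2(\eta U)|\le C\lambda^{-\frac{N-2s}{2}}$ near $x$, and it vanishes identically if $|x|>3\delta$. This piece contributes $C\lambda^{-\frac{N-2s}{2}}$, and zero for large $|x|$.
\item $|z-x|\ge\delta/4$, $|z|\ge\delta/4$: here $\eta U(z)\le C\lambda^{-\frac{N-2s}{2}}$, and integrating against the kernel gives $C\lambda^{-\frac{N-2s}{2}}(1+|x|)^{-N-2s}$. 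This uses that $\eta U$ is supported in $B_{2\delta}$: for large $|x|$ the integral is simply $\int_{B_{2\delta}}\eta U(z)|x-z|^{-N-2s}\,dz$.
\item $|z|<\delta/4$: here $U$ is large, but its mass is small:
\[
\int_{B_{\delta/4}}U\,dz\le C\lambda^{-\frac{N-2s}{2}}\int_{\R^N}(1+|y|^2)^{-\frac{N-2s}{2}}\chi_{|y|<\lambda\delta}\,dy\,\lambda^{0}\ldots
\]
More carefully, $\int_{B_r}U=\lambda^{-\frac{N+2s}{2}}\int_{B_{\lambda r}}(1+|y|^2)^{-\frac{N-2s}{2}}dy\le C\lambda^{-\frac{N+2s}{2}}(\lambda r)^{2s}=C\lambda^{-\frac{N-2s}{2}}$. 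Since the kernel is bounded by $C(1+|x|)^{-N-2s}$ on this set, this piece is also fine.
\end{itemize}
Combining the three pieces gives $|\Upsilon_i(x)|\le C\lambda^{-\frac{N-2s}{2}}(1+|x|)^{-N-2s}$ on $|x|\ge\delta/2$.

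\textbf{Main obstacle.} The delicate step is the mass estimate $\int_{B_r}U\lesssim\lambda^{-\frac{N-2s}{2}}$ and the local smoothness estimate near $|x|\sim\delta$. Both are needed to ensure that no positive power of $\lambda$ survives, and they rely on $N>2s$ so that $\int_{B_{\lambda r}}|y|^{-(N-2s)}\,dy\sim(\lambda r)^{2s}$. The constants depend only on $\delta,\eta,N,s$, so they are uniform in $\lambda_i$ and $\xi_i$, as required.
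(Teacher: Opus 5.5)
Your proof is correct. It differs from the paper's only in the region $|x-\xi_i|\geq\delta/2$.

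\textbf{Region $|x|<\delta/2$.} Here the two arguments coincide. The paper shows $\Upsilon_{i,1}=0$ on $B_{\delta/2}(\xi_i)$ and then uses its Case~1, which amounts to $\Upsilon_i(x)=c(N,s)\int(1-\eta(z))U(z)|x-z|^{-N-2s}\,dz$. That is exactly your computation of $(-\Delta)^s\big((1-\eta)U\big)$.

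\textbf{Region $|x|\geq\delta/2$.} The paper keeps the commutator form $\Upsilon_i(x)=c(N,s)\,\mathrm{P.V.}\int(\eta(x)-\eta(z))U(z)|x-z|^{-N-2s}\,dz$ throughout.
\begin{itemize}
\item On $|z-x|<\delta/4$ it Taylor-expands $\eta$ and symmetrizes in $z$ to remove the first-order term. This requires a mean-value estimate on the difference of $U$ at two symmetric points.
\item On $|z-x|\geq\delta/4$ it runs a three-case analysis in $x$, using Lemma~\ref{lemGA.3} in the annulus.
\end{itemize}
You instead split $\Upsilon_i=(-\Delta)^s(\eta U)-\eta U^{p_s}$ and bound each term on its own. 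This is legitimate because away from the core both terms are individually $O(\lambda^{-\frac{N-2s}{2}})$.
\begin{itemize}
\item Your near-diagonal piece then needs only $|D^2U(z)|\leq C\lambda^{-\frac{N-2s}{2}}|z|^{-(N-2s)-2}$ on $|z|\geq\delta/4$.
\item Your core piece uses the scaling bound $\int_{B_r}U\leq C\lambda^{-\frac{N-2s}{2}}r^{2s}$, which is the paper's Case~3 estimate in another form.
\end{itemize}

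\textbf{Comparison.} Your route is more elementary and avoids both the symmetrization step and the external lemma. The paper's route gives a single commutator formula for $\Upsilon_i$ valid at every $x$. That is the form it reuses for $J_{i,0}$, $J_{i,l}$ and $\Lambda_{i,l}$ in Lemma~\ref{lemA}.

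\textbf{Cleanups.}
\begin{itemize}
\item The opening product-rule decomposition and the ``First term'' paragraph are never used. The bilinear term $\mathcal B(\eta,U)$ is never estimated, and your final two-region argument does not need it. Drop them, together with the garbled first display of the mass estimate.
\item In your second piece, the $f(x)$ part of $f(x)-f(z)$ is missing. It contributes $\eta U(x)\int_{|z-x|\geq\delta/4}|x-z|^{-N-2s}\,dz\leq C\delta^{-2s}\lambda^{-\frac{N-2s}{2}}\chi_{B_{2\delta}}(x)$, which is harmless but should be stated.
\end{itemize}
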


\begin{proof}
The proof is well-known (see \cite{YGuo-2020}), and  we include it here for completeness.
In view of \eqref{eq-def}, we have 
\begin{equation}\label{eq-0426-1}
\begin{aligned}
    &\ \quad(-\Delta)^s W_{\lambda_i, \xi_i}(x)
    \vspace{2mm}\\
    &\ =\displaystyle c(N,s)\lim_{\varepsilon\to 0^+}\int_{\R^N\setminus B_\varepsilon(x)}\Bigg(\frac{\eta_i(x) \Big(U_{\lambda_i, \xi_i}(x)- U_{\lambda_i, \xi_i}(z)\Big)}{|x-z|^{N+2s}}\,dz+\frac{\Big(\eta_i(x) -\eta_i(z)\Big) U_{\lambda_i, \xi_i}(z)}{|x-z|^{N+2s}}\Bigg)
    \vspace{2mm}\\
    &\ =\eta_i(x)(-\Delta)^s U_{\lambda_i, \xi_i}(x)+\displaystyle c(N,s)\lim_{\varepsilon\to 0^+}\int_{\R^N\setminus B_\varepsilon(x)}\frac{\Big(\eta_i(x) -\eta_i(z)\Big) U_{\lambda_i, \xi_i}(z)}{|x-z|^{N+2s}}
    \vspace{2mm}\\
    &\ =\eta_i(x) U_{\lambda_i, \xi_i}^{p_s}(x)+\displaystyle c(N,s)\lim_{\varepsilon\to 0^+}\int_{\R^N\setminus B_\varepsilon(x)}\frac{\Big(\eta_i(x) -\eta_i(z)\Big) U_{\lambda_i, \xi_i}(z)}{|x-z|^{N+2s}}
    \vspace{2mm}\\
    &:=\eta_i(x) U_{\lambda_i, \xi_i}^{p_s}(x)+\Upsilon_i(x).
\end{aligned}
\end{equation}
Moreover,
    \begin{equation}
        \begin{aligned}
            \Upsilon_i(x)
            &\ =\displaystyle c(N,s)\left(\lim_{\varepsilon\to 0^+}\int_{B_{\frac{\delta}{4}}(x)\setminus B_\varepsilon(x)}\frac{\Big(\eta_i(x) -\eta_i(z)\Big) U_{\lambda_i, \xi_i}(z)}{|x-z|^{N+2s}}\right)
            \vspace{2mm}\\
            &\ \quad
            +\displaystyle c(N,s)\left(\int_{\R^N\setminus B_{\frac{\delta}{4}}(x)}\frac{\Big(\eta_i(x) -\eta_i(z)\Big) U_{\lambda_i, \xi_i}(z)}{|x-z|^{N+2s}}\right)
            \vspace{2mm}\\
            &:=\displaystyle c(N,s)\left(\Upsilon_{i,1}(x)+\Upsilon_{i,2}(x)\right).
        \end{aligned}
    \end{equation}
Then  we have 
$$
\Upsilon_{i,1}(x)=0  \ \hbox{in}\  B_{\frac{\delta}{2}}(\xi_i)\cup (\R^N\setminus B_{\frac{5}{2}\delta}(\xi_i)). 
$$ 
It remains to consider the annular region $x\in B_{\frac{5}{2}\delta}(\xi_i)\setminus B_{\frac{\delta}{2}}(\xi_i)
$. For $z\in B_{\frac{\delta}{4}}(x)\setminus B_\varepsilon(x)$,
$$
  \frac{\delta}{4}\leq \frac{|x-\xi_i|}{2}\leq |z-\xi_i|\leq  \frac{3}{2}|x-\xi_i|\leq  \frac{15}{4}\delta .
$$
By the mean value theorem, we deduce 
  \begin{align*}
  &|\Upsilon_{i,1}(x)|\\ \leq&  C\left|\lim_{\varepsilon\to 0^+}\int_{B_{\frac{\delta}{4}}(x)\setminus B_\varepsilon(x)}\frac{\left[\nabla \eta_i(x)\cdot(x-z)+O(|x-z|^2) \right]U_{\lambda_i, \xi_i}(z)}{|x-z|^{N+2s}}\right|\\
   \leq& C\left|\lim_{\varepsilon\to 0^+}\int_{B_{\frac{\delta}{4}}(0)\setminus B_\varepsilon(0)}\frac{\nabla \eta_i(x)\cdot z }{|z|^{N+2s}}\frac{\lambda_i^{\frac{N-2s}{2}}}{(1+\lambda_i^2|-z+x-\xi_i|^2)^{\frac{N-2s}{2}}}\right|
  +C\frac{\lambda_i^{\frac{N-2s}{2}}}{(1+\lambda_i|x-\xi_i|)^{N-2s}}\\
   \leq& C\left|\lim_{\epsilon\to 0^+}\int_{B_{\frac{\delta}{4}}(0)\setminus B_\varepsilon(0)}\frac{\nabla \eta_i(x)\cdot z }{|z|^{N+2s}}\left(\frac{\lambda_i^{\frac{N-2s}{2}}}{(1+\lambda_i^2|-z+x-\xi_i|^2)^{\frac{N-2s}{2}}}-\frac{\lambda_i^{\frac{N-2s}{2}}}{(1+\lambda_i^2|z+x-\xi_i|^2)^{\frac{N-2s}{2}}}\right)\right|\\
  &+C\frac{\lambda_i^{\frac{N-2s}{2}}}{(1+\lambda_i|x-\xi_i|)^{N-2s}}\\
  \leq&  C\lambda_i^{\frac{N-2s}{2}}\int_{B_{\frac{\delta}{4}}(0)}\frac{|\nabla \eta_i(x)|| z| }{|z|^{N+2s}}\frac{\lambda_i^2|(1-2\theta)z+x-\xi_i| |z|}{(1+\lambda_i^2|(1-2\theta)z+x-\xi_i|^2)^{\frac{N-2s+2}{2}}}+C\frac{\lambda_i^{\frac{N-2s}{2}}}{(1+\lambda_i|x-\xi_i|)^{N-2s}}\\
  \leq&  C\lambda_i^{\frac{N-2s+2}{2}}\int_{B_{\frac{\delta}{4}}(0)}\frac{1 }{|z|^{N+2s-2}}\frac{1}{(1+\lambda_i|(1-2\theta)z+x-\xi_i|)^{N-2s+1}}+C\frac{\lambda_i^{\frac{N-2s}{2}}}{(1+\lambda_i|x-\xi_i|)^{N-2s}}\\
  \leq& C\frac{\lambda_i^{\frac{N-2s+2}{2}}}{(1+\lambda_i|x-\xi_i|)^{N-2s+1}}+C\frac{\lambda_i^{\frac{N-2s}{2}}}{(1+\lambda_i|x-\xi_i|)^{N-2s}}\\
  \leq&  C\frac{\lambda_i^{\frac{N-2s}{2}}}{(1+\lambda_i|x-\xi_i|)^{N-2s}}\leq C\lambda_i^{-\frac{N-2s}{2}},\\
  \end{align*}
  where $0<\theta<1$. Here we have used the fact that $|(1-2\theta)z+x-\xi_i|\geq |x-\xi_i|-|(1-2\theta)z|\geq \frac{1}{2}|x-\xi_i|\geq \frac{\delta}{4}$ for $z\in B_{\frac{\delta}{4}}(0)$.

 Next we estimate  $\Upsilon_{i,2}(x)$.\\
  Case 1: Suppose that $x\in B_\delta(\xi_i)$.  Then  $\eta_i(x)=1$  and  
\begin{equation}
      \begin{aligned}
          |\Upsilon_{i,2}(x)|\leq & C\int_{\R^N\setminus (B_{\delta/{4}}(x)\cup B_\delta(\xi_i))}\frac{1}{|x-z|^{N+2s}}\frac{\lambda_i^{\frac{N-2s}{2}}}{(1+\lambda_i^2|z-\xi_i|^2)^{\frac{N-2s}{2}}}\\
          \leq & C\lambda_i^{-\frac{N-2s}{2}}\int_{\R^N\setminus B_{\delta/{4}}(x)}\frac{1}{|x-z|^{N+2s}}\\
           \leq & C\lambda_i^{-\frac{N-2s}{2}}.
      \end{aligned}
  \end{equation}
 Case 2: For $\delta\leq |x-\xi_i|\leq 3\delta$, it follows from Lemma \ref{lemGA.3}  that
 \begin{equation}
     \begin{aligned}
          |\Upsilon_{i,2}(x)|\leq & C\int_{\R^N\setminus B_{\delta/{4}}(x)}\frac{1}{|x-z|^{N+2s}}\frac{\lambda_i^{\frac{N-2s}{2}}}{(1+\lambda_i^2|z-\xi_i|^2)^{\frac{N-2s}{2}}}\\
          \leq & C\lambda_i^{\frac{N+2s}{2}}\int_{\R^N\setminus B_{\frac{\lambda_i\delta}{4}}(\lambda_i (x-\xi_i))}\frac{1}{|\lambda_i(x-\xi_i)- z|^{N+2s}}\frac{1}{(1+|z|)^{N-2s}}\\
          \leq & C\lambda_i^{\frac{N+2s}{2}}\left(\frac{1}{(1+\lambda_i|x-\xi_i|)^{N}}+\frac{1}{\lambda_i^{2s}(1+\lambda_i|x-\xi_i|)^{N-2s}}\right)\\
          \leq &C\lambda_i^{-\frac{N-2s}{2}}.
     \end{aligned}
 \end{equation}\\
 Case 3: If $|x-\xi_i|>3\delta$, then $\eta_i(x)$ and $\eta_i(z)$ vanish for $z\in \R^N\setminus B_{2\delta}(\xi_i)$. Hence we obtain 
 \begin{equation}
     \begin{aligned}
         |\Upsilon_{i,2}(x)|\leq & C\int_{B_{2\delta}(\xi_i)}\frac{1}{|x-z|^{N+2s}}\frac{\lambda_i^{\frac{N-2s}{2}}}{(1+\lambda_i^2|z-\xi_i|^2)^{\frac{N-2s}{2}}}
         \vspace{2mm}\\
         \leq & C\lambda_i^{-\frac{N-2s}{2}}\int_{B_{2\delta}(\xi_i)}\frac{1}{|x-z|^{N+2s}}\frac{1}{|z-\xi_i|^{N-2s}}
         \vspace{2mm}\\
          \leq & C\lambda_i^{-\frac{N-2s}{2}}\frac{1}{(1+|x-\xi_i|)^{N+2s}}\int_{B_{2\delta}(\xi_i)}\frac{1}{|z-\xi_i|^{N-2s}}\\
          \leq & C\lambda_i^{-\frac{N-2s}{2}}\frac{1}{(1+|x-\xi_i|)^{N+2s}}\\      
     \end{aligned}
 \end{equation}
 where we used the fact that $|x-z|\geq \frac{1}{3}|x-\xi_i|$ for $x\in \R^N\setminus B_{3\delta}(\xi_i)$ and $z\in B_{2\delta}(\xi_i)$. 
As a result, the desired estimate can be derived from the above arguments. 
\end{proof}
Define
 \begin{equation}\label{eqA.2.16}
Z_{i,0}=\frac{\partial W_{\lambda_i, \xi_i}}{\partial \lambda_i}, \ \;\   Z_{i,l}=\frac{\partial W_{\lambda_i, \xi_i}}{\partial \xi_{i,l}},
 \end{equation}
 where $\xi_{i,l}$ is the $l$-th component of $\xi_i\in \mathbb{R}^N$, $i=1,\cdots,k$,  $l=1,2,\cdots,N$.
Concerning the  fractional Laplacians of $Z_{i,0}$ and $Z_{i,l}$, we have
\begin{lemma}\label{lemA}
    If $0<s<1$ and $N>4s$, then
 \begin{itemize}
\item[$(i)$] 
$$
  \begin{aligned}
    &\quad \ (-\Delta)^s \left(\eta_i\frac{\partial U_{\lambda_i, \xi_i}}{\partial\lambda_i}\right)(x)
   =p_s\eta_i(x) U_{\lambda_i, \xi_i}^{p_s-1}\frac{\partial U_{\lambda_i, \xi_i}}{\partial\lambda_i}(x)+J_{i,0}(x),
    \end{aligned}
    $$
    and 
    \begin{equation}\label{eqA.1}
|J_{i,0}(x)|\leq C\frac{\lambda_i^{-\frac{N+2-2s}{2}}}{(1+|x-\xi_i|)^{N+2s}},
\end{equation}
where $J_{i,0}(x)=\displaystyle c(N,s)\lim_{\varepsilon\to 0^+}\int_{\R^N\setminus B_\varepsilon(x)}\frac{\Big(\eta_i(x) -\eta_i(z)\Big) \frac{\partial U_{\lambda_i, \xi_i}}{\partial\lambda_i}(z)}{|x-z|^{N+2s}}$.
\item [$(ii)$]
$$
\begin{aligned}
    &(-\Delta)^s\left(\eta_i\frac{\partial U_{\lambda_i, \xi_i}}{\partial \xi_{i,l}}\right)(x)
   =p_s\eta_i(x) U_{\lambda_i, \xi_i}^{p_s-1}\frac{\partial U_{\lambda_i, \xi_i}}{\partial \xi_{i,l}}(x)+J_{i,l}(x),
\end{aligned}
$$
and 
    \begin{equation}\label{eqA.2}
|J_{i,l}(x)|\leq C\frac{\lambda_i^{-\frac{N-2-2s}{2}}}{(1+|x-\xi_i|)^{N+2s}},\ \ i=1,2,\cdots,k,\ \ l=1,2,\cdots,N,
\end{equation}
where $J_{i,l}(x)=\displaystyle c(N,s)\lim_{\varepsilon\to 0^+}\int_{\R^N\setminus B_\varepsilon(x)}\frac{\Big(\eta_i(x) -\eta_i(z)\Big) \frac{\partial U_{\lambda_i, \xi_i}}{\partial \xi_i^l}(z)}{|x-z|^{N+2s}}$.
\item[$(iii)$] 
$$
\begin{aligned}
    &(-\Delta)^s\left(U_{\lambda_i, \xi_i}\frac{\partial \eta_i }{\partial \xi_{i,l}}\right)(x)
   =\frac{\partial \eta_i }{\partial \xi_{i,l}}(x)U_{\lambda_i, \xi_i}^{p_s}(x)+\Lambda_{i,l}(x),
\end{aligned}
$$
and 
    \begin{equation}\label{eqA.3}
|\Lambda_{i,l}(x)|\leq C\frac{\lambda_i^{-\frac{N-2s}{2}}}{(1+|x-\xi_i|)^{N+2s}},\ \ i=1,2,\cdots,k,\ \ l=1,2,\cdots,N,
\end{equation}
where $\Lambda_{i,l}(x)=\displaystyle c(N,s)\lim_{\varepsilon\to 0^+}\int_{\R^N\setminus B_\varepsilon(x)}\frac{\Big(\frac{\partial \eta_i}{\partial \xi_{i,l}}(x) -\frac{\partial \eta_i}{\partial \xi_{i,l}}(z)\Big) U_{\lambda_i, \xi_i}(z)}{|x-z|^{N+2s}}$.
\end{itemize}
\end{lemma}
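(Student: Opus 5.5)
The plan is to follow the proof of Lemma \ref{lem-J} verbatim, replacing $U_{\lambda_i,\xi_i}$ by the relevant function. For (i) and (ii) we use the product rule for the singular integral \eqref{eq-def}:
\[
\eta_i(x)\big(f(x)-f(z)\big)+\big(\eta_i(x)-\eta_i(z)\big)f(z),
\]
with $f=\partial_{\lambda_i}U_{\lambda_i,\xi_i}$ or $f=\partial_{\xi_{i,l}}U_{\lambda_i,\xi_i}$. Differentiating \eqref{eq1} for $U_{\lambda,\zeta}$ in $\lambda$ and in $\zeta_l$ gives $(-\Delta)^s f=p_sU_{\lambda_i,\xi_i}^{p_s-1}f$; equivalently, $f$ solves \eqref{eq2}. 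Hence the first piece produces the main term $p_s\eta_iU^{p_s-1}_{\lambda_i,\xi_i}f$, and the remainder is exactly $J_{i,0}$ or $J_{i,l}$. For (iii) the same splitting, with $\eta_i$ replaced by $\partial_{\xi_{i,l}}\eta_i=-\partial_l\eta(x-\xi_i)$, gives the main term $\partial_{\xi_{i,l}}\eta_i\,U^{p_s}_{\lambda_i,\xi_i}$ and the remainder $\Lambda_{i,l}$.

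The remainders are then estimated as in Lemma \ref{lem-J}, after recording pointwise bounds for the integrands:
\[
\Big|\frac{\partial U_{\lambda_i,\xi_i}}{\partial\lambda_i}\Big|\le C\lambda_i^{-1}\frac{\lambda_i^{\frac{N-2s}{2}}}{(1+\lambda_i|x-\xi_i|)^{N-2s}},\qquad
\Big|\frac{\partial U_{\lambda_i,\xi_i}}{\partial\xi_{i,l}}\Big|\le C\lambda_i\frac{\lambda_i^{\frac{N-2s}{2}}}{(1+\lambda_i|x-\xi_i|)^{N-2s+1}}.
\]
Away from $\xi_i$, i.e.\ for $|z-\xi_i|\ge\delta/4$, these become $O(\lambda_i^{-\frac{N+2-2s}{2}})$ and $O(\lambda_i^{-\frac{N-2-2s}{2}})$ respectively; the latter could even be improved to $\lambda_i^{-\frac{N-2s}{2}}$, but the stated weaker bound suffices. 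We also need analogous bounds for the first derivatives of $f$ in $z$, which cost at most one extra factor of $|z-\xi_i|^{-1}$. Since $|z-\xi_i|$ is bounded below in the relevant region, that factor is harmless.

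Split the integral into $B_{\delta/4}(x)$ and its complement. The near part vanishes unless $x\in B_{5\delta/2}(\xi_i)\setminus B_{\delta/2}(\xi_i)$, where $\nabla\eta_i\neq0$ is possible. In that annulus we Taylor expand the cutoff and symmetrize $z\mapsto -z$ to cancel the odd first-order term $\nabla\eta_i(x)\cdot(x-z)$. The mean value theorem applied to $f$ then bounds the result by $\sup_{|z-\xi_i|\ge\delta/4}(|f|+|\nabla f|)$, which yields the stated power of $\lambda_i$. The far part is handled in three cases: $x\in B_\delta(\xi_i)$, $\delta\le|x-\xi_i|\le3\delta$, and $|x-\xi_i|>3\delta$. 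In the first case the integrand involves $f$ only where $\eta_i(z)\ne1$, so $|z-\xi_i|\ge\delta$. The second case is bounded by $\sup|f|$ on $\{|z-\xi_i|\ge\delta/4\}$ times $\int_{|x-z|>\delta/4}|x-z|^{-N-2s}$, again using decay of $f$ as in Lemma \ref{lemGA.3}. In the third case only $z\in B_{2\delta}(\xi_i)$ contributes and $|x-z|\ge|x-\xi_i|/3$; this produces the factor $(1+|x-\xi_i|)^{-N-2s}$ times $\int_{B_{2\delta}}|f|\,dz$. For (iii), since $\partial_l\eta$ has the same regularity as $\eta$, the argument is identical to Lemma \ref{lem-J}.

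The main obstacle is the third-case integral $\int_{B_{2\delta}(\xi_i)}|f|$, because $f$ is large near $\xi_i$, and we must check that it scales correctly. For $\partial_\lambda U$ one gets $\lambda_i^{-1}\cdot\lambda_i^{-\frac{N-2s}{2}}\int_{B_{2\delta}}|z|^{2s-N}$ plus a contribution from the core $|z|\lesssim\lambda_i^{-1}$, which is of order $\lambda_i^{\frac{N-2s}{2}-1-N}$ and hence smaller. For $\partial_\xi U$, the tail $\lambda_i^{-\frac{N-2s}{2}}|z|^{2s-N-1}$ is integrable since $2s-1>-1$, giving $\lambda_i^{-\frac{N-2s}{2}}$. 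The core contributes at most $\lambda_i^{\frac{N-2s}{2}+1-N}=\lambda_i^{-\frac{N-2-2s}{2}}$, which matches \eqref{eqA.2}. Both bounds are therefore within the claimed rates.
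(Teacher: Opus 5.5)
Your proposal is correct and matches the paper's proof. The paper also just reruns the argument of Lemma \ref{lem-J}, using $|\partial_{\lambda_i}U_{\lambda_i,\xi_i}|\le C\lambda_i^{-1}U_{\lambda_i,\xi_i}$ and $|\partial_{\xi_{i,l}}U_{\lambda_i,\xi_i}|\le C\lambda_iU_{\lambda_i,\xi_i}$, and, for (iii), the fact that $\partial_{\xi_{i,l}}\eta_i$ vanishes on $B_\delta(\xi_i)\cup(\R^N\setminus B_{2\delta}(\xi_i))$.

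Your last paragraph contains slips that do not change the conclusion:
\begin{itemize}
\item The core term is $\lambda_i^{\frac{N-2s}{2}+1-N}=\lambda_i^{-\frac{N+2s-2}{2}}$, not $\lambda_i^{-\frac{N-2-2s}{2}}$.
\item The tail $\int_{\lambda_i^{-1}<|z-\xi_i|<2\delta}|z-\xi_i|^{2s-N-1}\,dz$ is bounded only for $s>\frac12$. For $s=\frac12$ it is $O(\log\lambda_i)$, and for $s<\frac12$ it is $O(\lambda_i^{1-2s})$.
\item In your second far case, the points $z$ near $\xi_i$ are not controlled by $\sup_{|z-\xi_i|\ge\delta/4}|f|$. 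They are controlled by this same core integral, or by Lemma \ref{lemGA.3}.
\end{itemize}
Every resulting contribution is still $O\bigl(\lambda_i^{-\frac{N-2-2s}{2}}\bigr)$. This is why the paper's cruder bound $C\lambda_iU_{\lambda_i,\xi_i}$ is enough and needs no case distinction in $s$.
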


\begin{proof}
    Note that  $$\left|\frac{\partial U_{\lambda_i, \xi_i}}{\partial\lambda_i}(x)\right|\leq C\lambda_i^{-1}U_{\lambda_i, \xi_i}\;\;\;
      \left|\frac{\partial U_{\lambda_i, \xi_i}}{\partial \xi_{i,l}}(x)\right|\leq C\lambda_i U_{\lambda_i, \xi_i}.$$
 Then      estimates \eqref{eqA.1} and \eqref{eqA.2} follow by the same argument as in Lemma \ref{lem-J}. 
 Since $\frac{\partial \eta_i }{\partial \xi_{i,l}}=0$ in $B_\delta(\xi_i)\cup(\R^N\setminus B_{2\delta}(\xi_i))$ and $|\nabla \eta_i|\leq C$, we can obtain estimate \eqref{eqA.3} analogously.
\end{proof}
	Finally, we give some well-known elementary inequalities that will be used in the subsequent sections.
		\begin{lemma}\label{lemL2.8}
			Let $q>0$. Then for $a>0$, $b\in\mathbb{R}$, we have 
			\begin{equation}\label{aa2.15}
				|(a+b)_+^q-a^q|\leq C\begin{cases}
					\min\{a^{q-1}|b|,|b|^q\},\;&0<q<1,\\
					a^{q-1}|b|+|b|^q,\;&q\geq1,
				\end{cases}
			\end{equation}
			\begin{equation}\label{aa2.16}
				|(a+b)_+^q-a^q-qa^{q-1}b|\leq C\begin{cases}
					\min\{a^{q-2}|b|^2,|b|^q\},\;&0<q<2,\\
					a^{q-2}|b|^2+|b|^q,\;&q\geq2.
				\end{cases}
			\end{equation}
		\end{lemma}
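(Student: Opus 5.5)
The inequalities of Lemma \ref{lemL2.8} are homogeneous, so the plan is to scale them down to a one-variable statement. Since $a>0$, write $b=ta$ with $t\in\mathbb{R}$. Then $(a+b)_+^q-a^q=a^q\big((1+t)_+^q-1\big)$ and $(a+b)_+^q-a^q-qa^{q-1}b=a^q\big((1+t)_+^q-1-qt\big)$. The right-hand sides also scale exactly by $a^q$: $a^{q-1}|b|=a^q|t|$, $|b|^q=a^q|t|^q$ and $a^{q-2}|b|^2=a^q t^2$. It therefore suffices to prove, with $f(t)=(1+t)_+^q-1$ and $g(t)=f(t)-qt$,
\[
|f(t)|\le C\min\{|t|,|t|^q\}\ (0<q<1),\qquad |f(t)|\le C(|t|+|t|^q)\ (q\ge1),
\]
\[
|g(t)|\le C\min\{t^2,|t|^q\}\ (0<q<2),\qquad |g(t)|\le C(t^2+|t|^q)\ (q\ge2).
\]

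I would split into two regimes according to the size of $t$.

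\emph{Small $t$, $|t|\le\frac12$.} Here $1+t\in[\frac12,\frac32]$, so $(1+t)^q$ is smooth. Taylor's theorem gives $|f(t)|\le C|t|$ and $|g(t)|\le Ct^2$, with $C$ depending only on $q$. Because $|t|\le 1$, we have $|t|\le |t|^q$ when $q<1$ and $t^2\le |t|^q$ when $q<2$. So in this regime the minimum is exactly $|t|$ (respectively $t^2$), and the sum form for $q\ge1$ (respectively $q\ge2$) is trivially dominated.

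\emph{Large $t$, $|t|>\frac12$.} Crude bounds suffice. We have $|f(t)|\le (1+|t|)^q+1\le C|t|^q$, because $1+|t|\le 3|t|$ and $1\le 2^q|t|^q$. For $q<1$ this gives $|f|\le C|t|^q$, and here $|t|^q\le 2|t|$ with $|t|^q\le C|t|$ for $|t|\ge\frac12$, so $|t|^q$ is comparable to the minimum. For $q\ge1$ it is bounded by the sum. Similarly $|g(t)|\le C|t|^q+q|t|$. For $q<2$ the bound $|t|\le 2^{q-1}|t|^q$ may fail when $q<1$ and $t$ is large. In that case use instead that $|t|\le 2t^2$ and that $\min\{t^2,|t|^q\}=|t|^q$ for large $|t|$ only when $q<2$; then handle $q\,|t|$ as follows.

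This linear term is the one step needing care. For $q\in(0,1)$ and $t>\frac12$, one has $0\le qt-f(t)$. Also $g(t)=(1+t)^q-1-qt\le 0$ by concavity, and $|g(t)|\le qt$ grows linearly, which is not $\le C|t|^q$. So for $0<q<1$ the bound $\min\{t^2,|t|^q\}$ in \eqref{aa2.16} genuinely fails for large $t$, and the estimate must be understood for $|b|\lesssim a$. In the paper's applications $a$ is a bubble and $b$ a small perturbation, and all uses are for $q=p_s-\epsilon-1$ or similar with $q>1$. I would therefore record \eqref{aa2.16} for $0<q<1$ only in the form $\min\{a^{q-2}b^2,\,a^{q-1}|b|+|b|^q\}$, or restrict it to $|b|\le 2a$. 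For $q\ge1$ the large-$|t|$ bound $|g|\le C(|t|^q+|t|)\le C|t|^q$ is fine, since $|t|\le 2^{q-1}|t|^q$. For $1\le q<2$ this is the minimum up to constants, and for $q\ge2$ it is bounded by the sum.

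Finally, multiplying back by $a^q$ gives \eqref{aa2.15} and \eqref{aa2.16}, with $C=C(q)$.
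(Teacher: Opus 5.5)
Your proof is correct. The paper states Lemma~\ref{lemL2.8} as well known and gives no argument, so there is nothing to compare it with. Your reduction $b=ta$ to $f(t)=(1+t)_+^q-1$ and $g(t)=f(t)-qt$, with Taylor expansion for $|t|\le\tfrac12$ and crude bounds for $|t|>\tfrac12$, is the standard route. It establishes \eqref{aa2.15} for every $q>0$ and \eqref{aa2.16} for every $q\ge1$.

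You are also right that \eqref{aa2.16} is false as written when $0<q<1$. For $q=\tfrac12$, $a=1$, $b=n\to\infty$, the left-hand side is $|\sqrt{1+n}-1-\tfrac n2|\sim\tfrac n2$, while $\min\{n^2,n^{1/2}\}=n^{1/2}$. Taking $b\to-\infty$ gives $q|b|-1$, which fails in the same way. Either of your repairs works: restrict to $|b|\le Ca$, or replace $|b|^q$ by $a^{q-1}|b|+|b|^q$.

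One correction to your remark on the applications. The paper invokes \eqref{aa2.16} only with $q=p_s-\epsilon>1$, namely in Lemma~\ref{lem2.5} and for $I_3$ in Lemma~\ref{lem3.2}. The exponent $q=p_s-1-\epsilon$ does occur and lies in $(0,1)$ for $N\ge 6s$, but it enters only through \eqref{aa2.15} (e.g.\ for $\beta_{11}$ and $L_1$), which holds for all $q>0$. So the defective case is harmless, but not because $b$ is small compared with $a$. The $|b|^q$ branch is used precisely where $|\phi|$ is not dominated by ${\bm W}_{\bm\lambda,\bm\xi}$, and that is legitimate only because $q>1$ there.
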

		
\section{The Lyapunov–Schmidt reduction}
The aim of this section is to carry out  the Lyapunov-Schmidt
reduction argument to reduce the construction of peak solutions to a finite dimensional variational problem.
To begin with, we introduce the following weighted norms
\begin{equation}\label{eq-norm*}
\|\phi\|_*=\sup\limits_{x\in \mathbb{R}^N}\left(\sum_{j=1}^k\frac{\lambda_j^{\frac{N-2s}{2}}}{(1+\lambda_j|x-\xi_j|)^{\frac{N-2s}{2}+\sigma}}\right)^{-1}|\phi(x)|,
\end{equation}
and
$$
\|h\|_{**}=\sup\limits_{x\in \mathbb{R}^N}\left(\sum_{j=1}^k\frac{\lambda_j^{\frac{N+2s}{2}}}{(1+\lambda_j|x-\xi_j|)^{\frac{N+2s}{2}+\sigma}}\right)^{-1}|h(x)|,
$$
where $\lambda_j>0$, $\xi_j\in \mathbb{R}^N$, $j=1,2, \cdots, k$, $
   \sigma\in \Big(0,\min\big\{\frac{s}{2},\frac{N-4s}{2}\big\}\Big).
$

We denote by $ C_{*}(\R^N)$ and $ C_{**}(\R^N)$ the spaces of continuous functions on $ \R^N$ with finite norms $\|\cdot\|_*$ and $\|\cdot\|_{**}$, respectively. 

\begin{definition}[\cite{CLL1}]
    Let ${\bm \lambda}=(\lambda_1, \cdots,\lambda_k)\in (\mathbb{R_+})^k$. We say that $\bm \lambda$ goes to infinity equivalently if $\lambda_i, \ i=1,\cdots,k$ goes to infinity and there exists a positive constant $C$ satisfying
$$
\frac{1}{C}\le \frac{\lambda_i}{\lambda_j}\le C,\ \  i,j=1,\cdots,k.
$$
\end{definition}
Now we consider the linear
problem
\begin{equation}\label{eq:2}
    \left\{
    \begin{array}{lcl}
         (-\Delta)^{s}\phi+V(x)\phi-p_s{\bm W}_{\bm \lambda, \bm \xi}^{p_s-1}\phi=h+\sum\limits_{i=1}^k\sum\limits_{l=0}^N c_i^l W_{\lambda_i, \xi_i}^{p_s-1}Z_{i,l}\ \ \hbox{in}\ \mathbb{R}^N,\\
         \displaystyle \int_{\mathbb{R}^N} W_{\lambda_i, \xi_i}^{p_s-1}Z_{i,l}\phi=0,\ \ i=1,2,\cdots,k, \ \  l=0,1,\cdots, N,
    \end{array}
    \right.
\end{equation}
where  $c_i^l$ are constants.
\begin{lemma}\label{l1}
    Let $\bm \lambda_n$, $\bm \xi_n$ be sequences such that $\bm \lambda_n$ goes to infinity equivalently,  $\bm \xi_n\to \bm \xi^*$ as $n\to \infty$ and $\phi_n$ solve equation \eqref{eq:2} with $h=h_n$, $\bm \xi=\bm \xi_n$ and $\bm \lambda=\bm \lambda_n$. If $\|h_n\|_{**}\to 0$ as $n\to \infty$, then $\|\phi_n\|_{*}\to 0$.
\end{lemma}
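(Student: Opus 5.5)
We argue by contradiction, following the standard scheme of Wei--Yan \cite{WY1}. Suppose that, along a subsequence, $\|h_n\|_{**}\to0$ but $\|\phi_n\|_*\ge c>0$. Since the problem is linear, we normalize so that $\|\phi_n\|_*=1$; then $\|h_n\|_{**}\to0$ still holds. The plan has four steps: a pointwise integral bound, control of the coefficients $c_i^l$, a local blow-up limit, and a contradiction with the maximum in the $\|\cdot\|_*$ norm.

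\textbf{Step 1: integral representation.} Since $V\ge0$, we can compare with the Riesz potential of $(-\Delta)^s$ (maximum principle / positivity of the Green function of $(-\Delta)^s+V$). This gives
\[
|\phi_n(x)|\le C\int_{\R^N}\frac{1}{|x-z|^{N-2s}}\Big(p_s{\bm W}^{p_s-1}|\phi_n|+|h_n|+\sum_{i,l}|c_i^l|\,W_{\lambda_i,\xi_i}^{p_s-1}|Z_{i,l}|\Big)(z)\,dz .
\]
We then apply the standard convolution estimates (of the type in Lemma \ref{lemGA.3}, collected in the appendix):
\[
\int\frac{1}{|x-z|^{N-2s}}\sum_j\frac{\lambda_j^{\frac{N+2s}{2}}}{(1+\lambda_j|z-\xi_j|)^{\frac{N+2s}{2}+\sigma}}\,dz\le C\sum_j\frac{\lambda_j^{\frac{N-2s}{2}}}{(1+\lambda_j|x-\xi_j|)^{\frac{N-2s}{2}+\sigma}},
\]
and, for the potential term,
\[
\int\frac{{\bm W}^{p_s-1}\sum_j\lambda_j^{\frac{N-2s}{2}}(1+\lambda_j|z-\xi_j|)^{-\frac{N-2s}{2}-\sigma}}{|x-z|^{N-2s}}\,dz\le C\sum_j\frac{\lambda_j^{\frac{N-2s}{2}}}{(1+\lambda_j|x-\xi_j|)^{\frac{N-2s}{2}+\tau}}
\]
with some $\tau>\sigma$. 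The restriction $\sigma<\min\{s/2,(N-4s)/2\}$ is what makes both estimates valid; the extra decay from the cutoff $\eta_i$ helps with the tails. Here the peaks are separated ($|\xi_i-\xi_j|\ge d/2$) and the $\lambda_j$ are comparable, so interactions between different peaks are lower order.

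\textbf{Step 2: the coefficients $c_i^l$.} We multiply the equation by $Z_{m,q}$ and integrate. Using Lemma \ref{lemA}, the left side equals
\[
\int \phi_n\Big(p_sW^{p_s-1}_{\lambda_m,\xi_m}Z_{m,q}-p_s{\bm W}^{p_s-1}Z_{m,q}+J_{m,q}+VZ_{m,q}\Big),
\]
which is $o(1)\,\lambda_m^{\pm1}\|\phi_n\|_*$ with the appropriate scaling. The right side contains a nearly diagonal Gram matrix: $\int W^{p_s-1}Z_{i,l}Z_{m,q}\approx \bar c\,\lambda^{\pm2}\delta_{im}\delta_{lq}$. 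For $l\ge1$ we must also use the term $VZ_{m,q}$, which is $O(\lambda^{-2s}\cdot\lambda)$ in size. Solving this system gives $|c_i^0|\lambda_i^{-1}+|c_i^l|\lambda_i\to0$ suitably, so the $c$-terms contribute $o(1)$ in the $\|\cdot\|_{**}$ weight. This bookkeeping of the $\lambda$-powers is careful but routine.

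\textbf{Step 3: blow-up limit.} Combining Steps 1 and 2,
\[
\frac{|\phi_n(x)|}{\sum_j\lambda_j^{\frac{N-2s}{2}}(1+\lambda_j|x-\xi_j|)^{-\frac{N-2s}{2}-\sigma}}\le o(1)+C\frac{\sum_j(1+\lambda_j|x-\xi_j|)^{-\frac{N-2s}{2}-\tau}}{\sum_j(1+\lambda_j|x-\xi_j|)^{-\frac{N-2s}{2}-\sigma}} .
\]
Since $\|\phi_n\|_*=1$ and $\tau>\sigma$, the supremum must be essentially attained inside some $B_{R/\lambda_j}(\xi_j)$, where $\lambda_j^{-\frac{N-2s}{2}}|\phi_n|\ge c>0$. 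We rescale $\tilde\phi_n(y)=\lambda_j^{-\frac{N-2s}{2}}\phi_n(\xi_j+y/\lambda_j)$. These functions are bounded by $C(1+|y|)^{-\frac{N-2s}{2}-\sigma}$ and satisfy
\[
(-\Delta)^s\tilde\phi_n+\lambda_j^{-2s}V\tilde\phi_n-p_s\tilde W^{p_s-1}\tilde\phi_n=o(1)
\]
in the weighted sense. Fractional local H\"older regularity, applied via the Caffarelli--Silvestre extension, gives $C^{\alpha}_{loc}$ compactness, so $\tilde\phi_n\to\tilde\phi$ solving \eqref{eq2} with $\lambda=1$, $\zeta=0$, and $\tilde\phi\in D^s$ because of the decay. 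The orthogonality conditions pass to the limit, so $\tilde\phi\perp Z^0,Z^h$. By the non-degeneracy result of \cite{Davila-2013}, $\tilde\phi\equiv0$, which contradicts $|\tilde\phi|\ge c$ on $B_R$.

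\textbf{Main obstacle.} I expect the main difficulty to be Step 1 with the nonlocal operator: justifying the Green-function comparison for $(-\Delta)^s+V$ with $V$ merely bounded and nonnegative, and proving the convolution estimate with the gain $\tau>\sigma$ in the presence of the cutoffs. First I would try writing $\phi=(-\Delta)^{-s}(\text{rhs}-V\phi)$ and splitting $\phi=\phi^+-\phi^-$ to drop the $V\phi$ term. If that fails, I would treat $V\phi$ as an $O(1)$ bounded perturbation and absorb it using its small contribution far from the peaks.
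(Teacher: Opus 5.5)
Your proposal is correct and follows essentially the same route as the paper. You normalize $\|\phi_n\|_*=1$, test against $Z_{j,m}$ and invert the nearly diagonal Gram matrix to get $c_i^0=o(\lambda_i)$ and $c_i^l=o(\lambda_i^{-1})$, and use the Riesz-potential comparison with extra decay to force the maximum of the weighted quotient into some $B_{R/\lambda_{j_0}}(\xi_{j_0})$ (the paper's exponent $\frac N2$ is your $\tau=s$); you then blow up and invoke nondegeneracy together with orthogonality. One small correction: the bound $|\tilde\phi_n(y)|\le C(1+|y|)^{-\frac{N-2s}{2}-\sigma}$ holds only for $|y|\le c\lambda_j$, since the other peaks reappear at $y\approx\lambda_j(\xi_i-\xi_j)$, but that restricted bound suffices for the limit argument, as does the global bound $|\tilde\phi_n|\le C$ combined with nondegeneracy of bounded solutions, which is what the paper uses.
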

\begin{proof}
    We argue by contradiction. Suppose that there exist sequences $\bm \lambda_n\to \infty$, $\bm \xi_n\to \bm \xi^*$ and $\phi_n$ solving $\eqref{eq:2}$ for $h=h_n$, $\bm \lambda=\bm \lambda_n$ and $\bm \xi= \bm \xi_n$ with $\|h_n\|_{**}\to 0$  and $\|\phi_n\|_{*}=1$. For simplicity, we omit the subscript $n$. \\
\textbf{Step 1.} Prove $c_i^0=o(\lambda_1)$, $c_i^l=o(\lambda^{-1}_1)$, $i=1,\cdots,k$, $l=1,\cdots, N$.\\
Multiplying \eqref{eq:2} by $Z_{j,m}$, $j=1,\cdots,k$, $m=0,1,\cdots,N$, we have
\begin{equation}\label{eq:3}
    \sum_{i=1}^k\sum_{l=0}^Nc_i^l\int_{\mathbb{R}^N} W_{\lambda_i,\xi_i}^{p_s-1} Z_{i,l}Z_{j,m}=\left<(-\Delta)^s\phi+V(x)\phi-p_s{\bm W}_{\bm \lambda, \bm \xi}^{p_s-1}\phi, Z_{j,m}\right>-\left<h, Z_{j,m}\right>,
\end{equation}
where $\langle u, v\rangle=\int_{\mathbb{R}^N} uv$.
Then it follows from direct computation that
\begin{equation}
    \begin{aligned}
         \displaystyle \bigg|\int_{\mathbb{R}^N} Z_{j,0}h\bigg|&=\displaystyle \bigg|\int_{B_{2\delta}(\xi_j)} h\eta_j\frac{\partial U_{\lambda_j,\xi_j}}{\partial \lambda_j}\bigg|
         \vspace{3mm}\\
         &\le  \displaystyle C\|h\|_{**}\sum\limits_{i=1}^k \int_{B_{2\delta}(\xi_j)} \frac{\lambda_i^{\frac{N+2s}{2}}}{(1+\lambda_i|x-\xi_i|)^{\frac{N+2s}{2}+\sigma}}\frac{\lambda_j^{\frac{N-2s}{2}-1}}{(1+\lambda_j|x-\xi_j|)^{N-2s}}
         \vspace{3mm}\\
         &\le \displaystyle C\|h\|_{**} \int_{\mathbb{R}^N} \frac{\lambda_j^{N-1}}{(1+\lambda_j|x-\xi_j|)^{\frac{3N}{2}-s+\sigma}}\le C\lambda_j^{-1}\|h\|_{**}.
    \end{aligned}
\end{equation}
 Since
$$
\frac{1}{\lambda_j}\le \frac{C}{1+\lambda_j|x-\xi_j|}\ \ \hbox{for}\;\;x\in B_{2\delta}(\xi_j),
$$
 we obtain 
\begin{equation}
    \begin{aligned}
         \displaystyle 
         \bigg|\int_{\mathbb{R}^N} V(x)Z_{j,0}\phi\bigg|
         &\le \displaystyle C\|\phi\|_{*}\sum\limits_{i=1}^k \int_{B_{2\delta}(\xi_j)} \frac{\lambda_i^{\frac{N-2s}{2}}}{(1+\lambda_i|x-\xi_i|)^{\frac{N-2s}{2}+\sigma}}\frac{\lambda_j^{\frac{N-2s}{2}-1}}{(1+\lambda_j|x-\xi_j|)^{N-2s}} 
         \vspace{3mm}\\
         &\le  \displaystyle C\|\phi\|_{*} \int_{B_{2\delta}(\xi_j)} \frac{\lambda_j^{N-1-2s}}{(1+\lambda_j|x-\xi_j|)^{\frac{3N}{2}-3s+\sigma}}
         \vspace{3mm}\\
         &\le  \displaystyle C\|\phi\|_{*}\lambda_j^{-1-s} \int_{B_{2\delta}(\xi_j)} \frac{\lambda_j^{N}}{(1+\lambda_j|x-\xi_j|)^{\frac{3N}{2}-2s+\sigma}}
         \vspace{3mm}\\
         &\le  \displaystyle C\|\phi\|_{*}\lambda_j^{-1-s}.
    \end{aligned}
\end{equation}
 Moreover, by Lemma \ref{lemA}, we have 
\begin{equation}
\begin{aligned}
     &\bigg|\left<(-\Delta)^s\phi-p_s{\bm W}_{\bm \lambda, \bm \xi}^{p_s-1}\phi, Z_{j,0}\right>\bigg|=\bigg|\left<(-\Delta)^sZ_{j,0}-p_s{\bm W}_{\bm \lambda, \bm \xi}^{p_s-1}Z_{j,0}, \phi\right>\bigg|
     \vspace{3mm}\\
     =&\displaystyle \bigg|\left<p_sU^{p_s-1}_{\lambda_j,\xi_j}Z_{j,0}+J_j^0-p_s{\bm W}_{\bm \lambda, \bm \xi}^{p_s-1}Z_{j,0}, \phi\right>\bigg|
     \vspace{3mm}\\
     \le &\displaystyle C\lambda_j^{-1} \int_{B_{2\delta}(\xi_j)\setminus B_{\delta}(\xi_j)} U^{p_s}_{\lambda_j,\xi_j}|\phi|+C \int_{\R^N}\frac{\lambda_j^{-\frac{N+2-2s}{2}}}{(1+|x-\xi_j|)^{N+2s}}|\phi|
     \vspace{3mm}\\
     \leq& C\lambda_j^{-1}\|\phi\|_*\sum_{i=1}^k\int_{B_{2\delta}(\xi_j)\setminus B_{\delta}(\xi_j)} \frac{\lambda_i^{\frac{N-2s}{2}}}{(1+\lambda_i|x-\xi_i|)^{\frac{N-2s}{2}+\sigma}}\frac{\lambda_j^{\frac{N+2s}{2}}}{(1+\lambda_j|x-\xi_j|)^{N+2s}}\\
     &+C\|\phi\|_*\sum_{i=1}^k\int_{\R^N}\frac{\lambda_j^{-1+2s}}{(1+\lambda_j|x-\xi_j|)^{\frac{N+2s}{2}}}\frac{\lambda_i^{\frac{N-2s}{2}}}{(1+\lambda_i|x-\xi_i|)^{\frac{N-2s}{2}+\sigma}}\\
     \le&\displaystyle C\lambda_j^{-\frac{N-2s}{2}-1}\|\phi\|_*.
\end{aligned}
\end{equation}
A direct calculation yields  that
$$
\begin{aligned}
     \displaystyle \bigg|\int_{\mathbb{R}^N} Z_{j,m}h\bigg|&\le C\lambda_j\|h\|_{**},
     \vspace{3mm}\\
     \displaystyle \bigg|\int_{\mathbb{R}^N} V(x)Z_{j,m}\phi\bigg|&\le C \lambda_j^{1-s}\|\phi\|_{*},
     \vspace{3mm}\\
     \bigg|\left<(-\Delta)^s\phi-p_s{\bm W}_{\bm \lambda, \bm \xi}^{p_s-1}\phi, Z_{j,m}\right>\bigg|&\le C\lambda_j^{-\frac{N-2s}{2}+1}\|\phi\|_{*},   \ \  m=1,\cdots,N.
\end{aligned}
$$
Thus we infer 
$$
\left<(-\Delta)^s\phi+V(x)\phi-p_s{\bm W}_{\bm \lambda, \bm \xi}^{p_s-1}\phi-h, Z_{j,m}\right>=
\left\{
\begin{array}{lcl}
     C\lambda_j^{-1-s}\|\phi\|_*+C\lambda_j^{-1}\|h\|_{**},  \ m=0,
     \vspace{2mm}\\
     C\lambda_j^{1-s}\|\phi\|_*+C\lambda_j\|h\|_{**}, \  m=1,\cdots, N.
\end{array}
\right.
$$
On the other hand, we have 
$$
\int_{\mathbb{R}^N}W^{p_s-1}_{\lambda_i,\xi_i}Z_{i,l}Z_{j,m}=
\left\{
\begin{array}{ll}
     c_0\lambda_j^{-2}+O(\lambda_j^{-N-2}), &i=j, \ l=m=0,
     \vspace{2mm}\\
     O(\lambda_j^{-N-1}),  &i=j, \ l=0, m=1,\cdots, N,
     \vspace{2mm}\\
     c\lambda_j^{2}+O(\lambda_j^{2-N}),  &i=j, \ l=m=1, \ \cdots, N,
     \vspace{2mm}\\
     O(\lambda_j^{-N}),  &i=j, \ l\neq m, l, m=1,\cdots, N,
     \vspace{2mm}\\
     0, &i\neq j,
\end{array}
\right.
$$
where $c>0$ and $c_0>0$. Consequently, we conclude that
$$
c_i^0=o(\lambda_i), \ \  c_i^l=o(\lambda_i^{-1}), \ \ i=1,2\cdots, k, \ \ l=1,\cdots,N.
$$
\\
\textbf{Step 2.} There exist  $R>0$ and $a>0$, both independent of $n$, such that for some $j_0\in \{1,\cdots k\}$, 
\begin{equation}\label{eq:3-1}
    \Big\|\lambda^{-\frac{N-2}{2}}_{j_0}\phi(x)\Big\|_{L^\infty(B_{R/{\lambda_{j_0}}}(\xi_{j_0}))}\ge a>0.
\end{equation}
With the aid of the Green's representation formula, we have
\begin{equation}\label{eq:4}
    \begin{aligned}
         |\phi(x)|&\le \displaystyle C\int_{\mathbb{R}^N}\frac{1}{|x-y|^{N-2s}}{\bm W_{\bm \lambda, \bm \xi}^{p_s-1}}(y)|\phi(y)|+ C\int_{\mathbb{R}^N}\frac{1}{|x-y|^{N-2s}}|h(y)|
         \vspace{3mm}\\
         & \displaystyle +\sum_{i=1}^k\sum_{l=0}^N|c_i^l|\int_{\mathbb{R}^N}\frac{1}{|x-y|^{N-2s}}W_{\lambda_i, \xi_i}^{p_s-1}(y)|Z_{i,l}(y)|\vspace{3mm} \\
         &:=\displaystyle A_1+A_2+\sum_{i=1}^k\sum_{l=0}^N|c_i^l|A^l_{3,i}.
    \end{aligned}
\end{equation}
Using Lemma \ref{lemLL2.3}, we find
\begin{equation}
    \begin{aligned}
         A_1&\le \displaystyle C\sum_{i=1}^k \int_{B_{2\delta}(\xi_i)}\frac{1}{|x-y|^{N-2s}}{U_{\lambda_i, \xi_i}^{p_s-1}}(y)|\phi(y)|
         \vspace{3mm}\\
         &\le \displaystyle C\|\phi\|_*\sum_{i=1}^k \int_{B_{2\delta}(\xi_i)}\frac{1}{|x-y|^{N-2s}}\frac{\lambda_i^{\frac{N+2s}{2}}}{(1+\lambda_i|y-\xi_i|)^{\frac{N+6s+2\sigma}{2}}}
         \vspace{3mm}\\
         &\le \displaystyle C\|\phi\|_*\sum_{i=1}^k \int_{\mathbb{R}^N}\frac{1}{|\lambda_i(x-\xi_i)-y|^{N-2s}}\frac{\lambda_i^{\frac{N-2s}{2}}}{(1+|y|)^{\frac{N+6s+2\sigma}{2}}}
         \vspace{3mm}\\
         &\le \displaystyle
          C \sum_{i=1}^k \frac{\lambda_i^{\frac{N-2s}{2}}}{(1+\lambda_i|x-\xi_i|)^\frac{N}{2}}. 
    \end{aligned}
\end{equation}
In a similar way, we get
\begin{equation}
    \begin{aligned}
         A_2&\le \displaystyle C\|h\|_{**}\sum_{i=1}^k \int_{\mathbb{R}^N}\frac{1}{|x-y|^{N-2s}}\frac{\lambda_i^{\frac{N+2s}{2}}}{(1+\lambda_i|y-\xi_i|)^{\frac{N+2s}{2}+\sigma}}
         \vspace{3mm}\\
         &\le \displaystyle C\|h\|_{**}\sum_{i=1}^k \frac{\lambda_i^{\frac{N-2s}{2}}}{(1+\lambda_i|x-\xi_i|)^{\frac{N-2s}{2}+\sigma}}.
    \end{aligned}
\end{equation}
Moreover, for $i=1,\cdots, k$,
\begin{equation}
    \begin{aligned}
         A_{3,i}^0&\le \displaystyle C\lambda^{-1}_i \int_{\mathbb{R}^N}\frac{1}{|x-y|^{N-2s}}{U_{\lambda_i, \xi_i}^{p_s}}(y)
         \vspace{3mm}\\
         &\le \displaystyle C\lambda^{\frac{N-2-2s}{2}}_i \int_{\mathbb{R}^N}\frac{1}{|\lambda_i(x-\xi_i)-y|^{N-2s}}\frac{1}{(1+|y|)^{N+2s}}
         \vspace{3mm}\\
         &\le \displaystyle
         C\frac{\lambda^{\frac{N-2-2s}{2}}_i}{(1+\lambda_i|x-\xi_i|)^{N-2s}}
    \end{aligned}
\end{equation}
and 
\begin{equation}
    \begin{aligned}
         A_{3,i}^l&\le \displaystyle C\int_{\mathbb{R}^N}\frac{1}{|x-y|^{N-2s}}{W_{\lambda_i, \xi_i}^{p_s-1}}(y)\left(\eta_i(y)\frac{\partial U_{\lambda_i, \xi_i}}{\partial \xi_{i,l}}(y)+U_{\lambda_i, \xi_i}(y)\frac{\partial \eta_i}{\partial \xi_{i,l}}(y)\right)
         \vspace{3mm}\\
         &\le \displaystyle C\int_{\mathbb{R}^N}\frac{1}{|x-y|^{N-2s}}{U_{\lambda_i, \xi_i}^{p_s-1}(y)}\bigg|\frac{\partial U_{\lambda_i, \xi_i}}{\partial \xi_{i,l}}(y)\bigg|+C\int_{\mathbb{R}^N}\frac{1}{|x-y|^{N-2s}}{U_{\lambda_i, \xi_i}^{p_s}}(y)\bigg|\frac{\partial \eta_i}{\partial \xi_{i,l}}(y)\bigg|
         \vspace{3mm}\\
         &\le \displaystyle
         C\frac{\lambda^{\frac{N+2-2s}{2}}_i}{(1+\lambda_i|x-\xi_i|)^{N-2s}}.
    \end{aligned}
\end{equation}
By virtue of \eqref{eq:4}, we have
\begin{equation}\label{eq:5}
    \begin{aligned}
         &\displaystyle
         \left(\sum_{j=1}^k\frac{\lambda_j^{\frac{N-2s}{2}}}{(1+\lambda_j|x-\xi_j|)^{\frac{N-2s}{2}+\sigma}}\right)^{-1}|\phi(x)|
         \vspace{3mm}\\
         \le& \displaystyle C\left(\sum_{j=1}^k\frac{\lambda_j^{\frac{N-2s}{2}}}{(1+\lambda_j|x-\xi_j|)^{\frac{N-2s}{2}+\sigma}}\right)^{-1}\sum_{i=1}^k\frac{\lambda_i^{\frac{N-2s}{2}}}{(1+\lambda_i|x-\xi_i|)^{\frac{N}{2}} }+\|h\|_{**}+o(1).
    \end{aligned}
\end{equation}
Since $\|\phi\|_*=1$,  there exists some $x_0\in \mathbb{R}^N$ satisfying 
$$
\left(\sum_{j=1}^k\frac{\lambda_j^{\frac{N-2s}{2}}}{(1+\lambda_j|x_0-\xi_j|)^{\frac{N-2s}{2}+\sigma}}\right)^{-1}|\phi(x_0)|=1.
$$
It follows from \eqref{eq:5} and the definition of $\|\phi\|_*$ that
\begin{equation}\label{eq:6}
    1\le C\left(\sum_{j=1}^k\frac{\lambda_j^{\frac{N-2s}{2}}}{(1+\lambda_j|x_0-\xi_j|)^{\frac{N-2s}{2}+\sigma}}\right)^{-1}\sum_{i=1}^k\frac{\lambda_i^{\frac{N-2s}{2}}}{(1+\lambda_i|x_0-\xi_i|)^{ \frac{N}{2}}}+\|h\|_{**}+o(1).
\end{equation}
Given that $\|h\|_{**}\to 0$, we infer that there exists $R>0$, such that $|\lambda_{j_0}(x_0-\xi_{j_0})|\le R$ for some $j_0$. 
Hence we derive 
\begin{equation}\label{eq:7}
    \begin{array}{ll}
        \displaystyle 1
        &\le \sup\limits_{x\in B_{R/\lambda_{j_0}}(\xi_{j_0})}\left(\sum\limits_{j=1}^k\frac{\lambda_j^{\frac{N-2s}{2}}}{(1+\lambda_j|x-\xi_j|)^{\frac{N-2s}{2}+\sigma}}\right)^{-1} |\phi(x)|  
        \vspace{3mm}\\
        &\le  (1+R)^{\frac{N-2s}{2}+\sigma}\Big\|\lambda_{j_0}^{-\frac{N-2s}{2}}\phi(x)\Big\|_{L^\infty\big(B_{R/\lambda_{j_0}}(\xi_{j_0})\big)}.
        \end{array}
\end{equation}
Consequently, \eqref{eq:3-1} holds.\\
\textbf{Step 3.} We thereby reach a contradiction.

Let $\hat{\phi}(x)=\lambda_{j_0}^{-\frac{N-2s}{2}}\phi(\xi_{j_0}+\lambda_{j_0}^{-1}x)$. Then $\hat{\phi}$ satisfies
\begin{equation}\label{eq:8}
    \begin{aligned}
         &(-\Delta)^{s} \hat{\phi}(x)+\lambda_{j_0}^{-2s} V(\xi_{j_0}+\lambda_{j_0}^{-1}x) \hat{\phi}(x)-p_s\lambda_{j_0}^{-2s}{\bm W_{\bm \lambda, \bm \xi}^{p_s-1}}(\xi_{j_0}+\lambda_{j_0}^{-1}x)\hat{\phi}(x)
         \vspace{3mm}\\
         =&\lambda_{j_0}^{-\frac{N+2s}{2}}h(\xi_{j_0}+\lambda_{j_0}^{-1}x)+\sum\limits_{i=1}^k\sum\limits_{l=0}^N c_i^l\lambda_{j_0}^{-\frac{N+2s}{2}}W^{p_s-1}_{\lambda_i,\xi_i}(\xi_{j_0}+\lambda_{j_0}^{-1}x)Z_{i,l}(\xi_{j_0}+\lambda_{j_0}^{-1}x).
    \end{aligned}
\end{equation}
Since  $\|\phi\|_*=1$ and $\|h\|_{**}=o(1)$, we see that 
\[
|\hat{\phi}(x)|=\Big|\lambda_{j_0}^{-\frac{N-2s}{2}}\phi(\xi_{j_0}+\lambda_{j_0}^{-1}x)\Big|\leq C
\]
and
\[
\Big|\lambda_{j_0}^{-\frac{N+2s}{2}}h(\xi_{j_0}+\lambda_{j_0}^{-1}x)\Big|\leq C\|h\|_{**}.
\]
It follows from  $c_i^0=o(\lambda_i)$, $c_i^l=o(\lambda_i^{-1}), l=1,\cdots,N$ that  the right hand side of \eqref{eq:8} tends to $0$ uniformly  on the compact sets of $\mathbb{R}^N$. By the elliptic regularity theory,  there exists a subsequence such that $\hat{\phi}(x)$ converges uniformly on any compact set to a solution $\phi_*$ of
$$
(-\Delta)^su-p_sU_{1,0}^{p_s-1}u=0\ \ \ \hbox{in}\ \mathbb{R}^N.
$$
By virtue of  \eqref{eq:3-1}, we infer that
\begin{equation}\label{a3.17}
\|\phi_{*}\|_{L^\infty(B_R(0))}\ge a>0.
\end{equation}
Thus the nondegeneracy of $U_{1,0}$ implies that
$$
\phi_*=\sum_{l=0}^N\alpha_l Z_l,
$$
where $Z_0=\frac{\partial U_{\lambda,0}}{\partial\lambda}\Big|_{\lambda=1},  Z_l=\frac{\partial U_{1,0}}{\partial x_l}$, $l=1,\cdots,N$.
Using the fact that 
$$
\int_{\mathbb{R}^N} W^{p_s-1}_{\lambda_{j_0}, \xi_{j_0}}Z_{j_0,l}\phi=0, \ \ l=0,1,\cdots,N,
$$
we deduce that
$$
\int_{\mathbb{R}^N} U_{1,0}^{p_s-1}Z_l\phi_*=0, \ \  l=0,1,\cdots,N.
$$
Hence, $\phi_*=0$, which contradicts \eqref{a3.17}. This completes the proof of Lemma \ref{l1}.
\end{proof}

Using the same argument as in the proof of Proposition 4.1 in \cite{PFM-2003}, we can obtain the following proposition.
\begin{proposition}\label{prop2.3}
    There exist $\lambda_0>0$ and $C>0$ independent of $\bm{\lambda}$, $\bm{\xi}$ such that for $\lambda_i\geq \lambda_0$, $i=1,2,\cdots,k$ and $h\in C_{**}(\R^N)$, problem \eqref{eq:2} admits a unique solution $\phi:=L(h)$. Moreover,
    $$
    \|L(h)\|_*\leq C\|h\|_{**},\ \  |c_i^0|\leq C\lambda_i \|h\|_{**},\ \  |c_i^l|\leq C\lambda_i^{-1} \|h\|_{**},\ \ i=1,2,\cdots,k,\ \  l=1,2,\cdots,N.
    $$
\end{proposition}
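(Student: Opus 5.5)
The plan is to derive Proposition \ref{prop2.3} from the a priori estimate of Lemma \ref{l1} together with a Fredholm alternative argument, following Proposition 4.1 of \cite{PFM-2003}. First I would upgrade Lemma \ref{l1} to a uniform quantitative bound by contradiction. Suppose there are $\bm\lambda_n\to\infty$ (equivalently), $\bm\xi_n$ near $\bm\xi^*$, and solutions $\phi_n$ of \eqref{eq:2} with $\|\phi_n\|_*\ge n\|h_n\|_{**}$. Normalizing so that $\|\phi_n\|_*=1$ gives $\|h_n\|_{**}\to0$. Lemma \ref{l1} then forces $\|\phi_n\|_*\to0$, which is a contradiction. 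This yields $\|\phi\|_*\le C\|h\|_{**}$ for all $\lambda_i\ge\lambda_0$, with $C$ independent of $\bm\lambda,\bm\xi$. The bounds on the multipliers then come from Step 1 of the proof of Lemma \ref{l1}. Testing \eqref{eq:2} against $Z_{j,m}$ and using the almost diagonal Gram matrix $\int W^{p_s-1}_{\lambda_i,\xi_i}Z_{i,l}Z_{j,m}$ gives
\[
|c_j^0|\lambda_j^{-2}\le C\big(\lambda_j^{-1-s}\|\phi\|_*+\lambda_j^{-1}\|h\|_{**}\big),\qquad |c_j^l|\lambda_j^{2}\le C\big(\lambda_j^{1-s}\|\phi\|_*+\lambda_j\|h\|_{**}\big).
\]
Inserting $\|\phi\|_*\le C\|h\|_{**}$ gives $|c_i^0|\le C\lambda_i\|h\|_{**}$ and $|c_i^l|\le C\lambda_i^{-1}\|h\|_{**}$. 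Uniqueness follows at once: with $h=0$ the bound gives $\phi=0$.

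For existence, I would work in the Hilbert space
\[
E=\Big\{\phi\in H^s_V(\R^N):\int W^{p_s-1}_{\lambda_i,\xi_i}Z_{i,l}\phi=0\ \text{for all } i,l\Big\}
\]
with the inner product $\langle\phi,\psi\rangle=\int(-\Delta)^{s/2}\phi(-\Delta)^{s/2}\psi+\int V\phi\psi$. In weak form, \eqref{eq:2} reads: find $\phi\in E$ such that for all $\psi\in E$,
\[
\langle\phi,\psi\rangle-p_s\int\bm W^{p_s-1}\phi\psi=\int h\psi .
\]
The multipliers $c_i^l$ are recovered afterwards by projection. By Riesz representation this becomes $\phi-K\phi=\tilde h$ on $E$. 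The operator $K$ is compact because $\bm W^{p_s-1}$ is bounded and compactly supported (it is supported in $\cup B_{2\delta}(\xi_i)$), and $D^s\hookrightarrow L^2_{loc}$ compactly. Also $\tilde h$ is well defined, since $h\in C_{**}$ lies in $L^{2N/(N+2s)}$ because $\frac{N+2s}{2}+\sigma$ decay exceeds the needed rate. The Fredholm alternative reduces existence to uniqueness of the homogeneous problem, which was settled above.

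I expect the main obstacle to be the consistency between the energy solution and the weighted $\|\cdot\|_*$ framework. The a priori estimate requires $\phi$ to be continuous with finite $\|\phi\|_*$, whereas Fredholm produces only $\phi\in H^s_V$. To close this gap I would first treat $h$ with compact support. I would then show that the weak solution is bounded by $\|\cdot\|_*$ weights, using the Green representation \eqref{eq:4} and the convolution estimates (Lemma \ref{lemLL2.3}) as in Step 2. This is consistent because $V\ge0$ only helps via the maximum principle for $(-\Delta)^s+V$. Regularity for $s$-harmonic extensions gives continuity. Finally, general $h\in C_{**}$ is handled by approximating with truncations $h\chi_{B_R}$, using the uniform bound to pass to the limit locally uniformly.
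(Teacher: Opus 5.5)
Your proposal is correct and is the argument the paper delegates to Proposition 4.1 of \cite{PFM-2003}: the uniform bound $\|\phi\|_*\le C\|h\|_{**}$ by contradiction from Lemma \ref{l1}, the multiplier bounds from Step 1 of its proof, uniqueness from the a priori estimate, and existence via the Fredholm alternative on the constrained Hilbert space. The extra step you flag (weak $H^s_V$ solutions lie in $C_*(\R^N)$) is what the paper leaves implicit on $\R^N$, but two points need adjusting. First, apply it to elements of $\ker(I-K)\subset E$ as well, since Fredholm needs injectivity in $E$, not only in $C_*(\R^N)$. Second, truncating $h$ is unnecessary, because Lemma \ref{lemLL2.3} already gives $\int_{\R^N}|x-y|^{2s-N}|h(y)|\,dy$ the $\|\cdot\|_*$ decay for every $h\in C_{**}(\R^N)$.
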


In what follows, we will choose $\bm{\lambda}$ appropriately such that  it goes to infinity equivalently. 
Specifically, we take $\bm{\lambda}$ satisfying  $\bm{\lambda}\sim\epsilon^{-\frac{1}{2s}}$.
 We then proceed to deal with   the nonlinear projected problem:
\begin{equation}\label{eq-2.22}
    \left\{
    \begin{array}{lcl}
         (-\Delta)^{s}({\bm W}_{\bm \lambda, \bm \xi}+\phi)+V(x)({\bm W}_{\bm \lambda, \bm \xi}+\phi)=({\bm W}_{\bm \lambda, \bm \xi}+\phi)_+^{p_s-\epsilon}+\sum\limits_{i=1}^k\sum\limits_{l=0}^N c_i^l W_{\lambda_i, \xi_i}^{p_s-1}Z_{i,l} \ \ \hbox{in }\ \mathbb{R}^N,\\
         \displaystyle \int_{\mathbb{R}^N} W_{\lambda_i, \xi_i}^{p_s-1}Z_{i,l}\phi=0, \ \ i=1,2,\cdots,k,\; l=0,1,\cdots, N,
    \end{array}
    \right.
\end{equation}
where $u_+=\max\{u,0\}$.

The main result of this section is the following result on the unique solvability of \eqref{eq-2.22}, which reduces the solvability of \eqref{eq:1} to a finite-dimensional problem.
\begin{proposition}\label{Prop2.4}
    Let $0<s<1$ and $N> 4s$. Then there exist $\epsilon_0>0$ and $C>0$ such that for $\epsilon\in (0,\epsilon_0)$, problem \eqref{eq-2.22} has a unique solution $\phi_{\epsilon}\in C_*(\R^N)\cap H^s_V(\R^N)$  satisfying 
    \begin{equation}
        \|\phi_{\epsilon}\|_*\leq   C\epsilon^{\frac{1}{2}(1+\tau_0)}  ,
    \end{equation}
    where $\tau_0$ is a small positive constant.
\end{proposition}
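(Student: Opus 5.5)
Rewrite \eqref{eq-2.22} as a fixed-point problem for the operator $L$ of Proposition \ref{prop2.3}:
\[
\phi=L\big(l_\epsilon+N_\epsilon(\phi)\big)=:A(\phi),
\]
where the error term and the nonlinear term are
\[
l_\epsilon={\bm W}_{\bm\lambda,\bm\xi}^{p_s-\epsilon}-(-\Delta)^s{\bm W}_{\bm\lambda,\bm\xi}-V(x){\bm W}_{\bm\lambda,\bm\xi},
\]
\[
N_\epsilon(\phi)=({\bm W}_{\bm\lambda,\bm\xi}+\phi)_+^{p_s-\epsilon}-{\bm W}_{\bm\lambda,\bm\xi}^{p_s-\epsilon}-p_s{\bm W}_{\bm\lambda,\bm\xi}^{p_s-1}\phi .
\]
We work in the ball $E=\{\phi\in C_*(\R^N): \text{orthogonality conditions hold},\ \|\phi\|_*\le \epsilon^{\frac12(1+\tau_0)}\}$ with $\bm\lambda\sim\epsilon^{-\frac1{2s}}$. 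We show that $A$ maps $E$ into itself and is a contraction. By Proposition \ref{prop2.3} it suffices to prove two bounds:
\[
\|l_\epsilon\|_{**}\le C\epsilon^{\frac12(1+\tau_0)}\cdot\tfrac12 ,\qquad
\|N_\epsilon(\phi)\|_{**}\le C\big(\|\phi\|_*^{\min\{p_s,2\}}+\epsilon|\log\epsilon|\,\|\phi\|_*\big).
\]
The second bound gives the contraction estimate $\|N_\epsilon(\phi_1)-N_\epsilon(\phi_2)\|_{**}\le o(1)\|\phi_1-\phi_2\|_*$. Uniqueness in $E$ and membership in $H^s_V$ then follow from the contraction mapping theorem together with the decay built into $\|\cdot\|_*$, since $\frac{N-2s}{2}+\sigma>\frac N2 - s$ makes $\phi$ square integrable against $V$, and the energy bound comes from testing the equation.

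\textbf{Estimating $l_\epsilon$.} By Lemma \ref{lem-J}, $(-\Delta)^s{\bm W}=\sum_i\eta_iU_i^{p_s}+\sum_i\Upsilon_i$. Split
\[
l_\epsilon=\Big[{\bm W}^{p_s-\epsilon}-{\bm W}^{p_s}\Big]+\Big[{\bm W}^{p_s}-\sum_i\eta_iU_i^{p_s}\Big]-\sum_i\Upsilon_i-V{\bm W}.
\]
We bound each term in $\|\cdot\|_{**}$.

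(a) Interactions vanish, since the supports of the $\eta_i$ are disjoint. The second bracket is $\sum_i(\eta_i^{p_s}-\eta_i)U_i^{p_s}$, supported in the annulus $\delta\le|x-\xi_i|\le2\delta$, of size $\lambda^{-\frac{N+2s}{2}}$. Against the weight $\lambda^{\frac{N+2s}{2}}(1+\lambda|x-\xi_i|)^{-\frac{N+2s}{2}-\sigma}\sim\lambda^{-\sigma}$, this gives $O(\lambda^{-\frac{N+2s}{2}+\sigma})$.

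(b) The term $\Upsilon_i$ satisfies $|\Upsilon_i|\le C\lambda^{-\frac{N-2s}{2}}(1+|x-\xi_i|)^{-N-2s}$. The weight at distance $|x-\xi_i|\sim r$ is $\lambda^{-\sigma}r^{-\frac{N+2s}{2}-\sigma}$, which dominates the decay $r^{-N-2s}$ for large $r$. So $\|\Upsilon_i\|_{**}\le C\lambda^{-\frac{N-2s}{2}+\sigma}$, worst near $x=\xi_i$, where the weight is $\lambda^{\frac{N+2s}{2}}$. More carefully, take the supremum over $r$ of $\lambda^{-\frac{N-2s}{2}}\big/\big[\lambda^{\frac{N+2s}{2}}(1+\lambda r)^{-\frac{N+2s}{2}-\sigma}\big]$, attained at $r\sim1$, giving $\lambda^{-\frac{N-2s}{2}+\sigma}$.

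(c) The potential term satisfies $|V{\bm W}|\le C\lambda^{\frac{N-2s}{2}}(1+\lambda r)^{-(N-2s)}$ on $B_{2\delta}$. Dividing by the weight gives $\lambda^{-2s}(1+\lambda r)^{-\frac{N-6s}{2}+\sigma}$ for $N\ge6s$. For $4s<N<6s$ it gives $\lambda^{-2s}(1+\lambda r)^{\frac{6s-N}{2}+\sigma}\le C\lambda^{-\frac{N-2s}{2}+\sigma}$ on $r\le2\delta$; the truncation is exactly what is needed here.

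(d) Writing ${\bm W}^{p_s-\epsilon}-{\bm W}^{p_s}={\bm W}^{p_s}({\bm W}^{-\epsilon}-1)$ gives the bound $C\epsilon\,|\log{\bm W}|\,{\bm W}^{p_s}$, which yields $O(\epsilon\log\lambda)$ in $\|\cdot\|_{**}$.

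With $\lambda\sim\epsilon^{-1/(2s)}$, these terms are of order $\epsilon$, $\epsilon|\log\epsilon|$, and $\epsilon^{\frac{N-2s}{4s}-\frac{\sigma}{2s}}$. Since $N>4s$ and $\sigma<\frac{N-4s}{2}$, every exponent exceeds $\frac12$. Hence $\|l_\epsilon\|_{**}\le C\epsilon^{\frac12(1+\tau_0)}$ for some small $\tau_0>0$, obtained by taking the minimum of the exponents minus $\frac12$.

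\textbf{Estimating $N_\epsilon$.} Use \eqref{aa2.16} together with the extra piece $(p_s-\epsilon)(\cdot)^{p_s-1-\epsilon}-p_s(\cdot)^{p_s-1}$, which is $O(\epsilon|\log{\bm W}|){\bm W}^{p_s-1}|\phi|$. Then bound products of weights by Hölder, in the standard way: $\big(\sum_j \lambda_j^{\frac{N-2s}{2}}(1+\lambda_j|x-\xi_j|)^{-\frac{N-2s}{2}-\sigma}\big)^{p_s}\le C\sum_j\lambda_j^{\frac{N+2s}{2}}(1+\lambda_j|x-\xi_j|)^{-\frac{N+2s}{2}-\sigma}$, and similarly for the mixed terms ${\bm W}^{p_s-2}\phi^2$.

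\textbf{Main obstacle.} The delicate step is the $\|\cdot\|_{**}$ bound in (d) away from the peaks, where ${\bm W}$ vanishes and $|\log {\bm W}|$ blows up. There we use ${\bm W}^{p_s-\epsilon}\le{\bm W}^{p_s-\epsilon}$ directly on $\operatorname{supp}\eta_i$, where ${\bm W}\ge c\lambda^{-\frac{N-2s}{2}}$, so $|\log{\bm W}|\le C\log\lambda$. Outside the supports ${\bm W}=0$. Likewise, in $N_\epsilon$ the regions where ${\bm W}=0$ contribute only $|\phi|^{p_s-\epsilon}$, which must be checked against the weight using $\|\phi\|_*$ small.
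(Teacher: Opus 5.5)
Your scheme is the paper's: $l_\epsilon=\mathcal R_1+\mathcal R_2+\mathcal R_3$, $N_\epsilon(\phi)=\mathcal N_1(\phi)+\mathcal N_2(\phi)$, your map $A$ is the paper's $T$, and your bounds (a)--(c) reproduce Lemma \ref{lem2.6}. Two steps, however, are justified by false claims.

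\textbf{The cut-off annulus.} In your ``main obstacle'' you assert that ${\bm W}\ge c\lambda^{-\frac{N-2s}{2}}$ on $\operatorname{supp}\eta_i$. This is false, because ${\bm W}=\eta_iU_{\lambda_i,\xi_i}$ there and $\eta_i\to0$ as $|x-\xi_i|\to2\delta$. So $|\log{\bm W}|$ is unbounded near $\partial B_{2\delta}(\xi_i)$. The expansion $|{\bm W}^{-\epsilon}-1|\le C\epsilon|\log{\bm W}|$ needs $\epsilon|\log{\bm W}|$ bounded, so it is unavailable there. The same defect affects your bound $O(\epsilon|\log{\bm W}|){\bm W}^{p_s-1}|\phi|$ for the $\epsilon$-part of $N_\epsilon$.

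The repair is the split the paper uses. Use the mean-value/logarithm bound only on $B_\delta(\xi_i)$, where ${\bm W}=U_{\lambda_i,\xi_i}$ lies between $c\lambda_i^{-\frac{N-2s}{2}}$ and $C\lambda_i^{\frac{N-2s}{2}}$. On $B_{2\delta}(\xi_i)\setminus B_\delta(\xi_i)$, bound the terms separately: ${\bm W}^{p_s-\epsilon}+{\bm W}^{p_s}\le C\lambda_i^{-\frac{N+2s}{2}+\frac{N-2s}{2}\epsilon}$ and ${\bm W}^{p_s-1-\epsilon}+{\bm W}^{p_s-1}\le C\lambda_i^{-2s+\frac{N-2s}{2}\epsilon}$. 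Against the weight $\sim\lambda_i^{-\sigma}$ these give $O(\epsilon^{\frac{N+2s-2\sigma}{4s}})$ and $O(\epsilon\|\phi\|_*)$.

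Two smaller points. A bound on $\|N_\epsilon(\phi)\|_{**}$ does not by itself give the Lipschitz estimate. You need a separate mean-value estimate of $N_\epsilon(\phi_1)-N_\epsilon(\phi_2)$, as the paper does with $\mathcal N_1(\phi_1,\phi_2)$ and $\mathcal N_2(\phi_1,\phi_2)$. Also, for the self-map step you should take radius $M\epsilon^{\frac12(1+\tau_0)}$ with $M$ large, as the paper does, or choose $\tau_0$ strictly below the available gap.

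\textbf{Membership in $H^s_V$.} Your argument here is wrong. The $\|\cdot\|_*$ bound only gives $|\phi|\lesssim|x|^{-\frac{N-2s}{2}-\sigma}$ at infinity. The inequality $\frac{N-2s}{2}+\sigma>\frac N2-s$ yields integrability of $|\phi|^{2^*_s}$, not of $V\phi^2$. Since $V$ is merely bounded, it may be bounded below by a positive constant near infinity. Then you would need $\frac{N-2s}{2}+\sigma>\frac N2$, i.e.\ $\sigma>s$, which is excluded by $\sigma<\frac s2$.

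To close this you must improve the decay of the fixed point. Set $f:=(-\Delta)^s\phi+V\phi$, given by \eqref{eq-2.24}. Using $V\ge0$ (Kato's inequality plus the maximum principle, i.e.\ the comparison behind the Green representation in Step 2 of Lemma \ref{l1}), one gets $|\phi|\le C\int_{\R^N}|x-y|^{2s-N}|f(y)|\,dy$. Outside $\cup_iB_{2\delta}(\xi_i)$ one has $f=\phi_+^{p_s-\epsilon}-\sum_i\Upsilon_i$. Because $(\frac{N-2s}{2}+\sigma)(p_s-1-\epsilon)>2s$, each application of Lemma \ref{lemLL2.3} raises the decay exponent by a fixed amount. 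Iterating gives $|\phi|\lesssim(1+|x|)^{-(N-2s)}$, which is square integrable because $N>4s$. Finite $D^s$-energy then follows by testing the equation with cut-offs of $\phi$.

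The paper's own proof is silent on this point; it only produces the fixed point in $C_*(\R^N)$. So this step has to be supplied rather than cited.
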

To study \eqref{eq-2.22}, we  rewrite it  as
\begin{equation}\label{eq-2.24}
    \left\{
    \begin{array}{lcl}
         (-\Delta)^{s}\phi+V(x)\phi-p_s{\bm W}_{\bm \lambda, \bm \xi}^{p_s-1}\phi=\mathcal{N}(\phi)+\mathcal{R}+\sum\limits_{i=1}^k\sum\limits_{l=0}^N c_i^l W_{\lambda_i, \xi_i}^{p_s-1}Z_{i,l}\ \ \hbox{in}\ \mathbb{R}^N,\\
         \phi\in C_*(\R^N)\cap H^s_V(\R^N),\\
         \displaystyle \int_{\mathbb{R}^N} W_{\lambda_i, \xi_i}^{p_s-1}Z_{i,l}\phi=0, \;i=1,2,\cdots,k,\; l=0,1,\cdots, N,
    \end{array}
    \right.
\end{equation}
where $\mathcal{N}(\phi)=\mathcal{N}_1(\phi)+\mathcal{N}_2(\phi)$, $\mathcal{R}=\mathcal{R}_1+\mathcal{R}_2+\mathcal{R}_3$,
\begin{equation}\label{eq-2.24R}
\begin{aligned}
   & \mathcal{N}_1(\phi)=({\bm W}_{\bm \lambda, \bm \xi}+\phi)_+^{p_s-\epsilon}-{\bm W}_{\bm \lambda, \bm \xi}^{p_s-\epsilon}-(p_s-\epsilon){\bm W}_{\bm \lambda, \bm \xi}^{p_s-1-\epsilon}\phi,\\
   & \mathcal{N}_2(\phi)=(p_s-\epsilon){\bm W}_{\bm \lambda, \bm \xi}^{p_s-1-\epsilon}\phi-p_s{\bm W}_{\bm \lambda, \bm \xi}^{p_s-1}\phi,\\
   &\mathcal{R}_1= {\bm W}_{\bm \lambda, \bm \xi}^{p_s-\epsilon}-\sum_{i=1}^k\eta_i U_{\lambda_i,\xi_i}^{p_s},\quad \mathcal{R}_2=-\sum_{i=1}^k \Upsilon_i,\quad
   \mathcal{R}_3=-V(x){\bm W}_{\bm \lambda, \bm \xi}.\\
\end{aligned}
\end{equation}

Next, we apply the contraction mapping theorem to obtain the unique solvability of  problem \eqref{eq-2.24} 
when  $\|\phi\|_*$ is  sufficiently small.
\begin{lemma}\label{lem2.5}
    If $0<s<1$ and $N>4s$, then 
    $$
    \|\mathcal{N}(\phi)\|_{**}\leq C\left(\|\phi\|_*^2+\|\phi\|_*^{p_s-\epsilon}+\epsilon|\log\epsilon|\|\phi\|_*\right).
    $$
\end{lemma}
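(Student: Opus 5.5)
We estimate $\mathcal{N}_1(\phi)$ and $\mathcal{N}_2(\phi)$ separately, pointwise in $x$, and then compare with the weight defining $\|\cdot\|_{**}$. Throughout we write
\[
\Sigma_*(x)=\sum_{j=1}^k\frac{\lambda_j^{\frac{N-2s}{2}}}{(1+\lambda_j|x-\xi_j|)^{\frac{N-2s}{2}+\sigma}},\qquad
\Sigma_{**}(x)=\sum_{j=1}^k\frac{\lambda_j^{\frac{N+2s}{2}}}{(1+\lambda_j|x-\xi_j|)^{\frac{N+2s}{2}+\sigma}},
\]
so that $|\phi|\le\|\phi\|_*\Sigma_*$. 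We also use that, since $\delta=d/10$, the supports of the $W_{\lambda_i,\xi_i}$ are disjoint. Hence ${\bm W}_{\bm\lambda,\bm\xi}^{q}=\sum_i W_{\lambda_i,\xi_i}^q$ and $0\le {\bm W}_{\bm\lambda,\bm\xi}\le C\sum_i U_{\lambda_i,\xi_i}$.

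\textbf{Estimate of $\mathcal N_1$.} Apply \eqref{aa2.16} of Lemma \ref{lemL2.8} with $q=p_s-\epsilon$, $a={\bm W}_{\bm\lambda,\bm\xi}$, $b=\phi$. If $p_s\ge2$ (i.e.\ $N\le 6s$), we get $|\mathcal N_1(\phi)|\le C({\bm W}^{p_s-2-\epsilon}\phi^2+|\phi|^{p_s-\epsilon})$; if $p_s<2$, we get $|\mathcal N_1(\phi)|\le C|\phi|^{p_s-\epsilon}$. Where ${\bm W}=0$, only the term $|\phi|^{p_s-\epsilon}$ appears.

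For the pure power term, Hölder's inequality for sums gives $\Sigma_*^{p_s}\le C\sum_j \lambda_j^{\frac{N+2s}{2}}(1+\lambda_j|x-\xi_j|)^{-(\frac{N-2s}{2}+\sigma)p_s}$. Since $(\frac{N-2s}{2}+\sigma)p_s\ge \frac{N+2s}{2}+\sigma$, this is bounded by $C\Sigma_{**}$. The $-\epsilon$ in the exponent produces factors $\lambda_j^{-\epsilon\frac{N-2s}{2}}$ and $(1+\lambda_j|x-\xi_j|)^{\epsilon(\cdots)}$. With $\lambda_j\sim\epsilon^{-1/(2s)}$ we have $\lambda_j^{\epsilon}=O(1)$, and the extra growth is at most $(1+\lambda_j|x-\xi_j|)^{C\epsilon}$, which is absorbed because the decay exponent has slack (we shrink $\sigma$ slightly if necessary). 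Away from the peaks the same computation works, since $\Sigma_*^{p_s-\epsilon}$ still decays faster than $\Sigma_{**}$.

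For the quadratic term (case $N\le 6s$), we use ${\bm W}^{p_s-2-\epsilon}\phi^2\le C\|\phi\|_*^2\,U^{p_s-2-\epsilon}\Sigma_*^2$. Near $\xi_j$ this is $\lambda_j^{\frac{N+2s}{2}}(1+\lambda_j|x-\xi_j|)^{-(N+2s)+2-... }$ times bounded powers, so it is dominated by $\Sigma_{**}$ after the same $\lambda^\epsilon$ bookkeeping.

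\textbf{Estimate of $\mathcal N_2$.} Write
\[
\mathcal N_2(\phi)=\big[(p_s-\epsilon){\bm W}^{p_s-1-\epsilon}-p_s{\bm W}^{p_s-1}\big]\phi
=-\epsilon{\bm W}^{p_s-1-\epsilon}\phi+p_s\big({\bm W}^{-\epsilon}-1\big){\bm W}^{p_s-1}\phi .
\]
By the mean value theorem, $|{\bm W}^{-\epsilon}-1|\le C\epsilon|\log {\bm W}|\max({\bm W}^{-\epsilon},1)$. On $B_{2\delta}(\xi_j)$ we have $\lambda_j^{-\frac{N-2s}{2}}\lesssim U_{\lambda_j,\xi_j}\lesssim\lambda_j^{\frac{N-2s}{2}}$, so $|\log {\bm W}|\le C|\log\lambda_j|\le C|\log\epsilon|$ and ${\bm W}^{-\epsilon}=O(1)$. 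This holds wherever $\eta_j$ is bounded below. Near the edge of the cutoff, where $\eta_j\to0$, we use $t^{p_s-1}|t^{-\epsilon}-1|\le C\epsilon|\log\epsilon|$ combined with $t\le C\lambda_j^{-\frac{N-2s}{2}}$; in this region the bound is even better. Consequently
\[
|\mathcal N_2(\phi)|\le C\epsilon|\log\epsilon|\,U_{\lambda_j,\xi_j}^{p_s-1}|\phi|\le C\epsilon|\log\epsilon|\,\|\phi\|_*\,\frac{\lambda_j^{2s}}{(1+\lambda_j|x-\xi_j|)^{4s}}\,\Sigma_*\le C\epsilon|\log\epsilon|\,\|\phi\|_*\,\Sigma_{**},
\]
where the last step uses $\frac{N-2s}{2}+\sigma+4s\ge\frac{N+2s}{2}+\sigma$, and outside the supports $\mathcal N_2=0$.

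\textbf{Main obstacle.} The delicate point is the uniform control of the $\epsilon$-perturbed powers ${\bm W}^{-\epsilon}$ and $|\phi|^{p_s-\epsilon}$ near the boundary of the cutoff region and far from the peaks, where ${\bm W}$ or $\Sigma_*$ is very small. We would handle it by splitting each $B_{2\delta}(\xi_j)$ into the region $\{\eta_j\ge1/2\}$, treated as above, and the transition annulus. On the annulus we use directly $t^{p_s-1-\epsilon}\le C$ and $\lambda_j^{\epsilon}\le C$, together with the fact that $\sigma<\min\{s/2,(N-4s)/2\}$ leaves room in the decay exponents.
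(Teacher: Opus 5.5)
Your proposal is correct and follows the paper's route: Lemma \ref{lemL2.8} plus H\"older's inequality for $\mathcal N_1$ with a case split at exponent $2$, and the mean value theorem in the exponent for $\mathcal N_2$. Your treatment of the cut-off edge, where $|\log{\bm W}_{\bm\lambda,\bm\xi}|$ blows up but is absorbed by ${\bm W}_{\bm\lambda,\bm\xi}^{p_s-1-\epsilon}$, covers a point the paper passes over without comment.

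Two small fixes are needed.
\begin{enumerate}
\item Split on $p_s-\epsilon\le 2$ rather than on $p_s\ge 2$. For $N=6s$ the factor ${\bm W}_{\bm\lambda,\bm\xi}^{p_s-2-\epsilon}={\bm W}_{\bm\lambda,\bm\xi}^{-\epsilon}$ is unbounded near $\partial B_{2\delta}(\xi_j)$, so your bound ${\bm W}_{\bm\lambda,\bm\xi}^{p_s-2-\epsilon}\phi^2\le C\|\phi\|_*^2U^{p_s-2-\epsilon}\Sigma_*^2$ fails there. In that case you must use the $|\phi|^{p_s-\epsilon}$ branch of the minimum in \eqref{aa2.16}.
\item Do not shrink $\sigma$; it is fixed by the norms, and shrinking is unnecessary anyway. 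The decay exponent $\frac{N+2s}{2}+(p_s-\epsilon)\sigma-\frac{N-2s}{2}\epsilon$ of the pure-power term already dominates $\frac{N+2s}{2}+\sigma$ once $\epsilon$ is small. The same holds for the quadratic term when $N<6s$, whose correct decay exponent is $4s+2\sigma-(N-2s)\epsilon$.
\end{enumerate}
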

\begin{proof}
 For $p_s-\epsilon\le 2$, by H\"older inequality, we have  
 $$
 \begin{aligned}    
 |\mathcal{N}_1(\phi)|
 \leq& C|\phi|^{p_s-\epsilon}\leq C\|\phi\|_*^{p_s-\epsilon}\left(\sum_{j=1}^k\frac{\lambda_j^{\frac{N-2s}{2}}}{(1+\lambda_j|x-\xi_j|)^{\frac{N-2s}{2}+\sigma}}\right)^{p_s-\epsilon}\\
 \leq & C\|\phi\|_*^{p_s-\epsilon}\sum_{j=1}^k\frac{\lambda_j^{\frac{N+2s}{2}-\frac{N-2s}{2}\epsilon}}{(1+\lambda_j|x-\xi_j|)^{\frac{N+2s}{2}+(p_s-\epsilon)\sigma-\frac{N-2s}{2}\epsilon}}\\
 \leq & C\|\phi\|_*^{p_s-\epsilon}\sum_{j=1}^k\frac{\lambda_j^{\frac{N+2s}{2}}}{(1+\lambda_j|x-\xi_j|)^{\frac{N+2s}{2}+\sigma}},\\
 \end{aligned}
 $$
 which gives that 
 $$\|\mathcal{N}_1(\phi)\|_{**}\leq C\|\phi\|_*^{p_s-\epsilon}.$$
If  $p_s-\epsilon > 2$, we find 
 $$
 \begin{aligned}
     |\mathcal{N}_1(\phi)|
 \leq& C\left({\bm W}_{\bm \lambda, \bm \xi}^{p_s-2-\epsilon}\phi^2+|\phi|^{p_s-\epsilon}\right)
 \leq C(\|\phi\|_*^2+\|\phi\|_*^{p_s-\epsilon})\sum_{j=1}^k\frac{\lambda_j^{\frac{N+2s}{2}}}{(1+\lambda_j|x-\xi_j|)^{\frac{N+2s}{2}+\sigma}}.
 \end{aligned}
 $$
 Thus we obtain
  $$\|\mathcal{N}_1(\phi)\|_{**}\leq C(\|\phi\|_*^2+\|\phi\|_*^{p_s-\epsilon}).$$
 On the other hand, it follows from  the mean value theorem that 
 $$
 \begin{aligned}
     |\mathcal{N}_2(\phi)|=&\left|(p_s-\epsilon){\bm W}_{\bm \lambda, \bm \xi}^{p_s-1-\epsilon}\phi-p_s{\bm W}_{\bm \lambda, \bm \xi}^{p_s-1}\phi\right|\\
     \leq &C\epsilon \sum_{i=1}^k (\eta_iU_{\lambda_i,\xi_i})^{p_s-1-\theta\epsilon}\left(1+|\log(\eta_iU_{\lambda_i,\xi_i})|\right)|\phi|\\
     \leq &C\epsilon|\log\epsilon|\|\phi\|_*\sum_{j=1}^k\frac{\lambda_j^{\frac{N+2s}{2}}}{(1+\lambda_j|x-\xi_j|)^{\frac{N+2s}{2}+\sigma}},
 \end{aligned}
 $$
 where $\theta\in (0,1)$.
As a result, we obtain 
 $$
 \|\mathcal{N}_2(\phi)\|_{**}\leq C\epsilon|\log \epsilon|\|\phi\|_*.
 $$
The claimed estimate then follows from the above calculations.
\end{proof}
\begin{lemma}\label{lem2.6}
    If $0<s<1$ and $N> 4s$, then
    $$
    \|\mathcal{R}\|_{**}\leq  C\epsilon^{\frac{1}{2}(1+\tau_0)},
    $$
     where $\tau_0$ is a small positive constant.
\end{lemma}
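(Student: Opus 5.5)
We bound the three pieces $\mathcal{R}_1,\mathcal{R}_2,\mathcal{R}_3$ in \eqref{eq-2.24R} separately in the $\|\cdot\|_{**}$ norm. Throughout we use $\lambda_j\sim\epsilon^{-\frac{1}{2s}}$, so $\lambda_j^{-2s}\sim\epsilon$, and $\lambda_j^{\epsilon}=1+O(\epsilon|\log\epsilon|)$ uniformly. The key device is a trade of decay for powers of $\lambda$. Suppose a term is bounded by $\lambda_j^{\frac{N+2s}{2}-\alpha}(1+\lambda_j|x-\xi_j|)^{-\beta}$, with $\beta\ge\frac{N+2s}{2}+\sigma-\gamma$ and $0\le\gamma\le\alpha$, on a region where $|x-\xi_j|\le C$. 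Writing $(1+\lambda_j|x-\xi_j|)^{\gamma}\le C\lambda_j^{\gamma}$ there, its $\|\cdot\|_{**}$ contribution is $O(\lambda_j^{\gamma-\alpha})$.

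\textbf{Step 1: $\mathcal{R}_3=-V\bm W$.} On $B_{2\delta}(\xi_j)$ we have $|V\bm W|\le C\lambda_j^{\frac{N-2s}{2}}(1+\lambda_j|x-\xi_j|)^{-(N-2s)}$. This is $\lambda_j^{\frac{N+2s}{2}-2s}(1+\lambda_j|x-\xi_j|)^{-(N-2s)}$, and $N-2s=\frac{N+2s}{2}+\sigma-\gamma$ with $\gamma=3s-\frac N2+\sigma$.
\begin{itemize}
\item If $N\ge 6s+2\sigma$, then $\gamma\le0$ and the weight is dominated directly, giving $O(\lambda^{-2s})=O(\epsilon)$.
\item Otherwise we get $O(\lambda^{-2s+\gamma})=O(\lambda^{-(\frac N2-s-\sigma)})$. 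Since $N>4s$ and $\sigma<\frac{N-4s}{2}$, we have $\frac N2-s-\sigma>s$. Hence this is $O(\epsilon^{\frac12(1+\tau_0)})$ with $\tau_0=\frac{N-4s-2\sigma}{2s}\wedge 1>0$.
\end{itemize}
Away from $\cup_j B_{2\delta}(\xi_j)$, $\mathcal{R}_3=0$ because $\eta_j$ is cut off.

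\textbf{Step 2: $\mathcal{R}_2=-\sum\Upsilon_i$.} By Lemma \ref{lem-J}, $|\Upsilon_i|\le C\lambda_i^{-\frac{N-2s}{2}}(1+|x-\xi_i|)^{-N-2s}$. We compare this with the weight $\lambda_i^{\frac{N+2s}{2}}(1+\lambda_i|x-\xi_i|)^{-\frac{N+2s}{2}-\sigma}$.
\begin{itemize}
\item For $|x-\xi_i|\le1$, the weight is at least $c\lambda_i^{-\sigma}$, so the ratio is $\le C\lambda_i^{-\frac{N-2s}{2}+\sigma}$.
\item For $|x-\xi_i|\ge1$, the weight is comparable to $\lambda_i^{-\sigma}|x-\xi_i|^{-\frac{N+2s}{2}-\sigma}$, which decays more slowly than $|x-\xi_i|^{-N-2s}$. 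The ratio is again $\le C\lambda_i^{-\frac{N-2s}{2}+\sigma}$.
\end{itemize}
Since $\frac{N-2s}{2}-\sigma>s$, this is $O(\epsilon^{\frac12(1+\tau_0)})$.

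\textbf{Step 3: $\mathcal{R}_1=\bm W^{p_s-\epsilon}-\sum\eta_iU_i^{p_s}$.} The supports of the $\eta_i$ are disjoint, so $\bm W^{p_s-\epsilon}=\sum\eta_i^{p_s-\epsilon}U_i^{p_s-\epsilon}$. We split
\[
\mathcal{R}_1=\sum_i\eta_i^{p_s-\epsilon}\big(U_i^{p_s-\epsilon}-U_i^{p_s}\big)+\sum_i\big(\eta_i^{p_s-\epsilon}-\eta_i\big)U_i^{p_s}.
\]
\begin{itemize}
\item The second sum lives in the annulus $\delta\le|x-\xi_i|\le2\delta$, where $U_i^{p_s}\le C\lambda_i^{-\frac{N+2s}{2}}$ while the weight is $\sim\lambda_i^{-\sigma}$. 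Its $\|\cdot\|_{**}$ contribution is therefore $O(\lambda^{-\frac{N+2s}{2}+\sigma})$.
\item For the first sum, the mean value theorem gives $|U^{p_s-\epsilon}-U^{p_s}|\le C\epsilon U^{p_s-\theta\epsilon}|\log U|$.
\item On $B_{2\delta}(\xi_i)$ we have $\lambda^{-\frac{N-2s}{2}}\lesssim U\lesssim\lambda^{\frac{N-2s}{2}}$, so $|\log U|\le C|\log\epsilon|$ and $U^{-\theta\epsilon}\le C$. This term is thus bounded by $C\epsilon|\log\epsilon|\,U_i^{p_s}$.
\item Since $U_i^{p_s}$ decays like $(1+\lambda|x-\xi_i|)^{-(N+2s)}$, faster than the weight, the contribution is $O(\epsilon|\log\epsilon|)$.
\end{itemize}

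\textbf{Conclusion and main obstacle.} Collecting the bounds, all terms are $O(\epsilon|\log\epsilon|+\lambda^{-(\frac N2-s-\sigma)})$, which is $\le C\epsilon^{\frac12(1+\tau_0)}$ for a small $\tau_0>0$. The main obstacle is Step 1 in low dimensions $4s<N<6s$. There the slow decay $(N-2s)$ of $V\bm W$ is not controlled by the weight's decay $\frac{N+2s}{2}+\sigma$. The cutoff and the restriction $\sigma<\frac{N-4s}{2}$ are precisely what make the trade of decay for powers of $\lambda$ still yield a power of $\epsilon$ strictly greater than $\frac12$.
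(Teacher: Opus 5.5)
Your proposal is correct and follows essentially the same route as the paper. You bound $\mathcal{R}_1$ by the mean value theorem on $B_{2\delta}(\xi_i)$ plus a separate annulus term, $\mathcal{R}_2$ via Lemma \ref{lem-J}, and $\mathcal{R}_3$ by trading excess decay for powers of $\lambda$ on $B_{2\delta}(\xi_i)$ with a split by dimension. The only differences are cosmetic: you split at $N=6s+2\sigma$ rather than $N=6s$, which gives the slightly sharper $O(\epsilon)$ in place of $O(\epsilon^{1-\frac{\sigma}{2s}})$ in high dimensions, and you factor the cutoff out of $\mathcal{R}_1$ algebraically rather than splitting the domain.
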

\begin{proof}
    Applying the mean value theorem, we have
    $$
    \begin{aligned}
        |\mathcal{R}_1|
        \leq &\sum_{i=1}^k \chi_{B_{\delta}(\xi_i)}|U_{\lambda_i,\xi_i}^{p_s-\epsilon}-U_{\lambda_i,\xi_i}^{p_s}|+\sum_{i=1}^k \chi_{B_{2\delta}(\xi_i)\setminus B_{\delta}(\xi_i)}(U_{\lambda_i,\xi_i}^{p_s-\epsilon}+U_{\lambda_i,\xi_i}^{p_s})\\
        \leq &C\epsilon \sum_{i=1}^k \chi_{B_{\delta}(\xi_i)}U_{\lambda_i,\xi_i}^{p_s-\theta\epsilon}\log U_{\lambda_i,\xi_i}+C\sum_{i=1}^k \chi_{B_{2\delta}(\xi_i)\setminus B_{\delta}(\xi_i)}\frac{\lambda_i^{\frac{N+2s}{2}}}{(1+\lambda_i|x-\xi_i|)^{N+2s}}\\
        \leq &C\epsilon|\log\epsilon|\sum_{i=1}^k \frac{\lambda_i^{\frac{N+2s}{2}}}{(1+\lambda_i|x-\xi_i|)^{\frac{N+2s}{2}+\sigma}}+C\sum_{i=1}^k \lambda_i^{^{-\frac{N+2s}{2}+\sigma}}\frac{\lambda_i^{\frac{N+2s}{2}}}{(1+\lambda_i|x-\xi_i|)^{\frac{N+2s}{2}+\sigma}},
    \end{aligned}
    $$
  where $\theta\in (0,1)$. Consequently, we arrive at 
    $$
    \|\mathcal{R}_1\|_{**}\leq C(\epsilon|\log\epsilon|+\epsilon^\frac{N+2s-2\sigma}{4s}).    $$
    From Lemma \ref{lem-J}, it is easy to check that
    $$
    \begin{aligned}
    |\mathcal{R}_2|\leq& C\sum_{i=1}^k \frac{\lambda_i^{-\frac{N-2s}{2}}}{(1+|x-\xi_i|)^{N+2s}} 
    \leq  C\sum_{i=1}^k\lambda_i^{-\frac{N-2s}{2}+\sigma}\frac{\lambda_i^{\frac{N+2s}{2}}}{(1+\lambda_i|x-\xi_i|)^{\frac{N+2s}{2}+\sigma}},
    \end{aligned}
    $$
    which yields that 
    $$
    \|\mathcal{R}_2\|_{**}\leq C\epsilon^\frac{N-2s-2\sigma}{4s}.
    $$
   Note that 
    $$
    \begin{aligned}
        |\mathcal{R}_3|\leq & C\sum_{i=1}^k\frac{\lambda_i^{\frac{N-2s}{2}}}{(1+\lambda_i|x-\xi_i|)^{N-2s}}\chi_{B_{2\delta}(\xi_i)}(x)\\
        \leq &\begin{cases}
           \displaystyle C\sum_{i=1}^k\lambda_i^{-2s+\sigma}\frac{\lambda_i^{\frac{N+2s}{2}}}{(1+\lambda_i|x-\xi_i|)^{\frac{N+2s}{2}+\sigma}}\chi_{B_{2\delta}(\xi_i)}(x), &\text{if}~~N>6s,\vspace{2mm}\\
           \displaystyle C\sum_{i=1}^k\lambda_i^{-\frac{N-2s}{2}+\sigma}\frac{\lambda_i^{\frac{N+2s}{2}}}{(1+\lambda_i|x-\xi_i|)^{\frac{N+2s}{2}+\sigma}}\chi_{B_{2\delta}(\xi_i)}(x), &\text{if}~~4s<N\leq 6s,\vspace{2mm}\\
        \end{cases}\\
        \leq &\begin{cases}
           \displaystyle C\epsilon^{1-\frac{\sigma}{2s}}\sum_{i=1}^k\frac{\lambda_i^{\frac{N+2s}{2}}}{(1+\lambda_i|x-\xi_i|)^{\frac{N+2s}{2}+\sigma}}, &\text{if}~~N>6s,\vspace{2mm}\\
          C\epsilon^\frac{N-2s-2\sigma}{4s} \displaystyle \sum_{i=1}^k\frac{\lambda_i^{\frac{N+2s}{2}}}{(1+\lambda_i|x-\xi_i|)^{\frac{N+2s}{2}+\sigma}}, &\text{if}~~4s<N\leq 6s.\vspace{2mm}\\
        \end{cases}
    \end{aligned}
    $$
     Hence, we conclude 
    $$
    \|\mathcal{R}_3\|_{**}\leq C\epsilon^{\frac{1}{2}(1+\tau_0)},
    $$
    where $\tau_0>0$ is a  small  constant.
\end{proof}
We now turn to the proof of Proposition \ref{Prop2.4}.

\begin{proof}[Proof of Proposition \ref{Prop2.4}]
By Proposition \ref{prop2.3}, we see that $\phi$ solves \eqref{eq-2.24} if and only if $\phi$ is a fixed point of  the operator 
$$
T:C_*(\R^N)\to C_*(\R^N),
$$
defined by 
$$
T(\phi):=L(\mathcal{N}(\phi))+L(\mathcal{R}), 
$$    
where $L$ is as in Proposition \ref{prop2.3}.

Let $B_*:=\{\phi\in C_*(\R^N):\|\phi\|_*\leq A\epsilon^{\frac{1}{2}(1+\tau_0)}\},$
where  $A$  is a positive constant to be determined later.  We will prove that $T$ is a contraction map from $B_*$ to $B_*$. 

By Proposition \ref{prop2.3}, Lemmas \ref{lem2.5} and  \ref{lem2.6}, we have 
$$
\|T(\phi)\|_*\leq C(\|\mathcal{N}(\phi)\|_{**}+\|\mathcal{R}\|_{**})\leq  C\left(\|\phi\|_*^2+\|\phi\|_*^{p_s-\epsilon}+\epsilon|\log\epsilon|\|\phi\|_*+\epsilon^{\frac{1}{2}(1+\tau_0)}\right).
$$
Taking $A>2C$, it follows that for sufficiently small $\epsilon$
$$
\|T(\phi)\|_*\leq A\epsilon^{\frac{1}{2}(1+\tau_0)}.
$$
This shows that $T$ maps $B_*$ to  $B_*$.

Next,  for any $\phi_1,\;\phi_2\in B_*$, we have 
$$
\begin{aligned}
    T(\phi_1)-T(\phi_2)=&L\left(({\bm W}_{\bm \lambda, \bm \xi}+\phi_1)_+^{p_s-\epsilon}-({\bm W}_{\bm \lambda, \bm \xi}+\phi_2)_+^{p_s-\epsilon}-(p_s-\epsilon){\bm W}_{\bm \lambda, \bm \xi}^{p_s-1-\epsilon}(\phi_1-\phi_2)\right)\\
    &+L\left(\left((p_s-\epsilon){\bm W}_{\bm \lambda, \bm \xi}^{p_s-1-\epsilon}-p_s{\bm W}_{\bm \lambda, \bm \xi}^{p_s-1}\right)(\phi_1-\phi_2)\right)\\
    :=&L(\mathcal{N}_1(\phi_1,\phi_2))+L(\mathcal{N}_2(\phi_1,\phi_2)),
\end{aligned}
$$
which, together with Proposition \ref{prop2.3}, gives that
$$
\begin{aligned}
    \|T(\phi_1)-T(\phi_2)\|_*\leq C(\|\mathcal{N}_1(\phi_1,\phi_2)\|_{**}+\|\mathcal{N}_2(\phi_1,\phi_2)\|_{**}).
\end{aligned}
$$
If $p_s-\epsilon\le 2$, then we deduce 
$$
\begin{aligned}
    |\mathcal{N}_1(\phi_1,\phi_2)|\leq & C(|\phi_1|^{p_s-1-\epsilon}+|\phi_2|^{p_s-1-\epsilon})|\phi_1-\phi_2|\\
    \leq& C(\|\phi_1\|_*^{p_s-1-\epsilon}+\|\phi_2\|_*^{p_s-1-\epsilon})\|\phi_1-\phi_2\|_*\left(\sum_{i=1}^k\frac{\lambda_i^{\frac{N-2s}{2}}}{(1+\lambda_i|x-\xi_i|)^{\frac{N-2s}{2}+\sigma}}\right)^{p_s-\epsilon}\\
    \leq& C(\|\phi_1\|_*^{p_s-1-\epsilon}+\|\phi_2\|_*^{p_s-1-\epsilon})\|\phi_1-\phi_2\|_*\sum_{i=1}^k\frac{\lambda_i^{\frac{N+2s}{2}}}{(1+\lambda_i|x-\xi_i|)^{\frac{N+2s}{2}+\sigma}}.
\end{aligned}
$$
Hence we obtain  
$$
\|\mathcal{N}_1(\phi_1,\phi_2)\|_{**}\leq C(\|\phi_1\|_*^{p_s-1-\epsilon}+\|\phi_2\|_*^{p_s-1-\epsilon})\|\phi_1-\phi_2\|_*.
$$
For  $p_s-\epsilon> 2$, similar arguments lead to
$$
\begin{aligned}
|\mathcal{N}_1(\phi_1,\phi_2)|
\leq&  C{\bm W}_{\bm \lambda, \bm \xi}^{p_s-2-\epsilon}(|\phi_1|+|\phi_2|)|\phi_1-\phi_2|+ C(|\phi_1|^{p_s-1-\epsilon}+|\phi_2|^{p_s-1-\epsilon})|\phi_1-\phi_2|\\
\leq& C(\|\phi_1\|_*+\|\phi_2\|_*)\|\phi_1-\phi_2\|_*\sum_{i=1}^k\frac{\lambda_i^{\frac{N+2s}{2}}}{(1+\lambda_i|x-\xi_i|)^{\frac{N+2s}{2}+\sigma}},
\end{aligned}
$$
which implies 
$$
\|\mathcal{N}_1(\phi_1,\phi_2)\|_{**}\leq C(\|\phi_1\|_*+\|\phi_2\|_*)\|\phi_1-\phi_2\|_*.
$$
On the other hand, we have 
$$
\begin{aligned}
|\mathcal{N}_2(\phi_1,\phi_2)|\leq& \left|(p_s-\epsilon){\bm W}_{\bm \lambda, \bm \xi}^{p_s-1-\epsilon}-p_s{\bm W}_{\bm \lambda, \bm \xi}^{p_s-1}\right||\phi_1-\phi_2|\\
 \leq &C\epsilon|\log\epsilon|\|\phi_1-\phi_2\|_*\sum_{i=1}^k\frac{\lambda_i^{\frac{N+2s}{2}}}{(1+\lambda_i|x-\xi_i|)^{\frac{N+2s}{2}+\sigma}}.
\end{aligned}
$$
Then we find 
$$
\|\mathcal{N}_2(\phi_1,\phi_2)\|_{**}\leq C\epsilon|\log\epsilon|\|\phi_1-\phi_2\|_*.
$$
As a result, for $\epsilon$ sufficiently small, we derive 
\[
\begin{split}
\|T(\phi_1)-T(\phi_2)\|_*\leq & C(\|\phi_1\|_*^{\min\{p_s-1-\epsilon,1\}}+\|\phi_2\|_*^{\min\{p_s-1-\epsilon,1\}}+\epsilon|\log\epsilon|)\|\phi_1-\phi_2\|_*\\
\leq& \frac{1}{2}\|\phi_1-\phi_2\|_*.
\end{split}
\]
Thus $T$ is a contraction mapping from $ B_*$ into itself.
		By  the contraction mapping theorem,  there exists a unique solution $\phi_{\epsilon}\in B_*$ to problem \eqref{eq-2.24}.  This completes the proof of Proposition \ref{Prop2.4}.
        \end{proof}
        
        \section{Existence of multi-peak solutions}
      In the preceding section, we have proved that for sufficiently small $\epsilon$, there exists a unique $\phi_\epsilon$ such that 
       $u={\bm W}_{\bm \lambda, \bm \xi}+\phi_\epsilon$ satisfies \eqref{eq-2.22}. 
In order to guarantee that ${\bm W}_{\bm \lambda, \bm \xi}+\phi_\epsilon$ 
 solves \eqref{eq:1}, we just need to  choose $\bm \lambda, \bm \xi$
 suitably such that the corresponding Lagrange multipliers $c_i^l=0$, $i=1,\cdots,k,\ l=0,1,\cdots,N$
 for $\epsilon$  small enough. For this purpose,  we define the $s$-harmonic extension of $u$ by  $\widetilde{u}=\widetilde{\bm W}_{\bm \lambda, \bm \xi}+\widetilde{\phi}_\epsilon$. 
 Then we have 
        \begin{equation}\label{eq-3.1}
        \begin{cases}
            \text{div}(t^{1-2s}\nabla\widetilde{u})=0 \quad &\text{in} \quad\R^{N+1}_+,\\
            -\lim\limits_{t\to 0^+}t^{1-2s}\partial_t\widetilde{u}=-V(x){u}+({u})_+^{p_s-\epsilon}+\sum\limits_{i=1}^k\sum\limits_{l=0}^N c_i^l W_{\lambda_i, \xi_i}^{p_s-1}Z_{i,l}\quad &\text{on}  \quad\R^{N}.
        \end{cases}
         \end{equation}
Denote
$$
X=(x,t)\in \R^{N+1},\; \nabla\widetilde{u}=\Big(\frac{\partial \widetilde{u}}{\partial x_1}, \cdots, \frac{\partial \widetilde{u}}{\partial x_N},\frac{\partial \widetilde{u}}{\partial t}\Big), \; 
\nabla u=\Big(\frac{\partial u}{\partial x_1}, \cdots,\frac{\partial u}{\partial x_N}\Big).
$$
     We now employ local Pohozaev identities to  solve the above reduced problem.
      Multiplying \eqref{eq-3.1} by $\frac{\partial\widetilde{u}}{\partial x_m}$, $m=1,2,\cdots,N$ 
         and integrating by parts, we obtain
         \begin{equation}\label{eq3.2}
         \begin{aligned}
             &-\int_{\partial''\mathfrak{B}_\rho^+(\xi_j)}t^{1-2s}\frac{\partial\widetilde{u}}{\partial\nu}\frac{\partial\widetilde{u}}{\partial x_m}+\frac{1}{2}\int_{\partial''\mathfrak{B}_\rho^+(\xi_j)}t^{1-2s}|\nabla\widetilde{u}|^2\nu_m\\
             =&\int_{B_\rho(\xi_j)}\left(-V(x){u}+({u})_+^{p_s-\epsilon}+\sum\limits_{i=1}^k\sum\limits_{l=0}^N c_i^l W_{\lambda_i, \xi_i}^{p_s-1}Z_{i,l}\right)\frac{\partial u}{\partial x_m},
              \end{aligned}
         \end{equation}
   where 
        $$
        \displaystyle \partial'' \mathfrak{B}_\rho^+(\xi_j)=\{X=(x,t):|X-(\xi_j,0)|= \rho \text{ and } t>0\}\subseteq\R^{N+1}_+,
        $$
        $j=1,2,\cdots k$, and  $\nu$ is the outward unit normal vector on $ \partial'' \mathfrak{B}_\rho^+(\xi_j)$. 

         \begin{lemma}
             Suppose that $\bm{\lambda}$, $\bm{\xi}$ satisfy 
              \begin{equation}\label{eq3.4}
             -\int_{\partial''\mathfrak{B}_\rho^+(\xi_j)}t^{1-2s}\frac{\partial\widetilde{u}}{\partial\nu}\frac{\partial\widetilde{u}}{\partial x_m}+\frac{1}{2}\int_{\partial''\mathfrak{B}_\rho^+(\xi_j)}t^{1-2s}|\nabla\widetilde{u}|^2\nu_m
             =\int_{B_\rho(\xi_j)}\left(-V(x){u}+({u})_+^{p_s-\epsilon}\right)\frac{\partial u}{\partial x_m}
         \end{equation}
         and
         \begin{equation}\label{eq3.5}
             \int_{\R^N}\big((-\Delta)^s u+V(x)u-(u)_+^{p_s-\epsilon}\big)\frac{\partial W_{\lambda_j,\xi_j}}{\partial\lambda_j}=0,
         \end{equation}
         where $\rho\in (2\delta,\,5\delta)$, $j=1,2,\cdots,k$, $m=1,2,\cdots,N$. Then 
         $$
         c^l_i=0,\;i=1,2,\cdots,k,\;l=0,1,\cdots,N.
         $$
         \end{lemma}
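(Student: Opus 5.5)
Comparing \eqref{eq3.2} with \eqref{eq3.4} turns the hypotheses into a linear system for the multipliers. Subtracting gives, for every $j$ and $m=1,\dots,N$,
\begin{equation*}
\sum_{i=1}^k\sum_{l=0}^N c_i^l\int_{B_\rho(\xi_j)} W_{\lambda_i,\xi_i}^{p_s-1}Z_{i,l}\frac{\partial u}{\partial x_m}=0 .
\end{equation*}
Since $\rho>2\delta$ and the supports of $W_{\lambda_i,\xi_i}$ are contained in $B_{2\delta}(\xi_i)$, which are pairwise disjoint and disjoint from $B_\rho(\xi_j)$ for $i\neq j$ (because $\rho<5\delta$ and $|\xi_i-\xi_j|\geq d=10\delta$ up to a small perturbation), only the terms with $i=j$ survive. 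The integral may then be taken over all of $\R^N$. Next, \eqref{eq-2.22} says that $(-\Delta)^s u+V u-u_+^{p_s-\epsilon}=\sum c_i^l W_{\lambda_i,\xi_i}^{p_s-1}Z_{i,l}$. Inserting this into \eqref{eq3.5} gives
\begin{equation*}
\sum_{l=0}^N c_j^l\int_{\R^N} W_{\lambda_j,\xi_j}^{p_s-1}Z_{j,l}Z_{j,0}=0 .
\end{equation*}
Altogether we get, for each $j$, a homogeneous $(N+1)\times(N+1)$ linear system in $(c_j^0,\dots,c_j^N)$.

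The coefficients are computed by writing $\frac{\partial u}{\partial x_m}=\frac{\partial W_{\lambda_j,\xi_j}}{\partial x_m}+\frac{\partial \phi_\epsilon}{\partial x_m}$ on the support of $W_{\lambda_j,\xi_j}$. In $B_\delta(\xi_j)$ we have $\frac{\partial W_{\lambda_j,\xi_j}}{\partial x_m}=-Z_{j,m}$, and the annulus where $\nabla\eta_j\neq0$ contributes $O(\lambda_j^{-N})$-type errors. The main part is therefore $-\int W_{\lambda_j,\xi_j}^{p_s-1}Z_{j,l}Z_{j,m}$, whose values are already listed in the proof of Lemma \ref{l1}: $c\lambda_j^2\delta_{lm}+O(\lambda_j^{2-N})$ for $l,m\geq1$, $O(\lambda_j^{-N-1})$ for the mixed entries, and $c_0\lambda_j^{-2}+O(\lambda_j^{-N-2})$ for $l=m=0$. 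Rescaling the unknowns as $c_j^0\lambda_j^{-1}$ and $c_j^l\lambda_j$ makes the main part of the matrix $\mathrm{diag}(c_0,c,\dots,c)$ plus $o(1)$. Hence the matrix is invertible for small $\epsilon$, and every $c_j^l$ vanishes, provided the perturbation by $\phi_\epsilon$ is also $o(1)$ after this scaling.

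The main obstacle is the $\phi_\epsilon$ contribution $\int W_{\lambda_j,\xi_j}^{p_s-1}Z_{j,l}\,\partial_{x_m}\phi_\epsilon$, because the $\|\cdot\|_*$ norm controls $\phi_\epsilon$ but not its gradient. I would integrate by parts to move the derivative onto $W_{\lambda_j,\xi_j}^{p_s-1}Z_{j,l}$. The boundary terms on $\partial B_\rho(\xi_j)$ vanish because the support lies inside $B_{2\delta}(\xi_j)$. Each derivative of the bubble costs a factor $\lambda_j/(1+\lambda_j|x-\xi_j|)$, so the term is bounded by
\begin{equation*}
C\lambda_j\,\|\phi_\epsilon\|_*\int \frac{\lambda_j^{N}\,\lambda_j^{\pm1}}{(1+\lambda_j|x-\xi_j|)^{N+2s+1+\frac{N-2s}{2}+\sigma}}\leq C\lambda_j^{1\pm1}\|\phi_\epsilon\|_* ,
\end{equation*}
with the sign $+$ for $l\geq1$ and $-$ for $l=0$. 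By Proposition \ref{Prop2.4}, $\|\phi_\epsilon\|_*\to0$, so this is $o(1)$ relative to the diagonal entries $\lambda_j^{2}$ and $\lambda_j^{0}$ after the rescaling of the unknowns. The scalings in the $l=0$ row, and the cross terms between the $l=0$ row and the $m\geq1$ columns, still need to be tracked carefully. They are of the same order, so the rescaled matrix stays a small perturbation of $\mathrm{diag}(c_0,c,\dots,c)$, and it is invertible.
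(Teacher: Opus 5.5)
Your proposal is correct and follows the paper's proof step for step. You subtract \eqref{eq3.2} from \eqref{eq3.4}, insert \eqref{eq-2.22} into \eqref{eq3.5}, localize to $i=j$, integrate by parts to move $\partial_{x_m}$ off $\phi_\epsilon$ (yielding the $O(\|\phi_\epsilon\|_*)$ and $O(\lambda_j^2\|\phi_\epsilon\|_*)$ bounds), and conclude from the near-diagonal $(N+1)\times(N+1)$ system. Your rescaling $(c_j^0\lambda_j^{-1},c_j^l\lambda_j)$ is a tidier form of the paper's final two inequalities. Also, $Z_{j,m}=-\partial_{x_m}W_{\lambda_j,\xi_j}$ holds exactly, because $W_{\lambda_j,\xi_j}$ depends only on $x-\xi_j$, so there is no annulus error to handle.
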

         \begin{proof}
             It follows from \eqref{eq3.2} and \eqref{eq3.4} that 
             \begin{equation}\label{eq3.33}
                 \sum\limits_{i=1}^k\sum\limits_{l=0}^N c_i^l\int_{B_\rho(\xi_j)} W_{\lambda_i, \xi_i}^{p_s-1}Z_{i,l}\frac{\partial u}{\partial x_m}=0,\;j=1,2,\cdots,k,\;m=1,2,\cdots,N.
             \end{equation}
             By virtue of \eqref{eq-2.22}, \eqref{eq3.5} and \eqref{eq3.33},  we conclude that
             \begin{equation}
                 \sum\limits_{i=1}^k\sum\limits_{l=0}^N c_i^l\int_{B_\rho(\xi_j)} W_{\lambda_i, \xi_i}^{p_s-1}Z_{i,l}v=0,
             \end{equation}
             where $v$ denotes either $\frac{\partial u}{\partial x_m}$ or $\frac{\partial W_{\lambda_j,\xi_j}}{\partial\lambda_j}$ for $j=1,2,\cdots,k$ and $m=1,2,\cdots,N.$
             Direct computation gives
             \begin{equation}\label{eq3.35}
                \sum\limits_{l=0}^N c_j^l\int_{B_\rho(\xi_j)} W_{\lambda_j, \xi_j}^{p_s-1}Z_{j,l}\frac{\partial W_{\lambda_j, \xi_j}}{\partial x_m}=-\sum\limits_{l=0}^N c_j^l\int_{B_\rho(\xi_j)} W_{\lambda_j, \xi_j}^{p_s-1}Z_{j,l}\frac{\partial \phi_\epsilon}{\partial x_m},
             \end{equation}
              \begin{equation}\label{3.36}
                \sum\limits_{l=0}^N c_j^l\int_{B_\rho(\xi_j)} W_{\lambda_j, \xi_j}^{p_s-1}Z_{j,l}\frac{\partial W_{\lambda_j, \xi_j}}{\partial \lambda_j}=0.
             \end{equation}
               By integrating by parts, we infer
             \begin{equation}\label{eq3.37}
             \int_{B_\rho(\xi_j)} W_{\lambda_j, \xi_j}^{p_s-1}Z_{j,l}\frac{\partial \phi_\epsilon}{\partial x^m}=-\int_{B_\rho(\xi_j)} \frac{\partial (W_{\lambda_j, \xi_j}^{p_s-1}Z_{j,l}) }{\partial x^m}\phi_\epsilon= \begin{cases}
                     O(\|\phi_\epsilon\|_*),\quad&l=0,\\
                     O(\lambda_j^2\|\phi_\epsilon\|_*),\quad&l=1,2,\cdots,N.\\
                 \end{cases}
             \end{equation}
            Moreover, we have 
             \begin{equation}\label{eq3.38}
                 \int_{B_\rho(\xi_j)} W_{\lambda_j, \xi_j}^{p_s-1}Z_{j,l}\frac{\partial W_{\lambda_j, \xi_j}}{\partial x_m}=\begin{cases}
                 O(\lambda_j^{-N-1}), &l=0,\\
                     -c\lambda_j^2\delta_{lm}+O(\lambda_j^{2-N}), &l=1,2,\cdots,N,\\
                 \end{cases}
             \end{equation}
             and 
             \begin{equation}\label{eq3.39}
                  \int_{B_\rho(\xi_j)} W_{\lambda_j, \xi_j}^{p_s-1}Z_{j,l}\frac{\partial W_{\lambda_j, \xi_j}}{\partial \lambda_j}=
                  \begin{cases}
                 c_0\lambda_j^{-2}+O(\lambda_j^{-N-2}), &l=0,\\
                    O(\lambda_j^{-N-1}), &l=1,2,\cdots,N,\\
                 \end{cases}
             \end{equation}
             where $c_0>0$ and $c>0$.
Collecting \eqref{eq3.35}-\eqref{eq3.39}, we obtain 
$$
c\lambda_j^2|c_j^m|= O\left(\|\phi_\epsilon\|_*+\lambda_j^{-N-1}\right)|c_j^0|+O\left(\lambda_j^2\|\phi_\epsilon\|_*+\lambda_j^{2-N}\right)\sum_{l=1}^N|c_j^l|,
$$
and 
$$
(c_0\lambda_j^{-2}+O(\lambda_j^{-N-2}))|c_j^0|=O\left(\lambda_j^{-N-1}\sum_{l=1}^N|c_j^l|\right).
$$
Hence, $c_i^l=0$ for  $i=1,2,\cdots,k,\;l=0,1,\cdots,N.$
         \end{proof}
         To solve \eqref{eq3.4} and \eqref{eq3.5}, we start by  deriving the  following energy expansion  of \eqref{eq3.5}.
         \begin{lemma}\label{lem3.2}
             It holds that
             \begin{equation}\label{eq2810-1}
             \begin{aligned}
              &\int_{\R^N} \big((-\Delta)^s u+V(x)u-(u)_+^{p_s-\epsilon}\big)\frac{\partial W_{\lambda_i,\xi_i}}{\partial\lambda_i}\vspace{3mm}\\
              =&A\epsilon\lambda_i^{-1}+BV(\xi_i)\lambda_i^{-2s-1}+O\left(\epsilon\lambda_i^{-1-N}\log \lambda_i\right)+O\left(\epsilon^2\lambda_i^{-1}\log ^2\lambda_i\right)+O(\lambda_i^{-2s-1-\tau})\vspace{3mm}\\
              &+C\epsilon\lambda_i^{-1}\log\lambda_i\|\phi_\epsilon\|_*
              +C\lambda_i^{-\frac{N}{2}-1+s}\|\phi_\epsilon\|_*+C\lambda_i^{-1-2s}\|\phi_\epsilon\|_*+C\lambda_i^{-1}\|\phi_\epsilon\|_*^2,
               \end{aligned}
     \end{equation}
    where $A>0$, $B<0$ and $\tau>0$ are defined in Lemma\,\ref{lemA.1}.         
         \end{lemma}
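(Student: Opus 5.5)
The plan is to split $u={\bm W}_{\bm\lambda,\bm\xi}+\phi_\epsilon$ and write
\[
\int_{\R^N}\big((-\Delta)^s u+V u-u_+^{p_s-\epsilon}\big)\frac{\partial W_{\lambda_i,\xi_i}}{\partial\lambda_i}
= I_1+I_2,
\]
where
\[
I_1=\Big\langle (-\Delta)^s{\bm W}_{\bm\lambda,\bm\xi}+V{\bm W}_{\bm\lambda,\bm\xi}-{\bm W}_{\bm\lambda,\bm\xi}^{p_s-\epsilon},\,Z_{i,0}\Big\rangle
\]
and
\[
I_2=\Big\langle (-\Delta)^s\phi_\epsilon+V\phi_\epsilon-(p_s-\epsilon){\bm W}_{\bm\lambda,\bm\xi}^{p_s-1-\epsilon}\phi_\epsilon,\,Z_{i,0}\Big\rangle-\big\langle \mathcal N_1(\phi_\epsilon),Z_{i,0}\big\rangle .
\]
The term $I_1$ is the $\lambda_i$-derivative of the reduced energy of ${\bm W}_{\bm\lambda,\bm\xi}$. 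I will take its expansion from Lemma \ref{lemA.1} in the appendix, which I assume gives $A\epsilon\lambda_i^{-1}+BV(\xi_i)\lambda_i^{-2s-1}$ plus the listed remainders. These come from the following sources:
\begin{itemize}
\item The term $\epsilon\int U^{p_s}\log U\,\partial_\lambda U$ gives $A\epsilon\lambda_i^{-1}$, and its second-order Taylor term gives $O(\epsilon^2\lambda_i^{-1}\log^2\lambda_i)$.
\item The potential term $\int V W_i Z_{i,0}=V(\xi_i)\lambda_i^{-2s-1}\int U_{1,0}\partial_\lambda U|_{\lambda=1}+O(\lambda_i^{-2s-1-\tau})$ follows by $C^1$ continuity of $V$ on $B_{2\delta}(\xi_i)$. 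Here $N>4s$ makes $U\partial_\lambda U$ integrable, and the sign $B<0$ follows from the Pohozaev-type identity for $\int U\,\partial_\lambda U$.
\item The cut-off errors $\Upsilon_i$ from Lemma \ref{lem-J}, and the interaction between different peaks, are of order $\lambda_i^{-N+2s-1}$. These are absorbed into $O(\lambda_i^{-2s-1-\tau})$, with $\tau>0$ small since $N>4s$.
\item The log term localized on the annulus gives $O(\epsilon\lambda_i^{-1-N}\log\lambda_i)$.
\end{itemize}

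For $I_2$ I will move the operator onto $Z_{i,0}$ by symmetry, exactly as in Step 1 of Lemma \ref{l1}. By Lemma \ref{lemA}(i), $(-\Delta)^sZ_{i,0}=p_s\eta_iU_i^{p_s-1}\partial_\lambda U_i+J_{i,0}$. This splits $I_2$ into four pieces.
\begin{itemize}
\item \textbf{The $J_{i,0}$ piece and the annulus mismatch} between $\eta_i U^{p_s-1}$ and $W_i^{p_s-1}$. This gives $C\lambda_i^{-\frac N2-1+s}\|\phi_\epsilon\|_*$, by the same weighted integral computed in Lemma \ref{l1}.
\item \textbf{The potential term.} The bound $|\int V Z_{i,0}\phi_\epsilon|\le C\lambda_i^{-1-2s}\|\phi_\epsilon\|_*$ was already shown there; note $\lambda^{-1-s}$ appeared there, and I will sharpen it using the tighter decay on $B_{2\delta}$ only where needed.
\item \textbf{The exponent change} $p_s{\bm W}^{p_s-1}$ versus $(p_s-\epsilon){\bm W}^{p_s-1-\epsilon}$. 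By the mean value estimate used for $\mathcal N_2$ in Lemma \ref{lem2.5}, paired with $|Z_{i,0}|\le C\lambda_i^{-1}U_i$, this gives $C\epsilon\lambda_i^{-1}\log\lambda_i\|\phi_\epsilon\|_*$.
\item \textbf{The nonlinear piece} $\langle\mathcal N_1(\phi_\epsilon),Z_{i,0}\rangle$. By Lemma \ref{lemL2.8} this gives $C\lambda_i^{-1}\|\phi_\epsilon\|_*^2$, with the $|\phi|^{p_s}$ part handled as in Lemma \ref{lem2.5}.
\end{itemize}

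The main obstacle is the precise computation of the constants $A$ and $B$ and the remainder in $I_1$ for the truncated bubble. This is because $(-\Delta)^s$ of the truncated bubble is nonlocal, and in low dimensions the tail of $U$ interacts with the cutoff. I will control this through Lemma \ref{lem-J}: it yields an error of size $\lambda_i^{-\frac{N-2s}{2}}$ weighted by $(1+|x-\xi_i|)^{-N-2s}$, and integrating this against $|Z_{i,0}|$ gives $O(\lambda_i^{-N+2s-1})$, which is of higher order. This is where I will rely on the appendix computation, Lemma \ref{lemA.1}.
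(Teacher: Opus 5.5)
Your proposal is correct and follows essentially the same route as the paper: the same split into the bubble term (handled by Lemma \ref{lemA.1}), the linearized piece with the operator moved onto $Z_{i,0}$ via Lemma \ref{lemA} (exponent change on $B_\delta$, annulus mismatch, $J_{i,0}$), the potential piece, and the $\mathcal N_1$ piece. Your explicit treatment of the inter-peak term $\sum_{j\neq i}\langle \Upsilon_j, Z_{i,0}\rangle=O(\lambda_i^{-N+2s-1})$ is slightly more careful than the paper.

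One small correction: the bound $|\int_{\R^N} V Z_{i,0}\phi_\epsilon|\le C\lambda_i^{-1-2s}\|\phi_\epsilon\|_*$ holds only when $N>6s-2\sigma$. For smaller $N$, the rescaled integral $\int_{B_{2\delta\lambda_i}(0)}(1+|y|)^{-(\frac{3N}{2}-3s+\sigma)}\,dy$ grows like $\lambda_i^{3s-\frac N2-\sigma}$, with a logarithm at equality. This gives $C\lambda_i^{-\frac N2-1+s}\|\phi_\epsilon\|_*$ instead, which is exactly why the paper splits into the cases $N>6s$ and $4s<N\le 6s$, and why that term appears in the statement.
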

         \begin{proof}
             Firstly, we have 
             $$
             \begin{aligned}
              &\int_{\R^N}((-\Delta)^s u+V(x)u-(u)_+^{p_s-\epsilon})\frac{\partial W_{\lambda_i,\xi_i}}{\partial\lambda_i}\\
              =&\Big\langle I'(W_{\lambda_i,\xi_i}),\,\frac{\partial W_{\lambda_i, \xi_i}}{\partial\lambda_i}\Big\rangle+\int_{\R^N}\left((-\Delta)^s\phi_\epsilon\frac{\partial W_{\lambda_i,\xi_i}}{\partial\lambda_i}-(p_s-\epsilon)W_{\lambda_i,\xi_i}^{p_s-1-\epsilon}\frac{\partial W_{\lambda_i,\xi_i}}{\partial\lambda_i}\phi_\epsilon\right)\\
              &+\int_{\R^N}V(x)\frac{\partial W_{\lambda_i,\xi_i}}{\partial\lambda_i}\phi_\epsilon-\int_{\R^N}\left((W_{\lambda_i, \xi_i}+\phi_\epsilon)_+^{p_s-\epsilon}-W_{\lambda_i, \xi_i}^{p_s-\epsilon}-(p_s-\epsilon)W_{\lambda_i, \xi_i}^{p_s-1-\epsilon}\phi_\epsilon\right)\frac{\partial W_{\lambda_i,\xi_i}}{\partial\lambda_i}\\
             : =&\Big\langle I'(W_{\lambda_i,\xi_i}),\,\frac{\partial W_{\lambda_i, \xi_i}}{\partial\lambda_i}\Big\rangle+I_1+I_2+I_3.
             \end{aligned}
             $$
             By Lemma \ref{lemA}, we deduce 
             $$
             \begin{aligned}
                 |I_1|\leq &\int_{\R^N}\left|(-\Delta)^s\frac{\partial W_{\lambda_i,\xi_i}}{\partial\lambda_i}-(p_s-\epsilon)W_{\lambda_i,\xi_i}^{p_s-1-\epsilon}\frac{\partial W_{\lambda_i,\xi_i}}{\partial\lambda_i}\right||\phi_\epsilon|\\
                 \leq& \int_{\R^N}\left|p_s\eta_i(x) U_{\lambda_i, \xi_i}^{p_s-1}\frac{\partial U_{\lambda_i, \xi_i}}{\partial\lambda_i}(x)+J_{i,0}(x)-(p_s-\epsilon)W_{\lambda_i,\xi_i}^{p_s-1-\epsilon}\frac{\partial W_{\lambda_i,\xi_i}}{\partial\lambda_i}\right||\phi_\epsilon|\\
                      \leq &\int_{B_\delta(\xi_i)}\left|p_s U_{\lambda_i, \xi_i}^{p_s-1}-(p_s-\epsilon)U_{\lambda_i,\xi_i}^{p_s-1-\epsilon}\right|\left|\frac{\partial U_{\lambda_i,\xi_i}}{\partial\lambda_i}\phi_\epsilon\right|
                 +C\lambda_i^{-1}\int_{B_{2\delta}(\xi_i)\setminus B_\delta(\xi_i)}U_{\lambda_i,\xi_i}^{p_s}|\phi_\epsilon|+\int_{\R^N}|J_{i,0}\phi_\epsilon|\\
    \leq&C\epsilon\lambda_i^{-1}\int_{B_\delta(\xi_i)}U_{\lambda_i,\xi_i}^{p_s-\theta\epsilon}(1+|\log U_{\lambda_i,\xi_i}|)|\phi_\epsilon|+C\lambda_i^{-\frac{N}{2}-1-s-\sigma}\|\phi_\epsilon\|_*+C\lambda_i^{-\frac{N}{2}-1+s}\|\phi_\epsilon\|_*\\
                 \leq&C\epsilon\lambda_i^{-1}\log\lambda_i\|\phi_\epsilon\|_*+C\lambda_i^{-\frac{N}{2}-1+s}\|\phi_\epsilon\|_*,\\
             \end{aligned}
             $$
             here we have used the estimate from Lemma \ref{lemA} 
             $$
             |J_{i,0}(x)|\leq C\frac{\lambda_i^{-\frac{N+2-2s}{2}}}{(1+|x-\xi_i|)^{N+2s}}\leq C\frac{\lambda_i^{-1+2s}}{(1+\lambda_i|x-\xi_i|)^{\frac{N+2s}{2}}}.
             $$
             In a similar way, we find 
            $$
            \begin{aligned}
                |I_2|\leq& C\lambda_i^{-1}\int_{B_{2\delta}(\xi_i)}U_{\lambda_i,\xi_i}|\phi_\epsilon|\\
                \leq& C\lambda_i^{-1}\|\phi_\epsilon\|_*\sum_{j=1}^k\int_{B_{2\delta}(\xi_i)}\frac{\lambda_i^{\frac{N-2s}{2}}}{(1+\lambda_i|x-\xi_i|)^{N-2s}}\frac{\lambda_j^{\frac{N-2s}{2}}}{(1+\lambda_j|x-\xi_j|)^{\frac{N-2s}{2}+\sigma}}\\
                \leq& C\lambda_i^{-1}\|\phi_\epsilon\|_*\int_{B_{2\delta}(\xi_i)}\frac{\lambda_i^{N-2s}}{(1+\lambda_i|x-\xi_i|)^{\frac{3}{2}N-3s+\sigma}}\\
                \leq& C
                \begin{cases}
                    \lambda_i^{-1-2s}\|\phi_\epsilon\|_*,\quad& \text{if}\quad N> 6s,\vspace{2mm}\\
                    \lambda_i^{-\frac{N}{2}-1+s}\|\phi_\epsilon\|_*,\quad& \text{if}\quad 4s<N\leq 6s.\vspace{2mm}\\
                \end{cases}
            \end{aligned}
            $$
     On the other hand,  for  $N>6s$, we have 
            $$
            \begin{aligned}
            |I_3|\leq &C\int_{\R^N}W_{\lambda_i, \xi_i}^{p_s-2-\epsilon}\phi_\epsilon^2\left|\frac{\partial W_{\lambda_i,\xi_i}}{\partial\lambda_i}\right|\leq C\lambda_i^{-1}\int_{\R^N}W_{\lambda_i, \xi_i}^{p_s-1-\epsilon}\phi_\epsilon^2\\
            \leq &C\|\phi_\epsilon\|_*^2\int_{\R^N}\frac{\lambda_i^{N-1}}{(1+\lambda_i|x-\xi_i|)^{N+2s+2\sigma-(N-2s)\epsilon}}\\
            \leq &C\lambda_i^{-1}\|\phi_\epsilon\|_*^2.
            \end{aligned}
            $$
            If ${N\leq 6s}$, then
            $$
            \begin{aligned}
                |I_3|\leq &C\int_{\R^N}W_{\lambda_i, \xi_i}^{p_s-2-\epsilon}\phi_\epsilon^2\left|\frac{\partial W_{\lambda_i,\xi_i}}{\partial\lambda_i}\right|+|\phi_\epsilon|^{p_s-\epsilon}\left|\frac{\partial W_{\lambda_i,\xi_i}}{\partial\lambda_i}\right|\\
                \leq& C\lambda_i^{-1}\|\phi_\epsilon\|_*^2+C\|\phi_\epsilon\|_*^{p_s-\epsilon}\int_{\R^N}\frac{\lambda_i^{N-1-\frac{N-2s}{2}\epsilon}}{(1+\lambda_i|x-\xi_i|)^{\frac{3}{2}N-s+(p_s-\epsilon)\sigma-\frac{N-2s}{2}\epsilon}}\\
\leq&C\lambda_i^{-1}\|\phi_\epsilon\|_*^2+C\lambda_i^{-1}\|\phi_\epsilon\|_*^{p_s-\epsilon}\\
                \leq &C\lambda_i^{-1}\|\phi_\epsilon\|_*^2.
            \end{aligned}
            $$
        The above estimates together with Lemma \ref{lemA.1} yield \eqref{eq2810-1}.
         \end{proof}
         
             By integration by parts, we have
         $$
        - \int_{B_\rho(\xi_j)}V(x)u\frac{\partial u}{\partial x_m}=-\frac{1}{2}\int_{\partial B_\rho(\xi_j)}V(x)u^2\nu_m+\frac{1}{2}\int_{B_\rho(\xi_j)}\frac{\partial V(x)}{\partial x_m}u^2,
         $$
         and 
         $$
         \int_{B_\rho(\xi_j)}({u})_+^{p_s-\epsilon}\frac{\partial u}{\partial x_m}=\frac{1}{2^*_s-\epsilon}\int_{\partial B_\rho(\xi_j)}({u})_+^{2^*_s-\epsilon}\nu_m.
         $$
         Then \eqref{eq3.4} is equivalent to 
         \begin{equation}\label{eq3.17}
             \begin{aligned}
                \frac{1}{2}\int_{B_\rho(\xi_j)}\frac{\partial V(x)}{\partial x_m}u^2=& -\int_{\partial''\mathfrak{B}_\rho^+(\xi_j)}t^{1-2s}\frac{\partial\widetilde{u}}{\partial\nu}\frac{\partial\widetilde{u}}{\partial x_m}
                +\frac{1}{2}\int_{\partial''\mathfrak{B}_\rho^+(\xi_j)}t^{1-2s}|\nabla\widetilde{u}|^2\nu_m\\
                &+\frac{1}{2}\int_{\partial B_\rho(\xi_j)}V(x)u^2\nu_m-\frac{1}{2^*_s-\epsilon}\int_{\partial B_\rho(\xi_j)}({u})_+^{2^*_s-\epsilon}\nu_m,\\
             \end{aligned}
         \end{equation}
         where $\rho\in(2\delta,\,5\delta)$, $ m=1,2,\cdots,N$ and $j=1,2,\cdots,k$.

        Based on \eqref {eq3.17}, we establish the following lemma.
         \begin{lemma}\label{lem3.3}
            \eqref{eq3.4} is equivalent to
           \begin{equation}\label{eq3.41}
         \frac{\partial V(\xi_j)}{\partial x_m}=o(1),\quad j=1,2,\cdots,k,\quad m=1,2,\cdots,N.
         \end{equation}
         \end{lemma}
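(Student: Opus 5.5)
The plan is to work with the equivalent form \eqref{eq3.17} of \eqref{eq3.4}. I will show that every term on its right-hand side is of smaller order than the main term produced by the left-hand side. The correct normalization comes from the left-hand side. Since $u=\bm W_{\bm\lambda,\bm\xi}+\phi_\epsilon$ and $\bm W_{\bm\lambda,\bm\xi}=W_{\lambda_j,\xi_j}$ on $B_\rho(\xi_j)$ for $\rho\in(2\delta,5\delta)$ (because $\rho<d-2\delta$), I will write
\[
\frac12\int_{B_\rho(\xi_j)}\frac{\partial V}{\partial x_m}u^2=\frac12\int_{B_{2\delta}(\xi_j)}\frac{\partial V}{\partial x_m}U_{\lambda_j,\xi_j}^2\eta_j^2+\int_{B_\rho(\xi_j)}\frac{\partial V}{\partial x_m}\Big(W_{\lambda_j,\xi_j}\phi_\epsilon+\tfrac12\phi_\epsilon^2\Big).
\]
Since $N>4s$, $U^2_{\lambda_j,\xi_j}$ is integrable at scale $\lambda_j^{-1}$ and concentrates at $\xi_j$. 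Using only the continuity of $\nabla V$, the first integral equals $\lambda_j^{-2s}\big(\frac{\partial V(\xi_j)}{\partial x_m}\int_{\R^N}U_{1,0}^2+o(1)\big)$. The $\phi_\epsilon$ terms are estimated with $\|\cdot\|_*$ as in the bound for $I_2$ in Lemma \ref{lem3.2}. They are $O(\lambda_j^{-2s}\|\phi_\epsilon\|_*)+O(\lambda_j^{-\frac N2+s}\|\phi_\epsilon\|_*)$ plus a quadratic term, and all of these are $o(\lambda_j^{-2s})$ by Proposition \ref{Prop2.4}, because $\lambda_j\sim\epsilon^{-1/2s}$ and $\|\phi_\epsilon\|_*\le C\epsilon^{\frac12(1+\tau_0)}$. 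The borderline case $N\le 6s$ needs the extra $\tau_0$.

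Next come the boundary terms on $\partial B_\rho(\xi_j)$. There $\bm W_{\bm\lambda,\bm\xi}=0$ and $u=\phi_\epsilon$, with $|\phi_\epsilon|\le C\|\phi_\epsilon\|_*\lambda_j^{-\sigma}$ because $|x-\xi_i|\ge c$ for every $i$. Hence these terms are $O(\|\phi_\epsilon\|_*^2)$, which is $o(\lambda_j^{-2s})$ since $\|\phi_\epsilon\|_*^2\le C\epsilon^{1+\tau_0}$ and $\lambda_j^{-2s}\sim\epsilon$.

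The main obstacle is the pair of extension integrals over $\partial''\mathfrak B^+_\rho(\xi_j)$. For these I need an estimate of $|\nabla\widetilde u|$ away from the concentration points. First, $\widetilde{\bm W}$ is the Poisson extension of a function of size $\lambda_j^{-\frac{N-2s}{2}}$ away from $\xi_j$. From the kernel $P_s$, its gradient at distance $\sim\rho$ from $(\xi_j,0)$ is $O(\lambda_j^{-\frac{N-2s}{2}})$ in the weighted sense. Second, for $\widetilde\phi_\epsilon$ I will use the standard weighted-energy route. Apply $\int_{\mathfrak B_{6\delta}^+\setminus\mathfrak B_{\delta'}^+}t^{1-2s}|\nabla\widetilde\phi_\epsilon|^2\le C\big(\|\phi_\epsilon\|^2_{H^s}\text{-local}+\ldots\big)$, and then choose a good radius $\rho\in(2\delta,5\delta)$ by a mean-value (Fubini) argument. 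This controls $\int_{\partial''\mathfrak B_\rho^+}t^{1-2s}|\nabla\widetilde u|^2$ by the annular energy, and that energy is bounded by $C(\lambda_j^{-(N-2s)}+\|\phi_\epsilon\|_*^2)$ through testing the equation with cutoffs. Both quantities are $o(\lambda_j^{-2s})$ because $N>4s$. This is where the freedom to choose $\rho$ is used; the reduction in the previous lemma only needs \eqref{eq3.4} for some such $\rho$.

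Dividing \eqref{eq3.17} by $\lambda_j^{-2s}\int U_{1,0}^2$ then gives $\frac{\partial V(\xi_j)}{\partial x_m}=o(1)$. Conversely, every step above is an identity up to $o(\lambda_j^{-2s})$ errors, so \eqref{eq3.4} holds exactly when the $\xi$-dependent expression $\frac{\partial V(\xi_j)}{\partial x_m}+o(1)$ vanishes. This is the sense in which the two statements are equivalent, and it will later be solved by a degree argument using the stability of $\xi_j^*$.
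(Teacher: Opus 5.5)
Your proposal is correct and follows essentially the same route as the paper. You reduce to \eqref{eq3.17} and extract the main term $\lambda_j^{-2s}\frac{\partial V(\xi_j)}{\partial x_m}\int_{\R^N}U_{1,0}^2$ from the left side, use $u=\phi_\epsilon$ on $\partial B_\rho(\xi_j)$, and control the hemisphere integrals by a pointwise bound on $\nabla\widetilde{\bm W}_{\bm\lambda,\bm\xi}$ (as in Lemma \ref{lemGA.5}) together with a cutoff energy estimate on an annulus and a good choice of $\rho\in(2\delta,5\delta)$ (as in Lemma \ref{newlemGA.6}); your coarser bound $C(\lambda_j^{-(N-2s)}+\|\phi_\epsilon\|_*^2)$, where the paper carries an extra factor $\lambda_j^{-2\sigma}$ and a $\lambda_j^{-2\sigma}\|\mathcal R\|_{**}\|\phi_\epsilon\|_*$ term, is still $o(\lambda_j^{-2s})$, which is all this lemma needs.
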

         
         \begin{proof}
            By  Lemma \ref{lemGA.5} and Lemma \ref{newlemGA.6}, there exists  $\rho\in (2\delta,5\delta)$ such that
             $$
                 \begin{aligned}
                     \left|\int_{\partial''\mathfrak{B}_\rho^+(\xi_j)}t^{1-2s}\frac{\partial\widetilde{u}}{\partial\nu}\frac{\partial\widetilde{u}}{\partial x_m}\right|\leq &C\int_{\partial''\mathfrak{B}_\rho^+(\xi_j)}t^{1-2s}|\nabla \widetilde{\bm W}_{\bm \lambda, \bm \xi}|^2+C\int_{\partial''\mathfrak{B}_\rho^+(\xi_j)}t^{1-2s}|\nabla \widetilde{\phi}_\epsilon|^2\\
                     \leq & C\int_{\partial''\mathfrak{B}_\rho^+(\xi_j)}t^{1-2s}\left(\sum_{i=1}^k\frac{1}{\lambda_i^{\frac{N-2s}{2}}}\frac{1}{(1+|x-\xi_i|)^{N-2s+1}}\right)^2\\
                     &+C\lambda_j^{-2\sigma}\|\phi_\epsilon\|_*^2+C\lambda_j^{-2\sigma}\|\mathcal{R}\|_{**}\|\phi_\epsilon\|_*\\
                     \leq& C\lambda_j^{-N+2s}+C\lambda_j^{-2\sigma}\|\phi_\epsilon\|_*^2+C\lambda_j^{-2\sigma}\|\mathcal{R}\|_{**}\|\phi_\epsilon\|_*,
                 \end{aligned}
             $$
             where $\mathcal{R}$ is defined in \eqref{eq-2.24R}.
             Similarly, we find 
             $$
                 \left|\int_{\partial''\mathfrak{B}_\rho^+(\xi_j)}t^{1-2s}|\nabla\widetilde{u}|^2\nu_m\right|\leq C\lambda_j^{-N+2s}+C\lambda_j^{-2\sigma}\|\phi_\epsilon\|_*^2
    +C\lambda_j^{-2\sigma}\|\mathcal{R}\|_{**}\|\phi_\epsilon\|_*.
             $$
             Since $\eta_i=0$ on $\partial B_\rho$ for $i=1,2,\cdots,k$, we have $u=\phi_\epsilon$ on $\partial B_\rho$. 
             Then it holds  that
            $$
                 \left|\int_{\partial B_\rho(\xi_j)}V(x)u^2\nu_m\right|\leq C\|\phi_\epsilon\|_*^2\int_{\partial B_\rho(\xi_j)}\left(\sum_{j=1}^k\frac{\lambda_j^{\frac{N-2s}{2}}}{(1+\lambda_j|x-\xi_j|)^{\frac{N-2s}{2}+\sigma}}\right)^2\leq  C\lambda_j^{-2\sigma}\|\phi_\epsilon\|_*^2,
            $$
              and 
              $$
                  \left|\int_{\partial B_\rho(\xi_j)}({u})_+^{2^*_s-\epsilon}\nu_m\right|
                  \leq C\lambda_j^{-(2^*_s-\epsilon)\sigma}\|\phi_\epsilon\|_*^{2^*_s-\epsilon}.
              $$
              Combining the above estimates, we find that \eqref{eq3.17} is equivalent to 
              \begin{equation}\label{eq:20251104}
                  \int_{B_\rho(\xi_j)}\frac{\partial V(x)}{\partial x_m}u^2=O\left(\lambda_j^{-N+2s}\right)+O\left(\lambda_j^{-2\sigma}\|\phi_\epsilon\|_*^2\right)
+O\left(\lambda_j^{-2\sigma}\|\mathcal{R}\|_{**}\|\phi_\epsilon\|_*\right).
              \end{equation}
         It remains to estimate the left-hand side. Using the decay of $U_{\lambda_j,\xi_j}$, we obtain
         $$
         \begin{aligned}
             &\int_{B_{\rho}(\xi_j)}\frac{\partial V(x)}{\partial x_m}u^2\\
             =&\int_{B_{\rho}(\xi_j)}\frac{\partial V(x)}{\partial x_m}\eta_j^2U_{\lambda_j,\xi_j}^2+O\left(\int_{B_{\rho}(\xi_j)}(U_{\lambda_j,\xi_j}|\phi_\epsilon|+\phi_\epsilon^2)\right)\\
             =&\int_{B_{\delta}(\xi_j)}\frac{\partial V(x)}{\partial x_m}U_{\lambda_j,\xi_j}^2+O(\lambda_j^{-N+2s})+O\left(\lambda_j^{-2\sigma}\|\phi_\epsilon\|_*^2\right)+
                \begin{cases}
                   O\left( \lambda_j^{-2s}\|\phi_\epsilon\|_*\right),& \text{if}\quad N> 6s,\vspace{2mm}\\
                    O(\lambda_j^{-\frac{N}{2}+s}\|\phi_\epsilon\|_*),& \text{if}\quad N\leq 6s,\vspace{2mm}\\
                \end{cases}\\
                =&\gamma_{s,N}^2\lambda_j^{-2s}\frac{\partial V(\xi_j)}{\partial x_m}\int_{\R^N}\frac{1}{(1+|x|^2)^{N-2s}}+o(\lambda_j^{-2s}).
         \end{aligned}
         $$ 
As a result, we conclude
         \begin{equation}
         \frac{\partial V(\xi_j)}{\partial x_m}=o(1),\quad j=1,2,\cdots,k,\quad m=1,2,\cdots,N.
         \end{equation}
         \end{proof}   
         
         Now we are ready to give the proof of Theorem \ref{Mth}.
         \begin{proof}[Proof of Theorem \ref{Mth}]
          By     Lemma \ref{lem3.2} and Lemma \ref{lem3.3},  \eqref{eq3.4} and \eqref{eq3.5} are equivalent to 
             \begin{equation}\label{eqL3.15}
                 \frac{\partial V(\xi_j)}{\partial x_m}=o(1),\quad j=1,2,\cdots,k,\quad m=1,2,\cdots,N,
             \end{equation}
          and   \begin{equation}\label{eqL3.16}
                 A\epsilon\lambda_j^{-1}+BV(\xi_j)\lambda_j^{-2s-1}+o(\epsilon^{1+\frac{1}{2s}})=0,\quad j=1,2,\cdots,k.
             \end{equation}
             Let $\lambda_j=(t_j\epsilon)^{-\frac{1}{2s}}$, $t_j\in (-\frac{A}{2BV(\xi_j)},\,-\frac{2A}{BV(\xi_j)})$. 
           Then \eqref{eqL3.16} reduces to 
             \begin{equation}\label{eqL3.17}
                 A+BV(\xi_j)t_j+o(1)=0,\quad j=1,2,\cdots,k.
             \end{equation}
             Since $\xi_j^*$, $j=1,2,\cdots,k$ are stable critical points of $V(x)$ with $V(\xi_j^*)>0$. Hence there exists $\epsilon_0>0$ such that for $\epsilon\in (0,\epsilon_0)$, \eqref{eqL3.15} and \eqref{eqL3.17} have a solution $(\xi_{j}^\epsilon,\,\lambda_{j}^\epsilon),\; j=1,2,\cdots,k$ with  $\xi_{j}^\epsilon$  close to $\xi_j^*$ and $\lambda_{j}^\epsilon$ close to $\left(-\frac{A}{BV(\xi_j)}\epsilon\right)^{-\frac{1}{2s}}$ and the proof of Theorem \ref{Mth} is completed.
         \end{proof}

         \section{Non-degeneracy of multi-peak solutions}
         In this section, we study the non-degeneracy of multi-peak solutions $u_\epsilon={\bm W_{\bm\lambda_\epsilon,\bm\xi_\epsilon}}+\phi_\epsilon$ constructed previously. To this end, we consider the following linearized problem
          \begin{equation}\label{eqND}
          \begin{cases}
             (-\Delta)^s\omega_\epsilon+V(x)\omega_\epsilon-(p_s-\epsilon)u_\epsilon^{p_s-1-\epsilon}\omega_\epsilon=0 \quad\text{in}\ \R^N,\\
             \omega_\epsilon\in H_V^s(\mathbb{R}^N).
             \end{cases}
         \end{equation} 
         We aim to show that \eqref{eqND} admits only the trivial solution for sufficiently small $\epsilon$.
          Arguing by contradiction,  we suppose that there exists a nontrivial solution $\omega_\epsilon$ to \eqref{eqND} with   $\|\omega_\epsilon\|_*=1$. Throughout the subsequent sections, we omit the subscript $\epsilon$ on  $\bm{\lambda}_\epsilon, \bm{\xi}_\epsilon$ for the sake of simplicity.

\subsection{Improved estimates  and Lyapunov-Schmidt decomposition of $\omega_\epsilon$}

In this subsection, we proceed to derive improved estimates for the multi-peak solutions. 
To obtain finer estimates for $\omega_\epsilon$, we need to make Lyapunov-Schmidt type decomposition of $\omega_\epsilon$, 
which can be viewed as
the inverse procedure of the Lyapunov–Schmidt reduction.

\begin{lemma}\label{thm1.1}
Let $s>\frac{1}{2}$, $N\geq 6s$. 
Under the same assumptions as in Theorem \ref{NDth}, 
assume $u_\epsilon$ is the peak solutions of problem \eqref{eq:1} with the form \eqref{1a}, obtained in Theorem \ref{Mth}. Then 
\begin{equation*}\label{1q}
\|\phi_{\epsilon}\|
_{*}=O(\epsilon^{1-\frac{\sigma}{2s}})  \ \ \text{and} \ \ \frac{\partial V}{\partial x_{m}}(\xi_{i})=o\big(\epsilon^{\frac{1}{2s}}\big), \ \ m=1,\cdots,N,
\end{equation*}
where $\sigma\in\left(0,\min\big\{\frac{s}{2},2s-1\big\}\right)$.
 \end{lemma}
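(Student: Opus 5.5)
The plan has two stages. First, improve the bound on $\|\phi_\epsilon\|_*$ by re-running the fixed-point estimate of Proposition \ref{Prop2.4} with the sharper size of $\mathcal R$ that $N\ge 6s$ allows. Second, feed this bound back into the local Pohozaev identity \eqref{eq:20251104} and expand its left-hand side to one more order, using $V\in C^2(B_{5\delta}(\xi_i^*))$.

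\textbf{Step 1: $\|\phi_\epsilon\|_*=O(\epsilon^{1-\frac{\sigma}{2s}})$.} Since $u_\epsilon$ is a genuine solution, all $c_i^l=0$, and $\phi_\epsilon=L(\mathcal N(\phi_\epsilon))+L(\mathcal R)$. Proposition \ref{prop2.3} and Lemma \ref{lem2.5} then give
\[
\|\phi_\epsilon\|_*\le C\|\mathcal R\|_{**}+C\big(\|\phi_\epsilon\|_*^2+\|\phi_\epsilon\|_*^{p_s-\epsilon}+\epsilon|\log\epsilon|\,\|\phi_\epsilon\|_*\big).
\]
Because $\|\phi_\epsilon\|_*\to0$ and $p_s-\epsilon>1$, the bracket is $o(\|\phi_\epsilon\|_*)$ and can be absorbed. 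So it suffices to show $\|\mathcal R\|_{**}=O(\epsilon^{1-\frac{\sigma}{2s}})$, and I would re-read the proof of Lemma \ref{lem2.6} with $N\ge 6s$ and $\lambda_i\sim\epsilon^{-\frac1{2s}}$:
\begin{itemize}
\item $\mathcal R_1$ is $O(\epsilon|\log\epsilon|+\epsilon^{\frac{N+2s-2\sigma}{4s}})$, and this is $O(\epsilon^{1-\frac{\sigma}{2s}})$.
\item $\mathcal R_2$ is $O(\epsilon^{\frac{N-2s-2\sigma}{4s}})$, and $\frac{N-2s}{4s}\ge1$ when $N\ge6s$.
\item $\mathcal R_3$ is bounded by $C\lambda^{-2s+\sigma}=C\epsilon^{1-\frac{\sigma}{2s}}$. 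This holds both for $N>6s$ and in the borderline case $N=6s$, where $\frac{N-2s}{2}=2s$.
\end{itemize}
Hence $\|\phi_\epsilon\|_*=O(\epsilon^{1-\frac{\sigma}{2s}})$.

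\textbf{Step 2: the estimate for $\nabla V(\xi_i)$.} I return to \eqref{eq:20251104} and expand its left-hand side more precisely. The main term is $\int_{B_\delta(\xi_j)}\partial_mV\,U_{\lambda_j,\xi_j}^2$. Using $V\in C^2$, write
\[
\partial_mV(x)=\partial_mV(\xi_j)+\nabla\partial_mV(\xi_j)\cdot(x-\xi_j)+O(|x-\xi_j|^2).
\]
The linear term integrates to zero by oddness. The quadratic remainder contributes $O\big(\lambda_j^{-2s-\min\{2,\,N-4s\}}\log\lambda_j\big)$, and since $N-4s\ge2s>1$ this is $o(\lambda_j^{-2s-1})$.

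The remaining terms on the left are the cross term $\int U_{\lambda_j,\xi_j}|\phi_\epsilon|$ and $\int\phi_\epsilon^2$. The cross term is $O(\lambda_j^{-2s}\|\phi_\epsilon\|_*)$, possibly with a logarithm at $N=6s$. By Step 1 this equals $\lambda_j^{-2s}\,O(\epsilon^{1-\frac{\sigma}{2s}})$, which is $o(\lambda_j^{-2s}\epsilon^{\frac1{2s}})=o(\lambda_j^{-2s-1})$ exactly because $\sigma<2s-1$. The term $\lambda_j^{-2\sigma}\|\phi_\epsilon\|_*^2$ is $O(\epsilon^2)=O(\lambda_j^{-4s})$, which is $o(\lambda_j^{-2s-1})$ since $s>\frac12$.

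On the right-hand side of \eqref{eq:20251104}:
\begin{itemize}
\item $\lambda_j^{-N+2s}=\lambda_j^{-2s}\lambda_j^{-(N-4s)}$, which is $o(\lambda_j^{-2s-1})$ since $N-4s\ge 2s>1$.
\item $\lambda_j^{-2\sigma}\|\phi_\epsilon\|_*^2$ is handled as above.
\item $\lambda_j^{-2\sigma}\|\mathcal R\|_{**}\|\phi_\epsilon\|_*$ is $O(\epsilon^{2-\frac{2\sigma}{s}}\lambda_j^{-2\sigma})=O(\epsilon^{2-\frac{\sigma}{s}})$, which is again $o(\lambda_j^{-2s-1})$.
\end{itemize}
Dividing by $c\lambda_j^{-2s}$ then yields $\partial_mV(\xi_j)=o(\lambda_j^{-1})=o(\epsilon^{\frac1{2s}})$.

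\textbf{Expected main obstacle.} The delicate point is the $\phi$-terms in Step 2, namely controlling the cross term $\int\partial_mV\,W_{\lambda_j,\xi_j}\phi_\epsilon$ at the sharp order $\lambda^{-2s}\|\phi_\epsilon\|_*$ when $N=6s$ (where a logarithm appears). This is precisely where the restriction $\sigma<2s-1$, and hence $s>\frac12$, is consumed. If the crude bound loses too much, I would first try exploiting the orthogonality conditions together with the Taylor expansion of $\partial_mV$ to cancel the leading part of that cross term.
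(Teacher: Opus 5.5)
Your proposal is correct and is essentially the paper's proof. You re-run the fixed-point argument of Proposition~\ref{Prop2.4}, using that for $N\ge 6s$ every piece of $\mathcal R$ is $O(\epsilon^{1-\frac{\sigma}{2s}})$. You then feed this into \eqref{eq:20251104} with a first-order expansion of $\partial_mV$, where $\sigma<2s-1$, $s>\frac12$ and $N\ge 6s$ make every error term $o(\lambda_j^{-2s-1})$.

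A few small corrections:
\begin{itemize}
\item With only $V\in C^2$, the Taylor remainder is $o(|x-\xi_j|)$ uniformly, not $O(|x-\xi_j|^2)$. This still gives $o(\lambda_j^{-2s-1})$ by dominated convergence, because $\int_{\R^N}|y|(1+|y|^2)^{-(N-2s)}\,dy<\infty$ for $N>4s+1$.
\item The term $\lambda_j^{-2\sigma}\|\mathcal R\|_{**}\|\phi_\epsilon\|_*$ equals $\lambda_j^{-2\sigma}O(\epsilon^{2-\frac{\sigma}{s}})=O(\epsilon^{2})$. It is not $O(\epsilon^{2-\frac{\sigma}{s}})$ as you wrote; that bound would be $o(\lambda_j^{-2s-1})$ only if $\sigma<s-\frac12$.
\item The cross term carries no logarithm, even at $N=6s$, since $\frac{3(N-2s)}{2}+\sigma>N$. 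So the obstacle you anticipated does not arise.
\end{itemize}
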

 \begin{proof}
     Since $N\geq 6s$, by repeating the proof of Proposition \ref{Prop2.4}, we obtain that 
      $\|\phi_\epsilon\|_*=O(\epsilon^{1-\frac{\sigma}{2s}})$. Note that  $V(x)\in C^2(B_{5\delta}(\xi^*_i))$. Then we have 
     $$
         \begin{aligned}
             \int_{B_{\rho}(\xi_j)}\frac{\partial V(x)}{\partial x_m}u_{\epsilon}^2=&\int_{B_{\rho}(\xi_j)}\frac{\partial V(x)}{\partial x_m}\eta_j^2U_{\lambda_j,\xi_j}^2+O\left(\int_{B_{\rho}(\xi_j)}(U_{\lambda_j,\xi_j}|\phi_\epsilon|+\phi_\epsilon^2)\right)\\
             =&\int_{B_{\delta}(\xi_j)}\frac{\partial V(x)}{\partial x_m}U_{\lambda_j,\xi_j}^2+O(\lambda_j^{-N+2s})+O\left(\lambda_j^{-2\sigma}\|\phi_\epsilon\|_*^2\right)+ O\left( \lambda_j^{-2s}\|\phi_\epsilon\|_*\right)\\
            =&\gamma_{s,N}^2\lambda_j^{-2s}\frac{\partial V}{\partial x_m}(\xi_j)\int_{\R^N}\frac{1}{(1+|x|^2)^{N-2s}}+o(\lambda_j^{-1-2s}).
         \end{aligned}
    $$
 By means of  \eqref{eq:20251104}, we derive that $\frac{\partial V}{\partial x_{m}}(\xi_{i})=o\big(\epsilon^{\frac{1}{2s}}\big)$.
 \end{proof}
         
         \begin{lemma}\label{lem4.1}
             There exists $C>0$ such that 
             $$
             \begin{aligned}
                |u_\epsilon(x)| \leq &C\sum_{i=1}^k\frac{\lambda_i^{\frac{N-2s}{2}}}{(1+\lambda_i|x-\xi_i|)^{N-2s}},\quad |\nabla u_\epsilon(x)| \leq C\sum_{i=1}^k\frac{\lambda_i^{\frac{N+2-2s}{2}}}{(1+\lambda_i|x-\xi_i|)^{N+1-2s}},\\
                 |\omega_\epsilon(x)| \leq &C\sum_{i=1}^k\frac{\lambda_i^{\frac{N-2s}{2}}}{(1+\lambda_i|x-\xi_i|)^{N-2s}},\quad |\nabla \omega_\epsilon(x)| \leq C\sum_{i=1}^k\frac{\lambda_i^{\frac{N+2-2s}{2}}}{(1+\lambda_i|x-\xi_i|)^{N+1-2s}}.\\
             \end{aligned}
             $$
         \end{lemma}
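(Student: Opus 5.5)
The plan is to obtain the pointwise bounds for $u_\epsilon$ and $\omega_\epsilon$ through the Riesz potential representation, bootstrapping the decay exponent from the a priori $\|\cdot\|_*$ information, and then to get the gradient bounds by rescaling and local regularity.

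\textbf{Step 1: representation.} Both $u_\epsilon$ and $\omega_\epsilon$ solve $(-\Delta)^s f + Vf = g$. Since $V\ge 0$ and $u_\epsilon>0$, we have
\[
0<u_\epsilon(x)\le C\int_{\R^N}\frac{u_\epsilon^{p_s-\epsilon}(y)}{|x-y|^{N-2s}}\,dy .
\]
For $\omega_\epsilon$, the potential $V\omega_\epsilon$ is handled by a maximum-principle or Kato-type comparison, which gives
\[
|\omega_\epsilon(x)|\le C\int_{\R^N}\frac{u_\epsilon^{p_s-1-\epsilon}(y)\,|\omega_\epsilon(y)|}{|x-y|^{N-2s}}\,dy .
\]
The starting information is the following.
\begin{itemize}
\item $u_\epsilon={\bm W}+\phi_\epsilon$ with $\|\phi_\epsilon\|_*$ small, so $|u_\epsilon|\le C\sum_i \lambda_i^{\frac{N-2s}{2}}(1+\lambda_i|x-\xi_i|)^{-\frac{N-2s}{2}-\sigma}$.
\item $\|\omega_\epsilon\|_*=1$ gives the same weight bound for $\omega_\epsilon$.
\end{itemize}

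\textbf{Step 2: bootstrap on the decay exponent.} Suppose $|f|\le C\sum_i \lambda_i^{\frac{N-2s}{2}}(1+\lambda_i|x-\xi_i|)^{-\alpha}$ with $\alpha<N-2s$. Then the integrand $u_\epsilon^{p_s-1-\epsilon}|f|$ is bounded by
\[
C\sum_i \lambda_i^{\frac{N+2s}{2}}(1+\lambda_i|x-\xi_i|)^{-\alpha-\frac{4s}{N-2s}(\frac{N-2s}{2}+\sigma)+O(\epsilon)} .
\]
The $\epsilon$-loss $\lambda^{-\frac{N-2s}{2}\epsilon}$ is harmless because $\lambda\sim\epsilon^{-1/(2s)}$, so $\lambda^{O(\epsilon)}=O(1)$. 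Cross terms between different peaks are absorbed using separation of the $\xi_i$ and the equivalence of the $\lambda_i$.

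The convolution estimate Lemma \ref{lemLL2.3} (with $\theta=\alpha+2s+\tau$) then returns the exponent $\min\{\alpha+\tau',\,N-2s\}$ for some fixed $\tau'>0$. After finitely many iterations the exponent reaches $N-2s$. This is the optimal Newtonian decay, since the right-hand side decays faster than $|y|^{-N}$ once $\alpha>2s$.

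Far from all peaks, $u_\epsilon^{p_s-1-\epsilon}\le C\lambda^{-2s}$. This case needs a separate check: there the gain comes from $\lambda$ rather than $|x|$. It is handled because the weights $(1+\lambda|x-\xi|)^{-\beta}$ already encode global decay.

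\textbf{Step 3: gradients.} Rescale around each $\xi_i$ by setting $\hat f(y)=\lambda_i^{-\frac{N-2s}{2}}f(\xi_i+\lambda_i^{-1}y)$. The rescaled function solves an equation with bounded coefficients and a right-hand side that is bounded in $C^{\alpha}$ locally. Here $V\in C^1$ and $s>\frac12$ are used: $s>\frac12$ gives $C^{1,\gamma}$ estimates for $(-\Delta)^s$, since $2s>1$. Applying local Schauder estimates for the fractional Laplacian on unit balls $B_1(y_0)$, together with the tail control already obtained in Step 2, gives
\[
|\nabla \hat f(y_0)|\le C\big(\|\hat f\|_{L^\infty(B_2(y_0))}+\text{weighted tail}+\|\text{rhs}\|_{C^\gamma}\big) .
\]
Each term is $\lesssim (1+|y_0|)^{-(N-2s)}$.

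To reach the improved exponent $N+1-2s$, I would instead differentiate the Riesz representation directly, using $|\nabla_x|x-y|^{2s-N}|\le C|x-y|^{2s-N-1}$. Split the integral into $|x-y|<|x-\xi_i|/2$ and its complement. In the near region, the local regularity bound on $\nabla g$ is used to move the derivative onto $g$.

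\textbf{Main obstacle.} I expect the hardest step to be this gradient decay $N+1-2s$ in the far and intermediate zones. The nonlocal tails must be controlled without losing powers of $\lambda$, and the factor $V$, which is only $C^1$, enters the differentiated equation. The approach I would try is the near/far splitting of the differentiated kernel described above, with $\nabla u_\epsilon^{p_s-\epsilon}=(p_s-\epsilon)u_\epsilon^{p_s-1-\epsilon}\nabla u_\epsilon$ bootstrapped in the same way as Step 2.
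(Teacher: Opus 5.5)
Your Steps 1--2 match the paper's argument for the bounds on $u_\epsilon$ and $\omega_\epsilon$. You represent by the Riesz kernel, using comparison with $V\ge0$. You then iterate Lemma~\ref{lemLL2.3} from the $\|\cdot\|_*$ weight until the exponent saturates at $N-2s$. One small slip: the right-hand side decays faster than $|y|^{-N}$ once $\alpha+2s+\tau>N$, not once $\alpha>2s$.

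The gap is the gradient bound, which you leave unproved.
\begin{itemize}
\item The rescaled Schauder estimate only gives $(1+\lambda_i|x-\xi_i|)^{-(N-2s)}$, one power short.
\item The near/far splitting, where you ``move the derivative onto $g$'', is never carried out.
\item That move would also be counterproductive. It brings in $u_\epsilon\nabla V$, and no global bound on $\nabla V$ is assumed. It also brings in $\nabla u_\epsilon$ itself, which is circular.
\end{itemize}

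The missing observation is that no splitting is needed. Since $s>\frac12$, the differentiated kernel is $|x-y|^{-(N-2s+1)}=|x-y|^{-(N-\beta)}$ with $\beta=2s-1\in(0,N)$. This kernel is locally integrable, so Lemma~\ref{lemLL2.3} applies to it directly.

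After Step 2, both $u_\epsilon^{p_s-\epsilon}$ and $u_\epsilon^{p_s-1-\epsilon}|\omega_\epsilon|$ are bounded by $C\sum_i\lambda_i^{\frac{N+2s}{2}}(1+\lambda_i|y-\xi_i|)^{-\gamma}$, with $\gamma=N+2s-O(\epsilon)>N$. Substitute $y=\xi_i+z/\lambda_i$. This produces the factor $\lambda_i^{\frac{N+2s}{2}-N+(N-2s+1)}=\lambda_i^{\frac{N+2-2s}{2}}$. The case $\gamma>N$ of the lemma then gives $(1+\lambda_i|x-\xi_i|)^{-(N-\beta)}=(1+\lambda_i|x-\xi_i|)^{-(N+1-2s)}$, which is exactly the claim. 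This one-line computation is the paper's entire gradient proof.

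As for $V$, keep the derivative on the kernel. Write $u_\epsilon=I_{2s}*(u_\epsilon^{p_s-\epsilon}-Vu_\epsilon)$ with $I_{2s}(z)=c|z|^{2s-N}$. The extra term $\int|x-y|^{2s-1-N}V|u_\epsilon|\,dy$ is handled by the same lemma with $\gamma=N-2s$. It is at most
\[
C\lambda_i^{\frac{N+2-2s}{2}}(1+\lambda_i|x-\xi_i|)^{-(N+1-2s)}(\lambda_i^{-1}+|x-\xi_i|)^{2s}.
\]
This is harmless on bounded sets, which is the only place the gradient bound is used later: on $B_\rho(\xi_i)$ and near its boundary in the Pohozaev identities. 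No derivative of $V$ is ever needed.

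As $|x|\to\infty$ this term is not controlled by this argument. The paper's displayed bound $|\nabla u_\epsilon|\le C\int|x-y|^{2s-N-1}u_\epsilon^{p_s-\epsilon}$ does not address it either. That bound would require $|\nabla_x G(x,y)|\le C|x-y|^{2s-N-1}$ for the Green function of $(-\Delta)^s+V$, which is not proved. So your concern about $V$ is legitimate globally, but it does not affect how the lemma is applied.
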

         \begin{proof}
           We only treat the estimates for $u_\epsilon$. The other estimates follow similarly.  
           It follows from  the Green's  representation formula  that 
             $$
             u_\epsilon(x)=\int_{\R^N}G(x,y)u_\epsilon^{p_s-\epsilon}(y)\;dy.
             $$
             Since 
             $$
             |u_\epsilon(x)|\leq C \sum_{i=1}^k\frac{\lambda_i^{\frac{N-2s}{2}}}{(1+\lambda_i|x-\xi_i|)^{\frac{N-2s}{2}+\sigma}},
             $$
by Lemma \ref{lemLL2.3}, we have
             $$
             \begin{aligned}
             |u_\epsilon(x)|
       \leq &C\sum_{i=1}^k\int_{\R^N}\frac{1}{|x-y|^{N-2s}}\frac{\lambda_i^{\frac{N+2s}{2}-\frac{N-2s}{2}\epsilon}}{(1+\lambda_i|y-\xi_i|)^{\frac{N+2s}{2}+p_s\sigma-(\frac{N-2s}{2}+\sigma)\epsilon}}\\
                  \leq &C\sum_{i=1}^k\int_{\R^N}\frac{1}{|\lambda_i(x-\xi_i)-y|^{N-2s}}\frac{\lambda_i^{\frac{N-2s}{2}-\frac{N-2s}{2}\epsilon}}{(1+|y|)^{\frac{N+2s}{2}+p_s\sigma-(\frac{N-2s}{2}+\sigma)\epsilon}}\\
                   \leq &C\sum_{i=1}^k\frac{\lambda_i^{\frac{N-2s}{2}}}{(1+\lambda_i|x-\xi_i|)^{\frac{N-2s}{2}+p_s\sigma-(\frac{N-2s}{2}+\sigma)\epsilon}}.\\
             \end{aligned}
             $$
           Note that 
           $$
           \frac{N-2s}{2}+p_s\sigma-\left(\frac{N-2s}{2}+\sigma\right)\epsilon>\frac{N-2s}{2}+\sigma \quad \text{for} \;\text{sufficiently \;small}  \; \epsilon, 
           $$
           iterating the above procedure yields the desired estimate for $u_\epsilon$. In addition, we have
           \[
            |\nabla u_\epsilon(x)|\leq C\int_{\R^N}\frac{1}{|x-y|^{N-2s+1}}u_\epsilon^{p_s-\epsilon}(y).
           \]
           Applying the similar arguments as above, we deduce the decay estimate for $\nabla u_\epsilon$. 
           This completes the proof of Lemma \ref{lem4.1}.
         \end{proof}
        Next, we perform a Lyapunov–Schmidt type decomposition for  $\omega_\epsilon$. More precisely, 
        we decompose $\omega_\epsilon$ into 
         \begin{equation}\label{eqY1}
\begin{cases}
\omega_\epsilon(x)=\displaystyle\sum_{i=1}^{k}
d_{i,0}\lambda_{i}Z_{i,0}
+\displaystyle\sum_{i=1}^{k}\displaystyle\sum_{l=1}
^{N}d_{i,l}\lambda_{i}^{-1}Z_{i,l}
+\omega_\epsilon^*(x),\\[3mm]

\displaystyle\int_{\mathbb{R}^N} W_{\lambda_{i},\xi_{i}}^{p-1}Z_{i,l}
\omega_\epsilon^*=0,\;i=1,2,\cdots,k,\; l=0,1,\cdots,N.
\end{cases}
\end{equation}
Then it is easy to check that  the projected coefficients $d_{i,l}$ are bounded for  $i=1,\cdots,k$ and $l=0,1,\cdots,N$. 
A direct computation  shows that 
         \begin{equation}
             \begin{aligned}
                 &(-\Delta)^s\omega_\epsilon^*+V(x)\omega_\epsilon^*-(p_s-\epsilon)u_\epsilon^{p_s-1-\epsilon}\omega_\epsilon^*\\
=&\sum_{i=1}^{k}
d_{i,0}\eta_{i}\lambda_{i}Z_{\lambda_i,\xi_i}^0\left((p_s-\epsilon)u_\epsilon^{p_s-1-\epsilon}-p_s U_{\lambda_i,\xi_i}^{p_s-1}\right)\\
&+\displaystyle\sum_{i=1}^{k}\displaystyle\sum_{l=1}
^{N}d_{i,l}\eta_{i}\lambda_{i}^{-1}Z_{\lambda_i,\xi_i}^l\left((p_s-\epsilon)u_\epsilon^{p_s-1-\epsilon}-p_s U_{\lambda_i,\xi_i}^{p_s-1}\right)\\
&-\sum_{i=1}^{k}
d_{i,0}\lambda_{i}J_{i,0}-\sum_{i=1}^{k}\displaystyle\sum_{l=1}
^{N}d_{i,l}\lambda_{i}^{-1}J_{i,l}-\sum_{i=1}^{k}\displaystyle\sum_{l=1}
^{N}d_{i,l}\lambda_{i}^{-1}\Lambda_{i,l}\\
&-V(x)\sum_{i=1}^{k}
d_{i,0}\eta_{i}\lambda_{i}Z_{\lambda_i,\xi_i}^0
-V(x)\displaystyle\sum_{i=1}^{k}\displaystyle\sum_{l=1}
^{N}d_{i,l}\eta_{i}\lambda_{i}^{-1}Z_{\lambda_i,\xi_i}^l
\\
&-V(x)\displaystyle\sum_{i=1}^{k}\displaystyle\sum_{l=1}
^{N}d_{i,l}\lambda_{i}^{-1}
U_{\lambda_i, \xi_i}\frac{\partial \eta_i }{\partial \xi_{i,l}}
-\displaystyle\sum_{i=1}^{k}\displaystyle\sum_{l=1}
^{N}d_{i,l}\lambda_{i}^{-1}
U^{p_s}_{\lambda_i, \xi_i}\frac{\partial \eta_i }{\partial \xi_{i,l}}\\
&+(p_s-\epsilon)u_\epsilon^{p_s-1-\epsilon}\displaystyle\sum_{i=1}^{k}\displaystyle\sum_{l=1}
^{N}d_{i,l}\lambda_{i}^{-1}
U_{\lambda_i, \xi_i}\frac{\partial \eta_i }{\partial \xi_{i,l}}\\
:=&\beta=\beta_1+\beta_2+\cdots+\beta_{10},\\
             \end{aligned}
         \end{equation}
where $J_{i,l}$ and $\Lambda_{i,l}$ are defined  in Lemma \ref{lemA}.
We first estimate the projected remainder term $\omega_\epsilon^*$.
\begin{lemma}\label{lem4.2}
There exists $C>0$ such that
\[
\|\omega_\epsilon^*\|_*\leq C
\epsilon^{1-\frac{\sigma}{2s}}.
\]
\end{lemma}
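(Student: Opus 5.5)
The plan is to treat $\omega_\epsilon^*$ as a solution of the projected linear problem \eqref{eq:2}. Its right-hand side is $h=\beta$, and the operator $(p_s-\epsilon)u_\epsilon^{p_s-1-\epsilon}$ replaces $p_s{\bm W}_{\bm\lambda,\bm\xi}^{p_s-1}$. Since $\omega_\epsilon^*$ satisfies the orthogonality conditions in \eqref{eqY1}, we can write
\[
(-\Delta)^s\omega_\epsilon^*+V\omega_\epsilon^*-p_s{\bm W}_{\bm\lambda,\bm\xi}^{p_s-1}\omega_\epsilon^*=\beta+\big((p_s-\epsilon)u_\epsilon^{p_s-1-\epsilon}-p_s{\bm W}_{\bm\lambda,\bm\xi}^{p_s-1}\big)\omega_\epsilon^*
\]
with all multipliers $c_i^l=0$. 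Proposition \ref{prop2.3} (equivalently, the contradiction argument of Lemma \ref{l1}) then gives
\[
\|\omega_\epsilon^*\|_*\le C\|\beta\|_{**}+C\big\|\big((p_s-\epsilon)u_\epsilon^{p_s-1-\epsilon}-p_s{\bm W}_{\bm\lambda,\bm\xi}^{p_s-1}\big)\omega_\epsilon^*\big\|_{**}.
\]
Arguing as for $\mathcal N_2$ and $\mathcal N_1$ in Lemma \ref{lem2.5}, and using $\|\phi_\epsilon\|_*=O(\epsilon^{1-\sigma/2s})$ from Lemma \ref{thm1.1}, the last term is bounded by $C(\epsilon|\log\epsilon|+\|\phi_\epsilon\|_*^{\min\{1,p_s-1\}})\|\omega_\epsilon^*\|_*=o(1)\|\omega_\epsilon^*\|_*$. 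It can therefore be absorbed into the left-hand side, and it suffices to prove $\|\beta\|_{**}\le C\epsilon^{1-\sigma/2s}$.

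We estimate $\beta_1,\dots,\beta_{10}$ one at a time, using that the $d_{i,l}$ are bounded and that $\lambda_i\sim\epsilon^{-1/2s}$, so $\lambda_i^{-2s+\sigma}\sim\epsilon^{1-\sigma/2s}$.

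For $\beta_1,\beta_2$: inside $B_\delta(\xi_i)$ we split
\[
(p_s-\epsilon)u_\epsilon^{p_s-1-\epsilon}-p_sU_{\lambda_i,\xi_i}^{p_s-1}=\big[(p_s-\epsilon)u_\epsilon^{p_s-1-\epsilon}-(p_s-\epsilon)U_{\lambda_i,\xi_i}^{p_s-1-\epsilon}\big]+\big[(p_s-\epsilon)U_{\lambda_i,\xi_i}^{p_s-1-\epsilon}-p_sU_{\lambda_i,\xi_i}^{p_s-1}\big].
\]
\begin{itemize}
\item The second bracket is handled by the mean value theorem and gives $O(\epsilon|\log\epsilon|)$ in $\|\cdot\|_{**}$.
\item The first bracket is handled by Lemma \ref{lemL2.8} with $b=\phi_\epsilon$ (the other bubbles vanish on $B_\delta(\xi_i)$). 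Since $|\lambda_iZ_{i,0}|,|\lambda_i^{-1}Z_{i,l}|\le CU_{\lambda_i,\xi_i}$, this yields $C\|\phi_\epsilon\|_*$ or $C\|\phi_\epsilon\|_*^{p_s-1}$. When $p_s-1<1$ (i.e. $N>6s$) one must instead use $\min\{a^{q-1}|b|,|b|^q\}$ with $a=U$, so the bound is again linear in $\|\phi_\epsilon\|_*$.
\item On the annulus $B_{2\delta}\setminus B_\delta$, everything is of size $\lambda_i^{-(N+2s)/2}$ times weights. This is converted to $\lambda_i^{-\frac{N+2s}{2}+\sigma}$ in $\|\cdot\|_{**}$, exactly as for $\mathcal R_1$ in Lemma \ref{lem2.6}.
\end{itemize}

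The terms $\beta_3,\beta_4,\beta_5$ are controlled by Lemma \ref{lemA}: $\lambda_i|J_{i,0}|$, $\lambda_i^{-1}|J_{i,l}|$ and $\lambda_i^{-1}|\Lambda_{i,l}|$ are all bounded by $C\lambda_i^{-\frac{N-2s}{2}}(1+|x-\xi_i|)^{-N-2s}$. As in $\mathcal R_2$, this gives a bound $\epsilon^{\frac{N-2s-2\sigma}{4s}}\le\epsilon^{1-\sigma/2s}$ because $N\ge 6s$.

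The terms $\beta_6,\beta_7$ are bounded pointwise by $CV\,U_{\lambda_i,\xi_i}\chi_{B_{2\delta}(\xi_i)}$. Exactly as for $\mathcal R_3$, this yields $\lambda_i^{-2s+\sigma}$ when $N>6s$, and $\lambda_i^{-\frac{N-2s}{2}+\sigma}=\lambda_i^{-2s+\sigma}$ when $N=6s$; in both cases the bound is $C\epsilon^{1-\sigma/2s}$.

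The terms $\beta_8,\beta_9,\beta_{10}$ are supported on the annulus $\delta\le|x-\xi_i|\le 2\delta$ and carry an extra factor $\lambda_i^{-1}$. There $U_{\lambda_i,\xi_i}\le C\lambda_i^{-\frac{N-2s}{2}}$ and $u_\epsilon^{p_s-1-\epsilon}\le C$, so these terms are even smaller.

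The main obstacle is the first bracket in $\beta_1,\beta_2$. The claim requires a bound that is linear in $\|\phi_\epsilon\|_*$, not $\|\phi_\epsilon\|_*^{p_s-1}$, when $p_s<2$. In addition, the weights must combine to give exactly the $\|\cdot\|_{**}$ profile, which means checking $U^{p_s-1}\cdot U\cdot(\text{weight of }\phi_\epsilon)/U\lesssim$ the $**$-weight. The first estimate to try is
\[
|u_\epsilon^{p_s-1-\epsilon}-W^{p_s-1-\epsilon}|\le CW^{p_s-2-\epsilon}|\phi_\epsilon|
\]
on the region where $|\phi_\epsilon|\le W/2$. This region covers $B_\delta(\xi_i)$ for small $\epsilon$, since $\|\phi_\epsilon\|_*\to0$ and $\sigma<\frac{N-2s}{2}$. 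Multiplying by $U$ gives $U^{p_s-1}|\phi_\epsilon|$, which has the correct $**$ decay.
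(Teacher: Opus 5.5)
Your argument is correct and follows the paper's route. You bound $\beta_1,\dots,\beta_{10}$ term by term in $\|\cdot\|_{**}$ by $C\epsilon^{1-\frac{\sigma}{2s}}$, using Lemma \ref{lemL2.8}, the mean value theorem, Lemma \ref{lemA}, and $N\ge 6s$ to compare $\lambda_i^{-\frac{N-2s}{2}+\sigma}$ with $\lambda_i^{-2s+\sigma}$, and then invert the projected operator as in Proposition \ref{prop2.3}. Your explicit absorption of $\big((p_s-\epsilon)u_\epsilon^{p_s-1-\epsilon}-p_s{\bm W}_{\bm\lambda,\bm\xi}^{p_s-1}\big)\omega_\epsilon^*$ is a clean justification of what the paper only calls ``following the argument in Proposition \ref{prop2.3}''.

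One correction to your last paragraph: the claim that $\{|\phi_\epsilon|\le W/2\}\supset B_\delta(\xi_i)$ does not follow from your bounds. The condition $\sigma<\frac{N-2s}{2}$ works against you. The ratio of the $\|\cdot\|_*$-weight to $U_{\lambda_i,\xi_i}$ is $(1+\lambda_i|x-\xi_i|)^{\frac{N-2s}{2}-\sigma}$. So near $|x-\xi_i|\sim\delta$ you only get $|\phi_\epsilon|/U_{\lambda_i,\xi_i}\lesssim\|\phi_\epsilon\|_*\lambda_i^{\frac{N-2s}{2}-\sigma}\sim\epsilon^{1-\frac{N-2s}{4s}}$, which is merely bounded for $N=6s$ and unbounded for $N>6s$.

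The claim is also unnecessary. For $0<q<1$, Lemma \ref{lemL2.8} gives $|(a+b)_+^q-a^q|\le Ca^{q-1}|b|$ with no smallness of $|b|/a$. That is exactly what your bullet point and the paper's estimate of $\beta_{11}$ use, so you can simply drop that remark.
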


\begin{proof}
We first estimate  $\beta_1$ and divide the estimate into two regions.\\
    Case 1: $x\in B_{\delta}(\xi_i)$.  
    $$
    \begin{aligned}
        |\beta_1|\leq& C\sum_{i=1}^kU_{\lambda_i,\xi_i}\left|(p_s-\epsilon)(U_{\lambda_i,\xi_i}+\phi_\epsilon)^{p_s-1-\epsilon}-p_s U_{\lambda_i,\xi_i}^{p_s-1}\right|\\
        \leq &C\sum_{i=1}^kU_{\lambda_i,\xi_i}\left|(U_{\lambda_i,\xi_i}+\phi_\epsilon)^{p_s-1-\epsilon}-U_{\lambda_i,\xi_i}^{p_s-1-\epsilon}\right|+C\sum_{i=1}^kU_{\lambda_i,\xi_i}\left|(p_s-\epsilon)U_{\lambda_i,\xi_i}^{p_s-1-\epsilon}-p_s U_{\lambda_i,\xi_i}^{p_s-1}\right|\\
        :=&\beta_{11}+\beta_{12}.
    \end{aligned}
    $$ 
It follows from   Lemma \ref{lemL2.8} that 
    $$
    \begin{aligned}
        |\beta_{11}|\leq&C\sum_{i=1}^kU_{\lambda_i,\xi_i}^{p_s-1-\epsilon}|\phi_\epsilon|
        \leq C\|\phi_\epsilon\|_*\sum_{i=1}^k\frac{\lambda_i^{\frac{N+2s}{2}}}{(1+\lambda_i|x-\xi_i|)^{\frac{N+2s}{2}+\sigma}}.
    \end{aligned}
    $$
    By the mean value theorem, we obtain 
    $$
    \begin{aligned}
        |\beta_{12}|\leq &  C\epsilon \sum_{i=1}^k U_{\lambda_i,\xi_i}^{p_s-\theta\epsilon}\left(1+|\log U_{\lambda_i,\xi_i}|\right)
     \leq C\epsilon|\log\epsilon|\sum_{i=1}^k\frac{\lambda_i^{\frac{N+2s}{2}}}{(1+\lambda_i|x-\xi_i|)^{\frac{N+2s}{2}+\sigma}},
    \end{aligned}
    $$
    where $\theta\in (0,1)$.\\
Case 2: $x\in B_{2\delta}(\xi_i)\setminus B_{\delta}(\xi_i)$. Applying Lemma \ref{lem4.1}, we find that  $|u_\epsilon|\leq C\lambda_i^{-\frac{N-2s}{2}}$.
Then we have 
     
    $$
    |\beta_{1}|\leq C\epsilon^{1-\frac{\sigma}{2s}}\displaystyle \frac{\lambda_i^{\frac{N+2s}{2}}}{(1+\lambda_i|x-\xi_i|)^{\frac{N+2s}{2}+\sigma}},
    $$
 which gives that  $\|\beta_1\|_{**}\leq C\epsilon^{1-\frac{\sigma}{2s}}$. 
Similarly, we derive  $\|\beta_2\|_{**}\leq C\epsilon^{1-\frac{\sigma}{2s}}$.

    Using Lemma \ref{lemA},  we infer 
    $$
    |\lambda_iJ_{i,0}|+
    |\lambda_i^{-1}J_{i,l}|+ |\Lambda_{i,l}|\leq \frac{C\lambda_i^{-\frac{N-2s}{2}}}{(1+|x-\xi_i|)^{N+2s}}.
    $$
  Hence we obtain 
    $$
    \begin{aligned}
        |\beta_3|+|\beta_4|&\leq C\sum_{i=1}^k\frac{\lambda_i^{-\frac{N-2s}{2}}}{(1+|x-\xi_i|)^{N+2s}}
        \leq C\epsilon^{1-\frac{\sigma}{2s}} \displaystyle \sum_{i=1}^k\frac{\lambda_i^{\frac{N+2s}{2}}}{(1+
        \lambda_i|x-\xi_i|)^{\frac{N+2s}{2}+\sigma}}
    \end{aligned}
    $$
    and $$
    \begin{aligned}
        |\beta_5|&\leq C\sum_{i=1}^k\frac{\lambda_i^{-\frac{N-2s+2}{2}}}{(1+|x-\xi_i|)^{N+2s}}
        \leq C\epsilon^{1-\frac{\sigma}{2s}} \displaystyle \sum_{i=1}^k\frac{\lambda_i^{\frac{N+2s}{2}}}{(1+\lambda_i|x-\xi_i|)^{\frac{N+2s}{2}+\sigma}},
    \end{aligned}
    $$
which implies that 
    $$
    \|\beta_3\|_{**}+\|\beta_4\|_{**}+\|\beta_5\|_{**}\leq C\epsilon^{1-\frac{\sigma}{2s}}.
    $$
A direct calculation leads to 
    $$
    \begin{aligned}
        |\beta_6|+ |\beta_7|\leq  C\sum_{i=1}^k\frac{\lambda_i^{\frac{N-2s}{2}}}{(1+\lambda_i|x-\xi_i|)^{N-2s}}\chi_{B_{2\delta}(\xi_i)}(x)
        \leq C\displaystyle \sum_{i=1}^k\lambda_i^{-2s+\sigma}\frac{\lambda_i^{\frac{N+2s}{2}}}{(1+\lambda_i|x-\xi_i|)^{\frac{N+2s}{2}+\sigma}}.
    \end{aligned}
    $$
Thanks to Lemma \ref{lem4.1}, we get 
$$
    \begin{aligned}
        |\beta_{8}|+ |\beta_9|+ |\beta_{10}|&\leq C\sum_{i=1}^k
        \chi_{B_{2\delta}(\xi_i)\setminus B_{\delta}(\xi_i)}
        \lambda_i^{-\frac{N+2s+2}{2}}+C\sum_{i=1}^k
        \chi_{B_{2\delta}(\xi_i)\setminus B_{\delta}(\xi_i)}
        \lambda_i^{-\frac{N-2s+2}{2}}
        \\
        &\leq C\epsilon^{1-\frac{\sigma}{2s}} \displaystyle \sum_{i=1}^k\frac{\lambda_i^{\frac{N+2s}{2}}}{(1+\lambda_i|x-\xi_i|)^{\frac{N+2s}{2}+\sigma}}.
    \end{aligned}
    $$
As a result, we arrive at  
  $$ \|\beta\|_{**}\leq\sum_{i=1}^{10} \|\beta_{i}\|_{**}\leq C\epsilon^{1-\frac{\sigma}{2s}}.
    $$
Following the argument in   Proposition\,\ref{prop2.3}, we conclude that 
\[
\|\omega_\epsilon^*\|_*\leq C\epsilon^{1-\frac{\sigma}{2s}}.
\]
The proof of Lemma \ref{lem4.2} is completed. 
\end{proof}

\subsection{The estimates of the projected coefficients via local Pohozaev identities}

In this subsection, we first establish  several local Pohozaev identities and  use them  to estimate the projected coefficients $d_{i,l}$,
 $i=1,\cdots,k$, $l=0,1,\cdots,N$. Recall that the $s$-harmonic extension  $\widetilde{u}_\epsilon$ and $\widetilde\omega_\epsilon$
  satisfy
    \begin{equation}\label{eq-ND1}
        \begin{cases}
            \text{div}(t^{1-2s}\nabla\widetilde{u}_\epsilon)=0 \quad &\text{in} \quad \R^{N+1}_+,\\
            -\lim\limits_{t\to 0^+}t^{1-2s}\partial_t\widetilde{u}_\epsilon=-V(x){u}_\epsilon+{u}_\epsilon^{p_s-\epsilon}\quad &\text{on}  \quad\R^{N},
        \end{cases}
         \end{equation}
         and 
    \begin{equation}\label{eq-ND2}
        \begin{cases}
            \text{div}(t^{1-2s}\nabla\widetilde\omega_\epsilon)=0 \quad &\text{in} \quad\R^{N+1}_+,\\
            -\lim\limits_{t\to 0^+}t^{1-2s}\partial_t\widetilde\omega_\epsilon=-V(x)\omega_\epsilon+(p_s-\epsilon)u_\epsilon^{p_s-1-\epsilon}\omega_\epsilon\quad &\text{on}  \quad\R^{N}.
        \end{cases}
         \end{equation}
         
        We first derive the following local Pohozaev identities associated with  $\widetilde{u}_\epsilon$ and $\widetilde\omega_\epsilon$. 

\begin{lemma}\label{lem4.3}
Assume that $V(x)\in C^2\big(B_\rho(\xi_{i})\big)$, where $\rho\in(2\delta,\,5\delta)$. Then for $i=1,\cdots,k$ and $l=1,\cdots,N$,
\begin{equation}\label{4.13}
\begin{split}
&\frac{N-2}{2}\epsilon\int_{B_\rho(
\xi_{i})} u_\epsilon^{p_{s}-\epsilon}\omega_\epsilon
-2s\int_{B_\rho(\xi_{i})}V(x)u_\epsilon
\omega_\epsilon
-\int_{B_\rho(\xi_{i})}\big\langle x-\xi_{i},\nabla V(x)\big\rangle u_\epsilon\omega_\epsilon\\
=&\int_{\partial^{''} \mathfrak{B}^{+}_\rho(\xi_{i})}
t^{1-2s}\frac{\partial \widetilde{u}_\epsilon}{\partial\nu}\langle X-(\xi_{i},0),\nabla\widetilde\omega_\epsilon\rangle
+\int_{\partial^{''} \mathfrak{B}^{+}_\rho(\xi_{i})}
t^{1-2s}\frac{\partial \widetilde\omega_\epsilon}{\partial\nu}\langle X-(\xi_{i},0),\nabla \widetilde{u}_\epsilon\rangle
\\
&+
\frac{N-2s}{2}\int_{\partial^{''} \mathfrak{B}^{+}_\rho(\xi_{i})}
t^{1-2s}\left(\frac{\partial \widetilde{u}_\epsilon}{\partial\nu}\widetilde\omega_\epsilon+\frac{\partial \widetilde\omega_\epsilon}{\partial\nu}\widetilde{u}_\epsilon\right)
-\int_{\partial^{''} \mathfrak{B}^{+}_\rho(\xi_{i})}
t^{1-2s}\langle\nabla \widetilde{u}_\epsilon,\nabla\widetilde\omega_\epsilon\rangle\langle X-(\xi_{i},0),\nu\rangle
\\
&+\int_{\partial B_\rho(\xi_{i})} u_\epsilon^{p_{s}-\epsilon}\omega_\epsilon \langle x-\xi_{i},\nu\rangle -\int_{\partial B_\rho(\xi_{i})}V(x)u_\epsilon
\omega_\epsilon\langle x-\xi_{i},\nu\rangle,
\end{split}
\end{equation}
\begin{equation}\label{4.3}
\begin{split}
\int_{B_\rho(\xi_{i})}\frac{\partial V(x)}{\partial x_l}u_\epsilon\omega_\epsilon
=&\int_{\partial B_\rho(\xi_{i})}V(x)u_\epsilon
\omega_\epsilon
\nu_{l}
-\int_{\partial B_\rho(\xi_{i})} u_\epsilon^{p_{s}-\epsilon}\omega_\epsilon \nu_{l}
+
\int_{\partial^{''} \mathfrak{B}^{+}_\rho(\xi_{i})}
t^{1-2s}\langle\nabla \widetilde{u}_\epsilon,\nabla\widetilde\omega_\epsilon\rangle\nu_{l}\\
&- \int_{\partial^{''} \mathfrak{B}^{+}_\rho(\xi_{i})}
t^{1-2s}\frac{\partial \widetilde{u}_\epsilon}{\partial x_l}\frac{\partial\widetilde\omega_\epsilon}{\partial\nu}
-\int_{\partial^{''} \mathfrak{B}^{+}_\rho(\xi_{i})}
t^{1-2s}\frac{\partial
\widetilde\omega_\epsilon}{\partial x_l}\frac{\partial \widetilde{u}_\epsilon}{\partial\nu}.
\end{split}
\end{equation}
Hereafter, $\langle \cdot, \cdot \rangle$ denotes the Euclidean inner product. 
\end{lemma}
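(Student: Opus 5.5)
These are bilinear (polarized) versions of the standard Pohozaev identities on the extension. The plan is to test the two extension equations \eqref{eq-ND1} and \eqref{eq-ND2} against suitable vector fields, integrate over the half-ball $\mathfrak{B}^+_\rho(\xi_i)=\{X\in\R^{N+1}_+:|X-(\xi_i,0)|<\rho\}$, and add the resulting identities.

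For \eqref{4.3}, I multiply $\mathrm{div}(t^{1-2s}\nabla\widetilde u_\epsilon)=0$ by $\partial_{x_l}\widetilde\omega_\epsilon$ and $\mathrm{div}(t^{1-2s}\nabla\widetilde\omega_\epsilon)=0$ by $\partial_{x_l}\widetilde u_\epsilon$, add them, and integrate over $\mathfrak{B}^+_\rho(\xi_i)$. The key algebraic identity is
\[
\mathrm{div}\big(t^{1-2s}(\nabla\widetilde u\,\partial_l\widetilde\omega+\nabla\widetilde\omega\,\partial_l\widetilde u)\big)
-\partial_l\big(t^{1-2s}\langle\nabla\widetilde u,\nabla\widetilde\omega\rangle\big)
=\partial_l\widetilde\omega\,\mathrm{div}(t^{1-2s}\nabla\widetilde u)+\partial_l\widetilde u\,\mathrm{div}(t^{1-2s}\nabla\widetilde\omega).
\]
It holds because $t^{1-2s}$ does not depend on $x_l$. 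The divergence theorem then produces boundary terms on $\partial''\mathfrak{B}^+_\rho$, which are exactly the three $t^{1-2s}$-integrals in \eqref{4.3} up to sign. On the flat part $B_\rho(\xi_i)\times\{0\}$, the outward normal is $-e_t$ and $\nu_l=0$, so only the Neumann data survive: $\int_{B_\rho}\big[(-V u_\epsilon+u_\epsilon^{p_s-\epsilon})\partial_l\omega_\epsilon+(-V\omega_\epsilon+(p_s-\epsilon)u_\epsilon^{p_s-1-\epsilon}\omega_\epsilon)\partial_l u_\epsilon\big]$. This combines into $\int_{B_\rho}\big[-\partial_l(V u_\epsilon\omega_\epsilon)+\partial_l V\,u_\epsilon\omega_\epsilon+\partial_l(u_\epsilon^{p_s-\epsilon}\omega_\epsilon)\big]$. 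A second integration by parts in $B_\rho(\xi_i)$ gives the boundary integrals over $\partial B_\rho(\xi_i)$ and the term $\int\partial_lV\,u_\epsilon\omega_\epsilon$. Rearranging yields \eqref{4.3}.

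For \eqref{4.13}, I use the same scheme with the test functions $\langle X-(\xi_i,0),\nabla\widetilde\omega_\epsilon\rangle+\frac{N-2s}{2}\widetilde\omega_\epsilon$ and its counterpart for $\widetilde u_\epsilon$. The relevant identity is
\[
\mathrm{div}\Big(t^{1-2s}\big[\nabla\widetilde u\,\langle Y,\nabla\widetilde\omega\rangle+\nabla\widetilde\omega\,\langle Y,\nabla\widetilde u\rangle-\langle\nabla\widetilde u,\nabla\widetilde\omega\rangle Y\big]\Big)
=(\ldots)+(N-2s)\,t^{1-2s}\langle\nabla\widetilde u,\nabla\widetilde\omega\rangle ,
\]
with $Y=X-(\xi_i,0)$. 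The constant $N-2s$ comes from $\mathrm{div}(t^{1-2s}Y)=(N+2-2s)t^{1-2s}$. The remaining gradient term is absorbed using $\mathrm{div}(t^{1-2s}\nabla\widetilde u\,\widetilde\omega)=t^{1-2s}\langle\nabla\widetilde u,\nabla\widetilde\omega\rangle$, and symmetrically for $\widetilde\omega$, which is where the $\frac{N-2s}{2}$ boundary terms come from. On the flat boundary $\langle Y,\nu\rangle=0$ and $\langle Y,\nabla\rangle=\langle x-\xi_i,\nabla_x\rangle$. The flat contributions are therefore $\int_{B_\rho}\big[f\,(\langle x-\xi_i,\nabla\omega_\epsilon\rangle+\tfrac{N-2s}{2}\omega_\epsilon)+g\,(\langle x-\xi_i,\nabla u_\epsilon\rangle+\tfrac{N-2s}{2}u_\epsilon)\big]$, where $f$ and $g$ denote the Neumann data of $u_\epsilon$ and $\omega_\epsilon$.

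I then integrate by parts in $x$. The nonlinear part combines to $\langle x-\xi_i,\nabla(u_\epsilon^{p_s-\epsilon}\omega_\epsilon)\rangle$. This gives $-N\int u_\epsilon^{p_s-\epsilon}\omega_\epsilon$ plus a boundary term, and adding $\frac{N-2s}{2}(1+p_s-\epsilon)\int u_\epsilon^{p_s-\epsilon}\omega_\epsilon$ leaves a coefficient proportional to $\epsilon$; I will recheck that constant against the stated $\frac{N-2}{2}$. The potential part gives $-\int V\langle x-\xi_i,\nabla(u_\epsilon\omega_\epsilon)\rangle-(N-2s)\int Vu_\epsilon\omega_\epsilon$. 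After integration by parts this becomes $N\int Vu_\epsilon\omega_\epsilon+\int\langle x-\xi_i,\nabla V\rangle u_\epsilon\omega_\epsilon$ minus a boundary term, and the net interior coefficient of $\int Vu_\epsilon\omega_\epsilon$ is $2s$. Rearranging gives \eqref{4.13}.

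The main obstacle is justification rather than algebra. Near $t=0$ the weight $t^{1-2s}$ is singular or degenerate, so the divergence theorem must be applied on $\{t>\tau\}$ with $\tau\to0$. One then needs $t^{1-2s}\partial_t\widetilde u_\epsilon\to$ the Neumann data, and $t^{1-2s}|\nabla_x\widetilde u|\,|\nabla\widetilde\omega|\,t\to0$ for the term involving $Y_t=t$. Here I will use the regularity of $s$-harmonic extensions, namely $\widetilde u_\epsilon,\widetilde\omega_\epsilon\in C^{1,\alpha}$ in $x$ up to $t=0$ via Lemma \ref{lem4.1} and $V\in C^2$, together with the standard bound $t^{1-2s}\partial_t\widetilde u\in C^\alpha$ up to the boundary.
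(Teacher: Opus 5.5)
Your proposal is correct and is essentially the paper's argument. Appendix B tests \eqref{Feq-ND1} against $\langle\nabla\widetilde\omega_\epsilon,X-(\xi_i,0)\rangle$ (resp.\ $\partial_{x_l}\widetilde\omega_\epsilon$), tests \eqref{Feq-ND2} against $\langle\nabla\widetilde u_\epsilon,X-(\xi_i,0)\rangle$ (resp.\ $\partial_{x_l}\widetilde u_\epsilon$), and uses the energy identities obtained from testing with $\widetilde\omega_\epsilon$ and $\widetilde u_\epsilon$; your divergence-free field just packages those integrations by parts.

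There is one sign slip: the divergence of your bracketed field is $(\ldots)-(N-2s)t^{1-2s}\langle\nabla\widetilde u,\nabla\widetilde\omega\rangle$, not $+$, which is exactly why the correction $+\frac{N-2s}{2}\widetilde\omega_\epsilon$ you then use is the right one.

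On the constant you wanted to recheck, your count $-N+\frac{N-2s}{2}(p_s+1-\epsilon)=-\frac{N-2s}{2}\epsilon$ is right. So the first term of \eqref{4.13} should read $\frac{N-2s}{2}\epsilon\int_{B_\rho(\xi_i)}u_\epsilon^{p_s-\epsilon}\omega_\epsilon$, and the $\frac{N-2}{2}$ in the statement is a misprint. The paper's own appendix computation also yields $\frac{N-2s}{2}$, and the constant plays no role in Lemma \ref{lem4.4}, where this term is absorbed into the error.
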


\begin{proof}
The proof can be found in Lemmas \ref{Flem4.3} and \ref{Flem4.30}. 
\end{proof}

Next we employ the above local Pohozaev identities to estimate  the projected coefficients $d_{i,l}$.
\begin{lemma}\label{lem4.4}Let $N\geq6s$ and $V(x)\in C^{2}\big(B_\rho(\xi_{i})\big)$ with $\rho\in(2\delta,\,5\delta)$. Then there exists a positive constant $C$ such that 
\begin{equation*}
|d_{i,0}|\leq C\epsilon^{1-\frac{\sigma}{2s}}, \ \ i=1,\cdots,k.
\end{equation*}
\end{lemma}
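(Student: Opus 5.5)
We plan to use the first local Pohozaev identity \eqref{4.13} from Lemma \ref{lem4.3}, the one generated by the dilation vector field $\langle X-(\xi_i,0),\nabla\cdot\rangle$. Its left-hand side is sensitive to the $Z_{i,0}$ component of $\omega_\epsilon$, while the boundary terms on $\partial''\mathfrak{B}^+_\rho(\xi_i)$ and $\partial B_\rho(\xi_i)$ are lower order because $\rho\in(2\delta,5\delta)$ is away from the concentration points. We substitute the decomposition \eqref{eqY1} of $\omega_\epsilon$ together with $u_\epsilon={\bm W}_{\bm\lambda,\bm\xi}+\phi_\epsilon$ into \eqref{4.13}. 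We use $\|\phi_\epsilon\|_*=O(\epsilon^{1-\frac{\sigma}{2s}})$ from Lemma \ref{thm1.1} and $\|\omega_\epsilon^*\|_*\le C\epsilon^{1-\frac{\sigma}{2s}}$ from Lemma \ref{lem4.2}. We also use $\lambda_i\sim\epsilon^{-\frac{1}{2s}}$ throughout.

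\textbf{Boundary terms.} By Lemma \ref{lem4.1}, on the boundary pieces both $u_\epsilon$ and $\omega_\epsilon$ (and their gradients) are $O(\lambda_i^{-\frac{N-2s}{2}})$. The $t$-weighted integrals over $\partial''\mathfrak{B}^+_\rho$ are controlled as in the proof of Lemma \ref{lem3.3} via Lemmas \ref{lemGA.5} and \ref{newlemGA.6}, after choosing a good radius $\rho$. This gives an $O(\lambda_i^{-N+2s})$ bound for the whole right-hand side. Since $N\ge 6s$, this is $O(\epsilon^{\frac{N-2s}{2s}})\le O(\epsilon^{2})$, which is negligible against the main term identified below.

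\textbf{Left-hand side.} Set $\omega_\epsilon=d_{i,0}\lambda_iZ_{i,0}+\sum_l d_{i,l}\lambda_i^{-1}Z_{i,l}+\omega^*_\epsilon$ on $B_\rho(\xi_i)$; the other peaks contribute only $O(\lambda^{-N+2s})$ there.
\begin{itemize}
\item The $Z_{i,l}$ parts ($l\ge1$) are odd in $x-\xi_i$ against the radial $U_{\lambda_i,\xi_i}$. Their contribution therefore reduces to terms involving $\nabla V(\xi_i)=o(\epsilon^{\frac{1}{2s}})$ (Lemma \ref{thm1.1}) and second derivatives of $V$, giving $o(\epsilon\,\lambda_i^{-1}\cdot\lambda_i)$-type small quantities.
\item The $\omega_\epsilon^*$ and $\phi_\epsilon$ parts are bounded by $C\epsilon^{1-\frac{\sigma}{2s}}$ times the natural scale, using the weighted-norm integrals already computed in Lemma \ref{lem3.2}.
\item The main contribution is $d_{i,0}$ times
\[
\lambda_i\Big(\tfrac{N-2}{2}\epsilon\int U^{p_s-\epsilon}Z_{i,0}-2s\int VUZ_{i,0}-\int\langle x-\xi_i,\nabla V\rangle UZ_{i,0}\Big).
\]
Here $\int U^{p_s-\epsilon}\lambda_iZ_{i,0}=\lambda_i\partial_{\lambda_i}\int U^{p_s+1-\epsilon}/(p_s+1-\epsilon)$, which is of order $\epsilon\log\lambda_i$ (nonzero). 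Also $\int VU\lambda_iZ_{i,0}=\frac{\lambda_i}{2}\partial_\lambda\int VU^2\approx -s\,V(\xi_i)c\lambda_i^{-2s}$, and the $\langle x-\xi_i,\nabla V\rangle$ term is $O(\lambda_i^{-2s-1})$ by $C^2$ regularity.
\end{itemize}
With $\lambda_i^{-2s}\sim\epsilon$, the coefficient of $d_{i,0}$ is $c'\epsilon(1+o(1))$ with $c'\ne0$. This follows from the balance $A+BV(\xi_i)t_i=0$ fixed in the proof of Theorem \ref{Mth}, provided the combination with the $\epsilon$-term is checked to be nondegenerate. Hence $c'\epsilon|d_{i,0}|\le C\epsilon\cdot\epsilon^{1-\frac{\sigma}{2s}}$, which gives the claim.

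\textbf{Main obstacle.} The hardest step is ensuring the leading coefficient of $d_{i,0}$ does not cancel. The $\epsilon$-term and the $V(\xi_i)\lambda_i^{-2s}$ term are of the same order, and their sum is related to the derivative of the reduced energy in $\lambda$. Its non-vanishing should come from the second $\lambda$-derivative of $A\epsilon\log\lambda$-type and $BV\lambda^{-2s}$ terms, i.e. $-2sBV\lambda^{-2s}\neq0$. We would first verify this by comparing with the expansion in Lemma \ref{lemA.1}. A secondary obstacle is showing that the $\omega^*_\epsilon$ and $\phi_\epsilon$ errors are genuinely of size $\epsilon^{2-\frac{\sigma}{2s}}$, not merely $\epsilon^{1-\frac{\sigma}{2s}}\lambda^{-2s}\log\lambda$. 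This may require the orthogonality in \eqref{eqY1} to remove the worst piece.
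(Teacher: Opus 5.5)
Your route is the paper's: the dilation identity \eqref{4.13}, the decomposition \eqref{eqY1}, a right-hand side of size $O(\epsilon^{2-\frac{\sigma}{2s}})$ (your $O(\lambda_i^{2s-N})$ also suffices for $N\ge 6s$), and a leading term coming from the potential. Your final identification $-2sBV(\xi_i)\lambda_i^{-2s}$ of the coefficient of $d_{i,0}$ also matches the paper's leading term $-2sT_1$. However, the obstacle you single out does not exist, and the order count behind it is wrong.

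\textbf{The $\epsilon$-term is lower order.} In \eqref{4.13} the $\epsilon$-term has an explicit factor $\epsilon$ in front of $\int u_\epsilon^{p_s-\epsilon}\omega_\epsilon$. Since $\int_{\mathbb{R}^N}U_{\lambda,\xi}^{p_s+1-\epsilon}=\lambda^{-\frac{N-2s}{2}\epsilon}\int_{\mathbb{R}^N}U_{1,0}^{p_s+1-\epsilon}$, you get $\int U_{\lambda_i,\xi_i}^{p_s-\epsilon}\lambda_iZ^0_{\lambda_i,\xi_i}=-\frac{(N-2s)\epsilon}{2(p_s+1-\epsilon)}\int U_{\lambda_i,\xi_i}^{p_s+1-\epsilon}=O(\epsilon)$, not $\epsilon\log\lambda_i$. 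The $\log\lambda_i$ part of $\log U$ is killed by $\int U^{p_s}Z^0=0$; this is how the paper shows $\int_{B_\rho(\xi_i)}u_\epsilon^{p_s-\epsilon}\omega_\epsilon=O(\epsilon^{1-\frac{\sigma}{2s}})$. So this term is $O(\epsilon^{2-\frac{\sigma}{2s}})$. Even your own estimate $\epsilon\cdot\epsilon\log\lambda_i$ would be $o(\lambda_i^{-2s})$, so it cannot compete with the potential term of order $\lambda_i^{-2s}\sim\epsilon$. The coefficient of $d_{i,0}$ is therefore nonzero simply because $B<0$ and $V(\xi_i)>0$, and the relation $A+BV(\xi_i)t_i=0$ is not needed. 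The comparison with Lemma \ref{lem3.2} misled you: there the $\epsilon$-contribution is $\int(U^{p_s}-U^{p_s-\epsilon})\partial_\lambda U$, with no extra $\epsilon$ in front.

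\textbf{The $d_{i,l}$ terms need a sharper bound.} Your conclusion $c'\epsilon|d_{i,0}|\le C\epsilon\cdot\epsilon^{1-\frac{\sigma}{2s}}$ requires every other term to be $O(\epsilon^{2-\frac{\sigma}{2s}})$. For the $d_{i,l}$ ($l\ge 1$) contributions you only claim $o(\epsilon)$, which would give merely $d_{i,0}=o(1)$. Expand $V$ and $\nabla V$ at $\xi_i$ to second order:
\begin{itemize}
\item the zeroth-order pieces vanish by oddness of $Z^l_{\lambda_i,\xi_i}$;
\item the first-order pieces are $O(\lambda_i^{-1-2s}|\nabla V(\xi_i)|)=o(\lambda_i^{-2-2s})$;
\item the remainders are $O(\lambda_i^{-2-2s})$, because $|x-\xi_i|^2U_{\lambda_i,\xi_i}|Z^l_{\lambda_i,\xi_i}|$ rescales to an integrable function when $N>4s+1$, which holds since $N\ge 6s$ and $s>\frac12$.
\end{itemize}
This is the paper's $T_2,D_2=O(\lambda_i^{-2-2s})$, and $\lambda_i^{-2-2s}=O(\epsilon^{1+\frac1s})=o(\epsilon^{2-\frac{\sigma}{2s}})$ because $s<1$.

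\textbf{Your secondary worry is unfounded.} For $N\ge 6s$ one has $\int_{B_\delta(\xi_i)}U_{\lambda_i,\xi_i}|\omega^*_\epsilon|\le C\lambda_i^{-2s}\|\omega^*_\epsilon\|_*$ with a convergent rescaled integral and no logarithm. So no appeal to the orthogonality in \eqref{eqY1} is required.
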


\begin{proof}
Note that  $$\displaystyle\int_{\mathbb{R}^N} U_{\lambda_i,\xi_i}^{p_{s}}Z_{\lambda_i,\xi_i}^0=0\;  \;\;\text{and}\;\;\;\displaystyle\int_{B_\delta(\xi_i)}U_{\lambda_i,\xi_i}^{p_{s}-\epsilon}Z_{\lambda_i,\xi_i}^l=0.$$  
Then it follows from  Lemma \ref{lem4.1}  that 
\begin{equation}\label{a4.4}
\begin{split}
&\int_{B_\rho(\xi_{i})}   u_\epsilon^{p_{s}-\epsilon}\omega_\epsilon\\
=&\int_{B_\delta(\xi_{i})}  (U_{\lambda_i,\xi_i}+\phi_\epsilon)^{p_{s}-\epsilon}
\Big(\displaystyle
d_{i,0}\lambda_iZ_{\lambda_i,\xi_i}^0
+\displaystyle\sum_{l=1}
^{N}d_{i,l}\lambda_i^{-1}
Z_{\lambda_i,\xi_i}^l
+\omega_\epsilon^*(x)\Big)+\int_{B_\rho(\xi_{i})\setminus B_\delta(\xi_{i})}   u_\epsilon^{p_{s}-\epsilon}\omega_\epsilon\\
=&d_{i,0}\lambda_i
\int_{B_\delta(\xi_{i})} \left[(U_{\lambda_i,\xi_i}+\phi_\epsilon)
^{p_{s}-\epsilon}
-U_{\lambda_i,\xi_i}^{p_{s}-\epsilon}\right]Z_{\lambda_i,\xi_i}^0
+d_{i,0}\lambda_i
\int_{B_\delta(\xi_{i})} \left[U_{\lambda_i,\xi_i}^{p_{s}-\epsilon}
-U_{\lambda_i,\xi_i}^{p_{s}}\right]Z_{\lambda_i,\xi_i}^0\\
&+\displaystyle\sum_{l=1}^{N}d_{i,l}
\lambda_i^{-1}
\int_{B_\delta(\xi_{i})} \left[(U_{\lambda_i,\xi_i}+\phi_\epsilon)^{p_{s}-\epsilon}
-U_{\lambda_i,\xi_i}^{p_{s}-\epsilon}\right]Z_{\lambda_i,\xi_i}^l+O(\lambda_i^{-N})+O(\|\omega_\epsilon^*\|_*)
\\
:=&\Gamma_{1}+\Gamma_{2}+\Gamma_{3}
+O(\lambda_i^{-N})+O(\|\omega_\epsilon^*\|_*).
\end{split}
\end{equation}
A direct computation shows that 
\[
\begin{split}
|\Gamma_1|\leq& C\int_{B_\delta(\xi_{i})} U_{\lambda_i,\xi_{i}
}^{p_{s}-\epsilon}|\phi_\epsilon|+C
\int_{B_\delta(\xi_{i})} U_{\lambda_i,\xi_{i}}|
\phi_\epsilon|^{p_{s}-\epsilon}
\leq C\big(\|\phi_\epsilon\|_*+\|\phi_\epsilon
\|_*^{p_{s}-\epsilon}\big).
\end{split}
\]
Using the mean value theorem, we see that
\[
|\Gamma_2|\leq C\epsilon \int_{B_\delta(\xi_{i})} U_{\lambda_i,\xi_{i}
}^{p_{s}+1-\theta\epsilon}|\log U_{\lambda_i,\xi_{i}}|\leq C\epsilon|\log\epsilon|,
\]
where $\theta\in(0,1)$.
Similar to the estimate of $\Gamma_1$, we  get
$$|\Gamma_3|\leq  C\|\phi_\epsilon\|_*+C\|\phi_\epsilon
\|_*^{p_{s}-\epsilon}.$$
Combining Lemma \ref{thm1.1} and Lemma \ref{lem4.2},  we derive 
\begin{equation*}
\int_{B_\delta(\xi_{i})} u_\epsilon^{p_{s}-\epsilon}\omega_\epsilon\leq C\epsilon^{1-\frac{\sigma}{2s}}.
\end{equation*}

On the other hand, we have 
\[
\begin{split}
&\int_{B_\rho(\xi_{i})}V(x)
u_\epsilon\omega_\epsilon\\
=&d_{i,0}\lambda_i
\int_{B_\delta(\xi_{i})} V(x)U_{\lambda_i,\xi_{i}}
Z_{\lambda_i,\xi_i}^0
+\displaystyle\sum_{l=1}
^{N}d_{i,l}\lambda_i^{-1}
\int_{B_\delta(\xi_{i})} V(x)U_{\lambda_i,\xi_{i}}
Z_{\lambda_i,\xi_i}^l+O(\lambda_i^{-2s}\|\phi_{\epsilon}\|_*)\\
&+O(\lambda_i^{-2s}\|\omega_\epsilon^*\|_*)+O(\lambda_i^{-N+2s})\vspace{3mm}\\
:=&T_{1}+T_{2}+O(\lambda_i^{-2s}\|\phi_{\epsilon}\|_*)
+O(\lambda_i^{-2s}\|\omega_\epsilon^*\|_*)+O(\lambda_i^{-N+2s}).
\end{split}
\]
Then we infer 
\[
\begin{split}
T_{1}
=& d_{i,0}\lambda_iV(\xi_{i} ) \int_{B_\delta(\xi_{i})}
U_{\lambda_i,\xi_{i}}
Z_{\lambda_i,\xi_i}^0+O\left( |d_{i,0}|\lambda_i^{-1-2s}\right)
\\
=&\frac{N-2s}{2}\gamma_{s,N}^2 d_{i,0} V(\xi_{i})\lambda_i^{-2s}\int_{\mathbb{R}^N}\frac{1-|x|^{2}}
{(1+|x|^{2})^{N-2s+1}}+
O\big(\lambda_i^{2s-N}\big)
+O\left( |d_{i,0}|\lambda_i^{-1-2s}
\right). \\
\end{split}
\]
By virtue of  Lemma \ref{thm1.1}, we find that   $\nabla V(\xi_{i})=o(\epsilon^{\frac{1}{2s}})$. Hence we obtain 
\[
\begin{split}
T_{2}
=&\displaystyle\sum_{l=1}
^{N}d_{i,l}\lambda_i^{-1}\frac{\partial V(\xi_{i})}{\partial x_l}
\int_{B_\delta(\xi_{i})}
U_{\lambda_i,\xi_{i}}
Z_{\lambda_i,\xi_i}^l
(x_{l}-\xi_{i,l})
+O\Big(\lambda_i^{-2-2s}\Big)
\\
=&(N-2s)\gamma_{s,N}^2\displaystyle\sum_{l=1}
^{N}d_{i,l}\lambda_i^{-1-2s}\frac{\partial V(\xi_{i})}{\partial x_{l}}
\int_{B_{\lambda_i\delta}(0)}
\frac{x_l^{2}}
{(1+|x|^{2})^{N-2s+1}}
+O\Big(\lambda_i^{-2-2s}\Big)
\\
=&O\Big(\lambda_i^{-1-2s}
\big|\nabla V(\xi_i)\big|\Big)
+O\Big(\lambda_i^{-2-2s}\Big)
\\
=&O\Big(\lambda_i^{-2-2s}\Big).
\end{split}
\]
Moreover, we see that 
\begin{equation*}\label{a4.6}
\begin{split}
&\int_{B_\rho(\xi_{i})}\big\langle x-\xi_{i},\nabla V(x)\big\rangle u_\epsilon\omega_\epsilon\\
=&
d_{i,0}\lambda_i
\int_{B_\delta(\xi_{i})}\big\langle x-\xi_{i},\nabla V(x)\big\rangle U_{\lambda_i,\xi_{i}}Z_{\lambda_i,\xi_i}^0+\displaystyle\sum_{l=1}
^{N}d_{i,l}\lambda_i^{-1}
\int_{B_\delta(\xi_{i})}\big\langle x-\xi_{i},\nabla V(x)\big\rangle U_{\lambda_i,\xi_{i}}Z_{\lambda_i,\xi_i}^l\\
&
+\int_{B_\delta(\xi_{i})}\big\langle x-\xi_{i},\nabla V(x)\big\rangle U_{\lambda_i,\xi_{i}}
\omega_\epsilon^*
+d_{i,0}\lambda_i
\int_{B_\delta(\xi_{i})}\big\langle x-\xi_{i},\nabla V(x)\big\rangle  Z_{\lambda_i,\xi_i}^0\phi_\epsilon
\\
&
+
\displaystyle\sum_{l=1}
^{N}d_{i,l}\lambda_i^{-1}
\int_{B_\delta(\xi_{i})}\big\langle x-\xi_{i},\nabla V(x)\big\rangle   Z_{\lambda_i,\xi_i}^l\phi_\epsilon
+\int_{B_\delta(\xi_{i})}\big\langle x-\xi_{i},\nabla V(x)\big\rangle \phi_{\epsilon}\omega_\epsilon^*+O(\lambda_i^{-N+2s})\\
:=&D_{1}+D_{2}+D_{3}+D_{4}+D_{5}+D_{6}+O(\lambda_i^{-N+2s}).
\end{split}
\end{equation*}
Since $V(x)\in C^{2}\big(B_\delta(\xi_{i})\big)$ and $\displaystyle\int_{B_\delta(\xi_i)}\langle x-\xi_i, \nabla V(\xi_i)\rangle U_{\lambda_i,\xi_i}\frac{\partial U_{\lambda_i,\xi_i}}
{\partial\lambda_i}=0$, we have
\[
\begin{split}
D_{1}=&
d_{i,0}\lambda_i
\int_{B_\delta(\xi_{i})}\big\langle x-\xi_{i},\nabla V(x)-\nabla V(\xi_{i})\big\rangle U_{\lambda_i,\xi_{i}}Z_{\lambda_i,\xi_i}^0
\\
=& O\left(\lambda_i\displaystyle\sum_{l=1}
^{N}
\int_{B_\delta(\xi_{i})} |x_{l}-\xi_{i,l}|^{2} U_{\lambda_i,\xi_{i}}
Z_{\lambda_i,\xi_i}^0\right) 
\\
=&O\big(\epsilon^{2-\frac{\sigma}{2s}}\big). 
\end{split}
\]
Thanks  to Lemma \ref{thm1.1}, we derive 
\[
\begin{split}
D_{2}=&
\displaystyle\sum_{l=1}
^{N}d_{i,l}\lambda_i^{-1}\frac{\partial V(\xi_{i})}{\partial x_{l}}
\int_{B_\delta(\xi_{i})} U_{\lambda_i,\xi_{i}}\frac{\partial U_{\lambda_i,\xi_{i}}}
{\partial\xi_{i,l}}
\big( x_{l}-\xi_{i,l}\big) 
+
O(\lambda_i^{-2-2s})
\\
=&  \displaystyle\sum_{l=1}
^{N}d_{i,l}\lambda_i^{-1-2s}\frac{\partial V(\xi_{i})}{\partial x_{l}}
\int_{B_{\delta\lambda_i}(0)}
\frac{x_l^{2}}
{(1+|x|^{2})^{N-1}}+
O(\lambda_i^{-2-2s})
\\
=&O(\lambda_i^{-2-2s}).
\end{split}
\]
A straightforward calculation leads to 
\[
\begin{split}
|D_{3}|
\leq C\lambda_i^{-2s}\|\omega_\epsilon^*\|_*
=O\big(\epsilon^{2-\frac{\sigma}{2s}}\big).
\end{split}
\]
In a similar way, we get 
\[
D_{4}=O\big(\lambda_i^{-2s}\|\phi_\epsilon\|_*\big),\ \ D_{5}=O\big(\lambda_i^{-1-2s}\|\phi_\epsilon\|_*\big)
\]
and
\[
\begin{split}
|D_{6}|
\leq C\lambda_i^{-2\sigma}\|\omega_\epsilon^*\|_*
\|\phi_\epsilon\|_*=O\big(\epsilon^{2}\big). 
\end{split}
\]
Thus, we conclude that 
$$
\text{LHS\;of\;}\eqref{4.13}=\frac{N-2s}{2}\gamma_{s,N}^2d_{i,0}(1+o(1)) V(\xi_{i})\lambda_i^{-2s}\int_{\mathbb{R}^N}\frac{1-|x|^{2}}
{(1+|x|^{2})^{N-2s+1}}
+O\big(\epsilon^{2-\frac{\sigma}{2s}}
\big).
$$
It follows from  Lemmas \ref{lem4.1},  \ref{lemGA.5} and \ref{newlemGA.6} that
 $$\text{RHS\;of\;}\eqref{4.13}=
 O\big(\epsilon^{2-\frac{\sigma}{2s}}
\big).$$
As a result, we obtain 
$$|d_{i,0}|\leq C\epsilon^{1-\frac{\sigma}{2s}},\ \ i=1,\cdots,k,$$ and  the desired estimate follows.
\end{proof}

\begin{lemma}\label{lem4.04}Let $N\geq6s$ and $V(x)\in C^{2}\big(B_\rho(\xi_{i})\big) $ with $\rho\in(2\delta,\,5\delta)$ for $i=1,\cdots,k$. Then there exists a positive constant $C$ such that  \begin{equation*}
|d_{i,l}|\leq C\epsilon^{1-\frac{1+\sigma}{2s}}
, \ \  i=1,\cdots,k, \ \ l=1,\cdots,N.\end{equation*}
\end{lemma}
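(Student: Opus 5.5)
We use the translational local Pohozaev identity \eqref{4.3} of Lemma \ref{lem4.3}, in the same way that \eqref{4.13} gave Lemma \ref{lem4.4}. The left-hand side $\int_{B_\rho(\xi_i)}\frac{\partial V}{\partial x_l}u_\epsilon\omega_\epsilon$ should produce a nondegenerate linear combination of the $d_{i,m}$, $m=1,\dots,N$, through the Hessian of $V$. The boundary terms on the right should be of lower order.

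\textbf{Step 1: the right-hand side of \eqref{4.3}.} On $\partial B_\rho(\xi_i)$ with $\rho\in(2\delta,5\delta)$ all cut-offs vanish, so Lemma \ref{lem4.1} gives $|u_\epsilon|,|\omega_\epsilon|\le C\lambda_i^{-\frac{N-2s}{2}}$ there. The two boundary integrals are therefore $O(\lambda_i^{-(N-2s)})$. For the integrals over $\partial''\mathfrak{B}^+_\rho(\xi_i)$ I would invoke Lemmas \ref{lemGA.5} and \ref{newlemGA.6}, exactly as in Lemma \ref{lem3.3} and Lemma \ref{lem4.4}. These give $\int t^{1-2s}|\nabla\widetilde u_\epsilon||\nabla\widetilde\omega_\epsilon|=O(\lambda_i^{-N+2s})$ for a suitable $\rho$. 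Since $N\ge 6s$, this is $O(\lambda_i^{-4s})=O(\epsilon^2)$, so the right-hand side of \eqref{4.3} is $O(\epsilon^{2})$.

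\textbf{Step 2: expansion of the left-hand side.} Insert the decomposition \eqref{eqY1} and $u_\epsilon=W_{\lambda_i,\xi_i}+\phi_\epsilon$ near $\xi_i$. Expand $\partial_l V(x)=\partial_l V(\xi_i)+\sum_m\partial_{lm}V(\xi_i)(x_m-\xi_{i,m})+o(|x-\xi_i|)$, which uses $V\in C^2$.

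The $d_{i,m}$ term contributes
\[
d_{i,m}\lambda_i^{-1}\partial_{lm}V(\xi_i)\int U_{\lambda_i,\xi_i}Z^m_{\lambda_i,\xi_i}(x_m-\xi_{i,m})=c_1\,d_{i,m}\partial_{lm}V(\xi_i)\lambda_i^{-1-2s}(1+o(1)),
\]
with $c_1=-\frac12\gamma_{s,N}^2\int(1+|x|^2)^{-(N-2s)}\ne0$; this follows by writing $UZ^m=\frac12\partial_{\xi_m}U^2$ and integrating by parts. The constant part $\partial_lV(\xi_i)$ gives zero against $UZ^m$ by oddness. The $d_{i,0}$ term gives $d_{i,0}\lambda_i\,\partial_lV(\xi_i)\int UZ^0$, which vanishes up to $O(\lambda_i^{-N})$ since $\int U Z^0\approx \frac12\partial_\lambda\!\int U^2$. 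Its Hessian part is odd, so only the $o(\cdot)$ remainder survives, giving $O(|d_{i,0}|\lambda_i^{-2s}\cdot o(1))$. By Lemma \ref{lem4.4} this is $o(\epsilon^{2-\frac{\sigma}{2s}})$.

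The remaining terms are bounded as follows. The $\omega^*_\epsilon$ term is $O(|\nabla V(\xi_i)|\lambda_i^{-2s}\|\omega^*\|_*+\lambda_i^{-1-2s}\|\omega^*\|_*)$, the cross terms with $\phi_\epsilon$ are similar, and the tails are $O(\lambda_i^{-N+2s})$. Using $\nabla V(\xi_i)=o(\epsilon^{\frac1{2s}})$ from Lemma \ref{thm1.1} and Lemma \ref{lem4.2}, all of these are $O(\epsilon^{2-\frac{\sigma}{2s}})$ or smaller.

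\textbf{Step 3: conclusion.} Steps 1 and 2 give
\[
\sum_m \partial_{lm}V(\xi_i)\,d_{i,m}=O\big(\lambda_i^{1+2s}\epsilon^{2-\frac{\sigma}{2s}}\big)+o(1)\textstyle\sum_m|d_{i,m}|.
\]
Since $\lambda_i^{1+2s}\sim\epsilon^{-1-\frac1{2s}}$, the first term is $O(\epsilon^{1-\frac{1+\sigma}{2s}})$. The points $\xi_i^*$ are nondegenerate critical points and $\xi_i\to\xi_i^*$, so the Hessian $(\partial_{lm}V(\xi_i))$ is invertible with uniformly bounded inverse. Inverting it yields the claimed bound $|d_{i,l}|\le C\epsilon^{1-\frac{1+\sigma}{2s}}$.

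\textbf{Main obstacle.} The hard step is the $\omega^*_\epsilon$ and $\phi_\epsilon$ contributions in Step 2. A crude bound $\lambda_i^{-2s}\|\omega^*\|_*$ is not enough, because the Hessian term is only of size $\lambda_i^{-1-2s}$. The way around this is to pair the constant part $\partial_lV(\xi_i)$ with the smallness $o(\epsilon^{\frac1{2s}})$, and the Hessian part with the extra decay factor $|x-\xi_i|$, which gains $\lambda_i^{-1}$. The bookkeeping must also produce a $\rho$ for which the $\partial''$ boundary estimates of Step 1 hold simultaneously.
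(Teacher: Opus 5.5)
Your proposal is correct and is essentially the paper's proof. You apply the translational identity \eqref{4.3}, extract $c\,\lambda_i^{-1-2s}\sum_m\big(\partial_{lm}V(\xi_i)+o(1)\big)d_{i,m}$ from the left-hand side, bound everything else (including the whole right-hand side) by $O(\epsilon^{2-\frac{\sigma}{2s}})$, and invert the Hessian to get $|d_{i,l}|\le C\lambda_i^{1+2s}\epsilon^{2-\frac{\sigma}{2s}}\le C\epsilon^{1-\frac{1+\sigma}{2s}}$.

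There are some slips that do not affect the conclusion:
\begin{itemize}
\item $\int U_{\lambda_i,\xi_i}Z^0_{\lambda_i,\xi_i}=\frac12\partial_{\lambda_i}\int U_{\lambda_i,\xi_i}^2$ does not vanish; it is of exact order $\lambda_i^{-1-2s}$, because the $L^2$ norm is not scale invariant. That term is negligible because of Lemma \ref{lem4.4}, together with $\nabla V(\xi_i)=o(\epsilon^{\frac{1}{2s}})$, not because it is zero.
\item Your constant is actually $c_1=+\frac12\gamma_{s,N}^2\int_{\mathbb{R}^N}(1+|x|^2)^{-(N-2s)}>0$. Only its nonvanishing matters.
\item Your ``main obstacle'' is misdiagnosed. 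The crude bound $\lambda_i^{-2s}\|\omega_\epsilon^*\|_*$ is exactly what the paper uses, and it is precisely what produces the stated exponent, since $\lambda_i\|\omega_\epsilon^*\|_*\le C\epsilon^{1-\frac{1+\sigma}{2s}}$. The refined bound $\lambda_i^{-1-2s}\|\omega_\epsilon^*\|_*$ you sketch is not needed. It also does not follow from the $\|\cdot\|_*$ control alone when $N$ is close to $6s$.
\end{itemize}
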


\begin{proof}[\bf Proof.]
By Lemma \ref{lem4.1}, we have 
\[
\begin{split}
&\int_{B_\rho(\xi_{i})}\frac{\partial V(x)}{\partial x_j}u_\epsilon\omega_\epsilon
\\
=&d_{i,0}\lambda_i\int_{B_\delta(\xi_{i})}
\frac{\partial V(x)}{\partial x_j}U_{\lambda_i,\xi_{i}}Z_{\lambda_i,\xi_i}^0
+\displaystyle\sum_{l=1}
^{N}d_{i,l}\lambda_i^{-1}
\int_{B_\delta(\xi_{i})} \frac{\partial V(x)}{\partial x_j}
U_{\lambda_i,\xi_{i}}Z_{\lambda_i,\xi_i}^l\\
&+O(\lambda_i^{-2s}\|\omega_\epsilon^*\|_*)+O(\lambda_i^{-2s}\|\phi_{\epsilon}\|_*)+O(\lambda_i^{-N+2s})\vspace{3mm}\\
:=&M_{1}+M_{2}
+O(\lambda_i^{-2s}\|\omega_\epsilon^*\|_*)+O(\lambda_i^{-2s}\|\phi_{\epsilon}\|_*)+O(\lambda_i^{-N+2s})
.
\end{split}
\]
Then it follows from direct computation that
\[
\begin{split}
M_{1}%
=& d_{i,0}\lambda_i\frac{\partial V(\xi_{i})}{\partial x_j} \int_{B_\delta(\xi_{i})}
U_{\lambda_i,\xi_{i}}Z_{\lambda_i,\xi_i}^0
+O\left(|d_{i,0}|\lambda_i^{-1-2s}\right)
\\
=&\frac{N-2s}{2}\gamma_{s,N}^2d_{i,0} \frac{\partial V(\xi_{i}) }{\partial x_j} \lambda_i^{-2s}\int_{\mathbb{R}^N}
\frac{1-|x|^{2}}
{(1+|x|^{2})^{N+1-2s}}
+O\left(|d_{i,0}|\lambda_i^{-1-2s}\right)
\\
=&O\left(|d_{i,0}|\lambda_i^{-1-2s}\right) 
\end{split}
\]
and
\[
\begin{split}
M_{2}=& \displaystyle\sum_{l=1}
^{N}d_{i,l}\lambda_i^{-1}
\int_{B_\delta(\xi_{i})}
\bigg(
\frac{\partial V(x)}{\partial x_{j}}-
\frac{\partial V(\xi_{i})}{\partial x_{j}}
\bigg)
 U_{\lambda_i,\xi_{i}}
 Z_{\lambda_i,\xi_i}^l \\
=&\frac{(N-2s)}{N}\gamma_{s,N}^2\lambda_i^{-1-2s}\displaystyle\sum_{l=1}
^{N}
d_{i,l}\left(\frac{\partial^{2} V(\xi_{i})}{\partial x_{j} \partial x_{l}}+o(1)\right)
\int_{\R^N}
\frac{|x|^{2}}
{(1+|x|^{2})^{N-2s+1}}
\end{split}
\]
 By  Lemmas\, \ref{thm1.1}, \ref{lem4.2} and  \ref{lem4.4}, we deduce
 $$
\begin{aligned}
\text{LHS\;of\;}\eqref{4.3}=&\frac{(N-2s)}{N}\gamma_{s,N}^2\lambda_i^{-1-2s}\displaystyle\sum_{l=1}
^{N}
d_{i,l}\left(\frac{\partial^{2} V(\xi_{i})}{\partial x_{j} \partial x_{l}}+o(1)\right)
\int_{\R^N}
\frac{|x|^{2}}
{(1+|x|^{2})^{N-2s+1}}\\
&+O(\epsilon^{2-\frac{\sigma}{2s}})+O(\epsilon^2).
\end{aligned}
$$
Applying Lemmas \ref{lem4.1}, \ref{lemGA.5} and  \ref{newlemGA.6}, we find that 
 $$\text{RHS\;of\;}\eqref{4.3}= O\big(\epsilon^{2-\frac{\sigma}{2s}}
\big).$$
Hence we obtain 
 \begin{equation*}|d_{i,l}|\leq C\epsilon^{1-\frac{1+\sigma}{2s}}, \ \ \ \  i=1,\cdots,k, \ \ l=1,\cdots,N.\end{equation*} 
 This concludes the proof of Lemma \ref{lem4.04}.
\end{proof}

\subsection{Completion of the proof of Theorem \ref{NDth}}
\begin{proof}[\bf Proof of Theorem \ref{NDth}]
Thanks to Lemmas \ref{lem4.1} and  \ref{lemLL2.3}, we have 
\[
\begin{aligned}
|\omega_\epsilon|&\leq C\int_{\mathbb{R}^N}\frac{1}{|x-y|^{N-2s}} u_\epsilon^{p_{s}-1-\epsilon}(y)
\omega_\epsilon(y)\\
&\leq C\|\omega_\epsilon\|_*
\displaystyle\sum_{i=1}^{k}
\int_{\mathbb{R}^N}\frac{1}{|x-y|^{N-2s}}
\frac{\lambda_{i}
^{\frac{N+2s}{2}}}
{(1+\lambda_{i}|y-\xi_{i}|)
^{\frac{N}{2}+3s+\sigma-(N-2s)\epsilon} }\\
&\leq C\|\omega_\epsilon\|_*
\displaystyle\sum_{i=1}^{k}
\int_{\mathbb{R}^N}\frac{1}
{|z-\lambda_{i}(x-\xi_{i})|^{N-2s}}
\frac{\lambda_{i}
^{\frac{N-2s}{2}}}
{(1+|z|)
^{\frac{N}{2}+3s-\sigma_{0}}} 
\\
&
\leq C\|\omega_\epsilon\|_*
\displaystyle\sum_{i=1}^{k}\frac{\lambda_{i}
^{\frac{N-2s}{2}}}
{(1+\lambda_{i}|x-\xi_{i}|)
^{\frac{N+2s}{2}-\sigma_{0}}},
\end{aligned}
\]
where $\sigma_{0}>0$ is a small fixed constant. Hence  we obtain
\begin{equation*}
\bigg(
\displaystyle\sum_{i=1}^{k}
\frac{\lambda_{i}^{\frac{N-2s}{2}}}
{\big(1+\lambda_{i}|x-\xi_{i}|\big)
^{\frac{N-2s}{2}+\sigma}}\bigg)^{-1}
|\omega_\epsilon(x)|
\leq  C\|\omega_\epsilon\|_*
\displaystyle\sum_{i=1}^{k}\frac{1}
{(1+\lambda_{i}|x-\xi_{i}|)
^{2s-\sigma-\sigma_{0}}}.
\end{equation*}
It follows from Lemmas \ref{lem4.2}, \ref{lem4.4} and \ref{lem4.04}  that for any $R>0$ and $i=1,\cdots,k$,
\begin{equation*}
\lambda_{i}^{-\frac{N-2s}{2}}\omega_\epsilon
\rightarrow0\ \ 
\mbox{in}\ \  B_{R\lambda_{i}^{-1}}
(\xi_{i}) \ \ \text{as}\;\;\epsilon\rightarrow0.
\end{equation*}
Since  $\|\omega_\epsilon\|_*=1$, 
$\bigg(
\displaystyle\sum_{i=1}^{k}
\frac{\lambda_{i}^{\frac{N-2s}{2}}}
{\big(1+\lambda_{i}|x-\xi_{i}|\big)
^{\frac{N-2s}{2}+\sigma}}\bigg)^{-1}
|\omega_\epsilon(x)|$
cannot attain its maximum in $B_{R\lambda_{i}^{-1}}(\xi_{i})$. Choosing $R$ large enough such that
\[
\frac{C}{(1+R
)^{2s-\sigma-\sigma_{0}}}\leq\frac{1}{2k},
\]
we deduce that
\begin{equation*}
\bigg(
\displaystyle\sum_{i=1}^{k}
\frac{\lambda_{i}^{\frac{N-2s}{2}}}
{\big(1+\lambda_{i}|x-\xi_{i}|\big)
^{\frac{N-2s}{2}+\sigma}}\bigg)^{-1}
|\omega_\epsilon(x)|
\leq\frac{1}{2}
\|\omega_\epsilon\|_*,\ \ x \in \R^N\setminus \cup_{i=1}^k B_{R\lambda_{i}^{-1}}
(\xi_{i})
.
\end{equation*}
This implies that for $\epsilon$ sufficiently small, 
\[
\|\omega_\epsilon\|_*\leq\frac{1}{2}
\|\omega_\epsilon\|_*,
\]
which contradicts the fact that  $\|\omega_\epsilon\|_*=1$. 
This completes the proof of Theorem \ref{NDth}.
\end{proof}  
\section{Local uniqueness of multi-peak solutions}

In this section, we investigate  the local uniqueness of multi-peak solutions to \eqref{eq:1}. 
 To this end, let $u_\epsilon^{(1)}$ and $u_\epsilon^{(2)}$ be two peak  solutions constructed in Theorem \ref{Mth}.
  We aim to prove that there exists $\epsilon_0>0$ such that $u_\epsilon^{(1)}\equiv u_\epsilon^{(2)}$ for $\epsilon\in(0,\epsilon_0)$.
We proceed by contradiction and assume that $u_\epsilon^{(1)}\neq u_\epsilon^{(2)}$. 
Define \[
\psi_\epsilon=\frac{u_\epsilon^{(1)}-u_\epsilon^{(2)}}{\big\|u_\epsilon^{(1)}-u_\epsilon^{(2)}\big\|_*}.
\]
Then  $\psi_\epsilon$ satisfies $\|\psi_\epsilon\|_*=1$ and 
\begin{equation*}
(-\Delta)^{s}\psi_\epsilon+V(x)\psi_\epsilon
=(p_{s}-\epsilon) c_\epsilon(x)\psi_\epsilon,
\end{equation*}
where 
\[
c_\epsilon(x)=\int_0^1\left(tu_\epsilon^{(1)}+(1-t)u_\epsilon^{(2)}\right)
^{p_{s}-1-\epsilon}\mathrm{d}t.
\]

\subsection{Quantitative estimates of multi-peak solutions via local  Pohozaev identities}

In this subsection, we employ several local Pohozaev identities to derive quantitative estimates for  multi-peak solutions,
which play an essential role in establishing the local uniqueness of such solutions.

Let $\widetilde{u}_\epsilon^{(m)}$ denote the $s$-harmonic extension of $u_\epsilon^{(m)}$, $m=1,2.$ Multiplying  \eqref{eq-ND1} respectively  by $\widetilde{u}_\epsilon^{(m)}$ and  $\langle  X-(\xi_{i}^{(1)},0),\nabla \widetilde{u}_\epsilon^{(m)}\rangle$, we obtain  the following local  Pohozaev identity for dilations.

\begin{lemma}\label{lem5.1}
Assume  that $V(x)\in C^2\big(B_\rho(\xi_{i}^{(1)})\big)$, 
$ i=1,\cdots,k$. Then  for $m=1,2$ and $\rho>0$, we have
\begin{equation}\label{5.2}
\begin{split}
&\left(\frac{N}{p_{s}+1-\epsilon}-
\frac{N-2s}{2}\right)
\int_{B_\rho(\xi_{i}^{(1)})} \big(u_\epsilon^{(m)}\big)^{p_{s}
+1-\epsilon}
-\frac{1}{2}\int_{B_\rho(\xi_{i}^{(1)})}\big\langle x-\xi_{i}^{(1)},\nabla V(x)\big\rangle\big(u_\epsilon^{(m)}\big)^{2}
\\&-s\int_{B_\rho(\xi_{i}^{(1)})}V(x)
\big(u_\epsilon^{(m)}\big)^{2}
=\frac{1}{p_{s}+1-\epsilon}\int_{\partial B_\rho(\xi_{i}^{(1)})} \big(u_\epsilon^{(m)}\big)^{p_{s}+1-\epsilon}\langle x-\xi_{i}^{(1)},\nu\rangle\\
&
-\frac{1}{2}\int_{\partial B_\rho(\xi_{i}^{(1)})}V(x)\big(u_\epsilon^{(m)}
\big)^2 \langle x-\xi_{i}^{(1)},\nu\rangle+\int_{\partial^{''} \mathfrak{B}^{+}_\rho(\xi_{i}^{(1)})}
t^{1-2s}\frac{\partial \widetilde{u}_\epsilon^{(m)}}{\partial\nu}\big\langle X-(\xi_{i}^{(1)},0),\nabla \widetilde{u}_\epsilon^{(m)}\big\rangle
\\
&-\frac{1}{2}\int_{\partial^{''} \mathfrak{B}^{+}_\rho(\xi_{i}^{(1)})}
t^{1-2s}|\nabla \widetilde{u}_\epsilon^{(m)}|^2 \big\langle X-(\xi_{i}^{(1)},0),\nu\big\rangle
+\frac{N-2s}{2}\int_{\partial^{''} \mathfrak{B}^{+}_\rho(\xi_{i}^{(1)})}
t^{1-2s}\frac{\partial \widetilde{u}^{(m)}_\epsilon}{\partial\nu}\widetilde{u}_\epsilon^{(m)}.
\end{split}
\end{equation}
\end{lemma}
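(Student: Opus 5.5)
The plan is the standard Pohozaev computation for the Caffarelli--Silvestre extension on the half ball $\mathfrak{B}^{+}_\rho(\xi_i^{(1)})=\{X\in\R^{N+1}_+:|X-(\xi_i^{(1)},0)|<\rho\}$. Write $u=u_\epsilon^{(m)}$, $\widetilde u=\widetilde u_\epsilon^{(m)}$, $Y=X-(\xi_i^{(1)},0)$, and recall \eqref{eq-ND1} with $\kappa_s=1$. The boundary of $\mathfrak{B}^{+}_\rho$ consists of the curved part $\partial''\mathfrak{B}^{+}_\rho$ and the flat part $B_\rho(\xi_i^{(1)})\times\{0\}$. On the flat part the outward conormal flux is $-t^{1-2s}\partial_t\widetilde u\to -V u+u^{p_s-\epsilon}$.

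\textbf{Step 1: test with $\langle Y,\nabla\widetilde u\rangle$.} Multiply $\mathrm{div}(t^{1-2s}\nabla\widetilde u)=0$ by $\langle Y,\nabla\widetilde u\rangle$ and integrate over $\mathfrak{B}^+_\rho$. Use the identity
\[
\mathrm{div}\Big(t^{1-2s}\langle Y,\nabla\widetilde u\rangle\nabla\widetilde u-\tfrac12 t^{1-2s}|\nabla\widetilde u|^2Y\Big)=-\tfrac{N-2s}{2}\,t^{1-2s}|\nabla\widetilde u|^2 .
\]
This holds because $\mathrm{div}(t^{1-2s}Y)=(N+1)t^{1-2s}+(1-2s)t^{1-2s}=(N+2-2s)t^{1-2s}$, and because the $t$-component of $Y$ is $t$. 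On the flat part $Y\cdot\nu=-t=0$, and $\langle Y,\nabla\widetilde u\rangle\to\langle x-\xi_i^{(1)},\nabla u\rangle$. The flat contribution is therefore
\[
\int_{B_\rho}\big(-Vu+u^{p_s-\epsilon}\big)\langle x-\xi_i^{(1)},\nabla u\rangle .
\]
Integrate this by parts on $B_\rho$ in $\R^N$. Writing $u^{p_s-\epsilon}\nabla u=\nabla\frac{u^{p_s+1-\epsilon}}{p_s+1-\epsilon}$ and $Vu\nabla u=\frac12 V\nabla u^2$ gives
\[
-\frac{N}{p_s+1-\epsilon}\int_{B_\rho}u^{p_s+1-\epsilon}+\frac{1}{p_s+1-\epsilon}\int_{\partial B_\rho}u^{p_s+1-\epsilon}\langle x-\xi,\nu\rangle+\frac N2\int_{B_\rho}Vu^2+\frac12\int_{B_\rho}\langle x-\xi,\nabla V\rangle u^2-\frac12\int_{\partial B_\rho}Vu^2\langle x-\xi,\nu\rangle .
\]

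\textbf{Step 2: test with $\widetilde u$.} Integrate $\mathrm{div}(t^{1-2s}\widetilde u\nabla\widetilde u)=t^{1-2s}|\nabla\widetilde u|^2$. This gives
\[
\int_{\mathfrak{B}^+_\rho}t^{1-2s}|\nabla\widetilde u|^2=\int_{\partial''}t^{1-2s}\partial_\nu\widetilde u\,\widetilde u+\int_{B_\rho}\big(-Vu^2+u^{p_s+1-\epsilon}\big).
\]
Substitute this into the interior term $-\frac{N-2s}{2}\int t^{1-2s}|\nabla\widetilde u|^2$ from Step 1. The interior Dirichlet energy is then replaced by $\frac{N-2s}{2}$ times the curved flux term plus the bulk terms. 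Collecting coefficients:
\begin{itemize}
\item $\int u^{p_s+1-\epsilon}$ gets $\frac{N}{p_s+1-\epsilon}-\frac{N-2s}{2}$;
\item $\int Vu^2$ gets $-\frac N2+\frac{N-2s}{2}=-s$;
\item the $\nabla V$ term gets $-\frac12$.
\end{itemize}
These are exactly the left-hand side of \eqref{5.2}, and the remaining boundary terms match the right-hand side, up to the overall sign.

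\textbf{Main obstacle: justification.} The algebra is routine; the difficulty is justifying the integrations by parts near $t=0$, where $t^{1-2s}\nabla\widetilde u$ is only weighted-regular. I would first integrate over $\mathfrak{B}^+_\rho\cap\{t>\tau\}$ and then let $\tau\to0^+$. This requires three facts:
\begin{itemize}
\item $t^{1-2s}\partial_t\widetilde u(\cdot,\tau)\to -(-Vu+u^{p_s-\epsilon})$ locally uniformly;
\item $\tau^{2-2s}|\nabla\widetilde u|^2\to0$ on the slab, which kills the $Y\cdot\nu=-\tau$ contribution;
\item $\nabla_x\widetilde u(\cdot,\tau)\to\nabla u$.
\end{itemize}
These follow from the regularity of $u$ ($u\in C^{1,\alpha}$ by Lemma \ref{lem4.1} and bootstrap, since $V\in C^1$ and the nonlinearity is $C^1$ for $u>0$) together with the Poisson kernel representation. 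The limit holds for every $\rho>0$; $V\in C^2$ is used only to make sense of later expansions, and $V\in C^1$ already suffices here.
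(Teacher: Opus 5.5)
Your proposal is correct and follows the paper's route: the paper also obtains \eqref{5.2} by testing \eqref{eq-ND1} against $\widetilde u_\epsilon^{(m)}$ and $\langle X-(\xi_i^{(1)},0),\nabla\widetilde u_\epsilon^{(m)}\rangle$ on the half ball and integrating by parts, and your divergence identity, flat-boundary contribution and coefficient bookkeeping all check out. Your truncation at $t=\tau$, which justifies the boundary limit, is a useful addition that the paper omits.
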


Similarly, multiplying \eqref{eq-ND1} by $\frac{\partial \widetilde{u}_\epsilon^{(m)}}{\partial x_{l}}$
yields the following local Pohozaev identity for translations.
\begin{lemma}\label{lem14.30}
Under the same assumptions as in Lemma \ref{lem5.1},  for $m=1,2$, $\rho>0$ and $l=1,\cdots,N$, we have
\begin{equation}\label{14.3}
\begin{split}
&\frac{1}{2}\int_{B_\rho(\xi_{i}^{(1)})}\frac{\partial V(x)}{\partial x_{l}}\big(u_\epsilon^{(m)}\big)^{2}
=\frac{1}{2}\int_{\partial^{''} \mathfrak{B}^{+}_\rho(\xi_{i}^{(1)})}
t^{1-2s}|\nabla \widetilde{u}_\epsilon^{(m)}|^{2}\nu_{l}
+\frac{1}{2}\int_{\partial B_\rho(\xi_{i}^{(1)})}V(x)\big(u_\epsilon^{(m)}\big)^{2}
\nu_{l}
\\&
-\int_{\partial^{''} \mathfrak{B}^{+}_\rho(\xi_{i}^{(1)})}
t^{1-2s}\frac{\partial \widetilde{u}_\epsilon^{(m)}}{\partial x_{l}}\frac{\partial \widetilde{u}_\epsilon^{(m)}}{\partial\nu}
-\frac{1}{p_{s}+1-\epsilon}\int_{\partial B_\rho(\xi_{i}^{(1)})} \big( u_\epsilon^{(m)}\big)^{p_{s}+1-\epsilon} \nu_{l}.
\end{split}
\end{equation}

\end{lemma}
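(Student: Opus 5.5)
We derive \eqref{14.3} exactly as the dilation identity of Lemma \ref{lem5.1}: multiply the extended equation by a translation field and integrate by parts on the half ball. Fix $m\in\{1,2\}$ and write $w=\widetilde u^{(m)}_\epsilon$, $u=u^{(m)}_\epsilon$, $\mathfrak B^+=\mathfrak B^+_\rho(\xi^{(1)}_i)$. The boundary of $\mathfrak B^+$ consists of the curved part $\partial''\mathfrak B^+$ and the flat part $\partial'\mathfrak B^+=B_\rho(\xi_i^{(1)})\times\{0\}$, whose outward normal is $-e_{N+1}$.

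First we multiply $\mathrm{div}(t^{1-2s}\nabla w)=0$ by $\partial_{x_l}w$ and integrate over $\mathfrak B^+_{\rho}\cap\{t>\tau\}$, letting $\tau\to0^+$ at the end. By the divergence theorem,
\[
0=\int_{\partial}t^{1-2s}\frac{\partial w}{\partial\nu}\partial_{x_l}w-\int t^{1-2s}\nabla w\cdot\nabla\partial_{x_l}w .
\]
Since $t^{1-2s}$ does not depend on $x_l$, we have $t^{1-2s}\nabla w\cdot\nabla\partial_{x_l}w=\tfrac12\partial_{x_l}\big(t^{1-2s}|\nabla w|^2\big)$. This term is therefore a pure divergence in the $x_l$ direction, and it equals $\tfrac12\int_\partial t^{1-2s}|\nabla w|^2\nu_l$. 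On the flat part $\nu_l=0$, so only $\tfrac12\int_{\partial''\mathfrak B^+}t^{1-2s}|\nabla w|^2\nu_l$ survives.

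On the flat part, the term $t^{1-2s}\partial_\nu w=-t^{1-2s}\partial_tw$ converges to $-V(x)u+u^{p_s-\epsilon}$ by \eqref{eq-ND1}. This gives
\[
\int_{B_\rho(\xi^{(1)}_i)}\big(-Vu+u^{p_s-\epsilon}\big)\partial_{x_l}u
=-\int_{\partial''\mathfrak B^+}t^{1-2s}\frac{\partial w}{\partial\nu}\partial_{x_l}w+\frac12\int_{\partial''\mathfrak B^+}t^{1-2s}|\nabla w|^2\nu_l ,
\]
which is \eqref{eq3.2} with $c_i^l=0$. Next we integrate by parts on $B_\rho$:
\[
\int_{B_\rho} V u\,\partial_{x_l}u=\frac12\int_{\partial B_\rho}Vu^2\nu_l-\frac12\int_{B_\rho}\partial_{x_l}V\,u^2,
\qquad
\int_{B_\rho}u^{p_s-\epsilon}\partial_{x_l}u=\frac1{p_s+1-\epsilon}\int_{\partial B_\rho}u^{p_s+1-\epsilon}\nu_l ,
\]
where the second uses $u>0$. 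Substituting both and rearranging gives exactly \eqref{14.3}.

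The main obstacle is justifying the limit $\tau\to0^+$, i.e.\ the behaviour of $t^{1-2s}\nabla w$ near $t=0$. Tangential derivatives $\nabla_xw$ are continuous up to $t=0$, because $u\in C^{1,\alpha}$ by Lemma \ref{lem4.1} and bootstrapping, so $\partial_{x_l}w(\cdot,\tau)\to\partial_{x_l}u$. Also $t^{1-2s}\partial_tw(\cdot,\tau)$ converges uniformly on $B_\rho$ to the Neumann datum by the standard regularity of the Caffarelli--Silvestre extension. The remaining difficulty is $t^{1-2s}|\nabla w|^2$ on the slice $\{t=\tau\}$. Here $\nu_l=0$ on the slice, and the $t^{1-2s}|\partial_tw|^2$ part of $\int t^{1-2s}|\nabla w|^2$ over the cut domain is integrable since $s>\frac12$ and $t^{1-2s}\partial_t w$ is bounded. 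So we can pass to the limit by dominated convergence. The identity holds for every $\rho>0$ with $B_\rho$ inside the $C^2$ region of $V$; in fact only $C^1$ is used.
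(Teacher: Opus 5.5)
Your proof is correct and follows the paper's route: multiply the extended equation by $\partial_{x_l}\widetilde u_\epsilon^{(m)}$, write $t^{1-2s}\nabla w\cdot\nabla\partial_{x_l}w=\frac12\partial_{x_l}\bigl(t^{1-2s}|\nabla w|^2\bigr)$, and use the Neumann condition to obtain \eqref{eq3.2} with vanishing multipliers; then integrate the potential and nonlinear terms by parts on $B_\rho$, exactly as in the passage from \eqref{eq3.4} to \eqref{eq3.17}. One minor point: the bound $t^{1-2s}|\partial_t w|^2\lesssim t^{2s-1}$ near $t=0$ is integrable for every $s\in(0,1)$, so $s>\frac12$ is not needed in your limiting argument; in fact that bound only matters on $\partial''\mathfrak{B}^+_\rho$, since the slice term drops out because $\nu_l=0$ there.
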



As a direct consequence of Lemmas \ref{lem5.1} and \ref{lem14.30}, we readily  deduce the  following local Pohozaev identities for $\psi_\epsilon$.

\begin{lemma}\label{lem5.2}
Under the same assumptions as in Lemma \ref{lem5.1}, for  $\rho>0$, it holds that
\begin{equation}\label{5.3}
\begin{split}
&\bigg(\frac{N}{p_{s}+1-\epsilon}
-\frac{N-2s}{2}
\bigg)
\int_{B_\rho(\xi_{i}^{(1)})} b_\epsilon(x)\psi_\epsilon
-\frac{1}{2}\int_{B_\rho(\xi_{i}^{(1)})}\big\langle x-\xi_{i}^{(1)},\nabla V(x)\big\rangle\big(u_\epsilon^{(1)}
+u_\epsilon^{(2)}\big)\psi_\epsilon
\\
-&s\int_{B_\rho(\xi_{i}^{(1)})}V(x)
\big(u_\epsilon^{(1)}+u_\epsilon^{(2)}
\big)\psi_\epsilon
=
-\frac{1}{2}\int_{\partial B_\rho(\xi_{i}^{(1)})}V(x)\big(u_\epsilon^{(1)}
+u_\epsilon^{(2)}\big)\psi_\epsilon \langle x-\xi_{i}^{(1)},\nu\rangle\\
+&\frac{1}{p_{s}+1-\epsilon}\int_{\partial B_\rho(\xi_{i}^{(1)})} b_\epsilon(x)\psi_\epsilon\langle x-\xi_{i}^{(1)},\nu\rangle
+\int_{\partial^{''} \mathfrak{B}^{+}_\rho(\xi_{i}^{(1)})}
t^{1-2s}\frac{\partial \widetilde{u}_\epsilon^{(2)}}{\partial\nu}
\big\langle X-(\xi_{i}^{(1)},0),
\nabla\widetilde{\psi}_\epsilon\big\rangle
\\
-&\frac{1}{2}\int_{\partial^{''} \mathfrak{B}^{+}_\rho(\xi_{i}^{(1)})}
t^{1-2s}\nabla\big(\widetilde{u}_\epsilon^{(1)}
+\widetilde{u}_\epsilon^{(2)}\big)
\nabla\widetilde{\psi}_\epsilon \langle X-(\xi_{i}^{(1)},0),\nu\rangle
+\frac{N-2s}{2}\int_{\partial^{''} \mathfrak{B}^{+}_\rho(\xi_{i}^{(1)})}
t^{1-2s}\frac{\partial
\widetilde{\psi}_\epsilon}{\partial\nu}
\widetilde{u}_\epsilon^{(1)}
\\
+&\frac{N-2s}{2}\int_{\partial^{''} \mathfrak{B}^{+}_\rho(\xi_{i}^{(1)})}
t^{1-2s}\frac{\partial \widetilde{u}_\epsilon^{(2)}}{\partial\nu}
\widetilde{\psi}_\epsilon
+
\int_{\partial ^{''}\mathfrak{B}^{+}_\rho(\xi_{i}^{(1)})}
t^{1-2s}\frac{\partial\widetilde{\psi}_\epsilon}
{\partial\nu}\big\langle X-(\xi_{i}^{(1)},0),\nabla \widetilde{u}_\epsilon^{(1)}\big\rangle
,
\end{split}
\end{equation}
where
\[
b_\epsilon(x)=(p_{s}+1-\epsilon)\int_0^1
\Big(tu_\epsilon^{(1)}+(1-t)u_\epsilon^{(2)}
\Big)^{p_{s}-\epsilon}\mathrm{d}t.
\]
\end{lemma}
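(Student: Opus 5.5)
The plan is to write the identity \eqref{5.2} of Lemma \ref{lem5.1} once for $m=1$ and once for $m=2$, subtract the two, and divide by $\|u_\epsilon^{(1)}-u_\epsilon^{(2)}\|_*$. Every term in \eqref{5.2} is either linear or quadratic in $u_\epsilon^{(m)}$ (or in its extension $\widetilde u_\epsilon^{(m)}$), except the nonlinear power term. This lets us factor each difference exactly, with $\psi_\epsilon$ and its $s$-harmonic extension $\widetilde\psi_\epsilon=(\widetilde u_\epsilon^{(1)}-\widetilde u_\epsilon^{(2)})/\|u_\epsilon^{(1)}-u_\epsilon^{(2)}\|_*$ playing the role of the difference quotient. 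Linearity of the Poisson extension ensures that $\widetilde\psi_\epsilon$ is indeed the extension of $\psi_\epsilon$. Note that both identities are centred at the same point $\xi_i^{(1)}$ and use the same radius $\rho$, so no shift of centre is needed.

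\textbf{Step 1 (the power term).} By the fundamental theorem of calculus,
\[
\big(u_\epsilon^{(1)}\big)^{p_s+1-\epsilon}-\big(u_\epsilon^{(2)}\big)^{p_s+1-\epsilon}=(p_s+1-\epsilon)\int_0^1\big(tu_\epsilon^{(1)}+(1-t)u_\epsilon^{(2)}\big)^{p_s-\epsilon}\,dt\,\big(u_\epsilon^{(1)}-u_\epsilon^{(2)}\big).
\]
After dividing by the norm, the right-hand side is exactly $b_\epsilon\psi_\epsilon$. This handles both the interior integral over $B_\rho(\xi_i^{(1)})$ and the boundary integral over $\partial B_\rho(\xi_i^{(1)})$, the latter carrying the weight $\langle x-\xi_i^{(1)},\nu\rangle$. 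Positivity of $u_\epsilon^{(m)}$ is used here, so that the convex combination is positive and the power is well defined.

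\textbf{Step 2 (the quadratic terms).} For the potential terms we use $a^2-b^2=(a+b)(a-b)$. This gives $V(u_\epsilon^{(1)}+u_\epsilon^{(2)})\psi_\epsilon$ and $\langle x-\xi_i^{(1)},\nabla V\rangle(u_\epsilon^{(1)}+u_\epsilon^{(2)})\psi_\epsilon$, together with the factor $\tfrac12$ in front of each; this produces the interior and boundary $V$-terms in \eqref{5.3}. The gradient-squared boundary term is treated the same way:
\[
|\nabla\widetilde u^{(1)}|^2-|\nabla\widetilde u^{(2)}|^2=\nabla(\widetilde u^{(1)}+\widetilde u^{(2)})\cdot\nabla(\widetilde u^{(1)}-\widetilde u^{(2)}).
\]
The three bilinear but non-symmetric terms have the form $A(f)=\partial_\nu f\,\langle X-(\xi_i^{(1)},0),\nabla f\rangle$ and $B(f)=\partial_\nu f\, f$. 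For these we use the asymmetric splitting $F(f_1,f_1)-F(f_2,f_2)=F(f_1-f_2,f_1)+F(f_2,f_1-f_2)$. This yields $\partial_\nu\widetilde\psi_\epsilon\langle X-(\xi_i^{(1)},0),\nabla\widetilde u^{(1)}\rangle+\partial_\nu\widetilde u^{(2)}\langle X-(\xi_i^{(1)},0),\nabla\widetilde\psi_\epsilon\rangle$, and analogously $\partial_\nu\widetilde\psi_\epsilon\,\widetilde u^{(1)}+\partial_\nu\widetilde u^{(2)}\,\widetilde\psi_\epsilon$. These match precisely the pairing of superscripts $(1)$ and $(2)$ in \eqref{5.3}.

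\textbf{Step 3.} We collect the terms and check that the coefficients $\tfrac{N}{p_s+1-\epsilon}-\tfrac{N-2s}{2}$, $\tfrac1{p_s+1-\epsilon}$ and $\tfrac{N-2s}{2}$ carry over unchanged from \eqref{5.2}. There is no genuine analytic obstacle, since the whole argument is algebraic once Lemma \ref{lem5.1} is available. The main point of care is bookkeeping. One must consistently use the same splitting convention, which puts $\psi$ in the first slot with $u^{(1)}$ and $\psi$ in the second slot with $u^{(2)}$, so that the boundary terms appear in exactly the stated form. One must also check that the regularity and decay near $t=0$ justifying Lemma \ref{lem5.1} for each $\widetilde u_\epsilon^{(m)}$ are inherited by the differences; this holds because the identities are simply subtracted.
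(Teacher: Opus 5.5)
Your proposal is correct and follows the paper's route exactly: the paper obtains \eqref{5.3} directly from Lemma \ref{lem5.1}, by subtracting the $m=2$ identity from the $m=1$ identity (both centred at $\xi_i^{(1)}$) and dividing by $\|u_\epsilon^{(1)}-u_\epsilon^{(2)}\|_*$. Your three factorizations are each correct and together reproduce every term of \eqref{5.3} with the right pairing of superscripts $(1)$ and $(2)$: the integral mean-value formula producing $b_\epsilon\psi_\epsilon$, the identity $a^2-b^2=(a+b)(a-b)$, and the asymmetric splitting $F(f_1,f_1)-F(f_2,f_2)=F(f_1-f_2,f_1)+F(f_2,f_1-f_2)$.
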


\begin{lemma}\label{lem14.33}
Under the same assumptions as in Lemma \ref{lem5.1}, for  $\rho>0$ and $l=1,\cdots,N$, we have
\begin{equation}\label{14.73}
\begin{split}
&\frac{1}{2}\int_{B_\rho(\xi_{i}^{(1)})}\frac{\partial V(x)}{\partial x_{l}}\big(u_\epsilon^{(1)}
+u_\epsilon^{(2)}\big)
\psi_\epsilon
=\frac{1}{2}\int_{\partial^{''} \mathfrak{B}^{+}_\rho(\xi_{i}^{(1)})}
t^{1-2s}\nabla\big(\widetilde{u}_\epsilon^{(1)}
+\widetilde{u}_\epsilon^{(2)}\big)
\nabla\widetilde{\psi}_\epsilon \nu_{l}\\
&
-\int_{\partial ^{''} \mathfrak{B}^{+}_\rho(\xi_{i}^{(1)})}
t^{1-2s}\frac{\partial \widetilde{u}_\epsilon^{(1)}}{\partial x_{l}}\frac{\partial \widetilde{\psi}_\epsilon}{\partial\nu}
-\int_{\partial^{''} \mathfrak{B}^{+}_\rho(\xi_{i}^{(1)})}
t^{1-2s}\frac{\partial \widetilde{\psi}_\epsilon
}{\partial x_{l}}\frac{\partial \widetilde{u}_\epsilon^{(2)} }{\partial\nu}
\\
&+\frac{1}{2}\int_{\partial B_\rho(\xi_{i}^{(1)})}V(x)\big(u_\epsilon^{(1)}
+u_\epsilon^{(2)}\big)
\psi_\epsilon
\nu_{l}
-\frac{1}{p_{s}+1-\epsilon
}\int_{\partial B_\rho(\xi_{i}^{(1)})} b_\epsilon(x)\psi_\epsilon \nu_{l}.
\end{split}
\end{equation}
\end{lemma}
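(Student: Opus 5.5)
The plan is to subtract the two translation identities of Lemma \ref{lem14.30}, the one written for $u_\epsilon^{(1)}$ and the one for $u_\epsilon^{(2)}$, both taken on the same ball $B_\rho(\xi_i^{(1)})$. The difference is then divided by $\|u_\epsilon^{(1)}-u_\epsilon^{(2)}\|_*$. Since \eqref{14.3} is quadratic in $u_\epsilon^{(m)}$, we will factor each difference as a product of a sum and a difference and then use $\psi_\epsilon=(u^{(1)}_\epsilon-u^{(2)}_\epsilon)/\|u_\epsilon^{(1)}-u_\epsilon^{(2)}\|_*$. By linearity of the Poisson extension, its $s$-harmonic extension is $\widetilde\psi_\epsilon=(\widetilde u^{(1)}_\epsilon-\widetilde u^{(2)}_\epsilon)/\|u_\epsilon^{(1)}-u_\epsilon^{(2)}\|_*$.

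The term-by-term algebra goes as follows.

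1. The potential terms, interior and boundary, are handled by $(u^{(1)})^2-(u^{(2)})^2=(u^{(1)}+u^{(2)})(u^{(1)}-u^{(2)})$. This gives the left-hand side of \eqref{14.73} and its $V$-boundary term directly.

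2. The gradient energy term is handled by $|\nabla\widetilde u^{(1)}|^2-|\nabla\widetilde u^{(2)}|^2=\nabla(\widetilde u^{(1)}+\widetilde u^{(2)})\cdot\nabla(\widetilde u^{(1)}-\widetilde u^{(2)})$, which produces the first term on the right of \eqref{14.73}.

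3. For the bilinear term we use the asymmetric splitting
\[
\partial_{x_l}\widetilde u^{(1)}\partial_\nu\widetilde u^{(1)}-\partial_{x_l}\widetilde u^{(2)}\partial_\nu\widetilde u^{(2)}
=\partial_{x_l}\widetilde u^{(1)}\,\partial_\nu(\widetilde u^{(1)}-\widetilde u^{(2)})+\partial_{x_l}(\widetilde u^{(1)}-\widetilde u^{(2)})\,\partial_\nu\widetilde u^{(2)}.
\]
This matches exactly the two mixed terms in \eqref{14.73}.

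4. For the nonlinear boundary term, the fundamental theorem of calculus gives
\[
(u^{(1)})^{p_s+1-\epsilon}-(u^{(2)})^{p_s+1-\epsilon}=(p_s+1-\epsilon)\int_0^1\big(tu^{(1)}+(1-t)u^{(2)}\big)^{p_s-\epsilon}dt\,(u^{(1)}-u^{(2)})=b_\epsilon(u^{(1)}-u^{(2)}).
\]
Dividing by $p_s+1-\epsilon$ gives the last term of \eqref{14.73}.

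Collecting the four pieces and dividing yields \eqref{14.73}. The only real issue is justifying Lemma \ref{lem14.30} for each $u^{(m)}_\epsilon$ on a ball that is not centered at that function's own peak. This costs nothing: the derivation, namely multiplying \eqref{eq-ND1} by $\partial_{x_l}\widetilde u^{(m)}_\epsilon$ and integrating over $\mathfrak B^+_\rho$, works for any center and any $\rho>0$. The needed regularity and the boundary behavior near $t=0$ are those already used in Lemmas \ref{Flem4.3}, \ref{Flem4.30}, together with the decay bounds of Lemma \ref{lem4.1}. The only step to check carefully is the sign and pairing bookkeeping in the bilinear term of step 3.
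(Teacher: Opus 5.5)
Your proposal is correct and is essentially the paper's own argument: the paper obtains \eqref{14.73} as a direct consequence of \eqref{14.3}, by subtracting its $m=2$ instance from its $m=1$ instance (both already stated on the common ball $B_\rho(\xi_i^{(1)})$), dividing by $\|u_\epsilon^{(1)}-u_\epsilon^{(2)}\|_*$, and using exactly the factorizations you describe. One small wording point in step 4: nothing needs to be divided by $p_s+1-\epsilon$, since the prefactor $\frac{1}{p_s+1-\epsilon}$ already present in \eqref{14.3} simply carries over in front of $\int_{\partial B_\rho}b_\epsilon\psi_\epsilon\nu_l$.
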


Next, we exploit the above local Pohozaev identities to derive quantitative estimates for the concentration parameters  $\lambda_{i}^{(m)}$, where   $m=1,2$ and  $i=1,\cdots,k$.

\begin{lemma}\label{lem5.3}
Assume 
$\xi^*_i$, $i=1,2,\cdots,k$ are the $k$ different   non-degenerate critical points  of $V(x)$ with $V(\xi^*_i)>0$ and $V(x)\in C^2(B_{5\delta}(\xi^*_i))$.
 Then for $m=1,2$, we have
\begin{equation*}
\lambda_{i}^{(m)}=\mu_i\epsilon^{
-\frac{1}{2s}}
+O\big(\epsilon^{1-\frac{1+\sigma}{2s}}\big),
\end{equation*}
where  $\mu_i$ is a positive constant, $i=1,\cdots,k$.
\end{lemma}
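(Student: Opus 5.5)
The plan is to apply the local Pohozaev identity for dilations, Lemma \ref{lem5.1}, to each solution $u_\epsilon^{(m)}$ separately. This identity plays the role of the equation $\partial_{\lambda_i}$-reduced, that is, of \eqref{eqL3.16}, but now it can be expanded with the refined accuracy available for $N\ge 6s$. We fix $\rho\in(2\delta,5\delta)$ chosen as in Lemmas \ref{lemGA.5} and \ref{newlemGA.6}, and write $u_\epsilon^{(m)}=\sum_j W_{\lambda_j^{(m)},\xi_j^{(m)}}+\phi_\epsilon^{(m)}$. We use $\|\phi_\epsilon^{(m)}\|_*=O(\epsilon^{1-\frac{\sigma}{2s}})$ and $\nabla V(\xi_i^{(m)})=o(\epsilon^{\frac1{2s}})$ from Lemma \ref{thm1.1}, together with the pointwise decay of Lemma \ref{lem4.1}.

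First we handle the boundary terms in \eqref{5.2}. On $\partial B_\rho$ all cut-offs vanish, so $u_\epsilon^{(m)}=\phi_\epsilon^{(m)}$ there, and Lemma \ref{lem4.1} gives $u_\epsilon^{(m)}=O(\lambda^{-\frac{N-2s}{2}})$. The extension-gradient terms on $\partial''\mathfrak B_\rho^+$ are estimated through Lemmas \ref{lemGA.5} and \ref{newlemGA.6} exactly as in Lemma \ref{lem3.3}. Altogether the right side of \eqref{5.2} is $O(\lambda^{-(N-2s)})=O(\epsilon^{\frac{N-2s}{2s}})$, which is at most $O(\epsilon^2)$ since $N\ge 6s$.

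Next we expand the interior terms. Writing $p_s+1=\frac{2N}{N-2s}$, a direct computation gives
\[
\frac{N}{p_s+1-\epsilon}-\frac{N-2s}{2}=\frac{(N-2s)^2}{4N}\epsilon+O(\epsilon^2).
\]
By the mean value theorem,
\[
\int_{B_\rho}(u_\epsilon^{(m)})^{p_s+1-\epsilon}=\int_{\R^N}U_{1,0}^{p_s+1}+O(\epsilon|\log\epsilon|)+O(\|\phi_\epsilon^{(m)}\|_*),
\]
so the first term equals $a_0\epsilon+O(\epsilon^{2-\frac{\sigma}{2s}})$ with $a_0>0$. We then use $V\in C^2$ to write $V(x)=V(\xi_i^{(m)})+O(|x-\xi_i^{(m)}|\,|\nabla V(\xi_i^{(m)})|+|x-\xi_i^{(m)}|^2)$, and note the harmless recentering $\xi_i^{(1)}$ versus $\xi_i^{(m)}$. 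This yields
\[
s\int V (u_\epsilon^{(m)})^2=s\,\gamma_{s,N}^2V(\xi_i^{(m)})(\lambda_i^{(m)})^{-2s}\int_{\R^N}\frac{dx}{(1+|x|^2)^{N-2s}}+O(\lambda^{-2s-1-\tau'})+O(\lambda^{-2s}\|\phi_\epsilon\|_*).
\]
The term with $\langle x-\xi_i^{(1)},\nabla V\rangle$ is bounded by $O(\lambda^{-2s-1}|\nabla V(\xi_i)|+\lambda^{-2s-2}+|\xi_i^{(1)}-\xi_i^{(m)}|\lambda^{-2s})$. The main obstacle is making every error term $O(\epsilon^{2-\frac{1+\sigma}{2s}})$, that is $\epsilon^{1+\frac1{2s}}\cdot\epsilon^{1-\frac{1+\sigma}{2s}}$. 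For this, $\lambda^{-2s-2}\sim\epsilon^{1+\frac1s}$ is fine, and $\lambda^{-2s}\|\phi\|_*=O(\epsilon^{2-\frac{\sigma}{2s}})$ is fine. The issue is the quadratic $V$-error in dimensions where $\int |x|^2U^2$ converges only marginally, i.e. $N$ near $6s$. In that regime the quadratic error yields $\lambda^{-\min\{2s+2,N-2s\}}\log\lambda$, which is still at least of the required order since $N-2s\ge 4s$ and $s>\frac12$.

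Finally, also $|\xi_i^{(m)}-\xi_i^*|$ is small: nondegeneracy of $\xi_i^*$ together with $\nabla V(\xi_i^{(m)})=o(\epsilon^{\frac1{2s}})$ gives $|\xi_i^{(m)}-\xi_i^*|=o(\epsilon^{\frac1{2s}})$, so $V(\xi_i^{(m)})=V(\xi_i^*)+o(\epsilon^{\frac1s})$. Collecting terms, we obtain
\[
a_0\epsilon-b_0V(\xi_i^*)(\lambda_i^{(m)})^{-2s}=O(\epsilon^{2-\frac{1+\sigma}{2s}}),
\]
with $b_0>0$ explicit. Solving for $\lambda_i^{(m)}$ and using $\lambda_i^{(m)}\sim\epsilon^{-\frac1{2s}}$ gives
\[
(\lambda_i^{(m)})^{-2s}=\frac{a_0}{b_0V(\xi_i^*)}\epsilon\Big(1+O(\epsilon^{1-\frac{1+\sigma}{2s}})\Big),
\]
hence
\[
\lambda_i^{(m)}=\mu_i\epsilon^{-\frac1{2s}}\Big(1+O(\epsilon^{1-\frac{1+\sigma}{2s}})\Big),\qquad \mu_i=\Big(\frac{b_0V(\xi_i^*)}{a_0}\Big)^{\frac1{2s}}.
\]
This is the claim with the error expressed relative to $\epsilon^{-\frac1{2s}}$, consistent with the stated order.
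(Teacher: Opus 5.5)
Your approach is the paper's: apply the dilation identity \eqref{5.2} to each $u_\epsilon^{(m)}$, expand the interior terms using $\|\phi_\epsilon^{(m)}\|_*=O(\epsilon^{1-\frac{\sigma}{2s}})$ and $\nabla V(\xi_i^{(m)})=o(\epsilon^{\frac1{2s}})$, and bound the boundary terms by $O(\epsilon^2)$. You also replace $V(\xi_i^{(m)})$ by $V(\xi_i^*)$ via nondegeneracy, which is needed for $\mu_i$ to be a genuine constant and which the paper leaves implicit.

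\textbf{The gap: your argument does not reach the stated order.} Both main terms $a_0\epsilon$ and $b_0V(\xi_i^*)(\lambda_i^{(m)})^{-2s}$ are of order $\epsilon$. The claim $\lambda_i^{(m)}=\mu_i\epsilon^{-\frac1{2s}}+O(\epsilon^{1-\frac{1+\sigma}{2s}})$ is a relative error $O(\epsilon^{1-\frac{\sigma}{2s}})$, so it needs the total error in the identity to be $O(\epsilon^{2-\frac{\sigma}{2s}})$. Your ``that is'' contains an arithmetic slip: $\epsilon^{1+\frac1{2s}}\cdot\epsilon^{1-\frac{1+\sigma}{2s}}=\epsilon^{2-\frac{\sigma}{2s}}$, not $\epsilon^{2-\frac{1+\sigma}{2s}}$. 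Working with the weaker target $O(\epsilon^{2-\frac{1+\sigma}{2s}})$, you end with $\lambda_i^{(m)}=\mu_i\epsilon^{-\frac1{2s}}\big(1+O(\epsilon^{1-\frac{1+\sigma}{2s}})\big)=\mu_i\epsilon^{-\frac1{2s}}+O(\epsilon^{1-\frac{2+\sigma}{2s}})$. This is weaker than the lemma by a factor $\epsilon^{-\frac1{2s}}$, so it is not ``consistent with the stated order''. The weaker bound would still serve the later use in Lemma \ref{cor5.4}, which only needs $|\lambda_i^{(1)}-\lambda_i^{(2)}|/\lambda_i^{(1)}=o(1)$, but it is not the statement.

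\textbf{The fix is to aim for $O(\epsilon^{2-\frac{\sigma}{2s}})$.} Most of your error terms already meet this:
\begin{itemize}
\item $\epsilon(\epsilon|\log\epsilon|+\|\phi_\epsilon\|_*)$ and $\lambda^{-2s}\|\phi_\epsilon\|_*$;
\item $\lambda^{-2s-2}\sim\epsilon^{1+\frac1s}$, fine since $s<1$;
\item $\lambda^{-(N-2s)}\log\lambda\le C\epsilon^2|\log\epsilon|$, using $N\ge 6s$;
\item $\lambda^{-2s}|V(\xi_i^{(m)})-V(\xi_i^*)|=o(\epsilon^{1+\frac1s})$.
\end{itemize}
The one that fails is your crude recentering bound $|\xi_i^{(1)}-\xi_i^{(m)}|\lambda^{-2s}=o(\epsilon^{1+\frac1{2s}})$. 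Since $1+\frac1{2s}<2-\frac{\sigma}{2s}$ exactly when $\sigma<2s-1$, which is the range imposed on $\sigma$, this term is too large. You can repair it in either of two ways:
\begin{itemize}
\item Center the dilation identity at $\xi_i^{(m)}$ when treating $u_\epsilon^{(m)}$; its derivation works for any center.
\item Keep the center $\xi_i^{(1)}$ and note that recentering only adds $\big\langle\xi_i^{(m)}-\xi_i^{(1)},\int_{B_\rho(\xi_i^{(1)})}\nabla V\,(u_\epsilon^{(m)})^2\big\rangle$. The integral is $o(\epsilon^{1+\frac1{2s}})$, using $\nabla V(\xi_i^{(m)})=o(\epsilon^{\frac1{2s}})$ and the fact that the linear part of $\nabla V$ about $\xi_i^{(m)}$ integrates to zero against $U^2_{\lambda_i^{(m)},\xi_i^{(m)}}$. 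It is even $O(\epsilon^2)$ by the translation identity \eqref{14.3}. Either way the added term is $o(\epsilon^{1+\frac1s})$.
\end{itemize}

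A minor point: the bounds you cite from Lemma \ref{newlemGA.6} give only $O(\lambda^{-2\sigma}\|\phi_\epsilon\|_*^2+\lambda^{-2\sigma}\|\mathcal R\|_{**}\|\phi_\epsilon\|_*)=O(\epsilon^2)$ for the $\widetilde\phi_\epsilon$ part of the right side of \eqref{5.2}, not $O(\lambda^{-(N-2s)})$. Your final bound $O(\epsilon^2)$ is still correct, and it is what the argument uses.
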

\begin{proof}For simplicity, we omit the superscript $(m)$.
By Lemma \ref{thm1.1}, we have
 \[
 \begin{split}
 \int_{B_\rho(\xi_{i})} u_\epsilon^{p_{s}+1-\epsilon}
 =&\int_{B_\delta(\xi_{i})} U_{\lambda_{i},\xi_{i}}
 ^{p_{s}+1-\epsilon}
 +O\bigg(\int_{B_\delta(\xi_{i})} U_{\lambda_{i},\xi_{i}}
 ^{p_{s}-\epsilon}|\phi_\epsilon|
 +\int_{B_\delta(\xi_{i})} |\phi_\epsilon|^{p_{s}+1-\epsilon}\bigg)+O\big(\lambda_i^{-N}\big)
 \\
 =&\gamma_{s,N}^{p_{s}+1-\epsilon}\lambda_{i}^{-\frac{N-2}{2}\epsilon}
 \int_{B_{\lambda_{i}\delta}(0)}\frac{1}{(1+|x|^{2})^{N-\frac{N-2}{2}\epsilon}}
 +O\big(\|\phi_\epsilon\|_*\big)+O\big(\|\phi_\epsilon\|_*^{p_{s}+1}\big)
 +O\big(\lambda_i^{-N}\big)\\
 =&\gamma_{s,N}^{p_{s}+1}
 \int_{\mathbb{R}^N}\frac{1}{(1+|x|^{2})
 ^{N}}
 +O\big(\epsilon^{1-\frac{\sigma}{2s}}\big).
 \end{split}
 \]
 Since
 $$
 \frac{1}{\lambda_{i}}\leq\frac{C}
 {(1+\lambda_{i}^{2}|x-\xi_{i}|^{2})^{\frac{1}{2}}} \;\;
 \text{in}\;\;B_\delta(\xi_{i})\;\;\text{for}\;\;i=1,\cdots,k,
 $$ we infer 
 \begin{align*}
 \int_{B_\rho(\xi_{i})}V(x)u_\epsilon^2
 =&
 \int_{B_\delta(\xi_{i})}V(x)
 U_{\lambda_{i},\xi_{i}}^2
 +2\int_{B_\delta(\xi_{i})}V(x)
 U_{\lambda_{i},\xi_{i}}\phi_\epsilon
 +\int_{B_\delta(\xi_{i})}V(x)\phi_{\epsilon}^2+O\big(\lambda_i^{-N+2s}\big)
 \\
 =&V(\xi_{i})\int_{B_\delta(\xi_{i})}U_{\lambda_{i},\xi_{i}}^2
 +O\big(\lambda_{i}^{-2s}\|\phi_\epsilon\|_*\big)
 +O\big(\lambda_{i}^{-2\sigma}\|\phi_\epsilon\|_*^2\big)+O\big(\lambda_i^{-N+2s}\big)\\
 =&\gamma_{s,N}^2V(\xi_{i})
 \lambda_{i}^{-2s}\int_{\mathbb{R}^N}
 \frac{1}{(1+|x|^{2})^{N-2s}}
+
O\big(\epsilon^{2-\frac{\sigma}{2s}}\big),
\\
 \end{align*}
 and
 \begin{align*}
&\int_{B_\rho(\xi_{i})}\big\langle x-\xi_{i},\nabla V(x)\big\rangle u_\epsilon^2\\
 =&
\int_{B_\delta(\xi_{i})}\big\langle x-\xi_{i},\nabla V(x)\big\rangle U_{\lambda_{i},\xi_{i}}^2
 +2\int_{B_\delta(\xi_{i})}\big\langle x-\xi_{i},\nabla V(x)\big\rangle U_{\lambda_{i},\xi_{i}}\phi_\epsilon
 \\[0.02mm]
 &+\int_{B_\delta(\xi_{i})}\big\langle x-\xi_{i},\nabla V(x)\big\rangle\phi_{\epsilon}^2+O\big(\lambda_i^{-N+2s}\big)\\[0.02mm]
 =& O\big(\lambda_i^{-2-2s}\big) +O\big(\lambda_i^{-4s+\sigma}\big)+O\big(\lambda_{i}^{-2s}\|\phi_\epsilon\|_*\big)+O\big(\lambda_{i}^{-2\sigma}\|\phi_\epsilon\|_*^2\big)
 +O\big(\lambda_i^{-N+2s}\big)
 \\[0.02mm]
 =&O\big(\epsilon^{2-\frac{\sigma}{2s}}\big).
 \end{align*}
On the other hand, it follows from  Lemmas \ref{lem4.1}, \ref{lemGA.5} 
and \ref{newlemGA.6} 
that 
\begin{align*}
\text{RHS\;of}\;\eqref{5.2}=
O\big(\lambda_{i}^{2s-N}\big)
+O(\epsilon^{2})=
O(\epsilon^{2}).
\end{align*}
Hence, by \eqref{5.2}, we conclude 
\begin{align*}
\lambda_{i}=\mu_i\epsilon^{
-\frac{1}{2s}}
+O\big(\epsilon^{1-\frac{1+\sigma}{2s}}\big).
\end{align*}
This completes the proof of Lemma \ref{lem5.3}. 
\end{proof}

By virtue of Lemma \ref{lem5.3}, we are able to  compare $u_\epsilon^{(1)}$ and $u_\epsilon^{(2)}$.

\begin{lemma}\label{cor5.4}
Under the same assumptions as in Lemma \ref{lem5.3}, it holds that
\begin{align*}
  \big|u_\epsilon^{(1)}-u_\epsilon^{(2)}\big|=o(1)\displaystyle\sum_{i=1}^{k}\eta_{i}^{(1)}(x)U_{\lambda_{i}^{(1)},\xi_{i}^{(1)}
  }+
 O\Big(
 \sum_{i=1}^k  \lambda_i^{-\frac{N-2s}{2}}
        \chi_{B_{\frac{5\delta}{2}}(\xi_{i}^{(1)})\setminus B_{\frac{\delta}{2}}(\xi_{i}^{(1)})}
        \Big)
  +
 O\big( |\phi_{\epsilon}^{(1)}|
  +|\phi_{\epsilon}^{(2)}|\big).
\end{align*}
\end{lemma}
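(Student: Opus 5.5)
We write $u_\epsilon^{(m)}=\sum_{i=1}^k\eta_i^{(m)}U_{\lambda_i^{(m)},\xi_i^{(m)}}+\phi_\epsilon^{(m)}$ for $m=1,2$. Then
\[
u_\epsilon^{(1)}-u_\epsilon^{(2)}=\sum_{i=1}^k\Big(\eta_i^{(1)}U_{\lambda_i^{(1)},\xi_i^{(1)}}-\eta_i^{(2)}U_{\lambda_i^{(2)},\xi_i^{(2)}}\Big)+\phi_\epsilon^{(1)}-\phi_\epsilon^{(2)}.
\]
The last two terms are exactly the $O(|\phi_\epsilon^{(1)}|+|\phi_\epsilon^{(2)}|)$ contribution, so only the bubble differences need work. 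We split each of them as
\[
\eta_i^{(1)}\big(U_{\lambda_i^{(1)},\xi_i^{(1)}}-U_{\lambda_i^{(2)},\xi_i^{(2)}}\big)+\big(\eta_i^{(1)}-\eta_i^{(2)}\big)U_{\lambda_i^{(2)},\xi_i^{(2)}} .
\]

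\emph{Cut-off term.} Since $\eta_i^{(m)}=\eta(\cdot-\xi_i^{(m)})$ and both $\xi_i^{(m)}\to\xi_i^*$, we have $|\xi_i^{(1)}-\xi_i^{(2)}|=o(1)$. Hence $\eta_i^{(1)}-\eta_i^{(2)}$ is supported in $B_{2\delta+o(1)}(\xi_i^{(1)})\setminus B_{\delta-o(1)}(\xi_i^{(1)})$, which lies in the annulus $B_{\frac{5\delta}{2}}(\xi_i^{(1)})\setminus B_{\frac{\delta}{2}}(\xi_i^{(1)})$. On that annulus $U_{\lambda_i^{(2)},\xi_i^{(2)}}\le C\lambda_i^{-\frac{N-2s}{2}}$, which gives the middle term of the claim. (Since $\lambda_i^{(1)}\sim\lambda_i^{(2)}\sim\epsilon^{-\frac{1}{2s}}$, the choice of $\lambda_i$ in that bound is immaterial.)

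\emph{Bubble difference term.} This is the main step. On the support of $\eta_i^{(1)}$ we use the mean value theorem in the parameters $(\lambda,\xi)$ along the segment joining $(\lambda_i^{(2)},\xi_i^{(2)})$ to $(\lambda_i^{(1)},\xi_i^{(1)})$, together with the pointwise bounds $|\partial_\lambda U_{\lambda,\xi}|\le C\lambda^{-1}U_{\lambda,\xi}$ and $|\partial_{\xi}U_{\lambda,\xi}|\le C\lambda U_{\lambda,\xi}$ used in Lemma \ref{lemA}. This yields
\[
\big|U_{\lambda_i^{(1)},\xi_i^{(1)}}-U_{\lambda_i^{(2)},\xi_i^{(2)}}\big|\le C\Big(\frac{|\lambda_i^{(1)}-\lambda_i^{(2)}|}{\lambda_i^{(1)}}+\lambda_i^{(1)}|\xi_i^{(1)}-\xi_i^{(2)}|\Big)U_{\lambda_i^{(1)},\xi_i^{(1)}},
\]
provided intermediate bubbles are comparable to $U_{\lambda_i^{(1)},\xi_i^{(1)}}$. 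Comparability holds uniformly in $x$ once $\lambda_i^{(1)}/\lambda_i^{(2)}\to1$ and $\lambda_i^{(1)}|\xi_i^{(1)}-\xi_i^{(2)}|\to0$, because $(1+\lambda^2|x-\xi|^2)/(1+\lambda'^2|x-\xi'|^2)$ stays bounded under these conditions.

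The first coefficient is $o(1)$ by Lemma \ref{lem5.3}: the error $O(\epsilon^{1-\frac{1+\sigma}{2s}})$ relative to $\epsilon^{-\frac{1}{2s}}$ is $O(\epsilon^{1-\frac{\sigma}{2s}})$. The main obstacle is the second coefficient, which requires $|\xi_i^{(1)}-\xi_i^{(2)}|=o(\lambda_i^{-1})=o(\epsilon^{\frac{1}{2s}})$. For this we use Lemma \ref{thm1.1}, which gives $\nabla V(\xi_i^{(m)})=o(\epsilon^{\frac{1}{2s}})$ for $m=1,2$. Since $V\in C^2(B_{5\delta}(\xi_i^*))$ and $\xi_i^*$ is a non-degenerate critical point, $D^2V(\xi_i^*)$ is invertible. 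Expanding $\nabla V$ around $\xi_i^*$ gives $|\xi_i^{(m)}-\xi_i^*|\le C|\nabla V(\xi_i^{(m)})|=o(\epsilon^{\frac{1}{2s}})$, and therefore $\lambda_i^{(1)}|\xi_i^{(1)}-\xi_i^{(2)}|=o(1)$.

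Combining the three pieces gives
\[
\big|u_\epsilon^{(1)}-u_\epsilon^{(2)}\big|\le o(1)\sum_i\eta_i^{(1)}U_{\lambda_i^{(1)},\xi_i^{(1)}}+O\Big(\sum_i\lambda_i^{-\frac{N-2s}{2}}\chi_{B_{\frac{5\delta}{2}}(\xi_i^{(1)})\setminus B_{\frac{\delta}{2}}(\xi_i^{(1)})}\Big)+O\big(|\phi_\epsilon^{(1)}|+|\phi_\epsilon^{(2)}|\big),
\]
which is the claim. The only delicate point is the comparability of intermediate bubbles. It is handled by noting that, on $|x-\xi_i^{(1)}|\le 2\delta$, the quantity $\lambda^{-1}+|x-\xi|$ changes by a factor $1+o(1)$ along the segment.
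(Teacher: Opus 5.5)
Your proposal is correct and follows the paper's proof essentially step by step. You get $|\xi_i^{(m)}-\xi_i^*|=o(\epsilon^{\frac{1}{2s}})$ from Lemma \ref{thm1.1} together with non-degeneracy, and you control $|\lambda_i^{(1)}-\lambda_i^{(2)}|$ via Lemma \ref{lem5.3}. You then apply the mean value theorem in the parameters on the support of $\eta_i^{(1)}$ and absorb the cut-off mismatch into the annulus term. The only difference is that you spell out two points the paper uses tacitly: the comparability of the intermediate bubbles, and the inversion $|\xi-\xi_i^*|\le C|\nabla V(\xi)|$.
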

\begin{proof}
Since $\xi_i^*$, $i=1,2,\cdots,k$ are the non-degenerate critical points of $V(x)$. Then it follows from Lemma \ref{thm1.1} that 
$$|\xi_i^{(m)}-\xi_i^*|=o(\epsilon^{\frac{1}{2s}})\;\;\text{for}\;\; m=1,2 \;\;\text{and}\;\; i=1,\cdots,k.$$
Thanks to Lemma \ref{lem5.3},  we obtain 
    \begin{align*}
        \left|u_\epsilon^{(1)}-u_\epsilon^{(2)}\right|\leq& \displaystyle\sum_{i=1}^{k}\left|\eta_{i}^{(1)}(x) U_{\lambda_{i}^{(1)},\xi_{i}^{(1)}
  }-\eta_{i}^{(2)}(x) U_{\lambda_{i}^{(2)},\xi_{i}^{(2)}
  }\right|
  +
  |\phi_{\epsilon}^{(1)}|
  +|\phi_{\epsilon}^{(2)}|\\
  \leq& \displaystyle C\sum_{i=1}^{k} \eta_{i}^{(1)}(x) \left|\frac{\partial U_{\lambda_{i}^{(1)},\xi_{i}^{(1)}
  }}{\partial\lambda_i^{(1)}}\right|\left|\lambda_i^{(1)}-\lambda_i^{(2)}\right|+\displaystyle C\sum_{i=1}^{k}\eta_{i}^{(1)}(x) \left|\nabla U_{\lambda_{i}^{(1)},\xi_{i}^{(1)}
  }\right|\left|\xi_i^{(1)}-\xi_i^{(2)}\right|\\
  &+
 O\Big(
 \sum_{i=1}^k
        \chi_{B_{\frac{5\delta}{2}}(\xi_{i}^{(1)})\setminus B_{\frac{\delta}{2}}(\xi_{i}^{(1)})}\big(\lambda_{i}^{(1)}\big)^{-\frac{N-2s}{2}}
        \Big)
  +
  |\phi_{\epsilon}^{(1)}|
  +|\phi_{\epsilon}^{(2)}|\\
  =&o(1)\displaystyle\sum_{i=1}^{k}\eta_{i}^{(1)}(x) U_{\lambda_{i}^{(1)},\xi_{i}^{(1)}
  }+
 O\Big(
 \sum_{i=1}^k \big(\lambda_{i}^{(1)}\big)^{-\frac{N-2s}{2}}
        \chi_{B_{\frac{5\delta}{2}}(\xi_{i}^{(1)})\setminus B_{\frac{\delta}{2}}(\xi_{i}^{(1)})} 
        \Big)
  +
 O\big( |\phi_{\epsilon}^{(1)}|
  +|\phi_{\epsilon}^{(2)}|\big).
    \end{align*}
\end{proof}

\subsection{Lyapunov-Schmidt decomposition of $\psi_\epsilon$}

In this subsection, we carry out a Lyapunov–Schmidt type decomposition for  $\psi_\epsilon$. To achieve this goal, 
we define for $i=1,\cdots,k$, $l=1,\cdots,N$, 
\begin{equation}
Z_{\lambda_i^{(1)},\xi_i^{(1)}}^0=\frac{\partial U_{\lambda_i^{(1)}, \xi_i^{(1)}}}{\partial \lambda_i^{(1)}}, \ \ 
Z_{\lambda_i^{(1)},\xi_i^{(1)}}^l=\frac{\partial U_{\lambda_i^{(1)}, \xi_i^{(1)}}}{\partial \xi_{i,l}^{(1)}},
 \end{equation}
 \begin{equation}\label{newdefz}
Z_{i,0}^{(1)}=\frac{\partial W_{\lambda_i^{(1)}, \xi_i^{(1)}}}{\partial \lambda_i^{(1)}}, \ \   
Z_{i,l}^{(1)}=\frac{\partial W_{\lambda_i^{(1)}, \xi_i^{(1)}}}{\partial \xi_{i,l}^{(1)}}.
 \end{equation}
We decompose $\psi_\epsilon$ into
\begin{align}\label{eq-psi}
\begin{cases}
\psi_\epsilon(x)=\displaystyle\sum_{i=1}^{k}
d_{i,0}^{(1)}
\lambda_{i}^{(1)}Z_{i,0}^{(1)}
+\displaystyle\sum_{i=1}^{k}\displaystyle\sum_{l=1}
^{N} d_{i,l}^{(1)}
\big(\lambda_{i}^{(1)}\big)^{-1}
Z_{i,l}^{(1)}
+\psi_\epsilon^*(x),\\[5mm]
\displaystyle
\int_{\mathbb{R}^N} W_{\lambda_{i}^{(1)},\xi_{i}^{(1)}}^{p_s-1}Z_{i,l}^{(1)}
\psi_\epsilon^*(x)=0,\;i=1,2,\cdots,k,\ l=0,1,\cdots,N.
\end{cases}
\end{align}
 
To obtain finer estimates for $\psi_\epsilon$, we introduce the following weighted norm associated with the parameters
$\big(\lambda_i^{(1)},\xi_i^{(1)}\big)$:
     \begin{equation*}
\|\phi\|_*=\sup\limits_{x\in \mathbb{R}^N}\left(\displaystyle\sum_{i=1}^{k}
\frac{\big(\lambda_{i}^{(1)}\big)
^{\frac{N-2s}{2}}}
{(1+\lambda_{i}^{(1)}
\big|x-\xi_{i}^{(1)}\big |)
^{\frac{N-2s}{2}+\sigma}}\right)^{-1}|\phi(x)|,
\end{equation*}
$$
\|h\|_{**}=\sup\limits_{x\in \mathbb{R}^N}\left(\displaystyle\sum_{i=1}^{k}
\frac{\big(\lambda_{i}^{(1)}\big)
^{\frac{N+2s}{2}}}
{(1+\lambda_{i}^{(1)}
\big|x-\xi_{i}^{(1)}\big |)
^{\frac{N+2s}{2}+\sigma}}\right)^{-1}|h(x)|,
$$
where $s>\frac{1}{2}$, $\sigma\in\left(0,\min\big\{\frac{s}{2},2s-1\big\}\right)$.
Then we have  $$\big\|\phi_{\epsilon}^{(m)}\big\|_*
=O\big(\epsilon^{1-\frac{\sigma}{2s}}\big),\ m=1,2,  
$$
where we made use of the comparability of $U_1$ and $U_2$. More precisely, 
from the quantitative estimates of 
the  concentration parameters $(\lambda_i^{(m)},\xi_i^{(m)})$ with  $m=1,2$, we can  obtain 
$$
C^{-1} U_{\lambda^{(1)}_i,\xi^{(1)}_i}  \leq   U_{\lambda^{(2)}_i,\xi^{(2)}_i} \leq C U_{\lambda^{(1)}_i,\xi^{(1)}_i}.
$$

Moreover, it is easy to check  that the projected coefficients $d_{i,l}^{(1)}$ are bounded for $i=1,\cdots,k $, $l=0,\cdots,N$. 
With the aid of Lemma\,\ref{lemA}, we deduce that $\psi_\epsilon^*$ satisfies 
\begin{align*}
&(-\Delta )^{s} \psi^*_\epsilon+V(x)\psi^*_\epsilon
-(p_{s}-\epsilon) c_\epsilon(x)\psi^*_\epsilon
\\
=&(p_{s}-\epsilon)\displaystyle\sum_{i=1}^{k}
d_{i,0}^{(1)}\eta_{i}^{(1)}
\lambda_{i}^{(1)}Z_{\lambda_i^{(1)},\xi_i^{(1)}}^0
\left(c_\epsilon(x)- \big(u^{(1)}_\epsilon\big)
^{p_{s}-1-\epsilon}\right)
\\
&+\displaystyle\sum_{i=1}^{k}
d_{i,0}^{(1)}\eta_{i}^{(1)}
\lambda_{i}^{(1)}Z_{\lambda_i^{(1)},\xi_i^{(1)}}^0
\left((p_{s}-\epsilon) \big(u^{(1)}_\epsilon\big)^{p_{s}-1
-\epsilon}-p_{s} \big(u^{(1)}_\epsilon\big)^{p_{s}-1}\right)
\\
&+p_{s}\displaystyle\sum_{i=1}^{k}
d_{i,0}^{(1)}\eta_{i}^{(1)}
\lambda_{i}^{(1)}Z_{\lambda_i^{(1)},\xi_i^{(1)}}^0
\Big(\big(u^{(1)}_\epsilon\big)^{p_{s}-1}- U_{\lambda_{i}^{(1)},\xi_{i}^{(1)}}
^{p_{s}-1}\Big)
-V(x)\displaystyle\sum_{i=1}^{k}
d_{i,0}^{(1)}\lambda_{i}^{(1)}
\eta_{i}^{(1)}
Z_{\lambda_i^{(1)},\xi_i^{(1)}}^0
\\
&-\displaystyle\sum_{i=1}^{k}
 d_{i,0}^{(1)}
\lambda_{i}^{(1)}J_{i,0}^{(1)}
+(p_{s}-\epsilon)
\displaystyle\sum_{i=1}^{k}\displaystyle\sum_{l=1}
^{N}d_{i,l}^{(1)}\eta_{i}^{(1)}
\big(\lambda_{i}^{(1)}\big)^{-1}Z_{\lambda_i^{(1)},\xi_i^{(1)}}^l
\Big(c_\epsilon(x)- \big(u^{(1)}_\epsilon\big)^{p_{s}-1-\epsilon}
\Big)
\\
&+
\displaystyle\sum_{i=1}^{k}\displaystyle\sum_{l=1}
^{N}d_{i,l}^{(1)}\eta_{i}^{(1)}
\big(\lambda_{i}^{(1)}\big)^{-1}
Z_{\lambda_i^{(1)},\xi_i^{(1)}}^l 
\left((p_{s}-\epsilon) \big(u^{(1)}_\epsilon\big)^{p_{s}-1
-\epsilon}-p_{s}\big(u^{(1)}_\epsilon\big)^{p_{s}-1}\right)
\\
&
+p_{s}
\displaystyle\sum_{i=1}^{k}\displaystyle\sum_{l=1}
^{N}d_{i,l}^{(1)}\eta_{i}^{(1)}
\big(\lambda_{i}^{(1)}\big)^{-1}
Z_{\lambda_i^{(1)},\xi_i^{(1)}}^l
\Big(\big(u^{(1)}_\epsilon\big)^{p_{s}-1}- U_{\lambda_{i}^{(1)}, \xi_{i}^{(1)}}
^{p_{s}-1}\Big)
\\
&-V(x)\displaystyle\sum_{i=1}^{k}
\displaystyle\sum_{l=1}^{N}
d_{i,l}^{(1)}
\big(\lambda_{i}^{(1)}\big)^{-1}
\eta_{i}^{(1)}Z_{\lambda_i^{(1)},\xi_i^{(1)}}^l-\displaystyle\sum_{i=1}^{k}
\displaystyle\sum_{l=1}^{N}
 d_{i,l}^{(1)}
\big(\lambda_{i}^{(1)}\big)^{-1} J_{i,l}^{(1)}
\\
&+(p_s-\epsilon)c_\epsilon(x)\displaystyle\sum_{i=1}^{k}\displaystyle\sum_{l=1}
^{N}d_{i,l}^{(1)}\big(\lambda_{i}^{(1)}\big)^{-1}
U_{\lambda_{i}^{(1)}, \xi_{i}^{(1)}}\frac{\partial\eta_{i}^{(1)} }{\partial \xi_{i,l}^{(1)}}
-\displaystyle\sum_{i=1}^{k}\displaystyle\sum_{l=1}
^{N}d_{i,l}^{(1)}\big(\lambda_{i}^{(1)}\big)^{-1}
U^{p_s}_{\lambda_{i}^{(1)},\xi_{i}^{(1)}}
\frac{\partial \eta_{i}^{(1)} }{\partial \xi_{i,l}^{(1)}}
\\
&-\displaystyle \sum_{i=1}^{k}
\displaystyle\sum_{l=1}^{N}
d_{i,l}^{(1)}
\big(\lambda_{i}^{(1)}\big)^{-1} \Lambda_{i,l}^{(1)}-V(x)\displaystyle\sum_{i=1}^{k}\displaystyle\sum_{l=1}
^{N}d_{i,l}^{(1)}\big(\lambda_{i}^{(1)}\big)^{-1}
U_{\lambda_{i}^{(1)}, \xi_{i}^{(1)}}\frac{\partial \eta_{i}^{(1)}}{\partial \xi_{i,l}^{(1)}}
\\
:=& L_1+\cdots+L_5+P_1+\cdots+P_5+Q_1+\cdots+Q_4,
\end{align*}
where
\[
J_{i,0}^{(1)}=C(N,s)\lim_{\varepsilon\rightarrow
0^{+}}\int_{\mathbb{R}^N\backslash B_\varepsilon(x)}\frac{\eta_{i}^{(1)}(x)-
\eta_{i}^{(1)}(y)}{|x-y|^{N+2s}}Z_{\lambda_i^{(1)},\xi_i^{(1)}}^0(y),
\]
\[
J_{i,l}^{(1)}=C(N,s)\displaystyle\lim_{\varepsilon\rightarrow
0^{+}}\displaystyle\int_{\mathbb{R}^N\backslash B_\varepsilon(x)}\frac{\eta_{i}^{(1)}(x)-
\eta_{i}^{(1)}(y)}{|x-y|^{N+2s}}Z_{\lambda_i^{(1)},\xi_i^{(1)}}^l(y),
\]
and 
\[
\Lambda_{i,l}^{(1)}=C(N,s)\displaystyle\lim_{\varepsilon\rightarrow
0^{+}}\int_{\mathbb{R}^N\backslash B_\varepsilon(x)} 
\frac{\frac{\partial \eta_{i}^{(1)} }{\partial \xi_{i,l}^{(1)}}(x)-
\frac{\partial \eta_{i}^{(1)} }{\partial \xi_{i,l}^{(1)}}(y)}{|x-y|^{N+2s}}U_{\lambda_{i}^{(1)}, \xi_{i}^{(1)}}(y).
\]
We start by estimating the projected remainder term  $\psi_*.$
\begin{lemma}\label{lem5.5}
Assume that $\xi^*_i$, $i=1,2,\cdots,k$ are the $k$ different   non-degenerate critical points  of $V(x)$ with $V(\xi^*_i)>0$ and $V(x)\in C^2(B_{5\delta}(\xi^*_i))$. Then 
\[
\|\psi_\epsilon^*\|_*=o(1).
\]
\end{lemma}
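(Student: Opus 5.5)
The plan mirrors the proof of Lemma \ref{lem4.2}. We show that the right-hand side $L_1+\cdots+L_5+P_1+\cdots+P_5+Q_1+\cdots+Q_4$ of the equation for $\psi_\epsilon^*$ satisfies
\[
\|L_1+\cdots+Q_4\|_{**}=o(1).
\]
We then invoke the linear theory of Proposition \ref{prop2.3}, or more precisely its blow-up proof in Lemma \ref{l1}, for the operator $(-\Delta)^s+V-(p_s-\epsilon)c_\epsilon(x)$ with the orthogonality conditions in \eqref{eq-psi}. This gives $\|\psi_\epsilon^*\|_*\le C\|L_1+\cdots+Q_4\|_{**}=o(1)$.

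The linear operator here involves $c_\epsilon$ rather than $p_s\bm W^{p_s-1}$, so one must check that Lemma \ref{l1} still applies. By Lemma \ref{cor5.4} and the comparability $C^{-1}U_{\lambda_i^{(1)},\xi_i^{(1)}}\le U_{\lambda_i^{(2)},\xi_i^{(2)}}\le CU_{\lambda_i^{(1)},\xi_i^{(1)}}$, we have
\[
c_\epsilon(x)=U_{\lambda_i^{(1)},\xi_i^{(1)}}^{p_s-1}(1+o(1))
\]
in $B_{R/\lambda_i^{(1)}}(\xi_i^{(1)})$ for every fixed $R$, together with the global bound $c_\epsilon\le C\sum_i U_{\lambda_i^{(1)},\xi_i^{(1)}}^{p_s-1-\epsilon}$. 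After rescaling, the limit equation is therefore again $(-\Delta)^s u=p_sU_{1,0}^{p_s-1}u$. The nondegeneracy result of \cite{Davila-2013} combined with the orthogonality kills the limit, and the Green representation step goes through verbatim.

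For the individual terms we proceed as follows.
\begin{itemize}
\item[$(a)$] The terms $L_4,P_4,Q_4$ involving $V$ are $O(\lambda^{-2s+\sigma})$ in $\|\cdot\|_{**}$, exactly as for $\beta_6,\beta_7$.
\item[$(b)$] The terms $L_5,P_5,Q_2,Q_3$ (the $J$, $\Lambda$ and cut-off gradient terms) are bounded by $C\lambda^{-\frac{N-2s}{2}+\sigma}$ using Lemma \ref{lemA}. These terms are supported in the annuli or decay like $(1+|x-\xi_i|)^{-N-2s}$, as in $\beta_3,\ldots,\beta_{10}$. The cut-off term $Q_1$ is handled the same way, using $c_\epsilon\le C\lambda^{-2s}$ on the annulus.
\item[$(c)$] The terms $L_2,P_2$ involve $(p_s-\epsilon)(u^{(1)}_\epsilon)^{p_s-1-\epsilon}-p_s(u^{(1)}_\epsilon)^{p_s-1}$. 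The mean value theorem bounds them by $C\epsilon|\log\epsilon|$, as for $\beta_{12}$.
\item[$(d)$] The terms $L_3,P_3$ are controlled by Lemma \ref{lemL2.8} on $B_\delta(\xi_i^{(1)})$: the difference $(u^{(1)}_\epsilon)^{p_s-1}-U^{p_s-1}$ is at most $CU^{p_s-2}|\phi^{(1)}_\epsilon|$ when $p_s\ge2$, or $C|\phi_\epsilon^{(1)}|^{p_s-1}$ otherwise. With $\|\phi^{(1)}_\epsilon\|_*=O(\epsilon^{1-\frac{\sigma}{2s}})$, this gives a small power of $\epsilon$.
\item[$(e)$] The delicate terms are $L_1,P_1$, which contain $c_\epsilon-(u^{(1)}_\epsilon)^{p_s-1-\epsilon}$. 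Write $u^{(2)}_\epsilon=u^{(1)}_\epsilon+(u^{(2)}_\epsilon-u^{(1)}_\epsilon)$ and apply Lemma \ref{lemL2.8}. Then
\[
|c_\epsilon-(u^{(1)}_\epsilon)^{p_s-1-\epsilon}|\le C\,\big(U^{p_s-2}|u^{(1)}_\epsilon-u^{(2)}_\epsilon|+|u^{(1)}_\epsilon-u^{(2)}_\epsilon|^{p_s-1-\epsilon}\big),
\]
where only the second term is kept if $p_s<2$. Lemma \ref{cor5.4} gives $|u^{(1)}_\epsilon-u^{(2)}_\epsilon|\le o(1)\eta_i^{(1)}U_{\lambda_i^{(1)},\xi_i^{(1)}}+O(\lambda^{-\frac{N-2s}{2}})\chi_{\mathrm{annulus}}+O(|\phi^{(1)}_\epsilon|+|\phi^{(2)}_\epsilon|)$. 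Multiplying by $|\lambda_iZ^0|+|\lambda_i^{-1}Z^l|\le CU$ yields $o(1)U^{p_s}$ plus pieces bounded by $\|\phi_\epsilon^{(m)}\|_*$ and negative powers of $\lambda$. All of these are $o(1)$ in $\|\cdot\|_{**}$, since $U^{p_s}$ has $**$-norm bounded.
\end{itemize}
The $d^{(1)}_{i,l}$ are bounded, so they only contribute constants throughout.

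The main obstacle is step $(e)$ combined with the applicability of the linear estimate. The $o(1)$ in Lemma \ref{cor5.4} is only qualitative, so it yields $o(1)$ rather than a rate, which is why the statement claims only $\|\psi_\epsilon^*\|_*=o(1)$. We must also take care that on the annulus $B_{5\delta/2}\setminus B_{\delta/2}$ the $O(\lambda^{-\frac{N-2s}{2}})$ error, multiplied by $U^{p_s-1}$, is absorbed into the $**$ weight. This holds since there the weight is of size $\lambda^{-\frac{N+2s}{2}-\sigma}\cdot\lambda^{N+2s}$ times a constant, leaving a surplus factor $\lambda^{-2s+\sigma}$.
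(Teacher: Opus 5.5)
Your proposal is correct and follows essentially the same route as the paper. The paper bounds each of $L_1,\dots,Q_4$ in $\|\cdot\|_{**}$, handling $L_1,P_1$ through Lemma \ref{lemL2.8} in the sublinear regime $0<p_s-1-\epsilon<1$ together with Lemma \ref{cor5.4}, and then invokes the linear estimate of Proposition \ref{prop2.3}. Your explicit check that the blow-up argument of Lemma \ref{l1} survives replacing $p_s\bm W^{p_s-1}$ by $(p_s-\epsilon)c_\epsilon$ fills in a step the paper only asserts as ``the same line of reasoning''; it rests on local convergence of the rescaled $c_\epsilon$ to $U_{1,0}^{p_s-1}$ and on a global decay bound for $c_\epsilon$, the latter following from Lemma \ref{lem4.1}.
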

\begin{proof}[\bf Proof.]
Since $0<p_{s}-1-\epsilon\leq1$,  then it follows from  Lemmas \ref{lemL2.8} and  \ref{cor5.4} that 
\[
\begin{split}
|L_1|\leq& C\displaystyle\sum_{i=1}^{k}
\eta_{i}^{(1)}U_{\lambda_{i}^{(1)}, \xi_{i}^{(1)}}
\big|c_\epsilon(x)- \big(u^{(1)}_\epsilon\big)^{p_{s}
-1-\epsilon}\big|
\leq C\displaystyle\sum_{i=1}^{k}
\eta_{i}^{(1)}U_{\lambda_{i}^{(1)}, \xi_{i}^{(1)}}
\big|u^{(1)}_{\epsilon}-u^{(2)}_{\epsilon}\big|^{p_{s}
-1-\epsilon}\\
= &o\Big(\sum_{i=1}^{k}
\eta_{i}^{(1)}U_{\lambda_{i}^{(1)}, \xi_{i}^{(1)}}\Big)^{p_{s}
-\epsilon}
+O\Big(
 \sum_{i=1}^k (\lambda_i^{(1)})^{-\frac{N+2s}{2}}
        \chi_{B_{\frac{5\delta}{2}}(\xi_{i}^{(1)})\setminus B_{\frac{\delta}{2}}(\xi_{i}^{(1)})}
        \Big)\\
&+O\Big( \displaystyle\sum_{i=1}^{k}
\eta_{i}^{(1)}U_{\lambda_{i}^{(1)}, \xi_{i}^{(1)}}
\big(|\phi_{\epsilon}^{(1)}|^{p_{s}-1-\epsilon}
+|\phi_{\epsilon}^{(2)}|^{p_{s}-1-\epsilon}\big)\Big)\\ 
=&o(1)
\sum_{i=1}^{k}
\frac{\big(\lambda_{i}^{(1)}\big)
^{\frac{N+2s}{2}-\frac{N-2s}{2}\epsilon}}
{(1+\lambda_{i}^{(1)}
\big|x-\xi_{i}^{(1)}\big |)
^{N+2s-(N-2s)\epsilon}}+O\Big(
 \sum_{i=1}^k (\lambda_i^{(1)})^{-\frac{N+2s}{2}}
        \chi_{B_{\frac{5\delta}{2}}(\xi_{i}^{(1)})\setminus B_{\frac{\delta}{2}}(\xi_{i}^{(1)})}
        \Big)
\\
&+O\left( \|\phi_{\epsilon}^{(1)}\|^{p_{s}-1-\epsilon}_*
+\|\phi_{\epsilon}^{(2)}\|^{p_{s}-1-\epsilon}_*\right)
\sum_{i=1}^{k}
\frac{\big(\lambda_{i}^{(1)}\big)
^{\frac{N+2s}{2}-\frac{N-2s}{2}\epsilon}}
{(1+\lambda_{i}^{(1)}
\big|x-\xi_{i}^{(1)}\big |)
^{N+(p_s-1-\epsilon)\sigma-\frac{N-2s}{2}\epsilon}}
\\
=&o(1)\displaystyle\sum_{i=1}^{k}
\frac{\big(\lambda_{i}^{(1)}\big)
^{\frac{N+2s}{2}}}
{(1+\lambda_{i}^{(1)}
\big|x-\xi_{i}^{(1)}\big |)
^{\frac{N+2s}{2}+\sigma}},
\end{split}
\]
which  implies that \[
\|L_1\|_{**}=o(1).
\]
By the mean value theorem, we have 
\[
\begin{split}
|L_2|\leq& C\epsilon\displaystyle\sum_{i=1}^{k}\eta_{i}^{(1)} U_{\lambda_{i}^{(1)},\xi_{i}^{(1)}}
\big(u_\epsilon^{(1)}\big)^{p_{s}-1-\theta\epsilon}\big(1+|\log u_\epsilon^{(1)}|\big)\\
\leq&C\epsilon|\log\epsilon|\sum_{i=1}^{k}\eta_{i}^{(1)} 
\frac{\big(\lambda_{i}^{(1)}\big)
^{\frac{N+2s}{2}-\frac{N-2s}{2}\theta\epsilon}}
{(1+\lambda_{i}^{(1)}
\big|x-\xi_{i}^{(1)}\big |)
^{N+2s-(N-2s)\theta\epsilon}}
\left(1+\log(1+\big(\lambda_{i}^{(1)}\big)^{2}
\big|x-\xi_{i}^{(1)}\big |^{2})\right)\\
\leq& C\epsilon|\log\epsilon|
\displaystyle\sum_{i=1}^{k}
\frac{\big(\lambda_{i}^{(1)}\big)
^{\frac{N+2s}{2}}}
{(1+\lambda_{i}^{(1)}
\big|x-\xi_{i}^{(1)}\big |)
^{\frac{N+2s}{2}+\sigma}},\;\;\text{where}\;\;\theta\in(0,1),
\end{split}
\]
which gives
\[
\|L_2\|_{**}\leq C\epsilon|\log\epsilon|.
\]
Next we estimate $\|L_3\|_{**}$.
Since $p_{s}\leq2$, we infer 
\[
\begin{split}
|L_3|\leq& C\displaystyle\sum_{i=1}^{k}\eta_{i}^{(1)} U_{\lambda_{i}^{(1)},\xi_{i}^{(1)}}
|\phi_{\epsilon}^{(1)}|^{p_{s}-1}\\
\leq&C\|\phi_{\epsilon}^{(1)}\|_*^{p_{s}-1}\displaystyle\sum_{i=1}^{k}
\frac{\big(\lambda_{i}^{(1)}\big)
^{\frac{N+2s}{2}}}
{(1+\lambda_{i}^{(1)}
\big|x-\xi_{i}^{(1)}\big |)
^{\frac{N+2s}{2}+\sigma}}.
\end{split}
\]
Thus we obtain 
\[
\|L_3\|_{**}\leq C\|\phi_{\epsilon}^{(1)}
\|_*^{p_{s}-1}.
\]
Moreover, we have 
\[
\begin{split}
|L_4|\leq&
C
\displaystyle\sum_{i=1}^{k}\eta_{i}^{(1)} 
\frac{\big(\lambda_{i}^{(1)}
\big)^{\frac{N-2s}{2}}}
{(1+\big(\lambda_{i}^{(1)}
\big)^{2}
\big|x-\xi_{i}^{(1)}\big |^{2})
^{\frac{N-2s}{2}}}
\\
\leq& C\displaystyle\sum_{i=1}^{k}\eta_{i}^{(1)} \frac{1}
{\big(\lambda_{i}^{(1)}\big)^{2s}}
\frac{\big(\lambda_{i}^{(1)}\big)
^{\frac{N+2s}{2}}}
{(1+\lambda_{i}^{(1)}
\big|x-\xi_{i}^{(1)}\big |)
^{N-2s}}\\
\leq&C\displaystyle\sum_{i=1}^{k}\frac{1}
{\big(\lambda_{i}^{(1)}\big)^{2s
-\sigma}}\frac{\big(\lambda_{i}^{(1)}\big)
^{\frac{N+2s}{2}}}
{(1+\lambda_{i}^{(1)}
\big|x-\xi_{i}^{(1)}\big |)
^{\frac{N+2s}{2}+\sigma}}
\end{split}
\]
and
\begin{equation*}
    |L_5|\leq C\displaystyle\sum_{i=1}^{k}\frac{(\lambda_i^{(1)})^{-\frac{N-2s}{2}}}{(1+|x-\xi_i^{(1)}|)^{N+2s}}\leq
    C \sum_{i=1}^{k}(\lambda_i^{(1)})^{-\frac{N-2s}{2}+\sigma}\frac{\big(\lambda_{i}^{(1)}\big)
^{\frac{N+2s}{2}}}
{(1+\lambda_{i}^{(1)}
\big|x-\xi_{i}^{(1)}\big |)
^{\frac{N+2s}{2}+\sigma}}.
\end{equation*}
Then we derive 
\[
\|L_4\|_{**}+\|L_5\|_{**}\leq C\epsilon^{1-\frac{\sigma}{2s}}.
\]

By analogous arguments as above, we derive 
\[
\begin{split}
\|P_1\|_{**}=o(1),\quad\|P_2\|_{**}\leq C\epsilon|\log\epsilon|,\quad\|P_3\|_{**}\leq C\big\|\phi_{\epsilon}^{(1)}\big\|_*^{p_{s}-1}
\end{split}
\]
and
\[
\|P_4\|_{**}+\|P_5\|_{**}+\sum_{j=1}^4\|Q_j\|_{**}\leq C\epsilon^{1-\frac{\sigma}{2s}}.
\]
It follows from  the same line of reasoning as Proposition \ref{prop2.3} that 
there exists $C>0$ such that \[
\|\psi_\epsilon^*\|_*\leq C\bigg(\sum^5_{i=1}\|L_i\|_{**}+\sum^5_{i=1}
\|P_i\|_{**}+\sum^4_{i=1}
\|Q_i\|_{**}\bigg)=o(1).
\]
This completes the proof of Lemma \ref{lem5.5}.
\end{proof}

From Lemmas \ref{lem4.1} and \ref{lem5.5}, we can establish the following finer decay estimate of $\psi_\epsilon^*$.
\begin{lemma}\label{lem-psi}
Under the same assumptions as in Lemma \ref{lem5.5}, we have 
    \begin{equation*}
        |\psi_\epsilon^*|=o(1)\sum_{i=1}^k\frac{(\lambda_i^{(1)})^{\frac{N-2s}{2}}}{(1+\lambda_i^{(1)}|x-\xi_i^{(1)}|)^{\frac{N-2s}{2}+1+\sigma}}.
    \end{equation*}
\end{lemma}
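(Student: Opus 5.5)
We will prove the finer decay by writing $\psi_\epsilon^*$ through the Green's representation of its equation and bootstrapping the decay exponent, in the same way as in Lemma \ref{lem4.1}.

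Let $\mathcal{F}_\epsilon:=L_1+\cdots+L_5+P_1+\cdots+P_5+Q_1+\cdots+Q_4$ be the right-hand side of the equation for $\psi_\epsilon^*$. Since $V\ge0$, that equation gives
\[
|\psi_\epsilon^*(x)|\le C\int_{\R^N}\frac{1}{|x-y|^{N-2s}}\Big(c_\epsilon(y)|\psi_\epsilon^*(y)|+|\mathcal{F}_\epsilon(y)|\Big)\,dy .
\]
Two inputs feed into this representation. First, Lemma \ref{lem4.1} applied to $u_\epsilon^{(1)}$ and $u_\epsilon^{(2)}$, together with the comparability $U_{\lambda_i^{(2)},\xi_i^{(2)}}\sim U_{\lambda_i^{(1)},\xi_i^{(1)}}$, gives
\[
c_\epsilon(y)\le C\sum_i\frac{(\lambda_i^{(1)})^{2s}}{(1+\lambda_i^{(1)}|y-\xi_i^{(1)}|)^{4s-(N-2s)\epsilon}}.
\]
Second, Lemma \ref{lem5.5} gives $|\psi_\epsilon^*|\le o(1)\sum_i(\lambda_i^{(1)})^{\frac{N-2s}{2}}(1+\lambda_i^{(1)}|y-\xi_i^{(1)}|)^{-\frac{N-2s}{2}-\sigma}$.

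\textbf{Step 1 (potential term).} We insert these two bounds into the first integral and apply Lemma \ref{lemLL2.3}, as in the proof of Theorem \ref{NDth}. This gives a bound
\[
o(1)\sum_i\frac{(\lambda_i^{(1)})^{\frac{N-2s}{2}}}{(1+\lambda_i^{(1)}|x-\xi_i^{(1)}|)^{\frac{N-2s}{2}+\sigma+2s-\sigma_0}}.
\]
The exponent gains $2s-\sigma_0$ over the starting one. Because $s>\frac12$, we have $2s-\sigma_0>1$ after choosing $\sigma_0$ small. In fact we need only one application, since the target exponent $\frac{N-2s}{2}+1+\sigma$ lies below the new one. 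We must also respect the ceiling $N-2s$ of Lemma \ref{lemLL2.3}. That is, we need $\frac{N-2s}{2}+1+\sigma\le N-2s$, which holds because $N\ge6s$ and $s>\frac12$ give $\frac{N-2s}{2}\ge 2s>1+\sigma$ (recall $\sigma<2s-1$).

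\textbf{Step 2 (source term).} The bound $\|\mathcal{F}_\epsilon\|_{**}=o(1)$ is not enough by itself. It only yields decay of order $\frac{N-2s}{2}+\sigma$ in the convolution, missing the extra power $1$. So we go back to the pointwise estimates obtained in the proof of Lemma \ref{lem5.5}, and check that each piece has weight of the form $(\lambda_i^{(1)})^{\frac{N+2s}{2}}(1+\lambda_i^{(1)}|x-\xi_i^{(1)}|)^{-\frac{N+2s}{2}-1-\sigma}$ times $o(1)$.
\begin{itemize}
\item Terms carrying $U^{p_s-1}$-type factors, such as $L_1$ and $P_1$, decay like $(1+\lambda|x-\xi|)^{-(N+2s)+O(\epsilon)}$ times $o(1)$. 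Since $N+2s\ge\frac{N+2s}{2}+1+\sigma$, they qualify.
\item The terms $L_3$ and $P_3$ come from $\phi_\epsilon^{(1)}$. Their decay is improved by using Lemma \ref{lem4.1}, which gives $|\phi_\epsilon^{(1)}|\le C|u_\epsilon|+CW\lesssim \sum_i(\lambda_i^{(1)})^{\frac{N-2s}{2}}(1+\lambda_i^{(1)}|x-\xi_i^{(1)}|)^{-(N-2s)}$. We interpolate this with the smallness $\|\phi_\epsilon^{(1)}\|_*=o(1)$.
\item The terms $L_4$, $P_4$ (involving $V$) and the cutoff terms $L_5$, $P_5$, $Q_j$ are supported in $B_{2\delta}$, or decay like $(1+|x-\xi|)^{-N-2s}$. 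Their coefficients are $\lambda^{-2s}$ or smaller. After we spend one more power of $\lambda$, they still carry a factor $\lambda^{-2s+1+\sigma}=o(1)$, using $s>\frac12$ and $\sigma<2s-1$.
\end{itemize}
With these refined weights, Lemma \ref{lemLL2.3} turns the second integral into $o(1)\sum_i(\lambda_i^{(1)})^{\frac{N-2s}{2}}(1+\lambda_i^{(1)}|x-\xi_i^{(1)}|)^{-\frac{N-2s}{2}-1-\sigma}$.

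\textbf{Main obstacle.} The hard part is Step 2, and specifically the cutoff and potential terms near the annulus $\delta\le|x-\xi_i|\le2\delta$. There the weight $\lambda^{-1}$ per extra decay power is costly. This is exactly where the restrictions $s>\frac12$ and $\sigma<2s-1$ are consumed. Finally, on the far region $|x-\xi_i^{(1)}|\ge 5\delta$ all sources have fixed decay $|x|^{-N-2s}$ with tiny coefficients. So the Riesz potential contributes $|x|^{-(N-2s)}$, which dominates the target weight for large $|x|$, and the estimate closes there as well.
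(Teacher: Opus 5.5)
Your proposal is correct and follows the paper's proof. Both use the Green's representation and Lemma \ref{lemLL2.3}, applied once to the potential term (via $\|\psi_\epsilon^*\|_*=o(1)$ and the $4s$-decay of $c_\epsilon$) and once to the source term via the refined pointwise bound \eqref{eq_LP}; the paper merely asserts that bound from the proof of Lemma \ref{lem5.5}, and your term-by-term check correctly justifies it, including the use of $\sigma<2s-1$ for $L_4$, $P_4$ near the annulus. One wording slip: in your far-region remark, $|x|^{-(N-2s)}$ is dominated by the target weight rather than dominating it, and that remark is redundant anyway once the pointwise source bound holds on all of $\mathbb{R}^N$.
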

\begin{proof}
From the proof of Lemma \ref{lem5.5}, it follows that
\begin{equation}\label{eq_LP}
       \sum_{i=1}^5L_i(y)+\sum_{i=1}^5P_i(y)+\sum_{i=1}^4Q_i(y)=o(1)\displaystyle\sum_{i=1}^{k}
       \frac{\big(\lambda_{i}^{(1)}\big)^{\frac{N+2s}{2}}}{\big(1+\lambda_{i}^{(1)}\big|x-\xi_{i}^{(1)}\big|\big)^{\frac{N+2s}{2}+1+\sigma}}.
    \end{equation}
Applying Green's representation formula, we have
\begin{equation*}
    \psi_\epsilon^*(x)=\int_{\R^N}G(x,y)\Big((p_{s}-\epsilon) c_\epsilon(y)\psi^*_\epsilon(y)+\sum_{i=1}^5L_i(y)+\sum_{i=1}^5P_i(y)+\sum_{i=1}^4Q_i(y)\Big).
\end{equation*}
By Lemma \ref{lemLL2.3},  we deduce 
    \begin{align*}
        |\psi_\epsilon^*|
        =&o(1)\int_{\R^N}\frac{1}{|x-y|^{N-2s}}\sum_{i=1}^k\frac{\big(\lambda_i^{(1)}\big)^{\frac{N+2s}{2}-\frac{N-2s}{2}\epsilon}}{\big(1+\lambda_i^{(1)}|y-\xi_i^{(1)}|\big)^{\frac{N+6s}{2}+\sigma-(N-2s)\epsilon}}\\
        &+o(1)\int_{\R^N}\frac{1}{|x-y|^{N-2s}}\sum_{i=1}^k\frac{\big(\lambda_i^{(1)}\big)^{\frac{N+2s}{2}}}{\big(1+\lambda_i^{(1)}|y-\xi_i^{(1)}|\big)^{\frac{N+2s}{2}+1+\sigma}}\\
       =& o(1)\sum_{i=1}^{k}\frac{\big(\lambda_{i}^{(1)}\big)^{\frac{N-2s}{2}}}{\big(1+\lambda_{i}^{(1)}\big|x-\xi_{i}^{(1)}\big |\big)^{\frac{N-2s}{2}+1+\sigma}}.
    \end{align*}
The proof of Lemma \ref{lem-psi} is concluded.
\end{proof}

\subsection{ The estimates of the projected coefficients}
This subsection aims to establish estimates for the projected coefficients  $d_{i,l}^{(1)}$ with  $i=1,\cdots,k$ and $l=0,1,\cdots,N$ 
by means of the local Pohozaev identities \eqref{5.3} and \eqref{14.73}. For this purpose, we first need to estimate the $s$-harmonic extension $\widetilde{\psi}^*_\epsilon$ of $\psi_\epsilon^*$. The relevant estimates of  $\widetilde{\psi}^*_\epsilon$ are provided in Appendix  $B$.

\begin{lemma}\label{lem5.6}
Assume that $\xi^*_i$, $i=1,2,\cdots,k$ are the $k$ different   non-degenerate critical points  of $V(x)$ with $V(\xi^*_i)>0$ and $V(x)\in C^2(B_{5\delta}(\xi^*_i))$. Then 
\begin{equation}\label{tta}
\big|d_{i,0}^{(1)}\big|=o(1)\;\;\text{for}\;\;i=1,\cdots,k.
\end{equation}
\end{lemma}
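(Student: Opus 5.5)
We argue exactly as in Lemma \ref{lem4.4}, now with the dilation Pohozaev identity \eqref{5.3} for $\psi_\epsilon$ in place of \eqref{4.13}. The plan is to expand the left-hand side of \eqref{5.3} using the decomposition \eqref{eq-psi}, to show that the right-hand side is of lower order, and then to read off $d_{i,0}^{(1)}$.

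\textbf{The $b_\epsilon$ term.} Since $\frac{N}{p_s+1-\epsilon}-\frac{N-2s}{2}=O(\epsilon)$, this term contributes $O(\epsilon)\int_{B_\rho}b_\epsilon\psi_\epsilon$. The integral is bounded, because $b_\epsilon\le C\sum_i (U_{\lambda_i^{(1)},\xi_i^{(1)}}^{p_s-\epsilon}+|\phi^{(m)}_\epsilon|^{p_s-\epsilon})$ and $\|\psi_\epsilon\|_*=1$. Hence the term is $O(\epsilon)=O(\lambda_i^{-2s})$, which is of the same order as the main term, so it has to be computed rather than discarded. Inside $B_\delta(\xi_i^{(1)})$ I write $b_\epsilon=(p_s+1)U_{\lambda_i,\xi_i}^{p_s}+(\text{error})$. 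The error consists of $O(\epsilon|\log\epsilon|)$, the contribution of $\phi^{(m)}_\epsilon$, and the difference $u^{(1)}-u^{(2)}$, which is $o(1)U$ by Lemma \ref{cor5.4}. Then:
\begin{itemize}
\item the orthogonality $\int U^{p_s}Z^0=0$ kills the $d_{i,0}^{(1)}$-part at leading order;
\item $\int U^{p_s}Z^l=0$ on the ball kills the $d_{i,l}^{(1)}$-part;
\item the $\psi^*_\epsilon$-part is $o(1)$ by Lemma \ref{lem5.5}.
\end{itemize}
Altogether the first term is $o(\epsilon)$.

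\textbf{The $V$ terms.} Next I expand the two $V$-terms with $u^{(1)}+u^{(2)}=2U_{\lambda_i^{(1)},\xi_i^{(1)}}+O(|u^{(1)}-u^{(2)}|+|\phi^{(1)}|+|\phi^{(2)}|)$ near $\xi_i^{(1)}$, using Lemmas \ref{cor5.4} and \ref{lem5.3}.
\begin{itemize}
\item The term $-s\int V(u^{(1)}+u^{(2)})\psi_\epsilon$ yields, as for $T_1$ in Lemma \ref{lem4.4},
\[
-(N-2s)s\,\gamma_{s,N}^2 d_{i,0}^{(1)}V(\xi_i^{(1)})(\lambda_i^{(1)})^{-2s}\int_{\mathbb{R}^N}\frac{1-|x|^2}{(1+|x|^2)^{N-2s+1}}+o(\lambda_i^{-2s}).
\]
The integral is nonzero for $N>4s$.
\item The $d_{i,l}^{(1)}$-contributions are $O(\lambda_i^{-1-2s}|\nabla V(\xi_i^{(1)})|)+O(\lambda_i^{-2-2s})$. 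By Lemma \ref{thm1.1} this is $o(\lambda_i^{-2s})$.
\item The $\psi^*_\epsilon$-contribution is $O(\lambda_i^{-2s}\|\psi^*_\epsilon\|_*)=o(\lambda_i^{-2s})$.
\item The term with $\langle x-\xi_i,\nabla V\rangle$ gains an extra factor $|x-\xi_i|$ against $U$. This gives $O(\lambda_i^{-1-2s})$ plus $o(\lambda_i^{-2s})$, using the bounded $d$'s and $V\in C^2$.
\end{itemize}
Hence
\[
\text{LHS}=c_0\,d_{i,0}^{(1)}V(\xi_i^{(1)})\lambda_i^{-2s}+o(\lambda_i^{-2s}),\qquad c_0\neq0,\quad V(\xi_i^*)>0.
\]

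\textbf{The boundary terms (main obstacle).} The main obstacle is the right-hand side of \eqref{5.3}. On $\partial B_\rho$ and $\partial''\mathfrak B^+_\rho$ with $\rho\in(2\delta,5\delta)$, the functions $u^{(m)}$ and $\widetilde u^{(m)}$ and their gradients are $O(\lambda_i^{-\frac{N-2s}{2}})$ by Lemma \ref{lem4.1} and the extension estimates (Lemmas \ref{lemGA.5}, \ref{newlemGA.6}). For $\psi_\epsilon$ we need a matching bound $|\widetilde\psi_\epsilon|+|\nabla\widetilde\psi_\epsilon|=O(\lambda_i^{-\frac{N-2s}{2}})$ there, at least after integrating in $\rho$ over $(2\delta,5\delta)$ and selecting a good radius. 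I would obtain it in three steps:
\begin{enumerate}
\item The $Z$-parts of $\psi_\epsilon$ are explicit and bounded by $C\lambda_i^{-\frac{N-2s}{2}}$ away from the centres.
\item For $\psi^*_\epsilon$, Lemma \ref{lem-psi} gives pointwise decay $o(1)\lambda_i^{-1-\sigma}$ on $\partial B_\rho$. The Appendix B estimates for $\widetilde\psi^*_\epsilon$ control the weighted $t^{1-2s}|\nabla\widetilde\psi^*_\epsilon|^2$ integral on $\partial''\mathfrak B_\rho^+$.
\item Applying Cauchy--Schwarz against the $O(\lambda_i^{-\frac{N-2s}{2}})$ bounds for $\widetilde u^{(m)}$ makes every boundary term $o(\lambda_i^{-2s})$. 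Here $N\ge 6s$ gives $\frac{N-2s}{2}\ge 2s$, and the extra gain $\lambda_i^{-1-\sigma}$ with $\sigma<2s-1$ and $s>\frac12$ is exactly what is needed.
\end{enumerate}

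\textbf{Conclusion.} Dividing the identity by $\lambda_i^{-2s}$ gives $d_{i,0}^{(1)}=o(1)$ for each $i$.
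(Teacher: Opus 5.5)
Your proposal is correct and follows the paper's proof essentially step for step. The $b_\epsilon$-term is $O(\epsilon)\cdot o(1)$ thanks to $\int U^{p_s}Z^0=0$ and the oddness of $Z^l$. The nondegenerate leading term $-s(N-2s)\gamma_{s,N}^2 d_{i,0}^{(1)}V(\xi_i^{(1)})\big(\lambda_i^{(1)}\big)^{-2s}\int_{\mathbb{R}^N}\frac{1-|x|^2}{(1+|x|^2)^{N-2s+1}}$ comes from the $V$-term, and the boundary terms are shown to be $o(\epsilon)$ via Lemmas \ref{F6.9}, \ref{F0514lem1} and Cauchy--Schwarz.

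The only slip is harmless: Lemma \ref{newlemGA.6} gives $\int_{\partial''\mathfrak{B}_\rho^+}t^{1-2s}|\nabla\widetilde u_\epsilon^{(m)}|^2\le C\epsilon^2$, not the $O\big(\lambda_i^{-(N-2s)}\big)$ you attribute to it. This weaker bound still suffices, because your estimates make $\nabla\widetilde\psi_\epsilon$ small ($o(1)$) in the same weighted $L^2$ norm on a good sphere.
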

\begin{proof}[\bf Proof.]
Note that \[\begin{split}
b_\epsilon(x)=(p_{s}+1-\epsilon)\big(u_\epsilon^{(1)}\big)
^{p_{s}-\epsilon}+O\left((u_\epsilon^{(1)})
^{p_{s}-1-\epsilon}|u_\epsilon^{(1)}
-u_\epsilon^{(2)}|+|u_\epsilon^{(1)}
-u_\epsilon^{(2)}|^{p_{s}-\epsilon}\right).
\end{split}
\]
Then we have 
\[
\begin{split}
\int_{B_\rho(\xi_{i}^{(1)})}  b_\epsilon(x)\psi_\epsilon
=&(p_{s}+1-\epsilon)\int_{B_\rho(\xi_{i}^{(1)})} \big(u_\epsilon^{(1)}\big)^{p_{s}-\epsilon}
\psi_\epsilon
+O\bigg(\int_{B_\rho(\xi_{i}^{(1)})} \big(u_\epsilon^{(1)}\big)^{p_{s}-1-\epsilon}
|u_\epsilon^{(1)}-u_\epsilon^{(2)}|
|\psi_\epsilon|\bigg)\\
&+O\bigg(\int_{B_\rho(\xi_{i}^{(1)})}|u_\epsilon^{(1)}
-u_\epsilon^{(2)}|^{p_{s}-\epsilon}
|\psi_\epsilon|\bigg).
\end{split}
\]

It follows from  Lemma \ref{lem4.1}   that 
\begin{equation*}
\begin{split}
&\int_{B_\rho(\xi_{i}^{(1)})}   \big(u_\epsilon^{(1)}\big)^{p_{s}-\epsilon}
\psi_\epsilon
\\
=&d_{i,0}^{(1)}\lambda_{i}^{(1)}
\int_{B_\delta(\xi_{i}^{(1)})}  \big(u_\epsilon^{(1)}\big)^{p_{s}-\epsilon}
Z_{\lambda_i^{(1)},\xi_i^{(1)}}^0
+\displaystyle\sum_{l=1}^{N} d_{i,l}^{(1)}
\big(\lambda_{i}^{(1)}\big)^{-1}
\int_{B_\delta(\xi_{i}^{(1)})}\big(u_\epsilon^{(1)}\big)
^{p_{s}-\epsilon}Z_{\lambda_i^{(1)},\xi_i^{(1)}}^l\\
&
+\int_{B_\delta(\xi_{i}^{(1)})}
\big(u_\epsilon^{(1)}\big)^{p_{s}-\epsilon}\psi_\epsilon^*(x)+O\Big(\big(\lambda_{i}^{(1)}\big)^{-N}\Big)
+O\Big(\big(\lambda_{i}^{(1)}\big)^{-\frac{N+2s}{2}}\|\psi_\epsilon^*\|_*\Big)
\\
=&d_{i,0}^{(1)}\lambda_{i}^{(1)}
\int_{B_\delta(\xi_{i}^{(1)})} \left[\big(U_{\lambda_{i}^{(1)},
\xi_{i}^{(1)}}
+\phi_{\epsilon}^{(1)}\big)^{p_{s}-\epsilon}
-U_{\lambda_{i}^{(1)},\xi_{i}^{(1)}}
^{p_{s}-\epsilon}\right]Z_{\lambda_i^{(1)},\xi_i^{(1)}}^0
\\
&+\displaystyle\sum_{l=1}^{N}d_{i,l}^{(1)}
\big(\lambda_{i}^{(1)}\big)^{-1}
\int_{B_\delta(\xi_{i}^{(1)})} \left[\big(U_{\lambda_{i}^{(1)},
\xi_{i}^{(1)}}
+\phi_{\epsilon}^{(1)}\big)^{p_{s}-\epsilon}
-U_{\lambda_{i}^{(1)},
\xi_{i}^{(1)}}
^{p_{s}-\epsilon}\right]Z_{\lambda_i^{(1)},\xi_i^{(1)}}^l
\\
&+d_{i,0}^{(1)}\lambda_{i}^{(1)}
\int_{B_\delta(\xi_{i}^{(1)})} \left[U_{\lambda_{i}^{(1)},
\xi_{i}^{(1)}}
^{p_{s}-\epsilon}
-U_{\lambda_{i}^{(1)},
\xi_{i}^{(1)}}
^{p_{s}}\right]Z_{\lambda_i^{(1)},\xi_i^{(1)}}^0\\
&+O\big(\|\psi_\epsilon^*\|_*\big)+O\Big(\big(\lambda_{i}^{(1)}\big)^{-N}\Big)
+O\Big(\big(\lambda_{i}^{(1)}\big)^{-\frac{N+2s}{2}}\|\psi_\epsilon^*\|_*\Big)\\
:=&F_{1}+F_{2}+F_{3}+O\big(\|\psi_\epsilon^*\|_*\big)
+O\Big(\big(\lambda_{i}^{(1)}\big)^{-N}\Big)
.
\end{split}
\end{equation*}
A direct computation yields that for $i=1,2,$ 
\[
\begin{split}
|F_i|\leq& C\int_{B_\delta(\xi_{i}^{(1)})} U_{\lambda_{i}^{(1)},\xi_{i}^{(1)}
}
^{p_{s}-\epsilon}|\phi_{\epsilon}^{(1)}|+C
\int_{B_\delta(\xi_{i}^{(1)})} U_{\lambda_{i}^{(1)},\xi_{i}^{(1)}
}
|\phi_{\epsilon}^{(1)}|^{p_{s}-\epsilon}
\leq C\big(\|\phi_{\epsilon}^{(1)}\|_*
+\|\phi_{\epsilon}^{(1)}\|_*\big)^{p_{s}-\epsilon}.
\end{split}
\]
Using the mean value theorem, we find
\[
|F_3|\leq C\epsilon \int_{B_\delta(\xi_{i}^{(1)})} U_{\lambda_{i}^{(1)},\xi_{i}^{(1)}
}
^{p_{s}+1-\theta\epsilon}|\log U_{\lambda^{(1)}_{i},\xi_{i}^{(1)}
}|\leq C\epsilon|\log\epsilon|,
\]
where  $\theta\in(0,1)$. Thus we conclude that 
\[
\int_{B_\rho(\xi_{i}^{(1)})} \big(u_\epsilon^{(1)}\big)^{p_{s}-\epsilon}
\psi_\epsilon=o(1).
\]
In view of Lemmas \ref{lem4.1} and \ref{cor5.4}, we obtain 
\[
\begin{split}
 &\int_{B_\rho(\xi_{i}^{(1)})} \big(u_\epsilon^{(1)}\big)^{p_{s}-1-\epsilon}
 \big|u_\epsilon^{(1)}-u_\epsilon^{(2)}\big| |\psi_\epsilon|\\
=&o(1)\int_{
B_\rho(\xi_{i}^{(1)})} U_{\lambda_{i}^{(1)},\xi_{i}^{(1)}
}
^{p_{s}-\epsilon}
|\psi_\epsilon|
+O\bigg(\int_{B_\rho(\xi_{i}^{(1)})} U_{\lambda_{i}^{(1)},\xi_{i}^{(1)}
}^{p_{s}-1-\epsilon}
|\psi_\epsilon|\big(|\phi_{\epsilon}^{(1)}|
+|\phi_{\epsilon}^{(2)}|\big)\bigg)
\\
&
+
 O\bigg(\big(\lambda_i^{(1)}\big)^{-\frac{N-2s}{2}}\int_{B_\rho(\xi_{i}^{(1)})} \big(u_\epsilon^{(1)}\big)^{p_{s}-1-\epsilon}|\psi_\epsilon|
        \chi_{B_{\frac{5\delta}{2}}(\xi_{i}^{(1)})\setminus B_{\frac{\delta}{2}}(\xi_{i}^{(1)})}
        \bigg)\\
=&o(1).
\end{split}
\]
Similarly, we have 
\[
\begin{split}
 \int_{B_\rho(\xi_{i}^{(1)})} \big|u_\epsilon^{(1)}-u_\epsilon^{(2)}\big|
 ^{p_{s}-\epsilon}|\psi_\epsilon|=o(1).
\end{split}
\]
Then we  derive 
\[
\int_{B_\rho(\xi_{i}^{(1)})} b_\epsilon(x)\psi_\epsilon
=o(1).
\]
On the other hand, we have 
\[
\begin{split}
&-s\int_{B_\rho(\xi_{i}^{(1)})}  V(x)\big(u_\epsilon^{(1)}
+u_\epsilon^{(2)}\big)\psi_\epsilon\\
=&-2s d_{i,0}^{(1)}\lambda_{i}^{(1)}
\int_{B_\delta(\xi_{i}^{(1)})}V(x)u_\epsilon^{(1)}
Z_{\lambda_i^{(1)},\xi_i^{(1)}}^0-2s\big(\lambda_{i}^{(1)}\big)^{-1}\displaystyle\sum_{l=1}
^{N}d_{i,l}^{(1)}\int_{B_\delta(\xi_{i}^{(1)})}V(x)
u_\epsilon^{(1)}Z_{\lambda_i^{(1)},\xi_i^{(1)}}^l
\\
&+s d_{i,0}^{(1)}\lambda_{i}^{(1)}
\int_{B_\delta(\xi_{i}^{(1)})}V(x)
\big(u_\epsilon^{(1)}-
u_\epsilon^{(2)}\big)Z_{\lambda_i^{(1)},\xi_i^{(1)}}^0
\\
&+s\big(\lambda_{i}^{(1)}\big)^{-1}\displaystyle\sum_{l=1}
^{N} d_{i,l}^{(1)}\int_{B_\delta(\xi_{i}^{(1)})}
V(x)\big(u_\epsilon^{(1)}-u_\epsilon^{(2)}
\big)Z_{\lambda_i^{(1)},\xi_i^{(1)}}^l\\
&-s
\int_{B_\delta(\xi_{i}^{(1)})}V(x)
\big(u_\epsilon^{(1)}
+u_\epsilon^{(2)}\big)\psi_\epsilon^*+O\Big(\big(\lambda_{i}^{(1)}\big)^{-N+2s}\Big)\\
:=&M_{1}+M_{2}+M_{3}+M_{4}+M_{5}+O\Big(\big(\lambda_{i}^{(1)}\big)^{-N+2s}\Big).
\end{split}
\]
Then we infer 
\[
\begin{split}
M_{1}=&-2s d_{i,0}^{(1)}\lambda_{i}^{(1)}
\int_{B_\delta(\xi_{i}^{(1)})}V(x)
U_{\lambda_{i}^{(1)},\xi_{i}^{(1)}
}Z_{\lambda_i^{(1)},\xi_i^{(1)}}^0-2s d_{i,0}^{(1)}
\lambda_{i}^{(1)}
\int_{B_\delta(\xi_{i}^{(1)})}V(x)Z_{\lambda_i^{(1)},\xi_i^{(1)}}^0 
\phi_{\epsilon}^{(1)}\\
=&-2s\left(d_{i}^{0}\right)^{(1)}\lambda_{i}^{(1)}V(\xi_{i}^{(1)})
\int_{B_\delta(\xi_{i}^{(1)})}
U_{\lambda_{i}^{(1)},\xi_{i}^{(1)}
}
Z_{\lambda_i^{(1)},\xi_i^{(1)}}^0+O\left(\big(\lambda_{i}^{(1)}
\big)^{-2s-1}
 \right)+
O\left(\big( \lambda_{i}^{(1)}
\big)^{-2s}
\|\phi_{\epsilon}^{(1)}\|_*\right)\\
=&s(2s-N)\left(d_{i}^{0}\right)^{(1)}
\gamma_{s,N}^2
\big(\lambda_{i}^{(1)}\big)^{-2s}
V(\xi_{i}^{(1)})
\int_{\mathbb{R}^N}
\frac{1-|x|^{2}}{(1+|x|^{2})
^{N-2s+1}}
+O\Big(\big(\lambda_{i}^{(1)}
\big)^{2s-N}
 \Big)\\
 &+O\Big(\big(\lambda_{i}^{(1)}
\big)^{-2s-1}
 \Big)+
O\Big(\big(\lambda_{i}^{(1)}
\big)^{-2s}
\|\phi_{\epsilon}^{(1)}\|_*
\Big).
\end{split}
\]
By a direct calculation, we get 
\[
\begin{split}
M_{2}=&-2s\big(\lambda_{i}^{(1)}\big)^{-1}\displaystyle\sum_{l=1}
^{N}d_{i,l}^{(1)}
\int_{B_\delta(\xi_{i}^{(1)})}V(x)
\big(U_{\lambda_{i}^{(1)},\xi_{i}^{(1)}
}+\phi_\epsilon\big)
Z_{\lambda_i^{(1)},\xi_i^{(1)}}^l\\
=&-2s\big(\lambda_{i}^{(1)}\big)^{-1}
\displaystyle\sum_{l=1}
^{N}d_{i,l}^{(1)}
\int_{B_\delta(\xi_{i}^{(1)})}
\big\langle\nabla V(\xi_{i}^{(1)}),x-\xi_{i}^{(1)}\big\rangle
U_{\lambda_{i}^{(1)},\xi_{i}^{(1)}
}
Z_{\lambda_i^{(1)},\xi_i^{(1)}}^l\\
&+
O\Big(\big(\lambda_{i}^{(1)}
\big)^{-2s}
\|\phi_{\epsilon}^{(1)}\|_*
\Big) + O\left(\epsilon^{2-\frac{\sigma}{2s}}
\right)\\
=&-\frac{2s(N-2s)}
{N}\gamma_{s,N}^2\big(\lambda_{i}^{(1)}\big)^{-1-2s}
\displaystyle\sum_{l=1}
^{N} d_{i,l}^{(1)}
\frac{\partial V(\xi_{i}^{(1)})}{\partial x_{l}}
\int_{\mathbb{R}^N}
\frac{|x|^{2}}{(1+|x|^{2})
^{N-2s+1}}\\
&
+O\left(\big(\lambda_{i}^{(1)}
\big)^{2s-1-N}
\right)+
O\left(\epsilon^{2-\frac{\sigma}{2s}}
\right).
\end{split}
\]
From Lemma \ref{cor5.4}, it follows that 
\[
\begin{split}
M_{3}\leq &C\int_{B_\delta(\xi_{i}^{(1)})}\big|
u_\epsilon^{(1)}-u_\epsilon^{(2)}
\big|
U_{\lambda_{i}^{(1)},\xi_{i}^{(1)}
}
 \\
=&o(1)\int_{B_\delta
(\xi_{i}^{(1)})}
U_{\lambda_{i}^{(1)},\xi_{i}^{(1)}
}^2
+O\bigg(\int_{B_\delta(\xi_{i}^{(1)})}
U_{\lambda_{i}^{(1)},\xi_{i}^{(1)}
}
\big(|\phi_{\epsilon}^{(1)}|
+|\phi_{\epsilon}^{(2)}|\big)\bigg)+
 O\big(\big(\lambda_i^{(1)})^{2s-N}  
        \big)\\
=&o(\epsilon)+
O
\big(\epsilon \|\phi_{\epsilon}^{(1)}\|_*\big)+
 O\big(\big(\lambda_i^{(1)})^{2s-N}  
        \big)\\
=&o(\epsilon).
\end{split}\]
In a similar way, we get 
\[
\begin{split}
|M_{4}|+|M_5|
= o(\epsilon).
\end{split}\]
Moreover, we have 
\[\begin{split}
&\ \ \ \int_{B_\rho(\xi_{i}^{(1)})} \big\langle x-\xi_{i}^{(1)},\nabla V(x)\big\rangle\big(u_\epsilon^{(1)}
+u_\epsilon^{(2)}\big)\psi_\epsilon
\\
&=\int_{B_\rho(\xi_{i}^{(1)})} \big\langle x-\xi_{i}^{(1)},\nabla V(\xi_{i}^{(1)})\big\rangle\big(u_\epsilon^{(1)}
+u_\epsilon^{(2)}\big)\psi_\epsilon +o(\epsilon)
\\&=O\left(\nabla V(\xi_{i}^{(1)})\big(\lambda_{i}^{(1)}
\big)^{-2s}\right)+o(\epsilon)
\\&
=o(\epsilon). 
\end{split}
\]
By an argument analogous to that in Lemma \ref{lemGA.5}, we derive that
\begin{equation}\label{eq0512-3}
\Big|\nabla 
\left(\lambda_{i}^{(1)}\widetilde{Z}_{i,0}^{(1)}\right)\Big|
+
\Big|\nabla\left(\big(\lambda_{i}^{(1)}\big)^{-1}
\widetilde{Z}_{i,l}^{(1)}\right)\Big|\leq \frac{C}{(\lambda_i^{(1)})^{\frac{N-2s}{2}}}\frac{1}{(1+|x-\xi_i^{(1)}|)^{N-2s+1}} ~\text{on $\partial^{''} \mathfrak{B}^{+}_\rho(\xi_{i}^{(1)})$,}
\end{equation}
\begin{equation}\label{eq0512-1}
    |\widetilde{u}_\epsilon^{(1)}|\leq \frac{C}{(\lambda_i^{(1)})^{\frac{N-2s}{2}}}\frac{1}{(1+|x-\xi_i^{(1)}|)^{N-2s}} ~\text{on $\partial^{''} \mathfrak{B}^{+}_\rho(\xi_{i}^{(1)})$,}
\end{equation}
and
\begin{equation}\label{eq0512-2}
    |\widetilde{\phi}_\epsilon^{(1)}|\leq \frac{C\|{\phi}_\epsilon^{(1)}\|_*}{(\lambda_i^{(1)})^{\sigma}}\frac{1}{(1+|x-\xi_i^{(1)}|)^{\frac{N-2s}{2}+\sigma}} ~\text{on $\partial^{''} \mathfrak{B}^{+}_\rho(\xi_{i}^{(1)})$},
\end{equation}
where $\widetilde{Z}_{i,l}^{(1)}$ is the $s$-harmonic extension of ${Z}_{i,l}^{(1)}$, $i=1,\cdots,k$, $l=0,1,\cdots,N$. 

Thanks to Lemma \ref{F0514lem1}, we find 
\begin{equation}\label{eq0512-5}
    \int_{\partial^{''} \mathfrak{B}^{+}_\rho(\xi_{i}^{(1)})}
t^{1-2s}|\nabla \widetilde{ \psi}_\epsilon|^{2}=o(\epsilon^{\frac{1}{s}+\frac{\sigma}{s}}),
\end{equation}
which gives
\begin{equation}\label{eq0512-6}
    \int_{\partial^{''} \mathfrak{B}^{+}_\rho(\xi_{i}^{(1)})}
t^{1-2s}|\nabla \widetilde{u}_\epsilon^{(1)}|^{2}
\leq  C \epsilon^2.
\end{equation}
Then we obtain 
\[
\text{RHS \;of \;\eqref{5.3}}=o\left(\epsilon^{1+\frac{1}{2s}+\frac{\sigma}{2s}}\right),
\]
From \eqref{5.3} in Lemma \ref{lem5.2}, we conclude that \eqref{tta} holds.
This completes the proof. 
\end{proof}
\begin{lemma}\label{lem5.61}
Under the same assumptions as in Lemma \ref{lem5.6},  it holds  that for $i=1,\cdots,k$, $l=1,\cdots,N$, 
\begin{equation*}\label{ttta}
\big|d_{i,l}^{(1)}\big|=o(1).
\end{equation*}
\end{lemma}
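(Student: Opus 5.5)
We will mimic the proof of Lemma \ref{lem4.04}, now using the translation-type local Pohozaev identity \eqref{14.73} for $\psi_\epsilon$ in Lemma \ref{lem14.33} instead of \eqref{4.3}. The idea is to expand the left-hand side of \eqref{14.73} with the decomposition \eqref{eq-psi}. The leading term should be a nondegenerate linear form in $(d_{i,1}^{(1)},\dots,d_{i,N}^{(1)})$ whose matrix is essentially the Hessian $D^2V(\xi_i^*)$, multiplied by $(\lambda_i^{(1)})^{-1-2s}\sim\epsilon^{1+\frac{1}{2s}}$. Every other contribution, and the whole right-hand side, should then be shown to be $o(\epsilon^{1+\frac{1}{2s}})$.

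\textbf{Left-hand side.} We write $u_\epsilon^{(1)}+u_\epsilon^{(2)}=2u_\epsilon^{(1)}+(u_\epsilon^{(2)}-u_\epsilon^{(1)})$ and insert \eqref{eq-psi}. Four groups of terms then arise.

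The $d_{i,0}^{(1)}$ term is $d_{i,0}^{(1)}\lambda_i^{(1)}\int \partial_{x_j}V\,U\,Z^0$. As in the computation of $M_1$ in Lemma \ref{lem4.04}, this equals $O(|\nabla V(\xi_i^{(1)})|\lambda^{-2s})+O(\lambda^{-1-2s})$ times $d_{i,0}^{(1)}$. Since $d_{i,0}^{(1)}=o(1)$ by Lemma \ref{lem5.6} and $\nabla V(\xi_i^{(1)})=o(\epsilon^{\frac{1}{2s}})$ by Lemma \ref{thm1.1}, the whole term is $o(\epsilon^{1+\frac{1}{2s}})$.

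The $d_{i,l}^{(1)}$ terms are computed as $M_2$ in Lemma \ref{lem4.04}, using $V\in C^2$ near $\xi_i^*$. They give
\[
\frac{N-2s}{N}\gamma_{s,N}^2(\lambda_i^{(1)})^{-1-2s}\sum_{l=1}^N d_{i,l}^{(1)}\Big(\frac{\partial^2V(\xi_i^*)}{\partial x_j\partial x_l}+o(1)\Big)\int_{\R^N}\frac{|x|^2}{(1+|x|^2)^{N-2s+1}}.
\]

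The $\psi_\epsilon^*$ and $\phi_\epsilon^{(1)}$ contributions are handled separately. For the $\psi_\epsilon^*$ term we subtract $\partial_{x_j}V(\xi_i^{(1)})$ and use the finer decay of Lemma \ref{lem-psi}. The constant part then gives $o(1)\cdot o(\epsilon^{\frac{1}{2s}})\lambda^{-2s}$. The remainder is $\int |x-\xi_i^{(1)}|\,U\,|\psi^*_\epsilon|$, which is $o(\lambda^{-1-2s})$ thanks to the extra factor $(1+\lambda|x-\xi|)^{-1}$ in Lemma \ref{lem-psi}. The $\phi_\epsilon^{(1)}$ cross terms are $O(\lambda^{-2s}\|\phi^{(1)}_\epsilon\|_*)=O(\epsilon^{2-\frac{\sigma}{2s}})=o(\epsilon^{1+\frac{1}{2s}})$, because $s>\frac12$ and $\sigma<2s-1$.

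The term containing $u_\epsilon^{(2)}-u_\epsilon^{(1)}$ is the delicate one. Lemma \ref{cor5.4} alone only gives $o(1)\,U$, which leads to $o(\epsilon)$ and is too weak. We therefore expand $u^{(2)}-u^{(1)}$ to first order as $(\lambda^{(2)}-\lambda^{(1)})\partial_\lambda U+(\xi^{(2)}-\xi^{(1)})\cdot\nabla_\xi U$, plus the $\phi^{(m)}$ terms. The $\lambda$-part is radial against the odd-after-subtraction weight. For the $\xi$-part we use $|\xi^{(2)}-\xi^{(1)}|=o(\epsilon^{\frac{1}{2s}})$ together with $|\nabla U|\le C\lambda U$. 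Combined with the odd symmetry in $x-\xi_i^{(1)}$ of the $\partial_{x_j}V(\xi_i^{(1)})$ part, this should make the whole term $o(\epsilon^{1+\frac{1}{2s}})$.

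\textbf{Right-hand side.} Every term of \eqref{14.73} is a boundary integral on $\partial B_\rho$ or on $\partial''\mathfrak{B}^+_\rho$, where all bubbles are of size $\lambda^{-\frac{N-2s}{2}}$. We use \eqref{eq0512-1}, \eqref{eq0512-2}, \eqref{eq0512-3} and the gradient bounds \eqref{eq0512-5}, \eqref{eq0512-6}, and apply Cauchy--Schwarz to products such as $t^{1-2s}\nabla\widetilde u\cdot\nabla\widetilde\psi$. This gives a bound $O(\epsilon)\cdot o(\epsilon^{\frac{1}{2s}+\frac{\sigma}{2s}})=o(\epsilon^{1+\frac{1}{2s}})$, exactly as in the proof of Lemma \ref{lem5.6}. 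The terms on $\partial B_\rho$ involve $\psi_\epsilon$ at distance $\rho$, where $|\psi_\epsilon|\le C\lambda^{-\sigma}$ and $u^{(m)}=O(\lambda^{-\frac{N-2s}{2}}+|\phi^{(m)}|)$. They are therefore even smaller.

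\textbf{Conclusion.} Dividing by $(\lambda_i^{(1)})^{-1-2s}$ yields $D^2V(\xi_i^*)\,d_i^{(1)}+o(|d_i^{(1)}|)=o(1)$ with $d_i^{(1)}=(d_{i,1}^{(1)},\dots,d_{i,N}^{(1)})$. The nondegeneracy of the critical point $\xi_i^*$ then gives $d_{i,l}^{(1)}=o(1)$. The main obstacle is the estimate of the $u^{(2)}-u^{(1)}$ term on the left. If the symmetry argument above fails, we would instead feed in the refined rate $|\lambda_i^{(1)}-\lambda_i^{(2)}|=O(\epsilon^{1-\frac{1+\sigma}{2s}})$ from Lemma \ref{lem5.3}.
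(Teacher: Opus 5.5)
Your strategy is the paper's: expand the left side of \eqref{14.73} via \eqref{eq-psi}, isolate $(\lambda_i^{(1)})^{-1-2s}\sum_l d_{i,l}^{(1)}\partial^2_{x_jx_l}V(\xi_i^{(1)})$, show everything else is $o(\epsilon^{1+\frac{1}{2s}})$, and conclude by nondegeneracy. Several pieces are correct as written: the leading-term computation, the $d_{i,0}^{(1)}$ term, the use of Lemma \ref{lem-psi} for the $\psi_\epsilon^*$ contribution, and the Cauchy--Schwarz bound $O(\epsilon)\cdot o(\epsilon^{\frac{1+\sigma}{2s}})$ for the $\partial''\mathfrak{B}^+_\rho$ integrals via \eqref{eq0512-5}--\eqref{eq0512-6}.

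The step that fails as written is your dismissal of the $\partial B_\rho$ terms. There $|u_\epsilon^{(m)}|\le C\lambda^{-\frac{N-2s}{2}}$. With only $|\psi_\epsilon|\le C\lambda^{-\sigma}$, the term $\frac12\int_{\partial B_\rho}V(u_\epsilon^{(1)}+u_\epsilon^{(2)})\psi_\epsilon\nu_l$ is bounded by $C\lambda^{-\frac{N-2s}{2}-\sigma}$. This is $o(\lambda^{-1-2s})$ only when $N>6s+2-2\sigma$. For $N=6s$ it is $O(\epsilon^{1+\frac{\sigma}{2s}})$, which is much larger than $\epsilon^{1+\frac{1}{2s}}$ because $\sigma<1$. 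Using $u_\epsilon^{(m)}=\phi_\epsilon^{(m)}$ on $\partial B_\rho$ does not help, since $\|\phi_\epsilon^{(m)}\|_*\lambda^{-\sigma}=O(\lambda^{-2s})$, no better at $N=6s$. So these terms are borderline, not ``even smaller''.

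The repair uses $\rho>2\delta$. Every $Z_{i,l}^{(1)}$ vanishes on $\partial B_\rho$, so $\psi_\epsilon=\psi_\epsilon^*$ there. Lemma \ref{lem-psi} then gives $|\psi_\epsilon|=o(\lambda^{-1-\sigma})$ at distance $\rho$, and the term becomes $o(\epsilon^{1+\frac{1+\sigma}{2s}})$. This is what the paper's appeal to Lemma \ref{F6.9} encodes. The same fact, not the crude $\|\cdot\|_*$ bound, is what controls the part of the left side over $B_\rho\setminus B_{2\delta}$.

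Separately, the $u_\epsilon^{(2)}-u_\epsilon^{(1)}$ term is not delicate, and your first-order expansion and symmetry argument are unnecessary. Write $\partial_{x_j}V(x)=\partial_{x_j}V(\xi_i^{(1)})+O(|x-\xi_i^{(1)}|)$ with $\partial_{x_j}V(\xi_i^{(1)})=o(\lambda^{-1})$. The $o(1)U$ bound of Lemma \ref{cor5.4} then already gives $o(\lambda^{-1})\,o(\lambda^{-2s})+o(1)\int_{B_\delta}|x-\xi_i^{(1)}|U^2=o(\lambda^{-1-2s})$. This holds because $\int_{B_\delta}|x-\xi_i^{(1)}|U^2\le C\lambda^{-1-2s}$ for $N>4s+1$, which follows from $N\ge 6s$ and $s>\frac12$. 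The $\phi_\epsilon^{(m)}$ pieces contribute $O(\lambda^{-2s}\|\phi_\epsilon\|_*)=o(\epsilon^{1+\frac{1}{2s}})$. This is why the paper simply absorbs the whole term into $o(\lambda^{-1-2s})$.
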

\begin{proof} Note that 
\[
\begin{split}
&\ \ \  \ \ \frac{1}{2}
\displaystyle\int_{B_\rho(\xi_{i}^{(1)})}\frac{\partial V(x)}{\partial x_{l}}\left(u_\epsilon^{(1)}+u_\epsilon^{(2)}\right)
\psi_\epsilon
\\
&\ 
=\int_{B_\delta(\xi_{i}^{(1)})} \frac{\partial V(x)}{\partial x_{l}}
\Big(U_{\lambda_{i}^{(1)},
\xi_{i}^{(1)}}
+\phi_{\epsilon}^{(1)}\Big)
\Big(d_{i,0}^{(1)}\lambda_{i}^{(1)}Z_{\lambda_i^{(1)},\xi_i^{(1)}}^0
+\displaystyle\sum_{m=1}
^{N}{d_{i,m}^{(1)}\big(\lambda_{i}^{(1)}\big)^{-1}
Z_{i,m}^{(1)}}
\Big)\\
&\ \ \quad+\frac{1}{2}\int_{B_\delta(\xi_{i}^{(1)})}\frac{\partial V(x)}{\partial x_{l}}
\big(u_\epsilon^{(2)}-
u_\epsilon^{(1)}\big)
\Big(d_{i,0}^{(1)}\lambda_{i}^{(1)}Z_{\lambda_i^{(1)},\xi_i^{(1)}}^0
+\displaystyle\sum_{m=1}
^{N}{d_{i,m}^{(1)}\big(\lambda_{i}^{(1)}\big)^{-1}
Z_{i,m}^{(1)}}
\Big)
\\
&\ \ \quad+\frac{1}{2}
\int_{B_\delta(\xi_{i}^{(1)})}\frac{\partial V(x)}{\partial x_{l}}\left(u_\epsilon^{(1)}+u_\epsilon^{(2)}\right)
\psi_\epsilon^*(x)
+\frac{1}{2}
\displaystyle\int_{B_\rho(\xi_{i}^{(1)})\setminus B_\delta(\xi_{i}^{(1)})}\frac{\partial V(x)}{\partial x_{l}}\left(u_\epsilon^{(1)}+u_\epsilon^{(2)}\right)
\psi_\epsilon
\\
&\ =d_{i,0}^{(1)}\lambda_{i}^{(1)}
\int_{B_\delta(\xi_{i}^{(1)})}
\frac{\partial V(x)}{\partial x_{l}}U_{\lambda_{i}^{(1)},
\xi_{i}^{(1)}}
Z_{\lambda_i^{(1)},\xi_i^{(1)}}^0
+\displaystyle\sum_{m=1}
^{N}d_{i,m}^{(1)}\big(\lambda_{i}^{(1)}\big)^{-1}
\int_{B_\delta(\xi_{i}^{(1)})} \frac{\partial V(x)}{\partial x_{l}}
U_{\lambda_{i}^{(1)},\xi_{i}^{(1)}
}
Z_{i,m}^{(1)}
\\
&\  \quad+\int_{B_\delta(\xi_{i}^{(1)})}\frac{\partial V(x)}{\partial x_{l}}\bigg(
\frac{u_\epsilon^{(2)}-
u_\epsilon^{(1)}}{2}\bigg)
\Big(d_{i,0}^{(1)}\lambda_{i}^{(1)}Z_{\lambda_i^{(1)},\xi_i^{(1)}}^0
+\displaystyle\sum_{m=1}
^{N}{d_{i,m}^{(1)}\big(\lambda_{i}^{(1)}\big)^{-1}
Z_{i,m}^{(1)}}
\Big) 
\\
&\ \  \quad+\frac{1}{2}
\int_{B_\delta(\xi_{i}^{(1)})}\frac{\partial V(x)}{\partial x_{l}}\left(u_\epsilon^{(1)}+u_\epsilon^{(2)}\right)
\psi_\epsilon^*(x)+O\left(\big(\lambda_{i}^{(1)}
\big)^{-2s}
\|\phi_{\epsilon}^{(1)}\|_*
\right)+O\left(\big(\lambda_{i}^{(1)}
\big)^{2s-N}
\right)
\\
&
:=G_{1}+G_{2}+O\left(\big(\lambda_{i}^{(1)}
\big)^{-2s}
\|\phi_{\epsilon}^{(1)}\|_*
\right)+O\left(\big(\lambda_{i}^{(1)}
\big)^{2s-N}
\right)+o\Big(\big(\lambda_i^{(1)}\big)^{-1-2s}\Big).\\
\end{split}
\]
Since $\nabla V(\xi_{i}^{(1)})=o(\epsilon^{\frac{1}{2s}})$,  we have
\[
\begin{split}
G_{1}
=& d_{i,0}^{(1)}\lambda_{i}^{(1)}\frac{\partial V(\xi_{i}^{(1)}) }{\partial x_{l}} \int_{B_\delta(\xi_{i}^{(1)})}
U_{\lambda_{i}^{(1)},
\xi_{i}^{(1)}}
Z_{\lambda_i^{(1)},\xi_i^{(1)}}^0
+o\Big(\big(\lambda_i^{(1)}\big)^{-1-2s}\Big)
\\
=&\frac{N-2s}{2}\gamma_{s,N}^2d_{i,0}^{(1)} \frac{\partial V(\xi_{i}^{(1)}) }{\partial x^{l}} \big(\lambda_{i}^{(1)}\big)^{-2s}\int_{\mathbb{R}^N}\frac{1-|x|^{2}}
{(1+|x|^{2})^{N-2s+1}}\\
&
+O\Big(
\big(\lambda_{i}^{(1)}\big)^{2s-N}
 \Big)
+o\Big(\big(\lambda_{i}^{(1)}\big)^{-2s-1}\Big)
\\
=&
o\left(\epsilon^{1+\frac{1}{2s}}\right)
\end{split}
\]and
 \[
\begin{split}
G_{2}=& \displaystyle\sum_{m=1}
^{N}d_{i,m}^{(1)}\big(\lambda_{i}^{(1)}\big)^{-1}
\int_{B_\delta
(\xi_{i}^{(1)})}
\Big(
\frac{\partial V(x)}{\partial x_{l}}-
\frac{\partial V(\xi_{i}^{(1)})}{\partial x_{l}}
\Big)
U_{\lambda_{i}^{(1)},\xi_{i}^{(1)}
}
Z_{i,m}^{(1)}\\
=&\frac{\gamma_{s,N}^2}{N}\displaystyle\sum_{j=1}
^{N}
d_{i,m}^{(1)}
\big(\lambda_{i}^{(1)}\big)^{-1-2s}
\frac{\partial^{2} V(\xi_{i}^{(1)})}{\partial x_{l}
x_{j}}
\int_{\R^N}
\frac{|x|^{2}}
{(1+|x|^{2})^{N-2s+1}}+o\Big(\big(\lambda_i^{(1)}\big)^{-1-2s}\Big).
\end{split}
\]
From Lemma \ref{F6.9} and \eqref{eq0512-1}-\eqref{eq0512-6}, it follows  that
\[
\text{RHS \;of \;\eqref{14.73}}=o\left(\epsilon^{1+\frac{1}{2s}+\frac{\sigma}{2s}}\right).
\]
By virtue of \eqref{14.73} in Lemma \ref{lem14.33}, we derive 
\begin{equation*}
\big|d_{i,l}^{(1)}\big|=o(1)\ \; \hbox{for}\ \; l=1,2,\cdots,N.
\end{equation*}
This  concludes the proof of Lemma \ref{lem5.61}.
\end{proof}

\subsection{Completion of the proof of Theorem \ref{thm1.3}}
\begin{proof}[\bf Proof of Theorem \ref{thm1.3}] 
As in the proof of Theorem \ref{NDth}, we have 
\[
\begin{split}
|\psi_\epsilon|&\leq C\int_{\mathbb{R}^N}\frac{1}{|x-y|^{N-2s}} c_\epsilon(y)|\psi_\epsilon(y)|
\leq C\|\psi_\epsilon\|_*
\displaystyle\sum_{i=1}^{k}
\frac{(\lambda_{i}^{(1)})^{\frac{N-2s}{2}}}
{\big(1+\lambda_{i}^{(1)}
|x-\xi_{i}^{(1)}|\big)^{\frac{N+2s}{2}-\sigma_{0}}}
,\end{split}
\]
where $\sigma_0>0$ is a small fixed constant. 
 Thus, we infer 
\begin{equation*}
\bigg(
\displaystyle\sum_{i=1}^{k}
\frac{(\lambda_{i}^{(1)})^{\frac{N-2s}{2}}}
{\big(1+\lambda_{i}^{(1)}|x-\xi_{i}^{(1)}|\big)
^{\frac{N-2s}{2}+\sigma}}\bigg)^{-1}
|\psi_\epsilon(x)|
\leq  C\|\psi_\epsilon\|_*
\displaystyle\sum_{i=1}^{k}\frac{1}
{\big(1+\lambda_{i}^{(1)}|x-\xi_{i}^{(1)}|\big)
^{2s-\sigma-\sigma_0}}.
\end{equation*}
For any $R>0$, by Lemmas \ref{lem5.5}, \ref{lem5.6} and \ref{lem5.61}, we obtain 
 \begin{equation*}
 \big(\lambda_{i}^{(1)}\big)^{-\frac{N
-2s}{2}}\psi_\epsilon\rightarrow0\,\,\,\mbox{in}\,\,\, B_{R(\lambda_{i}^{(1)})^{-1}}(\xi_{i}^{(1)}),\ \  i=1,\cdots,k.\end{equation*}
Then it follows from $\|\psi_\epsilon\|_*=1$ that 
$$
\bigg(\displaystyle\sum_{i=1}^{k}
\frac{\big(\lambda_{i}^{(1)}\big)^{\frac{N-2s}{2}}}
{\big(1+(\lambda_{i}^{(1)})|x-\xi_{i}^{(1)}|\big)
^{\frac{N-2s}{2}+\sigma}}\bigg)^{-1}|\psi_\epsilon(x)|
$$
cannot attain its maximum in $B_{R(\lambda_{i}^{(1)})^{-1}}(\xi_{i}^{(1)})$. Taking  $R$ sufficiently large  such that
\[
\frac{C}{(1+R)^{2s-\sigma-\sigma_0}}\leq\frac{1}{2k},
\]
we find
\begin{equation*}
\bigg(
\displaystyle\sum_{i=1}^{k}
\frac{\big(\lambda_{i}^{(1)}\big)^{\frac{N-2s}{2}}}
{\big(1+\lambda_{i}^{(1)}|x-\xi_{i}^{(1)}|\big)
^{\frac{N-2s}{2}+\sigma}}\bigg)^{-1}
|\psi_\epsilon(x)|
\leq\frac{1}{2}
\|\psi_\epsilon\|_*\ \ \text{for}\;x\in \R^N\setminus \cup_{i=1}^k B_{R(\lambda_{i}^{(1)})^{-1}}
(\xi_{i}^{(1)}).
\end{equation*}
Therefore, for $\epsilon$ sufficiently small, we derive  
\[
\|\psi_\epsilon\|_*\leq\frac{1}{2}\|\psi_\epsilon\|_*.
\]
This completes the proof of Theorem \ref{thm1.3}.
\end{proof}    

\appendix

\section{Asymptotic expansions of  the energy functional}

In this appendix, we present the asymptotic expansion of the energy functional of  the approximate solutions. 
Let  $\xi_i$, $i=1,\cdots, k$ be $k$ distinct points in $\mathbb{R}^N$. We set
 $$d=\min\limits_{ i\neq j,i,j=1,\cdots,k}|\xi_i-\xi_j|\;\;\text{and}\;\;\delta=\frac{d}{10}. $$
 Let  $\eta\in C^\infty(\R^N)$ be a smooth cut-off function satisfying $\eta(x)=1$ for $|x|\le \delta$, 
$\eta(x)=0$ for $ |x|\ge 2\delta$ and $0\le \eta\le 1$. For $i=1,\cdots,k$, let $\eta_i(x)=\eta(x-\xi_i)$.
 Then 
the approximate solutions are defined by 
$$
W_{\lambda_i, \xi_i}=\eta_iU_{\lambda_i, \xi_i},~~ {\bm W}_{\bm \lambda, \bm\xi}=\sum_{i=1}^kW_{\lambda_i, \xi_i}.
$$
The energy functional associated with these approximate solutions reads
$$
I(u)=\frac{1}{2}\int_{\mathbb{R}^N}\left([(-\Delta)^{\frac{s}{2}}u]^2+V(x)u^2\right)-\frac{1}{p_s+1-\epsilon}\int_{\R^N}u_+^{p_s+1-\epsilon},
$$
where $u_+=\max(u,0)$.
\begin{lemma}\label{lemA.1}
Let $N>4s$. Then for $i=1,2,\cdots,k$,
   $$
   \begin{aligned}
    \Big\langle I'(W_{\lambda_i,\xi_i}),\,\frac{\partial W_{\lambda_i, \xi_i}}{\partial\lambda_i}\Big\rangle=&A\epsilon\lambda_i^{-1}+BV(\xi_i)\lambda_i^{-2s-1}+O\left(\epsilon\lambda_i^{-1-N}\log \lambda_i\right)+O\left(\epsilon^2\lambda_i^{-1}\log ^2\lambda_i\right)\\
    &+O\left(\lambda_i^{-2s-1-\tau}\right),
    \end{aligned}
    $$
    where $\tau\in(0,\frac{N-4s}{2})$ is a fixed small constant and 
    $$
    A=-\left(\frac{N-2s}{2}\right)^2\gamma_{s,N}^{p_s+1}\int_{\R^N}\frac{1-|x|^2}{(1+|x|^2)^{N+1}}\log{(1+|x|^2)}>0,
    $$ 
    $$
    B=\frac{N-2s}{2}\gamma_{s,N}^{2}\int_{\R^N}\frac{1-|x|^2}{(1+|x|^2)^{N-2s+1}}<0.
    $$
\end{lemma}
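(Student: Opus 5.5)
We write $\langle I'(W_{\lambda_i,\xi_i}),\partial_{\lambda_i}W_{\lambda_i,\xi_i}\rangle$ as the sum of three terms:
\[
\int_{\R^N}(-\Delta)^sW_{\lambda_i,\xi_i}\,Z_{i,0}
+\int_{\R^N}V(x)W_{\lambda_i,\xi_i}Z_{i,0}
-\int_{\R^N}W_{\lambda_i,\xi_i}^{p_s-\epsilon}Z_{i,0},
\]
where $Z_{i,0}=\eta_i\,\partial_{\lambda_i}U_{\lambda_i,\xi_i}$. Each term is expanded separately after rescaling $x=\xi_i+\lambda_i^{-1}y$.

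\textbf{Step 1: the fractional Laplacian term.} By Lemma \ref{lem-J}, $(-\Delta)^sW_{\lambda_i,\xi_i}=\eta_iU^{p_s}_{\lambda_i,\xi_i}+\Upsilon_i$. Since $|Z_{i,0}|\le C\lambda_i^{-1}U_{\lambda_i,\xi_i}$, the $\Upsilon_i$ contribution is at most
\[
C\lambda_i^{-\frac{N-2s}{2}}\int_{B_{2\delta}(\xi_i)}\lambda_i^{-1}U_{\lambda_i,\xi_i}=O(\lambda_i^{-N+2s-1}),
\]
and this is $O(\lambda_i^{-2s-1-\tau})$ because $N>4s$. Combining the main part with the nonlinear term gives
\[
\int \eta_i^2U^{p_s}\partial_\lambda U-\int \eta_i^{p_s+1-\epsilon}U^{p_s-\epsilon}\partial_\lambda U .
\]
On $B_\delta(\xi_i)$ we have $\eta_i=1$. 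The region outside $B_\delta$ contributes $O(\lambda_i^{-N-1})$ by the decay of $U$. We then use
\[
\int_{\R^N}U^{p_s}\partial_\lambda U=\frac{1}{p_s+1}\,\partial_\lambda\!\int U^{p_s+1}=0 .
\]

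\textbf{Step 2: the $\epsilon$-expansion.} For the remaining difference $\int_{B_\delta}(U^{p_s}-U^{p_s-\epsilon})\partial_\lambda U$, expand $U^{-\epsilon}=1-\epsilon\log U+O(\epsilon^2\log^2U)$. This gives
\[
\epsilon\int U^{p_s}\log U\,\partial_\lambda U+O(\epsilon^2\lambda_i^{-1}\log^2\lambda_i).
\]
In rescaled variables $U=\gamma_{s,N}\lambda^{\frac{N-2s}{2}}(1+|y|^2)^{-\frac{N-2s}{2}}$ and
\[
\partial_\lambda U=\tfrac{N-2s}{2}\lambda^{-1}U\,\frac{1-|y|^2}{1+|y|^2}.
\]
The $\log\lambda$ part of $\log U$ multiplies $\int U^{p_s}\partial_\lambda U$. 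Over $\R^N$ this integral vanishes; its tail outside $B_{\lambda\delta}$ gives the error $O(\epsilon\lambda^{-1-N}\log\lambda)$. The $-\tfrac{N-2s}{2}\log(1+|y|^2)$ part produces exactly $A\epsilon\lambda_i^{-1}$. Truncating that integral to $B_{\lambda\delta}$ costs only $O(\epsilon\lambda^{-1-N}\log\lambda)$.

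The sign $A>0$ follows by writing $\frac{1-|y|^2}{(1+|y|^2)^{N+1}}=-\frac{1}{2N}\,\mathrm{div}\bigl(y(1+|y|^2)^{-N}\bigr)$, hence
\[
\int \frac{1-|y|^2}{(1+|y|^2)^{N+1}}\log(1+|y|^2)=-\frac1N\int\frac{|y|^2}{(1+|y|^2)^{N+1}}<0 .
\]

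\textbf{Step 3: the potential term.} Write $\int V\eta_i^2U\partial_\lambda U$. On $B_\delta$, use $V\in C^1$, so $V(x)=V(\xi_i)+o(1)$ as $|x-\xi_i|\to0$; a Hölder modulus for $\nabla V$ would make the error quantitative. The leading part is $V(\xi_i)\int_{B_\delta}U\partial_\lambda U$, and rescaling gives $B V(\xi_i)\lambda_i^{-2s-1}$ up to a tail $O(\lambda^{-N+2s-1})$. The sign $B<0$ follows because $\int\frac{1-|y|^2}{(1+|y|^2)^{N-2s+1}}=\frac{s}{N}\cdot\bigl(\text{negative}\bigr)$ by the same divergence identity, using $N-2s>N/2$.

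\textbf{Main obstacle.} The error $\int(V(x)-V(\xi_i))U\partial_\lambda U$ must be shown to be $O(\lambda_i^{-2s-1-\tau})$. With $V$ only $C^1$, the linear term $\nabla V(\xi_i)\cdot(x-\xi_i)$ integrates to zero by radial symmetry. The remaining $o(|x-\xi_i|)$ part yields $\lambda^{-2s-1}\int o(|y|/\lambda)(1+|y|)^{-2(N-2s)}$; the weight is integrable against $|y|$ since $N>4s$. I would split this integral at $|y|=\lambda^{\theta}$. Near the origin the modulus of continuity of $\nabla V$ gives the gain. Far out, $\int_{|y|>\lambda^\theta}|y|^{1-2N+4s}\,\lambda^{-1}$ gains $\lambda^{-1-\theta(N-4s-1)}$ when $N-4s>1$, and a gain $\lambda^{-\tau}$ with $\tau<\frac{N-4s}{2}$ in general, using the boundedness of $V$. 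If pure $C^1$ only yields $o(\lambda^{-2s-1})$, I would accept that, since it suffices for Theorem \ref{Mth}, or use the $C^2$ hypothesis near $\xi_i^*$.
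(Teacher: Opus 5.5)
Your proposal is correct and follows the paper's proof essentially step by step: Lemma~\ref{lem-J} to discard $\Upsilon_i$, restriction to $B_\delta(\xi_i)$, the $\epsilon$-expansion with $\int_{\R^N}U^{p_s}\partial_\lambda U=0$ removing the constant and $\log\lambda_i$ parts of $\log U$, then rescaling to obtain $A$ and $BV(\xi_i)$; your sign checks, which the paper omits, reach the right conclusions, though the identity should read $\frac{1-|y|^2}{(1+|y|^2)^{N+1}}=\frac1N\,\mathrm{div}\bigl(y(1+|y|^2)^{-N}\bigr)$ (giving $-\frac2N\int\frac{|y|^2}{(1+|y|^2)^{N+1}}$), and the $B$-integral equals $-\frac{4s}{N}\int\frac{|y|^2}{(1+|y|^2)^{N-2s+1}}$. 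Your ``main obstacle'' is not one: on $|y|\le\delta\lambda_i$ the Lipschitz bound for $V$ on $B_\delta(\xi_i)$ gives $|V(\xi_i+y/\lambda_i)-V(\xi_i)|\le C|y|/\lambda_i\le C_\delta(|y|/\lambda_i)^{\tau}$ for $\tau\in(0,1]$, so the error is $O\bigl(\lambda_i^{-2s-1-\tau}\int_{\R^N}|y|^{\tau}(1+|y|)^{-2(N-2s)}dy\bigr)$, which is finite precisely for $\tau<N-4s$, with no need for the symmetry cancellation, a modulus of continuity of $\nabla V$, the $C^2$ hypothesis, or a retreat to $o(\lambda_i^{-2s-1})$ (also, your intermediate claim that $\int|y|(1+|y|)^{-2(N-2s)}dy<\infty$ fails when $N\le 4s+1$).
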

\begin{proof}
 Note that  
    $$
    \begin{aligned}
       &\displaystyle\Big\langle I'(W_{\lambda_i,\xi_i}),\,\frac{\partial W_{\lambda_i, \xi_i}}{\partial\lambda_i}\Big\rangle\\
        =&\displaystyle\int_{\R^N}\left(\eta_i^2 U_{\lambda_i,\xi_i}^{p_s}-\eta_i^{p_s+1-\epsilon}U_{\lambda_i,\xi_i}^{p_s-\epsilon}\right) \frac{\partial U_{\lambda_i, \xi_i}}{\partial\lambda_i}
        +\int_{\R^N}V(x) \eta_i^2 U_{\lambda_i, \xi_i}\frac{\partial U_{\lambda_i, \xi_i}}{\partial\lambda_i}
        +\int_{\R^N}\Upsilon_i(x) \eta_i\frac{\partial U_{\lambda_i, \xi_i}}{\partial\lambda_i}\\
        =&\int_{B_\delta(\xi_i)}\left(U_{\lambda_i,\xi_i}^{p_s}-U_{\lambda_i,\xi_i}^{p_s-\epsilon}\right) \frac{\partial U_{\lambda_i, \xi_i}}{\partial\lambda_i}+\int_{B_\delta(\xi_i)}V(x) U_{\lambda_i, \xi_i}\frac{\partial U_{\lambda_i, \xi_i}}{\partial\lambda_i}
        +O\big(\lambda_i^{-N-1+2s}\big).
    \end{aligned}
    $$
Then it follows from  the mean value theorem that 
    $$
\begin{aligned}
    &\int_{B_\delta(\xi_i)}\left(U_{\lambda_i,\xi_i}^{p_s}-U_{\lambda_i,\xi_i}^{p_s-\epsilon}\right) \frac{\partial U_{\lambda_i, \xi_i}}{\partial\lambda_i}\\
    =&\epsilon\int_{B_\delta(\xi_i)}U_{\lambda_i,\xi_i}^{p_s}\frac{\partial U_{\lambda_i, \xi_i}}{\partial\lambda_i}\log U_{\lambda_i, \xi_i} +C\epsilon^2\int_{B_\delta(\xi_i)}U_{\lambda_i,\xi_i}^{p_s-\theta\epsilon}\frac{\partial U_{\lambda_i, \xi_i}}{\partial\lambda_i}\log^2 U_{\lambda_i, \xi_i} \\
    =&\epsilon\int_{\R^N}U_{\lambda_i,\xi_i}^{p_s}\frac{\partial U_{\lambda_i, \xi_i}}{\partial\lambda_i}\log U_{\lambda_i, \xi_i}+C\epsilon^2\int_{B_\delta(\xi_i)}U_{\lambda_i,\xi_i}^{p_s-\theta\epsilon}\frac{\partial U_{\lambda_i, \xi_i}}{\partial\lambda_i}\log^2 U_{\lambda_i, \xi_i}+O\left(\epsilon\lambda_i^{-1-N}\log \lambda_i\right) \\
    =&-\left(\frac{N-2s}{2}\right)^2\gamma_{s,N}^{p_s+1}\epsilon\lambda_i^{-1}\int_{\R^N}\frac{1-|x|^2}{(1+|x|^2)^{N+1}}\log{(1+|x|^2)}\\
    &+O\left(\epsilon\lambda_i^{-1-N}\log \lambda_i\right)+O\left(\epsilon^2\lambda_i^{-1}\log ^2\lambda_i\right),
\end{aligned}
    $$
    where $\theta \in (0,1) $. A  direct computation yields
    \begin{align*}
        &\int_{B_\delta(\xi_i)}V(x) U_{\lambda_i, \xi_i}\frac{\partial U_{\lambda_i, \xi_i}}{\partial\lambda_i}\\
        =&\frac{N-2s}{2}\gamma_{s,N}^{2}\lambda_i^{-2s-1}\int_{B_{\delta\lambda_i}(0)} V\big(\xi_i+\frac{x}{\lambda_i}\big)\frac{1-|x|^2}{(1+|x|^2)^{N-2s+1}} \\
        =&\frac{N-2s}{2}\gamma_{s,N}^{2}V(\xi_i)\lambda_i^{-2s-1}\int_{\R^N}\frac{1-|x|^2}{(1+|x|^2)^{N-2s+1}}+O\left(\lambda_i^{-2s-1-\tau}\right),\\
    \end{align*}
where  $\tau\in (0,\frac{N-4s}{2})$. 
    Hence,  
    \begin{align*}
        \Big\langle I'(W_{\lambda_i,\xi_i}),\,\frac{\partial W_{\lambda_i, \xi_i}}{\partial\lambda_i}\Big\rangle&=A\epsilon\lambda_i^{-1}+BV(\xi_i)\lambda_i^{-2s-1}+O\left(\epsilon\lambda_i^{-1-N}\log \lambda_i\right)\\
        &\ \ \ +O\left(\epsilon^2\lambda_i^{-1}\log ^2\lambda_i\right)+O\left(\lambda_i^{-2s-1-\tau}\right).
    \end{align*}
\end{proof}
\section{Local Pohozaev identities}

In this section, we  give the proof of  the local Pohozaev identities in Lemma \ref{lem4.3} corresponding to \eqref{eq-ND1} and \eqref{eq-ND2}.  Recall that the $s$-harmonic extension  $\widetilde{u}_\epsilon$ and $\widetilde\omega_\epsilon$
  satisfy
    \begin{equation}\label{Feq-ND1}
        \begin{cases}
            \text{div}(t^{1-2s}\nabla\widetilde{u}_\epsilon)=0 \quad &\text{in} \quad \R^{N+1}_+,\\
            -\lim\limits_{t\to 0^+}t^{1-2s}\partial_t\widetilde{u}_\epsilon=-V(x){u}_\epsilon+{u}_\epsilon^{p_s-\epsilon}\quad &\text{on}  \quad\R^{N},
        \end{cases}
         \end{equation}
         and 
    \begin{equation}\label{Feq-ND2}
        \begin{cases}
            \text{div}(t^{1-2s}\nabla\widetilde\omega_\epsilon)=0 \quad &\text{in} \quad\R^{N+1}_+,\\
            -\lim\limits_{t\to 0^+}t^{1-2s}\partial_t\widetilde\omega_\epsilon=-V(x)\omega_\epsilon+(p_s-\epsilon)u_\epsilon^{p_s-1-\epsilon}\omega_\epsilon\quad &\text{on}  \quad\R^{N}.
        \end{cases}
         \end{equation}

\begin{lemma}\label{Flem4.3}
Assume that $V(x)\in C^2(B_{5\delta}(\xi_i))$, $i=1,\cdots,k$. Then  we have
\begin{equation}\label{F4.13}
\begin{split}
&\frac{N-2}{2}\epsilon\int_{B_\rho(
\xi_{i})} u_\epsilon^{p_{s}-\epsilon}\omega_\epsilon
-2s\int_{B_\rho(\xi_{i})}V(x)u_\epsilon
\omega_\epsilon
-\int_{B_\rho(\xi_{i})}\big\langle x-\xi_{i},\nabla V(x)\big\rangle u_\epsilon\omega_\epsilon\\
=&\int_{\partial^{''} \mathfrak{B}^{+}_\rho(\xi_{i})}
t^{1-2s}\frac{\partial \widetilde{u}_\epsilon}{\partial\nu}\langle X-(\xi_{i},0),\nabla\widetilde\omega_\epsilon\rangle
+\int_{\partial^{''} \mathfrak{B}^{+}_\rho(\xi_{i})}
t^{1-2s}\frac{\partial \widetilde\omega_\epsilon}{\partial\nu}\langle X-(\xi_{i},0),\nabla \widetilde{u}_\epsilon\rangle
\\
&+
\frac{N-2s}{2}\int_{\partial^{''} \mathfrak{B}^{+}_\rho(\xi_{i})}
t^{1-2s}\left(\frac{\partial \widetilde{u}_\epsilon}{\partial\nu}\widetilde\omega_\epsilon+\frac{\partial \widetilde\omega_\epsilon}{\partial\nu}\widetilde{u}_\epsilon\right)
-\int_{\partial^{''} \mathfrak{B}^{+}_\rho(\xi_{i})}
t^{1-2s}\langle\nabla \widetilde{u}_\epsilon,\nabla\widetilde\omega_\epsilon\rangle\langle X-(\xi_{i},0),\nu\rangle
\\
&+\int_{\partial B_\rho(\xi_{i})} u_\epsilon^{p_{s}-\epsilon}\omega_\epsilon \langle x-\xi_{i},\nu\rangle -\int_{\partial B_\rho(\xi_{i})}V(x)u_\epsilon
\omega_\epsilon\langle x-\xi_{i},\nu\rangle
\end{split}
\end{equation}
where $0<\rho<5\delta$ and 
            $$
         \begin{aligned}
             \displaystyle\mathfrak{B}_\rho^+(\xi_i)=&\{X=(x,t):|X-(\xi_i,0)|\leq \rho \text{ and } t>0\}\subseteq\R^{N+1}_+,\\
             \displaystyle\partial'\mathfrak{B}_\rho^+(\xi_i)=&\{X=(x,t):|x-\xi_i|\leq \rho \text{ and } t=0\}\subseteq\R^{N+1}_+,\\
           \displaystyle \partial'' \mathfrak{B}_\rho^+(\xi_i)=&\{X=(x,t):|X-(\xi_i,0)|= \rho \text{ and } t>0\}\subseteq\R^{N+1}_+,\\
           \displaystyle \partial\mathfrak{B}_\rho^+(\xi_i)=&\partial'\mathfrak{B}_\rho^+(\xi_i)\cup\partial''\mathfrak{B}_\rho^+(\xi_i),\\
           \displaystyle B_\rho(\xi_i)=&\{x:|x-\xi_i|\leq \rho\}\subseteq\R^{N}.
         \end{aligned}
         $$
        
\end{lemma}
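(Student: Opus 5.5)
The proof follows the classical derivation of a Pohozaev identity: test the extended equation for $\widetilde u_\epsilon$ against a multiplier built from $\widetilde\omega_\epsilon$ and vice versa, then symmetrize. Write $X_i=X-(\xi_i,0)$. On the truncated half ball $\mathfrak{B}_{\rho,\theta}:=\mathfrak{B}_\rho^+(\xi_i)\cap\{t>\theta\}$ consider the vector field
\[
F=t^{1-2s}\Big[\langle X_i,\nabla\widetilde\omega_\epsilon\rangle\nabla\widetilde u_\epsilon+\langle X_i,\nabla\widetilde u_\epsilon\rangle\nabla\widetilde\omega_\epsilon-\langle\nabla\widetilde u_\epsilon,\nabla\widetilde\omega_\epsilon\rangle X_i+\tfrac{N-2s}{2}\big(\widetilde\omega_\epsilon\nabla\widetilde u_\epsilon+\widetilde u_\epsilon\nabla\widetilde\omega_\epsilon\big)\Big].
\]

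\textbf{Step 1: $F$ is divergence free.} Use $\operatorname{div}(t^{1-2s}\nabla\widetilde u_\epsilon)=\operatorname{div}(t^{1-2s}\nabla\widetilde\omega_\epsilon)=0$ from \eqref{Feq-ND1}--\eqref{Feq-ND2}, together with $\operatorname{div}(t^{1-2s}X_i)=(N+2-2s)t^{1-2s}$ and $X_i\cdot\nabla t^{1-2s}=(1-2s)t^{1-2s}$. The second-order terms cancel by symmetry. The first three terms then contribute
\[
2\,t^{1-2s}\langle\nabla\widetilde u_\epsilon,\nabla\widetilde\omega_\epsilon\rangle-(N+2-2s)\,t^{1-2s}\langle\nabla\widetilde u_\epsilon,\nabla\widetilde\omega_\epsilon\rangle=-(N-2s)\,t^{1-2s}\langle\nabla\widetilde u_\epsilon,\nabla\widetilde\omega_\epsilon\rangle .
\]
The last term contributes $(N-2s)\,t^{1-2s}\langle\nabla\widetilde u_\epsilon,\nabla\widetilde\omega_\epsilon\rangle$. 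Hence $\operatorname{div}F=0$ in $\R^{N+1}_+$, and the divergence theorem on $\mathfrak{B}_{\rho,\theta}$ gives $\int_{\partial\mathfrak{B}_{\rho,\theta}}F\cdot\nu=0$.

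\textbf{Step 2: the curved boundary.} On $\partial''\mathfrak{B}_\rho^+(\xi_i)\cap\{t>\theta\}$, the flux $F\cdot\nu$ is exactly the integrand of the $\partial''$-terms on the right-hand side of \eqref{F4.13}. Letting $\theta\to0$ recovers those integrals by monotone/dominated convergence.

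\textbf{Step 3: the flat piece $\{t=\theta\}$.} Here $\nu=-e_{N+1}$ and $X_i\cdot\nu=-\theta$. The contributions are as follows.
\begin{itemize}
\item The terms $-\langle X_i,\nabla\widetilde\omega_\epsilon\rangle\,t^{1-2s}\partial_t\widetilde u_\epsilon$ and the analogous one with $u,\omega$ exchanged. Their $x$-parts converge to $\int_{B_\rho(\xi_i)}\big(f(u_\epsilon)\langle x-\xi_i,\nabla\omega_\epsilon\rangle+g\,\langle x-\xi_i,\nabla u_\epsilon\rangle\big)$, where $f(u)=-Vu+u^{p_s-\epsilon}$ and $g=-V\omega_\epsilon+(p_s-\epsilon)u_\epsilon^{p_s-1-\epsilon}\omega_\epsilon$ are the Neumann data.
\item The $t$-parts $\theta\, t^{1-2s}\partial_t\widetilde u_\epsilon\,\partial_t\widetilde\omega_\epsilon$.
\item The term $\theta\, t^{1-2s}\langle\nabla\widetilde u_\epsilon,\nabla\widetilde\omega_\epsilon\rangle$.
\item The terms $\tfrac{N-2s}{2}\,t^{1-2s}\partial_t(\cdot)(\cdot)$, which converge to $\tfrac{N-2s}{2}\int\big(f(u_\epsilon)\omega_\epsilon+g\,u_\epsilon\big)$.
\end{itemize}

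The main technical point is to show that the second and third contributions vanish as $\theta\to0$. For this I would use the standard regularity of the extension near $t=0$ (the $C^{1,\alpha}$ data $u_\epsilon,\omega_\epsilon$ and the decay bounds of Lemma \ref{lem4.1} suffice): $t^{1-2s}\partial_t\widetilde u_\epsilon$ and $t^{1-2s}\partial_t\widetilde\omega_\epsilon$ stay bounded up to $t=0$, and $\nabla_x\widetilde u_\epsilon,\nabla_x\widetilde\omega_\epsilon$ are bounded. Then $\theta\,t^{1-2s}|\partial_t\widetilde u_\epsilon\,\partial_t\widetilde\omega_\epsilon|\le C\theta^{2s}\to0$, and $\theta\,t^{1-2s}|\nabla_x\widetilde u_\epsilon\cdot\nabla_x\widetilde\omega_\epsilon|\le C\theta^{2-2s}\to0$. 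This is where the careful bookkeeping lies.

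\textbf{Step 4: integrate by parts in $B_\rho(\xi_i)$.} Pointwise one has
\[
f(u)\langle x-\xi_i,\nabla\omega\rangle+g\,\langle x-\xi_i,\nabla u\rangle=\langle x-\xi_i,\nabla(u^{p_s-\epsilon}\omega)\rangle-V\langle x-\xi_i,\nabla(u\omega)\rangle ,
\]
and integrating by parts produces the $\partial B_\rho$ terms plus
\[
-N\!\int u_\epsilon^{p_s-\epsilon}\omega_\epsilon+N\!\int Vu_\epsilon\omega_\epsilon+\int\langle x-\xi_i,\nabla V\rangle u_\epsilon\omega_\epsilon .
\]
Adding $\tfrac{N-2s}{2}\int(f\omega_\epsilon+g u_\epsilon)=\tfrac{N-2s}{2}\big[(p_s+1-\epsilon)\int u_\epsilon^{p_s-\epsilon}\omega_\epsilon-2\int Vu_\epsilon\omega_\epsilon\big]$, and using $\tfrac{N-2s}{2}(p_s+1)=N$, the interior coefficients become $-\tfrac{N-2s}{2}\epsilon$ for $\int u_\epsilon^{p_s-\epsilon}\omega_\epsilon$ and $2s$ for $\int V u_\epsilon\omega_\epsilon$. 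Moving these to the left and collecting signs gives \eqref{F4.13}. In the statement, the coefficient written $\frac{N-2}{2}\epsilon$ should read $\frac{N-2s}{2}\epsilon$.
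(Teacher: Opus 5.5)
Your proof is correct and is essentially the paper's argument. The paper tests \eqref{Feq-ND1} against $\langle\nabla\widetilde\omega_\epsilon,X-(\xi_i,0)\rangle$, tests \eqref{Feq-ND2} against $\langle\nabla\widetilde u_\epsilon,X-(\xi_i,0)\rangle$, and tests both against $\widetilde\omega_\epsilon,\widetilde u_\epsilon$; this is exactly your identity $\operatorname{div}F=0$ integrated by parts piece by piece. Your truncation at $t=\theta$ additionally justifies the passage to $t=0$ (dropping the $t\,\partial_t$-components), which the paper only does formally.

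Your correction of the coefficient to $\frac{N-2s}{2}\epsilon$ is also right, since $\frac{N-2s}{2}(p_s+1-\epsilon)=N-\frac{N-2s}{2}\epsilon$. The same typo appears in Lemma \ref{lem4.3}, where it is harmless because that term is only bounded in absolute value.
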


\begin{proof}
For simplicity, we denote $x^{N+1}=t$.
     Multiplying \eqref{Feq-ND1} by $\langle\nabla\widetilde\omega_\epsilon,\;X-(\xi_{i},0)\rangle$ and  integrating over  $\mathfrak{B}_\rho^{+}(\xi_i)$, we have
     \begin{equation}\label{eq4.4.0}
         \begin{aligned}
             0=&\int_{\mathfrak{B}_\rho^{+}(\xi_i)}\text{div}(t^{1-2s}\nabla\widetilde{u}_\epsilon)\langle\nabla\widetilde\omega_\epsilon,\;X-(\xi_{i},0)\rangle\\
             =&\int_{\partial\mathfrak{B}_\rho^{+}(\xi_i)}t^{1-2s}\frac{\partial\widetilde{u}_\epsilon}{\partial \nu}\langle\nabla\widetilde\omega_\epsilon,\;X-(\xi_{i},0)\rangle
             -\int_{\mathfrak{B}_\rho^{+}(\xi_i)}t^{1-2s}\nabla\widetilde{u}_\epsilon\cdot \nabla\widetilde\omega_\epsilon\\
             &-\int_{\mathfrak{B}_\rho^{+}(\xi_i)}t^{1-2s}\sum_{l=1}^{N+1}\frac{\partial\widetilde{u}_\epsilon}{\partial x_l}\langle\nabla\frac{\partial\widetilde\omega_\epsilon}{\partial x_l},\;X-(\xi_{i},0)\rangle\\
             :=&A_1+A_2+A_3.
         \end{aligned}
     \end{equation}
Next, we compute $A_1$.
     \begin{equation}\label{eq4.4.1}
         \begin{aligned}
             A_1= &  \int_{\partial''\mathfrak{B}_\rho^{+}(\xi_i)\cup \partial'\mathfrak{B}_\rho^{+}(\xi_i)}t^{1-2s}\frac{\partial\widetilde{u}_\epsilon}{\partial \nu}\langle\nabla\widetilde\omega_\epsilon,\;X-(\xi_{i},0)\rangle\\
             =&\int_{\partial''\mathfrak{B}_\rho^{+}(\xi_i)}t^{1-2s}\frac{\partial\widetilde{u}_\epsilon}{\partial \nu}\langle\nabla\widetilde\omega_\epsilon,\;X-(\xi_{i},0)\rangle
             +\int_{B_\rho(\xi_i)}\left(-V(x){u}_\epsilon+{u}_\epsilon^{p_s-\epsilon}\right)\langle\nabla\omega_\epsilon,\;x-\xi_{i}\rangle\\
             =&\int_{\partial''\mathfrak{B}_\rho^{+}(\xi_i)}t^{1-2s}\frac{\partial\widetilde{u}_\epsilon}{\partial \nu}\langle\nabla\widetilde\omega_\epsilon,\;X-(\xi_{i},0)\rangle
             +\int_{\partial B_\rho(\xi_i)}\left(-V(x){u}_\epsilon+{u}_\epsilon^{p_s-\epsilon}\right)\omega_\epsilon\langle\nu,\;x-\xi_{i}\rangle
             \\
             &+\int_{ B_\rho(\xi_i)}{u}_\epsilon\omega_\epsilon\langle\nabla V(x),\;x-\xi_{i}\rangle+\int_{ B_\rho(\xi_i)}V(x)\omega_\epsilon\langle\nabla {u}_\epsilon,\;x-\xi_{i}\rangle\\
             &
             -\int_{ B_\rho(\xi_i)}(p_s-\epsilon){u}_\epsilon^{p_s-1-\epsilon}\omega_\epsilon\langle\nabla {u}_\epsilon,\;x-\xi_{i}\rangle
             -N\int_{ B_\rho(\xi_i)}\left(-V(x){u}_\epsilon+{u}_\epsilon^{p_s-\epsilon}\right)\omega_\epsilon.\\
         \end{aligned}
     \end{equation}
For the computation of $A_2$, we rely on a Pohozaev-type identity. Specifically, we multiply  \eqref{Feq-ND1} by $\widetilde\omega_\epsilon$ and \eqref{Feq-ND2} by $\widetilde{u}_\epsilon$, integrate both products over $\mathfrak{B}_\rho^{+}(\xi_i)$, and apply integration by parts to obtain 
     $$
     \begin{aligned}
       \int_{\mathfrak{B}_\rho^{+}(\xi_i)}t^{1-2s}\nabla\widetilde{u}_\epsilon\nabla\widetilde\omega_\epsilon
       &=\int_{\partial''\mathfrak{B}_\rho^{+}(\xi_i)}t^{1-2s}\frac{\partial\widetilde{u}_\epsilon}{\partial \nu}\widetilde\omega_\epsilon  
       +\int_{{B}_\rho(\xi_i)}\left(-V(x){u}_\epsilon+{u}_\epsilon^{p_s-\epsilon}\right)\omega_\epsilon,  \\
       \int_{\mathfrak{B}_\rho^{+}(\xi_i)}t^{1-2s}\nabla\widetilde{u}_\epsilon\nabla\widetilde\omega_\epsilon
       &=\int_{\partial''\mathfrak{B}_\rho^{+}(\xi_i)}t^{1-2s}\frac{\partial\widetilde\omega_\epsilon}{\partial \nu}\widetilde{u}_\epsilon  
       +\int_{{B}_\rho(\xi_i)}\left(-V(x)\omega_\epsilon+(p_s-\epsilon)u_\epsilon^{p_s-1-\epsilon}\omega_\epsilon\right)u_\epsilon. \\
     \end{aligned}
     $$
Adding these two identities and dividing by 2, we find 
     \begin{equation}\label{eq4.4.3}
          A_2=-\frac{1}{2}\int_{\partial''\mathfrak{B}_\rho^{+}(\xi_i)}t^{1-2s}(\frac{\partial\widetilde{u}_\epsilon}{\partial \nu}\widetilde\omega_\epsilon +\frac{\partial\widetilde\omega_\epsilon}{\partial \nu}\widetilde{u}_\epsilon )
          +\int_{{B}_\rho(\xi_i)}V(x)u_\epsilon\omega_\epsilon-\frac{p_s+1-\epsilon}{2}\int_{{B}_\rho(\xi_i)}{u}_\epsilon^{p_s-\epsilon}\omega_\epsilon.
     \end{equation}
It remains to estimate $A_3$. Multiplying \eqref{Feq-ND2} by $\langle\nabla\widetilde{u}_\epsilon,\;X-(\xi_{i},0)\rangle$ and integrating over  $\mathfrak{B}_\rho^{+}(\xi_i)$, we get     \begin{equation}\label{eq4.2.2}
         \begin{aligned}
             0=&\int_{\mathfrak{B}_\rho^{+}(\xi_i)}\text{div}(t^{1-2s}\nabla\widetilde\omega_\epsilon)\langle\nabla\widetilde{u}_\epsilon,\;X-(\xi_{i},0)\rangle\\
             =&\int_{\partial\mathfrak{B}_\rho^{+}(\xi_i)}t^{1-2s}\frac{\partial\widetilde\omega_\epsilon}{\partial \nu}\langle\nabla\widetilde{u}_\epsilon,\;X-(\xi_{i},0)\rangle
             -\int_{\mathfrak{B}_\rho^{+}(\xi_i)}t^{1-2s}\nabla\widetilde{u}_\epsilon\nabla\widetilde\omega_\epsilon\\
             &-\int_{\mathfrak{B}_\rho^{+}(\xi_i)}t^{1-2s}\sum_{l=1}^{N+1}\frac{\partial\widetilde\omega_\epsilon}{\partial x_l}\langle\nabla\frac{\partial\widetilde{u}_\epsilon}{\partial x_l},\;X-(\xi_{i},0)\rangle\\
             =& \int_{\partial''\mathfrak{B}_\rho^{+}(\xi_i)}t^{1-2s}\frac{\partial\widetilde\omega_\epsilon}{\partial \nu}\langle\nabla\widetilde{u}_\epsilon,\;X-(\xi_{i},0)\rangle +\int_{B_\rho(\xi_i)}\left(-V(x)\omega_\epsilon+(p_s-\epsilon)u_\epsilon^{p_s-1-\epsilon}\omega_\epsilon\right)\langle\nabla u_\epsilon,\;x-\xi_{i}\rangle
            \\
             &  -\int_{\mathfrak{B}_\rho^{+}(\xi_i)}t^{1-2s}\nabla\widetilde{u}_\epsilon\nabla\widetilde\omega_\epsilon -\int_{\mathfrak{B}_\rho^{+}(\xi_i)}t^{1-2s}\sum_{l=1}^{N+1}\frac{\partial\widetilde\omega_\epsilon}{\partial x_l}\langle\nabla\frac{\partial\widetilde{u}_\epsilon}{\partial x_l},\;X-(\xi_{i},0)\rangle .
         \end{aligned}
     \end{equation}
Therefore, integrating by parts and using \eqref{eq4.2.2} yields
      \begin{equation}\label{eq4.4.2}
         \begin{aligned}
             A_3=&-\int_{\partial''\mathfrak{B}_\rho^{+}(\xi_i)}t^{1-2s}\nabla\widetilde{u}_\epsilon\nabla\widetilde\omega_\epsilon\langle X-(\xi_{i},0),\;\nu\rangle
             +\int_{\mathfrak{B}_\rho^{+}(\xi_i)}t^{1-2s}\sum_{l=1}^{N+1}\frac{\partial\widetilde\omega_\epsilon}{\partial x_l}\langle\nabla\frac{\partial\widetilde{u}_\epsilon}{\partial x_l},\;X-(\xi_{i},0)\rangle\\
             &+(N+2-2s)\int_{\mathfrak{B}_\rho^{+}(\xi_i)}t^{1-2s}\nabla\widetilde{u}_\epsilon\nabla\widetilde\omega_\epsilon,\\
             =&- \int_{\partial''\mathfrak{B}_\rho^{+}(\xi_i)}t^{1-2s}\nabla\widetilde{u}_\epsilon\nabla\widetilde\omega_\epsilon\langle X-(\xi_{i},0),\;\nu\rangle+\int_{\partial''\mathfrak{B}_\rho^{+}(\xi_i)}t^{1-2s}\frac{\partial\widetilde\omega_\epsilon}{\partial \nu}\langle\nabla\widetilde{u}_\epsilon,\;X-(\xi_{i},0)\rangle \\
             &+\int_{B_\rho(\xi_i)}\left(-V(x)\omega_\epsilon+(p_s-\epsilon)u_\epsilon^{p_s-1-\epsilon}\omega_\epsilon\right)\langle\nabla u_\epsilon,\;x-\xi_{i}\rangle +(N+1-2s)\int_{\mathfrak{B}_\rho^{+}(\xi_i)}t^{1-2s}\nabla\widetilde{u}_\epsilon\nabla\widetilde\omega_\epsilon \\
             \end{aligned}
    \end{equation}
     Combining \eqref{eq4.4.0}-\eqref{eq4.4.2}, we obtain the result.
\end{proof}

\begin{lemma}\label{Flem4.30}
Assume that $V(x)\in C^2(B_{5\delta}(\xi^*_i))$, $i=1,\cdots,k$. Then, for $0<\rho<5\delta$ and $l=1,\cdots,N$, we have 
\begin{equation}\label{F4.3}
\begin{split}
\int_{B_\rho(\xi_{i})}\frac{\partial V(x)}{\partial x_l}u_\epsilon\omega_\epsilon
=&\int_{\partial B_\rho(\xi_{i})}V(x)u_\epsilon
\omega_\epsilon
\nu_{l}
-\int_{\partial B_\rho(\xi_{i})} u_\epsilon^{p_{s}-\epsilon}\omega_\epsilon \nu_{l}
+
\int_{\partial^{''} \mathfrak{B}^{+}_\rho(\xi_{i})}
t^{1-2s}\langle\nabla \widetilde{u}_\epsilon,\nabla\widetilde\omega_\epsilon\rangle\nu_{l}\\
&- \int_{\partial^{''} \mathfrak{B}^{+}_\rho(\xi_{i})}
t^{1-2s}\frac{\partial \widetilde{u}_\epsilon}{\partial x_l}\frac{\partial\widetilde\omega_\epsilon}{\partial\nu}
-\int_{\partial^{''} \mathfrak{B}^{+}_\rho(\xi_{i})}
t^{1-2s}\frac{\partial
\widetilde\omega_\epsilon}{\partial x_l}\frac{\partial \widetilde{u}_\epsilon}{\partial\nu},
\end{split}
\end{equation}
\end{lemma}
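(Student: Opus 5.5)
The identity \eqref{F4.3} is the translation analogue of Lemma \ref{Flem4.3}. I would prove it the same way: multiply each extended equation by a tangential derivative of the \emph{other} function, integrate over $\mathfrak{B}_\rho^+(\xi_i)$, and add. Concretely, test \eqref{Feq-ND1} against $\frac{\partial\widetilde\omega_\epsilon}{\partial x_l}$ and \eqref{Feq-ND2} against $\frac{\partial\widetilde u_\epsilon}{\partial x_l}$, with $l\in\{1,\dots,N\}$ a horizontal direction. This is what makes the boundary terms on $\partial'\mathfrak{B}_\rho^+(\xi_i)$ reduce to integrals over $B_\rho(\xi_i)$ of the $x$-derivatives of $u_\epsilon,\omega_\epsilon$, and $\nu=(0,\dots,0,-1)$ there, so $\nu_l=0$ on the flat part. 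The regularity needed to justify the integrations by parts (with the weight $t^{1-2s}$, near $t=0$) I would take as given, exactly as in the proof of Lemma \ref{Flem4.3}, by first working on $\{t>\varepsilon\}$ and letting $\varepsilon\to0$.

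First, integrating by parts once,
\[
0=\int_{\mathfrak{B}_\rho^+}\mathrm{div}(t^{1-2s}\nabla\widetilde u_\epsilon)\,\partial_l\widetilde\omega_\epsilon
=\int_{\partial\mathfrak{B}_\rho^+}t^{1-2s}\partial_\nu\widetilde u_\epsilon\,\partial_l\widetilde\omega_\epsilon-\int_{\mathfrak{B}_\rho^+}t^{1-2s}\nabla\widetilde u_\epsilon\cdot\nabla\partial_l\widetilde\omega_\epsilon .
\]
The analogous identity holds with $u$ and $\omega$ interchanged. On $\partial'\mathfrak{B}_\rho^+$ the Neumann data give
\[
\int_{B_\rho}(-Vu_\epsilon+u_\epsilon^{p_s-\epsilon})\,\partial_l\omega_\epsilon
\quad\text{and}\quad
\int_{B_\rho}\big(-V\omega_\epsilon+(p_s-\epsilon)u_\epsilon^{p_s-1-\epsilon}\omega_\epsilon\big)\,\partial_l u_\epsilon .
\]
Adding the two identities, the interior terms combine into
\[
-\int_{\mathfrak{B}_\rho^+}t^{1-2s}\,\partial_l\big(\nabla\widetilde u_\epsilon\cdot\nabla\widetilde\omega_\epsilon\big).
\]
Since the weight $t^{1-2s}$ does not depend on $x_l$, this integral equals $-\int_{\partial''\mathfrak{B}_\rho^+}t^{1-2s}\langle\nabla\widetilde u_\epsilon,\nabla\widetilde\omega_\epsilon\rangle\nu_l$. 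The flat-boundary contribution vanishes because $\nu_l=0$ there. This independence of the weight from $x_l$ is the one point that differs from the dilation case; it is why no $(N+1-2s)$-type bulk term appears.

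Next I would collect the flat-part integrals:
\[
\int_{B_\rho}\Big[-V\,\partial_l(u_\epsilon\omega_\epsilon)+\partial_l\big(u_\epsilon^{p_s-\epsilon}\omega_\epsilon\big)\Big],
\]
using $u_\epsilon^{p_s-\epsilon}\partial_l\omega_\epsilon+(p_s-\epsilon)u_\epsilon^{p_s-1-\epsilon}\omega_\epsilon\partial_l u_\epsilon=\partial_l(u_\epsilon^{p_s-\epsilon}\omega_\epsilon)$. The divergence theorem on $B_\rho(\xi_i)$ turns the second term into $\int_{\partial B_\rho}u_\epsilon^{p_s-\epsilon}\omega_\epsilon\nu_l$. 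Integrating by parts in the first term gives
\[
-\int_{\partial B_\rho}Vu_\epsilon\omega_\epsilon\nu_l+\int_{B_\rho}\partial_lV\,u_\epsilon\omega_\epsilon .
\]
Setting the total equal to zero and solving for $\int_{B_\rho}\partial_lV\,u_\epsilon\omega_\epsilon$ yields exactly \eqref{F4.3}, including the signs of the curved-boundary terms $\int_{\partial''}t^{1-2s}\langle\nabla\widetilde u_\epsilon,\nabla\widetilde\omega_\epsilon\rangle\nu_l-\int_{\partial''}t^{1-2s}\partial_l\widetilde u_\epsilon\,\partial_\nu\widetilde\omega_\epsilon-\int_{\partial''}t^{1-2s}\partial_l\widetilde\omega_\epsilon\,\partial_\nu\widetilde u_\epsilon$.

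The only delicate point is justifying the boundary limit at $t=0$. This requires $t^{1-2s}\nabla_x\widetilde u_\epsilon$ to be integrable and $\nabla_x\widetilde u_\epsilon\to\nabla u_\epsilon$, which follows from the $C^1$ bounds of Lemma \ref{lem4.1} together with $V\in C^1$. I would handle it by the $\varepsilon$-truncation above, in the same manner as for Lemma \ref{Flem4.3}.
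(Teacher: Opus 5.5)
Your proposal is correct and follows essentially the same route as the paper: test \eqref{Feq-ND1} with $\partial_{x_l}\widetilde\omega_\epsilon$ and \eqref{Feq-ND2} with $\partial_{x_l}\widetilde u_\epsilon$, add the two identities, and use that $t^{1-2s}$ does not depend on $x_l$ to convert the bulk term into a $\partial''\mathfrak{B}^+_\rho(\xi_i)$ term carrying $\nu_l$. The paper reaches the same cancellations by one extra integration by parts inside the first identity, rather than by forming the total derivatives $\partial_l(\nabla\widetilde u_\epsilon\cdot\nabla\widetilde\omega_\epsilon)$ and $\partial_l(u_\epsilon^{p_s-\epsilon}\omega_\epsilon)$ first, and it leaves implicit the $t\to0$ truncation you mention.
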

\begin{proof}
 Multiplying \eqref{Feq-ND1} by $\frac{\partial \widetilde\omega_\epsilon}{\partial x_l}$ and  integrating over  $\mathfrak{B}_\rho^{+}(\xi_i)$, we have
         \begin{equation}\label{eq4.5.1}
         \begin{aligned}
             0=&\int_{\mathfrak{B}_\rho^{+}(\xi_i)}\frac{\partial \widetilde\omega_\epsilon}{\partial x_l}\text{div}(t^{1-2s}\nabla\widetilde{u}_\epsilon)=\int_{\partial\mathfrak{B}_\rho^{+}(\xi_i)}t^{1-2s}
             \frac{\partial\widetilde{u}_\epsilon}{\partial \nu}\frac{\partial \widetilde\omega_\epsilon}{\partial x_l}-\int_{\mathfrak{B}_\rho^{+}(\xi_i)}t^{1-2s}\nabla\widetilde{u}_\epsilon\nabla\frac{\partial \widetilde\omega_\epsilon}{\partial x_l}\\
             =&\int_{\partial''\mathfrak{B}_\rho^{+}(\xi_i)}t^{1-2s}\frac{\partial\widetilde{u}_\epsilon}{\partial \nu}\frac{\partial \widetilde\omega_\epsilon}{\partial x_l}+\int_{{B}_\rho(\xi_i)}(-V(x)u_\epsilon+u_\epsilon^{p_s-\epsilon})\frac{\partial \omega_\epsilon}{\partial x_l}-\int_{\mathfrak{B}_\rho^{+}(\xi_i)}t^{1-2s}\nabla\widetilde{u}_\epsilon\nabla\frac{\partial \widetilde\omega_\epsilon}{\partial x_l}\\
              =&\int_{\partial''\mathfrak{B}_\rho^{+}(\xi_i)}t^{1-2s}\frac{\partial\widetilde{u}_\epsilon}{\partial \nu}\frac{\partial \widetilde\omega_\epsilon}{\partial x_l}+\int_{\partial B_\rho(\xi_i)}(-V(x)u_\epsilon+u_\epsilon^{p_s-\epsilon})\omega_\epsilon\nu_l+\int_{B_\rho(\xi_i)}\Big(\frac{\partial V(x)}{\partial x_l}u_\epsilon\omega_\epsilon+V(x)\frac{\partial u_\epsilon}{\partial x_l} \omega_\epsilon\Big)\\
              &-(p_s-\epsilon)\int_{B_\rho(\xi_i)}u_\epsilon^{p_s-1-\epsilon}\frac{\partial u_\epsilon}{\partial x_l} \omega_\epsilon-\int_{\partial''\mathfrak{B}_\rho^{+}(\xi_i)}t^{1-2s}\nabla\widetilde{u}_\epsilon\nabla\widetilde\omega_\epsilon\nu_l
              +\int_{\mathfrak{B}_\rho^{+}(\xi_i)}t^{1-2s}\nabla\widetilde\omega_\epsilon\nabla\frac{\partial \widetilde{u}_\epsilon}{\partial x_l}.\\
         \end{aligned}
         \end{equation}
         Similarly, multiplying \eqref{Feq-ND2} by $\frac{\partial \widetilde u_\epsilon}{\partial x_l}$ and integrating over $\mathfrak{B}_\rho^{+}(\xi_i)$ gives  
          \begin{equation}\label{eq4.5.2}
         \begin{aligned}
             0=&\int_{\mathfrak{B}_\rho^{+}(\xi_i)}\frac{\partial \widetilde{u}_\epsilon}{\partial x_l}\text{div}(t^{1-2s}\nabla\widetilde\omega_\epsilon)=
             \int_{\partial\mathfrak{B}_\rho^{+}(\xi_i)}t^{1-2s}\frac{\partial\widetilde\omega_\epsilon}{\partial \nu}\frac{\partial \widetilde{u}_\epsilon}{\partial x_l}-\int_{\mathfrak{B}_\rho^{+}(\xi_i)}t^{1-2s}\nabla\widetilde\omega_\epsilon\nabla\frac{\partial \widetilde{u}_\epsilon}{\partial x_l}\\
             =&\int_{\partial''\mathfrak{B}_\rho^{+}(\xi_i)}t^{1-2s}\frac{\partial\widetilde\omega_\epsilon}{\partial \nu}\frac{\partial \widetilde{u}_\epsilon}{\partial x_l}+\int_{{B}_\rho(\xi_i)}(-V(x)\omega_\epsilon+(p_s-\epsilon)u_\epsilon^{p_s-1-\epsilon}\omega_\epsilon)\frac{\partial u_\epsilon}{\partial x_l}-\int_{\mathfrak{B}_\rho^{+}(\xi_i)}t^{1-2s}\nabla\widetilde\omega_\epsilon\nabla\frac{\partial \widetilde{u}_\epsilon}{\partial x_l}.\\
         \end{aligned}
         \end{equation}
         Combining \eqref{eq4.5.1} and \eqref{eq4.5.2}, we can get the Pohozaev identity.
\end{proof}

\section{Technical  estimates}

In this appendix, we collect  various technical estimates and auxiliary results that are repeatedly used in the previous sections.
\begin{lemma}\textnormal{\cite[Lemma 2.3]{LLLY}}\label{lemLL2.3}
		Assume that  $0<\beta<N, \gamma>\beta$. Then
		\begin{equation}
			\int_{\mathbb{R}^N}\frac{1}{|x-y|^{N-\beta}}\frac{1}{(1+|y|)^\gamma}\,dy
			\leq C\begin{cases}
				\displaystyle\frac{1}{(1+|x|)^{\gamma-\beta}}, \quad&\text{if}\;\; \gamma<N,\vspace{2mm}\\
				\displaystyle\frac{1+|\log|x||}{(1+|x|)^{N-\beta}}, \quad &\text{if}\;\;\gamma=N,\vspace{2mm}\\
				\displaystyle\frac{1}{(1+|x|)^{N-\beta}}, \quad &\text{if}\;\;\gamma>N.
			\end{cases}
		\end{equation}
	\end{lemma}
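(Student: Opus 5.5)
The statement is the classical potential estimate of Lemma \ref{lemLL2.3}: for $0<\beta<N$ and $\gamma>\beta$, the integral
\[
I(x):=\int_{\mathbb{R}^N}\frac{1}{|x-y|^{N-\beta}}\frac{1}{(1+|y|)^{\gamma}}\,dy
\]
is bounded by the three stated expressions according as $\gamma<N$, $\gamma=N$, or $\gamma>N$. The plan is a direct decomposition of $\mathbb{R}^N$ into three regions adapted to the two singular/decaying factors. First we note that for $|x|\le 2$ the integral is uniformly bounded. Indeed, the local singularity $|x-y|^{\beta-N}$ is integrable since $\beta>0$. At infinity the integrand behaves like $|y|^{\beta-N-\gamma}$, which is integrable since $\gamma>\beta$. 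All right-hand sides are bounded below by a positive constant for $|x|\le 2$; in the case $\gamma=N$ the factor $1+|\log|x||\ge 1$. Hence it suffices to treat $|x|\ge 2$, where $1+|x|\sim |x|$.

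For $|x|\ge 2$ we split into $\Omega_1=\{|y-x|\le |x|/2\}$, $\Omega_2=\{|y|\le |x|/2\}$, and $\Omega_3=\mathbb{R}^N\setminus(\Omega_1\cup\Omega_2)$.
\begin{itemize}
\item[$\Omega_1$:] Here $|y|\ge |x|/2$, so $(1+|y|)^{-\gamma}\le C|x|^{-\gamma}$, and
\[
\int_{|y-x|\le |x|/2}|x-y|^{\beta-N}\,dy=C|x|^{\beta}.
\]
This region therefore contributes $C|x|^{\beta-\gamma}$.
\item[$\Omega_2$:] Here $|x-y|\ge |x|/2$, so this region contributes at most
\[
C|x|^{\beta-N}\int_{|y|\le |x|/2}(1+|y|)^{-\gamma}\,dy.
\]
The remaining integral is $O(|x|^{N-\gamma})$ if $\gamma<N$, $O(\log|x|)$ if $\gamma=N$, and $O(1)$ if $\gamma>N$. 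This region is exactly what produces the three cases.
\item[$\Omega_3$:] Here $|y|\ge|x|/2$ and $|x-y|\ge |x|/2$. Moreover $|x-y|\le |x|+|y|\le 3|y|$, so $|x-y|$ and $|y|$ are comparable, both being at least of order $|x|$. The contribution is bounded by
\[
C\int_{|y|\ge |x|/2}|y|^{\beta-N-\gamma}\,dy=C|x|^{\beta-\gamma},
\]
using $\gamma>\beta$ for convergence.
\end{itemize}

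Collecting the three regions gives $C\big(|x|^{\beta-\gamma}+|x|^{\beta-N}\,\Theta(|x|)\big)$, where $\Theta(r)=r^{N-\gamma}$, $\log r$, or $1$ in the three cases. If $\gamma<N$, both terms equal $|x|^{\beta-\gamma}$. If $\gamma=N$, the bound is $|x|^{\beta-N}(1+\log|x|)$. If $\gamma>N$, then $|x|^{\beta-\gamma}\le |x|^{\beta-N}$, giving the third bound.

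There is no genuine obstacle. The only point needing care is the bookkeeping in $\Omega_3$, namely checking that $|x-y|$ and $|y|$ are comparable there and that the tail integral converges; this is precisely where the hypothesis $\gamma>\beta$ enters. The logarithmic borderline case arises solely from $\Omega_2$.
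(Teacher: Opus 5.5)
Your three-region argument is correct and is the standard proof of this estimate. The paper gives no proof of its own and only cites Lemma 2.3 of \cite{LLLY}; that standard proof uses exactly your splitting into $B_{|x|/2}(x)$, $B_{|x|/2}(0)$ and the remainder.

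There is one slip in $\Omega_3$. To bound $|x-y|^{\beta-N}$ above by $C|y|^{\beta-N}$, you need a lower bound $|x-y|\ge c|y|$. The inequality you actually wrote, $|x-y|\le 3|y|$, goes the wrong way and gives no upper bound on $|x-y|^{\beta-N}$. The correct direction comes from the same triangle-inequality step: on $\Omega_3$ you have $|x|<2|x-y|$, so $|y|\le |x|+|x-y|<3|x-y|$. With this in place, your tail bound $C\int_{|y|\ge|x|/2}|y|^{\beta-N-\gamma}\,dy=C|x|^{\beta-\gamma}$ is justified and the proof is complete.
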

    \begin{lemma}\textnormal{\cite[Lemma~A.3]{YGuo-2020}}\label{lemGA.3}
        Let $\mu>0$. For any constants $0<\beta<N$ there exists a constant $C>0$, independent of $\mu$, such that 
        \[
        \int_{\R^N\setminus B_\mu(y)}\frac{1}{|y-z|^{N+2s}}\frac{1}{(1+|z|)^\beta}\,dz\leq C\left(\frac{1}{(1+|y|)^{\beta+2s}}+\frac{1}{\mu^{2s}}\frac{1}{(1+|y|)^\beta}\right).
        \]
    \end{lemma}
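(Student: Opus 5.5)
The plan is to reduce to a single-variable inequality by the substitution $z = y + w$ and then split the domain according to the relative sizes of $|w|$ and $|y|$. Write
\[
I(y)=\int_{|w|\ge\mu}\frac{1}{|w|^{N+2s}}\frac{1}{(1+|y+w|)^{\beta}}\,dw .
\]
Decompose $\R^N\setminus B_\mu(0)$ into three pieces:
\[
\Omega_1=\{|w|\ge \tfrac{|y|}{2}\}\setminus B_\mu,\qquad
\Omega_2=\{|w|<\tfrac{|y|}{2},\ |y+w|\ge \tfrac{|y|}{2}\}\setminus B_\mu,
\]
together with the complementary set where $|w|<|y|/2$ and $|y+w|<|y|/2$. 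The last set is empty by the triangle inequality, so only $\Omega_1$ and $\Omega_2$ need to be estimated.

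On $\Omega_2$ we have $|y+w|\ge |y|/2$, so $(1+|y+w|)^{-\beta}\le C(1+|y|)^{-\beta}$. The remaining kernel satisfies $\int_{|w|\ge\mu}|w|^{-N-2s}\,dw = C\mu^{-2s}$. This gives the second term $C\mu^{-2s}(1+|y|)^{-\beta}$ directly.

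On $\Omega_1$ we use $|w|\ge \max\{\mu,|y|/2\}$ and split further.
\begin{itemize}
\item[(a)] If $|y|\le 2$, we bound $(1+|y+w|)^{-\beta}\le 1$. Since $(1+|y|)$ is comparable to $1$ here, the integral is at most $C\mu^{-2s}$, which is again absorbed into the second term.
\item[(b)] If $|y|>2$, we split $\Omega_1$ into two parts.
  \begin{itemize}
  \item[(b1)] On $\{|w|\ge 2|y|\}$ we have $|y+w|\ge |w|/2$. Therefore
  \[
  \int_{|w|\ge 2|y|}|w|^{-N-2s}(1+|w|)^{-\beta}\,dw\le C|y|^{-\beta-2s},
  \]
  and this is bounded by the first term.
  \item[(b2)] On $\{|y|/2\le |w|\le 2|y|\}$ we have $|w|^{-N-2s}\le C|y|^{-N-2s}$. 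Hence the contribution is at most
  \[
  C|y|^{-N-2s}\int_{|y+w|\le 3|y|}(1+|y+w|)^{-\beta}\,dw\le C|y|^{-N-2s}|y|^{N-\beta}=C|y|^{-\beta-2s}.
  \]
  This step is exactly where the hypothesis $0<\beta<N$ is needed: it makes $(1+|\cdot|)^{-\beta}$ integrate over a ball of radius $3|y|$ to order $|y|^{N-\beta}$.
  \end{itemize}
\end{itemize}

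Collecting these bounds gives the stated estimate with a constant $C$ depending only on $N$, $s$ and $\beta$. No bound anywhere depends on $\mu$ other than through the explicit factor $\mu^{-2s}$.

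The only potentially delicate point is the annular region (b2), since neither factor decays there and the estimate must rely on $\beta<N$. It is nonetheless elementary, so no serious obstacle is expected; the main care is in keeping the $\mu$-dependence confined to the $\Omega_2$ and small-$|y|$ terms.
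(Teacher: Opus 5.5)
Your proof is correct and complete. Splitting $\{|w|\ge\mu\}$ into $\{|w|<|y|/2\}$ and $\{|w|\ge|y|/2\}$ covers everything. On the first set, $|y+w|\ge|y|/2$ holds automatically, which gives the $\mu^{-2s}(1+|y|)^{-\beta}$ term. On the second set, your subcases $|y|\le 2$, $|w|\ge 2|y|$ and $|y|/2\le|w|\le 2|y|$ all check out, with $\beta<N$ used only on the annulus. This yields the estimate with $C=C(N,s,\beta)$.

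The paper gives no proof of this lemma; it just quotes \cite[Lemma~A.3]{YGuo-2020}. So there is no in-paper argument to compare against, and your near/far decomposition is the standard route for this kind of kernel estimate.
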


\begin{lemma}\textnormal{\cite[Lemma~A.4]{YGuo-2020}}\label{lemGA.4}
    Let $\rho>0$ and suppose $(y-\xi)^2+t^2=\rho^2$ with $t>0$. Then for $\alpha>N$ and $0<\beta<N$, we have 
    $$
    \int_{\R^N}\frac{1}{(t+|z|)^\alpha}\frac{1}{|y-\xi-z|^\beta}\,dz\leq C\left(\frac{1}{(1+|y-\xi|)^\beta}\frac{1}{t^{\alpha-N}}+\frac{1}{(1+|y-\xi|)^{\alpha+\beta-N}}\right).
    $$
\end{lemma}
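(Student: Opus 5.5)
Writing $X=(x,t)$, the plan is to compare the Poisson extension of the boundary weight $|y-\xi|^{-\beta}$ with its trace. I would split the $z$-integral according to whether $|z|$ is small or large compared with $|y-\xi|$, and use that the kernel $(t+|z|)^{-\alpha}$ has total mass of order $t^{N-\alpha}$ precisely because $\alpha>N$. Set $r=|y-\xi|$, so that $r^2+t^2=\rho^2$; in particular $r\le\rho$, $t\le\rho$, and $1+r$ is comparable to a constant depending on $\rho$. I would keep the form $(1+r)$ anyway, to match how the lemma is used later.

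\emph{Region 1: $|z|\le r/2$.} Here $|y-\xi-z|\ge r/2$, so the factor $|y-\xi-z|^{-\beta}$ is bounded by $Cr^{-\beta}$. The remaining integral $\int_{\mathbb{R}^N}(t+|z|)^{-\alpha}\,dz$ equals $Ct^{N-\alpha}$, which is finite since $\alpha>N$. This gives the contribution $Cr^{-\beta}t^{N-\alpha}$.
\begin{itemize}
\item When $r\ge1$, $r^{-\beta}\le C(1+r)^{-\beta}$, and the bound is exactly the first term of the claim.
\item When $r<1$, $t$ is bounded below by $\sqrt{\rho^2-1}$ if $\rho>1$, and the singular factor must be handled differently. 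I would simply not split in this case: I would use $(t+|z|)^{-\alpha}\le t^{-\alpha}$ for $|z|\le 2$, together with local integrability of $|y-\xi-z|^{-\beta}$ (valid since $\beta<N$), which gives $Ct^{-\alpha}\le Ct^{N-\alpha}$ because $t\le\rho$ is bounded.
\end{itemize}
So the small-$r$ case is absorbed into the first term up to a constant depending on $\rho$.

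\emph{Region 2: $|z|\ge r/2$.} I would split this further.
\begin{itemize}
\item On $\{|y-\xi-z|\le r/2\}$ we have $|z|\ge r/2$, hence $(t+|z|)^{-\alpha}\le C(1+r)^{-\alpha}$. The integral of $|y-\xi-z|^{-\beta}$ over the ball of radius $r/2$ is $Cr^{N-\beta}$. The total is $C(1+r)^{-\alpha}r^{N-\beta}$, which is $\le C(1+r)^{\alpha+\beta-N}$ up to constants; here I use $r\le\rho$ and again treat $r<1$ by the local argument above.
\item On $\{|y-\xi-z|\ge r/2,\ |z|\ge r/2\}$ we have $|y-\xi-z|\le 3|z|$ for $|z|\ge r$, while on the annulus $r/2\le|z|\le r$ both factors are of order $r$. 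The integral is then bounded by $\int_{|z|\ge r/2}(t+|z|)^{-\alpha}|z|^{-\beta}\,dz$, up to the harmless annulus piece, and this is $\le Cr^{N-\alpha-\beta}$ because $\alpha+\beta>N$. This yields the second term $(1+r)^{-(\alpha+\beta-N)}$ when $r\gtrsim1$.
\end{itemize}

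The main obstacle is keeping the constants uniform. Since $\rho$ is fixed and $t$, $r$ range over a compact arc, uniformity is only delicate as $t\to0$, where $r\to\rho$. In that regime Region 1 produces the genuine $t^{N-\alpha}$ blow-up, and the other regions stay bounded. I therefore expect the $t\to0$ regime to require the only careful bookkeeping, namely checking that no other piece produces a worse power of $t$. This holds because outside $|z|\le r/2$ we have $t+|z|\gtrsim r\sim\rho$.
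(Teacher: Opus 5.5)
There is a genuine gap: your case split at $r=|y-\xi|=1$ is the wrong dichotomy, and the branch $r<1$ rests on a false inequality. In that branch you bound the integral by $Ct^{-\alpha}+C$. You then assert $Ct^{-\alpha}\le Ct^{N-\alpha}$ ``because $t\le\rho$ is bounded''. But $t^{-\alpha}\le Ct^{N-\alpha}$ means $t^{-N}\le C$, so it requires $t$ bounded \emph{below}; an upper bound $t\le\rho$ gives the reverse inequality.

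This is harmless when $\rho>1$, since then $r<1$ forces $t\ge\sqrt{\rho^2-1}$. However, the lemma is stated for every $\rho>0$, and in the paper $\rho$ is comparable to $\delta=d/10$, which need not exceed $1$. If $\rho\le 1$, the whole arc lies in $\{r<1\}$, including the regime $t\to0^+$. There the right-hand side is of order $t^{N-\alpha}$, and your bound $Ct^{-\alpha}$ is too weak by a factor $t^{-N}$. Region~2 defers to the same branch for $r<1$, so it inherits the gap. Your closing paragraph has the right observation: as $t\to0$ one has $r\to\rho$, so $r$ is bounded below there. But the body of the argument sends exactly this regime to the estimate that fails.

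The repair is to choose the cases using the constraint $r^2+t^2=\rho^2$: either $r\ge\rho/\sqrt2$ or $t\ge\rho/\sqrt2$.
\begin{itemize}
\item If $r\ge\rho/\sqrt2$, then $r^{-\beta}\le C(\rho)(1+r)^{-\beta}$, so Region~1 gives the first term. Both pieces of Region~2 are $O(r^{N-\alpha-\beta})=O_\rho(1)\le C(\rho)(1+r)^{-(\alpha+\beta-N)}$. For the last piece you need the lower bound $|y-\xi-z|\ge|z|/4$ on $\{|y-\xi-z|\ge r/2,\ |z|\ge r/2\}$, which does hold. The upper bound $|y-\xi-z|\le 3|z|$ that you wrote points the wrong way.
\item If $t\ge\rho/\sqrt2$, split at $|y-\xi-z|=1$. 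This bounds the integral by $Ct^{-\alpha}+Ct^{N-\alpha}=O_\rho(1)$, which is dominated by the right-hand side because $t^{N-\alpha}\ge\rho^{N-\alpha}$.
\end{itemize}
The constant must depend on $\rho$ in any case: at $r=0$, $t=\rho$ the left side equals $c\rho^{N-\alpha-\beta}$, while the right side is $\rho^{N-\alpha}+1$. So uniformity only means uniformity along the arc for fixed $\rho$.

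A shorter route avoids case analysis altogether. Substitute $z=tu$ and apply Lemma~\ref{lemLL2.3}, with its $\beta$ replaced by $N-\beta$ and $\gamma=\alpha>N$. This gives the left side $\le Ct^{N-\alpha}(t+r)^{-\beta}\le C\rho^{-\beta}t^{N-\alpha}$, using $t+r\ge\rho$, which already yields the first term.

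Finally, the exponent $(1+r)^{\alpha+\beta-N}$ in your Region~2 should read $(1+r)^{-(\alpha+\beta-N)}$.
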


\begin{lemma}\textnormal{\cite[Lemma~A.5]{YGuo-2020}}\label{lemGA.5}
     Let $\rho>0$ and suppose $(x-\xi_i)^2+t^2=\rho^2$. Then there exists a constant $C>0$ such that 
     $$
     |\widetilde{W}_{\lambda_i,\xi_i}(x)|\leq C\frac{\lambda_i^{-\frac{N-2s}{2}}}{(1+|x-\xi_i|)^{N-2s}} ~~\text{and}~~ |\nabla\widetilde{W}_{\lambda_i,\xi_i}(x)|\leq C\frac{\lambda_i^{-\frac{N-2s}{2}}}{(1+|x-\xi_i|)^{N-2s+1}} .
     $$
\end{lemma}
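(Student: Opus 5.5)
The plan is to use the explicit Poisson representation $\widetilde{W}_{\lambda_i,\xi_i}(x,t)=\int_{\R^N}P_s(x-z,t)\,\eta_i(z)U_{\lambda_i,\xi_i}(z)\,dz$. I will split the $z$-integral according to whether $z$ lies in the region where the bubble is concentrated or not. Here $X=(x,t)$ is a point on $\partial''\mathfrak{B}^+_\rho(\xi_i)$. The starting observation is the pointwise bound
\[
\eta_iU_{\lambda_i,\xi_i}(z)\le C\frac{\lambda_i^{\frac{N-2s}{2}}}{(1+\lambda_i|z-\xi_i|)^{N-2s}}\chi_{B_{2\delta}(\xi_i)}(z)\le C\lambda_i^{-\frac{N-2s}{2}}\frac{\chi_{B_{2\delta}(\xi_i)}(z)}{|z-\xi_i|^{N-2s}}.
\]
The second inequality is what produces the factor $\lambda_i^{-\frac{N-2s}{2}}$. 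It reduces the problem to bounding the $\lambda$-free integral
\[
t^{2s}\int_{B_{2\delta}(\xi_i)}\frac{1}{(|x-z|^2+t^2)^{\frac{N+2s}{2}}}\frac{dz}{|z-\xi_i|^{N-2s}}.
\]
Because of the cutoff and the fixed radius $\rho$, the required decay factor $(1+|x-\xi_i|)^{-(N-2s)}$ is comparable to a constant on the hemisphere. So it suffices to show that this integral is bounded uniformly on $\partial''\mathfrak{B}^+_\rho(\xi_i)$. If $\rho$ is allowed to be large, I would instead track the decay through Lemma \ref{lemGA.4}.

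For the bound on $\widetilde W$ I will split the hemisphere into two parts. On the region $t\ge\rho/2$ the kernel is bounded by $Ct^{2s}(t^2)^{-\frac{N+2s}{2}}\le C\rho^{-N}$. The function $|z-\xi_i|^{-(N-2s)}$ is integrable on $B_{2\delta}$, so the integral is bounded there. On the region $t<\rho/2$ we have $|x-\xi_i|\ge\frac{\sqrt3}{2}\rho$. I then split the $z$-integral at $|z-\xi_i|=|x-\xi_i|/2$. Where $|z-\xi_i|<|x-\xi_i|/2$, we have $|x-z|\gtrsim\rho$ and the kernel is bounded. In the complementary piece the singular factor $|z-\xi_i|^{-(N-2s)}$ is bounded by $C\rho^{-(N-2s)}$. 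The remaining integral is $\int P_s(x-z,t)\,dz\le 1$. Both pieces are therefore bounded.

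For the gradient I will differentiate under the integral sign. This uses
\[
|\nabla_{(x,t)}P_s(x-z,t)|\le C\Big(\frac{t^{2s-1}}{(|x-z|^2+t^2)^{\frac{N+2s}{2}}}+\frac{t^{2s}}{(|x-z|^2+t^2)^{\frac{N+2s+1}{2}}}\Big).
\]
The near-axis region $t<\rho/2$ is the main obstacle. There the $t^{2s-1}$ factor may blow up when $s<\frac12$, and the kernel loses its normalization. To avoid this I would move the derivative onto the data. For $x$-derivatives I use $\nabla_x\widetilde W=\widetilde{\nabla W}$. The function $\nabla(\eta_iU_{\lambda_i,\xi_i})$ is bounded by $C\lambda_i^{-\frac{N-2s}{2}}|z-\xi_i|^{-(N-2s+1)}$ away from $\xi_i$, and this is still integrable. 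So the same splitting argument applies: bounded kernel near $\xi_i$, normalized Poisson mass away from $\xi_i$.

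For the $t$-derivative I will use the extension equation. Write $\partial_t\widetilde W=t^{2s-1}\cdot t^{1-2s}\partial_t\widetilde W$, and estimate $t^{1-2s}\partial_t\widetilde W$ through the conjugate Poisson kernel applied to $(-\Delta)^sW$. Lemma \ref{lem-J} controls $(-\Delta)^sW=\eta_iU^{p_s}+\Upsilon_i$, and this quantity is $O(\lambda_i^{-\frac{N-2s}{2}})$ away from $\xi_i$. Near $t=0$ I would alternatively use the expansion $\partial_t\widetilde W=O(t)$ coming from the evenness of the extension for smooth data. Collecting the two regions gives the stated bound, with the extra power $(1+|x-\xi_i|)^{-1}$ reflecting one more order of decay of the kernel gradient.
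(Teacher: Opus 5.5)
The paper only quotes this lemma, so your argument has to stand on its own. Your bound for $\widetilde W_{\lambda_i,\xi_i}$ is correct. For $s>\frac12$ your gradient argument also works, for two reasons. First, $|z-\xi_i|^{-(N-2s+1)}$ is integrable. Second, the representation $t^{1-2s}\partial_t\widetilde W=-P_{1-s}(\cdot,t)*(-\Delta)^sW$ gives $|\partial_t\widetilde W|\le Ct^{2s-1}\lambda_i^{-\frac{N-2s}{2}}$, which is bounded on the hemisphere; here the part near $\xi_i$ is controlled by $\|U_{\lambda_i,\xi_i}^{p_s}\|_{L^1}=O(\lambda_i^{-\frac{N-2s}{2}})$ against a bounded kernel.

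The gap is the range $s\le\frac12$, which both the statement and your last paragraph claim to cover. The first problem is integrability: $|z-\xi_i|^{-(N-2s+1)}$ is integrable near $\xi_i$ only if $2s>1$. In fact $\int_{B_{2\delta}(\xi_i)}|\nabla U_{\lambda_i,\xi_i}|$ is of order $\lambda_i^{1-2s}\lambda_i^{-\frac{N-2s}{2}}$ for $s<\frac12$, and of order $\lambda_i^{-\frac{N-2s}{2}}\log\lambda_i$ for $s=\frac12$. So putting the $x$-derivative on the data near $\xi_i$ loses a power of $\lambda_i$. This part is repairable. Take a smooth cutoff $\chi$ supported in $B_{\rho/4}(\xi_i)$. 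For $\chi W$, keep the derivative on the kernel: there $|x-z|\ge\rho/2$, so $|\nabla_xP_s(x-z,t)|\le C$. Move the derivative onto the data only for $(1-\chi)W$, whose gradient is pointwise $O(\lambda_i^{-\frac{N-2s}{2}})$.

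The second problem cannot be repaired: your claim that $\partial_t\widetilde W=O(t)$ near $t=0$ is false. The $s$-harmonic extension is not even in $t$, since $t^{1-2s}\partial_t\widetilde W\to-(-\Delta)^sW\neq0$. Explicitly,
$$\partial_tP_s(y,t)=d_{s,N}\,t^{2s-1}(2s|y|^2-Nt^2)(|y|^2+t^2)^{-\frac{N+2s+2}{2}}.$$
For $\rho>2\delta$, the range in which the lemma is used, points of the hemisphere with small $t$ stay at distance at least $\frac{\rho-2\delta}{2}$ from $\mathrm{supp}\,W\subset B_{2\delta}(\xi_i)$. Since $W\ge0$, this gives
\[
\partial_t\widetilde W_{\lambda_i,\xi_i}(x,t)\ge c\,t^{2s-1}\int_{\R^N}W_{\lambda_i,\xi_i}\ge c'\,t^{2s-1}\lambda_i^{-\frac{N-2s}{2}}\qquad\text{for small } t>0,
\]
which is unbounded as $t\to0^+$ when $s<\frac12$. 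So for $s<\frac12$ the pointwise gradient bound is false, and no argument can close that step.

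Your method proves the lemma for $s\ge\frac12$, with the cutoff fix needed at $s=\frac12$. This covers Sections 5--6, where $\frac12<s<1$. For $s<\frac12$ it only gives $|\nabla\widetilde W_{\lambda_i,\xi_i}|\le C(1+t^{2s-1})\lambda_i^{-\frac{N-2s}{2}}$ on $\partial''\mathfrak{B}^+_\rho(\xi_i)$. Since $t^{2s-1}$ is integrable on the hemisphere, this yields $\int_{\partial''\mathfrak{B}^+_\rho(\xi_i)}t^{1-2s}|\nabla\widetilde W_{\lambda_i,\xi_i}|^2\le C\lambda_i^{-(N-2s)}$. That weighted bound is all the proof of Lemma~\ref{lem3.3} uses. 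You should state the restriction on $s$, or this weaker conclusion, instead of appealing to evenness.
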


\begin{lemma}\label{newlemGA.6}
Assume that $\xi^*_i$, $i=1,2,\cdots,k$ are the $k$ different   non-degenerate critical points  of $V(x)$ with $V(\xi^*_i)>0$ and $V(x)\in C^2(B_{5\delta}(\xi^*_i))$. Then 
    $$
    \int_{\partial''\mathfrak{B}_\rho^+(\xi_j)}t^{1-2s}|\nabla\widetilde{\phi}|^2\leq C\left(\lambda_j^{-2\sigma}\|\phi\|_*^2+\lambda_j^{-2\sigma}\|\mathcal{R}\|_{**}\|\phi\|_*\right).
    $$
    where  $\mathcal{R}$ is defined in \eqref{eq-2.24R}. 
\end{lemma}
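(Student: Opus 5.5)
The strategy is to write $\widetilde\phi$ through the Poisson kernel and then estimate its gradient on the spherical cap $\partial''\mathfrak{B}_\rho^+(\xi_j)$. The needed energy information will come from the equation that $\phi$ satisfies. By Proposition \ref{Prop2.4}, $\phi$ solves \eqref{eq-2.24}, so
\[
(-\Delta)^s\phi=-V\phi+p_s{\bm W}^{p_s-1}_{\bm\lambda,\bm\xi}\phi+\mathcal N(\phi)+\mathcal R+\sum c_i^lW^{p_s-1}_{\lambda_i,\xi_i}Z_{i,l}=:F .
\]
The plan is first to show that $\|F\|_{**}\le C(\|\phi\|_*+\|\mathcal R\|_{**})$. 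This uses Lemma \ref{lem2.5}, the bounds on $c_i^l$ from Proposition \ref{prop2.3}, and the fact that $V\phi$ is bounded in $\|\cdot\|_{**}$ near the peaks and small away from them.

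Next I would work with the Riesz representation of $\widetilde\phi$ in the upper half space: $\widetilde\phi(X)=\int_{\R^N}G_s(X,y)F(y)\,dy$, where $|\nabla_X G_s(X,y)|\le C|X-(y,0)|^{-(N-2s+1)}$. On the cap we have $|X-(\xi_j,0)|=\rho\in(2\delta,5\delta)$. I would split the $y$-integral into two regions.

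In the far region $|y-\xi_i|\ge\delta$, the weight $\lambda_i^{\frac{N+2s}{2}}(1+\lambda_i|y-\xi_i|)^{-\frac{N+2s}{2}-\sigma}$ is of size $\lambda_i^{-\sigma}|y-\xi_i|^{-\frac{N+2s}{2}-\sigma}$. Lemma \ref{lemGA.4} (or Lemma \ref{lemLL2.3}) then bounds this contribution by $C\lambda_j^{-\sigma}\|F\|_{**}\,t^{-a}$ times a bounded factor. The exponent $a$ is the one singular power that appears when $X$ approaches $t=0$ over the region where $y$ lies near $x$.

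In the near region $|y-\xi_i|<\delta$, $X$ stays at distance $\ge\delta$ from $(y,0)$, so the kernel is bounded. Then $\int\lambda_i^{\frac{N+2s}{2}}(1+\lambda_i|y|)^{-\frac{N+2s}{2}-\sigma}dy\le C\lambda_i^{-\frac{N-2s}{2}+\sigma}$, which is even smaller.

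This gives the pointwise bound $|\nabla\widetilde\phi|\le C\lambda_j^{-\sigma}(\|\phi\|_*+\|\mathcal R\|_{**})(1+t^{-a})$ on the cap.

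The main obstacle is the singular behaviour of $\nabla\widetilde\phi$ as $t\to0$ on the cap. There the $t$-derivative behaves like $t^{2s-1}F$, and the $x$-gradient requires local $C^{1}$ control of $\phi$ near $\partial B_\rho(\xi_j)$. Integrability against $t^{1-2s}$ must be checked. For the normal part, $t^{1-2s}(t^{2s-1})^2=t^{2s-1}$, which is integrable. For the tangential part, I would first try a Schauder/regularity estimate for $(-\Delta)^s$ in the annulus $\delta<|x-\xi_j|<6\delta$. There ${\bm W}$ is tiny and $\phi$ is bounded by $C\lambda_j^{-\sigma}\|\phi\|_*$, so this gives $\|\phi\|_{C^{1,\alpha}}\le C\lambda_j^{-\sigma}(\|\phi\|_*+\|F\|_{**})$, using $s>\frac12$.

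Squaring the pointwise bound and integrating over the cap gives a total of $C\lambda_j^{-2\sigma}(\|\phi\|_*+\|\mathcal R\|_{**})^2$. To reach exactly the stated form $\lambda_j^{-2\sigma}(\|\phi\|_*^2+\|\mathcal R\|_{**}\|\phi\|_*)$, I would not square $F$ directly. Instead I would handle the $\mathcal R$ contribution by splitting $\widetilde\phi=\widetilde\phi_1+\widetilde\phi_2$ according to the source pieces. I would then use the integration-by-parts identity on $\mathfrak{B}^+_{\rho'}$, averaged over $\rho'\in(2\delta,5\delta)$ (which is what lets one choose a good $\rho$). This identity bounds the cap energy of $\widetilde\phi_2$ by $\int \mathcal R\,\phi$, which is controlled by $\lambda_j^{-2\sigma}\|\mathcal R\|_{**}\|\phi\|_*$ in the region away from the peaks.
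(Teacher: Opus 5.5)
Your kernel route has a genuine gap, and it is the one you flagged at the end. Estimating $\nabla\widetilde\phi$ on the cap through the Green representation bounds each source piece of $F$ in absolute value. The most it yields is
\[
\int_{\partial''\mathfrak{B}_\rho^+(\xi_j)}t^{1-2s}|\nabla\widetilde\phi|^2\le C\lambda_j^{-2\sigma}\big(\|\phi\|_*+\|\mathcal R\|_{**}\big)^2 .
\]
Its piece $\lambda_j^{-2\sigma}\|\mathcal R\|_{**}^2$ is not dominated by the right-hand side of the lemma, because Propositions \ref{prop2.3} and \ref{Prop2.4} give $\|\phi\|_*\le C\|\mathcal R\|_{**}$ and never the reverse. (This weaker bound would still be enough in Lemma \ref{lem3.3}, where $\|\mathcal R\|_{**}$ is small, but it is not what is claimed.)

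The repair you sketch does not close the gap, for four reasons:
\begin{itemize}
\item Testing the equation of $\widetilde\phi_2$ against $\widetilde\phi_2$, which is what produces $|\nabla\widetilde\phi_2|^2$, gives $\int\mathcal R\,\phi_2$. This is again quadratic in $\mathcal R$, since $\phi_2$ has size $\|\mathcal R\|_{**}$ rather than $\|\phi\|_*$.
\item $\mathcal R$ also leaks into $\widetilde\phi_1$ through the multipliers $c_i^l$, which are only bounded by $\|\mathcal N(\phi)+\mathcal R\|_{**}$.
\item An identity on the whole half-ball $\mathfrak B^+_{\rho'}(\xi_j)$ contains the peak. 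There $\int_{B_{\rho'}(\xi_j)}|\mathcal R||\phi|$ is only $O(\|\mathcal R\|_{**}\|\phi\|_*)$, with no factor $\lambda_j^{-2\sigma}$.
\item Such an identity controls bulk energy by a cap boundary term, not the cap energy itself.
\end{itemize}

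What is missing is a Caccioppoli estimate for the whole $\widetilde\phi$ on a shell that avoids every peak; this is the paper's argument. Let $D_1=\{\delta<|X-(\xi_j,0)|<6\delta\}$ and $D_2=\{2\delta<|X-(\xi_j,0)|<5\delta\}$. Take $\varphi$ with $\varphi=1$ on $D_2$ and $\varphi=0$ off $D_1$, and test the extended equation with $\varphi^2\widetilde\phi$.
\begin{itemize}
\item $F$ then enters only through $\int_{B_{6\delta}(\xi_j)\setminus B_\delta(\xi_j)}\varphi^2\phi F$. This is bilinear in $(\phi,F)$ and supported where both weights are $O(\lambda^{-\sigma})$. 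It is therefore bounded by $C\lambda_j^{-2\sigma}(\|\phi\|_*^2+\|\mathcal R\|_{**}\|\phi\|_*)$, the multiplier terms being negligible there.
\item After Young's inequality the only other term is $\int_{D_1}t^{1-2s}\widetilde\phi^2|\nabla\varphi|^2$. This needs only the pointwise bound $|\widetilde\phi|\le C\lambda^{-\sigma}\|\phi\|_*$ on $D_1$, which follows from the Poisson kernel and Lemma \ref{lemGA.4} using $\|\phi\|_*$ alone.
\item A good $\rho\in(2\delta,5\delta)$ is then selected by Fubini from the bound on $D_2$. So the lemma holds for such a $\rho$, which is how Lemma \ref{lem3.3} uses it.
\end{itemize}
The smallness of $\widetilde\phi$ itself encodes the cancellations among source pieces that an absolute-value kernel bound on $\nabla\widetilde\phi$ cannot see. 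With this argument, neither the gradient kernel analysis nor the Schauder step is needed.

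Two smaller slips. Near a peak, $\int_{B_\delta(\xi_i)}\lambda_i^{\frac{N+2s}{2}}(1+\lambda_i|y-\xi_i|)^{-\frac{N+2s}{2}-\sigma}dy\sim\lambda_i^{-\sigma}$, because the exponent is below $N$. It is not $\lambda_i^{-\frac{N-2s}{2}+\sigma}$, so that contribution is of the same order, not smaller. Also, $\|V\phi\|_{**}$ need not be finite globally, since $V$ does not decay.
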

\begin{proof}
The proof follows a similar approach to that of  Lemma A.6 in  \cite{YGuo-2020}. 
    For any $\delta>0$, we introduce the two sets,
    $D_1=\{X=(x,t):\delta<|X-(\xi_j,0)|<6\delta\}$ and $D_2=\{X=(x,t):2\delta<|X-(\xi_j,0)|<5\delta\}$. By Lemma \ref{lemGA.4}, for $(x,t)\in D_1$, we have 
    \begin{equation}\label{eqA.6}
    \begin{aligned}
        \widetilde{\phi}(x,t)=&\left|\int_{\R^N}\beta(N,s)\frac{t^{2s}}{(|x-z|^2+t^2)^{\frac{N+2s}{2}}}\phi(z)\right|\\
        \leq&C\|\phi\|_*\sum_{i=1}^k\int_{\R^N}\frac{t^{2s}}{(|x-z|^2+t^2)^{\frac{N+2s}{2}}}\frac{\lambda_i^{\frac{N-2s}{2}}}{(1+\lambda_i|z-\xi_i|)^{\frac{N-2s}{2}+\sigma}}\\
        \leq &C\|\phi\|_*t^{2s}\sum_{i=1}^k\lambda_i^{-\sigma}\int_{\R^N}\frac{1}{(|x-z|+t)^{N+2s}}\frac{1}{|z-\xi_i|^{\frac{N-2s}{2}+\sigma}}\\
        \leq &C\|\phi\|_*t^{2s}\sum_{i=1}^k\lambda_i^{-\sigma}\int_{\R^N}\frac{1}{(|z|+t)^{N+2s}}\frac{1}{|x-z-\xi_i|^{\frac{N-2s}{2}+\sigma}}\\
        \leq &C\|\phi\|_*t^{2s}\sum_{i=1}^k\lambda_i^{-\sigma}\left(\frac{1}{(1+|x-\xi_i|)^{\frac{N-2s}{2}+\sigma}}\frac{1}{t^{2s}}+\frac{1}{(1+|x-\xi_i|)^{\frac{N+2s}{2}+\sigma}}\right)\\
        \leq &C\|\phi\|_*\epsilon^{\frac{\sigma}{2s}}\sum_{i=1}^k\frac{1}{(1+|x-\xi_i|)^{\frac{N-2s}{2}+\sigma}}.\\
    \end{aligned}
    \end{equation}
    Let $\varphi\in C^\infty_0(\R^{N+1})$ be  a function with $\varphi(x,t)=1$ in $D_2$, $\varphi(x,t)=0$ in $\R^{N+1}\setminus D_1$ and $|\nabla \varphi|\leq C$. Note that $\widetilde{\phi}$ satisfies 
   $$
        \begin{cases}
            \text{div}(t^{1-2s}\nabla\widetilde{\phi})=0 \quad &\text{in} \quad\R^{N+1}_+,\\
            -\lim\limits_{t\to 0^+}t^{1-2s}\partial_t\widetilde{\phi}=-V(x)\phi+p_s{\bm W}_{\bm \lambda, \bm \xi}^{p_s-1}\phi+\mathcal{N}(\phi)+\mathcal{R}+\sum\limits_{i=1}^k\sum\limits_{l=0}^N c_i^l W_{\lambda_i, \xi_i}^{p_s-1}Z_{i,l}\quad &\text{on}  \quad\R^{N}.
        \end{cases}
       $$
       Multiplying both sides of the equation by $\varphi^2\widetilde{\phi}$ and integrating by parts over $D_1\cap\{t>0\}$, we have 
       \begin{equation}\label{eqA.61}
       \begin{aligned}
       0=&\int_{\partial (D_1\cap\{t>0\})}\varphi^2\widetilde{\phi}t^{1-2s}(\nabla\widetilde{\phi}\cdot\nu)-\int_{D_1\cap\{t>0\}}t^{1-2s}\nabla\widetilde{\phi}\nabla(\varphi^2\widetilde{\phi})\\
       =&\int_{B_{6\delta}(\xi_j)\setminus B_\delta(\xi_j)}\varphi^2(x,0)\phi\left(-V(x)\phi+p_s{\bm W}_{\bm \lambda, \bm \xi}^{p_s-1}\phi+\mathcal{N}(\phi)+\mathcal{R}+\sum\limits_{i=1}^k\sum\limits_{l=0}^N c_i^l W_{\lambda_i, \xi_i}^{p_s-1}Z_{i,l}\right)\\
       &-\int_{D_1\cap\{t>0\}}t^{1-2s}\nabla\widetilde{\phi}(\varphi^2\nabla\widetilde{\phi}+2\varphi\widetilde{\phi}\nabla\varphi).
        \end{aligned}
       \end{equation}
       By Proposition \ref{prop2.3}, Lemma \ref{lem2.5},  Lemma \ref{lem2.6} and direct computation, we obtain 
       \begin{equation}\label{eqB-0614-1}
           \begin{aligned}
              &\ \ \ \ \left| \int_{B_{6\delta}(\xi_j)\setminus B_\delta(\xi_j)}\varphi(x,0)^2\phi\left(-V(x)\phi+p_s{\bm W}_{\bm \lambda, \bm \xi}^{p_s-1}\phi+\mathcal{N}(\phi)+\mathcal{R}+\sum\limits_{i=1}^k\sum\limits_{l=0}^N c_i^l W_{\lambda_i, \xi_i}^{p_s-1}Z_{i,l}\right)\right|\\
              & \leq C\int_{B_{6\delta}(\xi_j)\setminus B_\delta(\xi_j)}\phi^2+{\bm W}_{\bm \lambda, \bm \xi}^{p_s-1}\phi^2+|\mathcal{N}(\phi)||\phi|+|\mathcal{R}||\phi|+\sum\limits_{l=0}^N |c_j^l| W_{\lambda_j, \xi_j}^{p_s-1}|Z_{j,l}||\phi|\\
              & \leq C\left(\lambda_j^{-2\sigma}\|\phi\|_*^2+\lambda_j^{-2s-2\sigma}\|\phi\|_*^2+\lambda_j^{-2\sigma}\|\mathcal{N}(\phi)\|_{**}\|\phi\|_*+\lambda_j^{-2\sigma}\|\mathcal{R}\|_{**}\|\phi\|_*\right)\\
              &\ \ \ +C\left(|c_j^0|\lambda_j^{-\frac{N+2s}{2}-1-\sigma}\|\phi\|_*+\sum_{l=1}^N|c_j^l|\lambda_j^{-\frac{N+2s}{2}+1-\sigma}\|\phi\|_*\right)\\
               & \leq C\left(\lambda_j^{-2\sigma}\|\phi\|_*^2+\lambda_j^{-2\sigma}\|\mathcal{R}\|_{**}\|\phi\|_*\right).
           \end{aligned}
       \end{equation}
       Using Young's inequality, we have 
       \begin{equation}\label{eqA.62}
       \left|\int_{D_1\cap\{t>0\}}t^{1-2s}\nabla\widetilde{\phi}\varphi\widetilde{\phi}\nabla\varphi\right|\leq \frac{1}{4}\int_{D_1\cap\{t>0\}}t^{1-2s}|\nabla\widetilde{\phi}|^2\varphi^2+C\int_{D_1\cap\{t>0\}}t^{1-2s}\widetilde{\phi}^2|\nabla\varphi|^2.
       \end{equation}
       Combining \eqref{eqA.61}-\eqref{eqA.62}, we deduce that
       $$
       \int_{D_1\cap\{t>0\}}t^{1-2s}|\nabla\widetilde{\phi}|^2\varphi^2\leq C\int_{D_1\cap\{t>0\}}t^{1-2s}\widetilde{\phi}^2|\nabla\varphi|^2+C\left(\lambda_j^{-2\sigma}\|\phi\|_*^2+\lambda_j^{-2\sigma}\|\mathcal{R}\|_{**}\|\phi\|_*\right).
       $$
       By means of \eqref{eqA.6}, we further obtain
       $$
       \begin{aligned}
           &\int_{D_2\cap\{t>0\}}t^{1-2s}|\nabla\widetilde{\phi}|^2\\
           \leq &C\int_{D_1\cap\{t>0\}}t^{1-2s}\widetilde{\phi}^2|\nabla\varphi|^2+C\left(\lambda_j^{-2\sigma}\|\phi\|_*^2+\lambda_j^{-2\sigma}\|\mathcal{R}\|_{**}\|\phi\|_*\right)\\
           \leq & C\|\phi\|_*^2\epsilon^{\frac{\sigma}{s}}\int_{D_1\cap\{t>0\}}t^{1-2s}\sum_{i=1}^k\frac{1}{(1+|x-\xi_i|)^{N-2s+2\sigma}}+C\left(\lambda_j^{-2\sigma}\|\phi\|_*^2+\lambda_j^{-2\sigma}\|\mathcal{R}\|_{**}\|\phi\|_*\right)\\
           \leq&C\left(\lambda_j^{-2\sigma}\|\phi\|_*^2+\lambda_j^{-2\sigma}\|\mathcal{R}\|_{**}\|\phi\|_*\right).
       \end{aligned}
       $$
       The mean value theorem for integrals implies the existence of $\rho=\rho(\delta)\in (2\delta,\,5\delta)$ such that 
       $$
    \int_{\partial''\mathfrak{B}_\rho^+(\xi_j)}t^{1-2s}|\nabla\widetilde{\phi}|^2\leq C\left(\lambda_j^{-2\sigma}\|\phi\|_*^2+\lambda_j^{-2\sigma}\|\mathcal{R}\|_{**}\|\phi\|_*\right).
    $$      
\end{proof}
Next we establish estimates for the projected remainder term $\psi_\epsilon^*$, 
which serve as a crucial ingredient in the proof of local uniqueness of multi-peak solutions given in Section 6.
\begin{lemma}\label{F6.9}
    Let $\rho>0$ and suppose $\big|x-\xi_i^{(1)}\big|^2+t^2=\rho^2$ with $t>0$. Then
    \begin{equation*}
      |\widetilde{\psi}^*_\epsilon|=o(\epsilon^{\frac{1}{2s}+\frac{\sigma}{2s}})\sum_{i=1}^k\frac{1}{(1+|x-\xi_i^{(1)}|)^{\frac{N-2s}{2}+1+\sigma}}.
    \end{equation*}
\end{lemma}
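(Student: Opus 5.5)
We will bound $\widetilde{\psi}^*_\epsilon$ directly through the Poisson kernel representation
\[
\widetilde{\psi}^*_\epsilon(x,t)=\int_{\R^N}P_s(x-z,t)\,\psi_\epsilon^*(z)\,dz ,
\]
combined with the pointwise decay of $\psi_\epsilon^*$ given by Lemma \ref{lem-psi}. This is the same scheme as the proof of \eqref{eqA.6} in Lemma \ref{newlemGA.6}. The only changes are that the decay exponent $\frac{N-2s}{2}+\sigma$ is replaced by $\frac{N-2s}{2}+1+\sigma$, and the constant $\|\phi\|_*$ is replaced by the $o(1)$ factor.

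\textbf{Step 1 (pulling out the small factor).} By Lemma \ref{lem-psi},
\[
|\psi_\epsilon^*(z)|\le o(1)\sum_{i=1}^k\frac{(\lambda_i^{(1)})^{\frac{N-2s}{2}}}{(1+\lambda_i^{(1)}|z-\xi_i^{(1)}|)^{\frac{N-2s}{2}+1+\sigma}}.
\]
Using $(1+\lambda|z-\xi|)^{-a}\le \lambda^{-a}|z-\xi|^{-a}$, each summand is at most
\[
(\lambda_i^{(1)})^{-1-\sigma}\,|z-\xi_i^{(1)}|^{-\frac{N-2s}{2}-1-\sigma}.
\]
Lemma \ref{lem5.3} gives $\lambda_i^{(1)}\sim\epsilon^{-\frac{1}{2s}}$, so $(\lambda_i^{(1)})^{-1-\sigma}\sim \epsilon^{\frac{1}{2s}+\frac{\sigma}{2s}}$. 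This is exactly the prefactor in the claim, multiplied by the $o(1)$. The singularity of $|z-\xi_i^{(1)}|^{-\beta}$ with $\beta=\frac{N-2s}{2}+1+\sigma$ is integrable, since $\beta<N$. This holds because $N\ge 6s>2s+2+2\sigma$, which uses $s>\frac12$ and $\sigma<2s-1$.

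\textbf{Step 2 (convolution with the Poisson kernel).} We bound $P_s(x-z,t)\le C t^{2s}(|x-z|+t)^{-N-2s}$, change variables $z\mapsto x-z$, and apply Lemma \ref{lemGA.4} with $\alpha=N+2s>N$ and $\beta=\frac{N-2s}{2}+1+\sigma<N$. This gives
\[
\int_{\R^N}\frac{t^{2s}}{(|z|+t)^{N+2s}}\frac{dz}{|x-\xi_i^{(1)}-z|^{\beta}}\le C\left(\frac{1}{(1+|x-\xi_i^{(1)}|)^{\beta}}+\frac{t^{2s}}{(1+|x-\xi_i^{(1)}|)^{\beta+2s}}\right).
\]
On $\partial''\mathfrak{B}^+_\rho(\xi_i^{(1)})$ we have $t\le\rho$, so the second term is dominated by the first.

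\textbf{Step 3 (the other centers).} For $j\ne i$ there is no issue. The same inequality applies with $\xi_j^{(1)}$ in place of $\xi_i^{(1)}$ and directly yields the $j$-th summand in the claimed sum. Summing over $i$ then completes the proof.

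\textbf{Main obstacle.} The only delicate point is that Lemma \ref{lemGA.4} is stated for the center $\xi$ of the sphere. For $j\ne i$, the point $x$ lies at distance roughly $|\xi_i-\xi_j|\ge 10\delta$ from $\xi_j^{(1)}$, and $t$ may be small. We must check that the bound from Lemma \ref{lemGA.4} remains valid there, or redo that computation by hand. We would split the $z$-integral into $|z|<\frac12|x-\xi_j^{(1)}|$ and its complement. On the first region $|x-\xi_j^{(1)}-z|$ is comparable to $|x-\xi_j^{(1)}|$, and $\int t^{2s}(|z|+t)^{-N-2s}\,dz\le C$. On the complement we use $(|z|+t)^{-N-2s}\le C|x-\xi_j^{(1)}|^{-N-2s}$ together with the local integrability of the singular factor.
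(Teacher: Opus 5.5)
Your proposal is correct and follows the paper's proof essentially line by line. Both use the Poisson kernel representation, insert the decay from Lemma \ref{lem-psi}, pull out $(\lambda_i^{(1)})^{-1-\sigma}\sim\epsilon^{\frac{1+\sigma}{2s}}$, and apply Lemma \ref{lemGA.4} with $\alpha=N+2s$ and $\beta=\frac{N-2s}{2}+1+\sigma$; the $t^{2s}$ term is then absorbed because $t\le\rho$.

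Your extra check for the centers $\xi_j^{(1)}$ with $j\neq i$ addresses a point the paper passes over silently. It works in the relevant range $\rho\in(2\delta,5\delta)$, where $|x-\xi_j^{(1)}|$ stays bounded below. One correction is needed there: on the complement $\{|z|\ge\frac12|x-\xi_j^{(1)}|\}$, local integrability of $|x-\xi_j^{(1)}-z|^{-\beta}$ is not enough, because $\beta<N$ makes that factor non-integrable at infinity. On the far part of that region you must also use the decay $(|z|+t)^{-N-2s}\le|z|^{-N-2s}$.
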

\begin{proof}
    By Lemma \ref{lem-psi} and Lemma \ref{lemGA.4}, we have 
    \begin{equation*}
        \begin{aligned}
        |\widetilde{\psi}^*_\epsilon|=&\left|\int_{\R^N}d(N,s)\frac{t^{2s}}{(|x-z|^2+t^2)^{\frac{N+2s}{2}}}\psi^*_\epsilon(z)\right|\\
        =&o(1)\sum_{i=1}^k\int_{\R^N}\frac{t^{2s}}{(|x-z|^2+t^2)^{\frac{N+2s}{2}}}\frac{\big(\lambda_{i}^{(1)}\big)^{\frac{N-2s}{2}}}{(1+\lambda_{i}^{(1)}\big|z-\xi_{i}^{(1)}\big |)^{\frac{N-2s}{2}+1+\sigma}}\\
        =&o(1)t^{2s}\sum_{i=1}^k(\lambda_i^{(1)})^{-1-\sigma}\int_{\R^N}\frac{1}{(|x-z|+t)^{N+2s}}\frac{1}{|z-\xi_i^{(1)}|^{\frac{N-2s}{2}+1+\sigma}}\\
        = &o(\epsilon^{\frac{1}{2s}+\frac{\sigma}{2s}})t^{2s}\sum_{i=1}^k\int_{\R^N}\frac{1}{(|z|+t)^{N+2s}}\frac{1}{|x-z-\xi_i^{(1)}|^{\frac{N-2s}{2}+1+\sigma}}\\
        = &o(\epsilon^{\frac{1}{2s}+\frac{\sigma}{2s}})t^{2s}\sum_{i=1}^k\left(\frac{1}{(1+|x-\xi_i^{(1)}|)^{\frac{N-2s}{2}+1+\sigma}}\frac{1}{t^{2s}}+\frac{1}{(1+|x-\xi_i^{(1)}|)^{\frac{N+2s}{2}+1+\sigma}}\right)\\
        = &o(\epsilon^{\frac{1}{2s}+\frac{\sigma}{2s}})\sum_{i=1}^k\frac{1}{(1+|x-\xi_i^{(1)}|)^{\frac{N-2s}{2}+1+\sigma}}.\\
        \end{aligned}
    \end{equation*}
\end{proof}
\begin{lemma}\label{F0514lem1}
     For any $\delta>0$, there exists $\rho=\rho(\delta)\in (2\delta,\;5\delta)$ such that for $N>4s$,  
    $$
    \int_{\partial''\mathfrak{B}_\rho^+(\xi_j^{(1)})}t^{1-2s}|\nabla\widetilde{\psi}^*_\epsilon|^2=o(\epsilon^{\frac{1}{s}+\frac{\sigma}{s}}).
    $$
\end{lemma}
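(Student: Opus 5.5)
We follow the scheme of Lemma \ref{newlemGA.6}, which rests on a Caccioppoli-type energy estimate on an annular region of the extended half-space, followed by the mean value theorem for integrals to select a good radius $\rho$. Set $D_1=\{X:\delta<|X-(\xi_j^{(1)},0)|<6\delta\}$ and $D_2=\{X:2\delta<|X-(\xi_j^{(1)},0)|<5\delta\}$. We take a cut-off $\varphi$ with $\varphi=1$ on $D_2$, $\varphi=0$ outside $D_1$, and $|\nabla\varphi|\le C$. The function $\widetilde{\psi}^*_\epsilon$ is $t^{1-2s}$-harmonic in $\R^{N+1}_+$, with boundary Neumann datum
\[
-V\psi^*_\epsilon+(p_s-\epsilon)c_\epsilon\psi^*_\epsilon+\mathcal{L},\qquad \mathcal{L}:=\sum_{i=1}^5 L_i+\sum_{i=1}^5P_i+\sum_{i=1}^4Q_i .
\]

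First we test with $\varphi^2\widetilde{\psi}^*_\epsilon$ and integrate by parts. Applying Young's inequality to the cross term gives
\[
\int_{D_2\cap\{t>0\}}t^{1-2s}|\nabla\widetilde{\psi}^*_\epsilon|^2\le C\int_{D_1\cap\{t>0\}}t^{1-2s}(\widetilde{\psi}^*_\epsilon)^2+C\int_{B_{6\delta}\setminus B_\delta}\big(|\psi^*_\epsilon|^2+c_\epsilon|\psi^*_\epsilon|^2+|\mathcal{L}||\psi^*_\epsilon|\big).
\]
Next we control both terms pointwise. For the boundary term we use Lemma \ref{lem-psi}: on the annulus $\delta\le|x-\xi_j^{(1)}|\le 6\delta$ (the other peaks contribute even less, since they are at distance at least $d$), we have $|\psi^*_\epsilon|=o(1)(\lambda_j^{(1)})^{-1-\sigma}$. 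Moreover $c_\epsilon\le C(\lambda^{(1)})^{-2s}$ there, and \eqref{eq_LP} gives $|\mathcal{L}|=o(1)(\lambda^{(1)})^{-1-\sigma}$. Hence the boundary integral is $o\big((\lambda^{(1)})^{-2-2\sigma}\big)$. By Lemma \ref{lem5.3}, $\lambda_j^{(1)}\sim\epsilon^{-1/(2s)}$, so this is $o(\epsilon^{\frac1s+\frac{\sigma}{s}})$. For the interior term, the argument of Lemma \ref{F6.9}, i.e.\ Lemma \ref{lemGA.4} applied to the Poisson kernel with the decay from Lemma \ref{lem-psi}, extends to all of $D_1$ and not only to the sphere. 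It gives $|\widetilde{\psi}^*_\epsilon|=o(\epsilon^{\frac{1}{2s}+\frac{\sigma}{2s}})$ uniformly on $D_1$. Since $t^{1-2s}$ is integrable for $s<1$, we get $\int_{D_1}t^{1-2s}(\widetilde{\psi}^*_\epsilon)^2=o(\epsilon^{\frac1s+\frac\sigma s})$.

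Combining the two bounds yields $\int_{D_2\cap\{t>0\}}t^{1-2s}|\nabla\widetilde{\psi}^*_\epsilon|^2=o(\epsilon^{\frac1s+\frac\sigma s})$. Writing this integral in polar coordinates about $(\xi_j^{(1)},0)$ and applying the mean value theorem for integrals in the radius produces $\rho\in(2\delta,5\delta)$ with $\int_{\partial''\mathfrak{B}_\rho^+}t^{1-2s}|\nabla\widetilde{\psi}^*_\epsilon|^2$ bounded by $(3\delta)^{-1}$ times the $D_2$ integral, which is the claim.

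The main obstacle is the interior estimate of $\widetilde{\psi}^*_\epsilon$ on $D_1$ near $t=0$. There Lemma \ref{lemGA.4} carries a factor $t^{-2s}$, which the prefactor $t^{2s}$ cancels, so the bound is still fine. Care is also needed because $D_1$ reaches the inner sphere of radius $\delta$, where the decay of $\psi^*_\epsilon$ from Lemma \ref{lem-psi} must still be used in its scaled form $o(1)\lambda^{-1-\sigma}|x-\xi|^{-(\frac{N-2s}{2}+1+\sigma)}$; this is valid since $|x-\xi_j^{(1)}|\ge\delta>0$ there.
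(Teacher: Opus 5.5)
Your proposal is correct and is essentially the paper's proof: a Caccioppoli estimate on $D_1$, $D_2$ as in Lemma~\ref{newlemGA.6}, with the interior term controlled by the Poisson-kernel bound of Lemma~\ref{F6.9} (valid on every sphere of radius in $(\delta,6\delta)$, hence on all of $D_1$), the trace term controlled by Lemma~\ref{lem-psi} and \eqref{eq_LP}, and the mean value theorem in the radius to finish. One small correction to your last remark: on the inner sphere $|x-\xi_j^{(1)}|$ need not be $\ge\delta$ (it is close to $0$ near the top, where $t\approx\delta$), but this does no harm, since there $t$ is bounded below and the pointwise bound $o(1)(\lambda_i^{(1)})^{-1-\sigma}|z-\xi_i^{(1)}|^{-(\frac{N-2s}{2}+1+\sigma)}$ holds for all $z$, so the Poisson integral is still uniformly controlled.
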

\begin{proof}
    The proof follows the same lines as Lemma \ref{newlemGA.6}, we therefore only detail the final estimation. Using  Lemmas \ref{newlemGA.6}, \ref{F6.9}, \ref{lem-psi},  and \eqref{eq_LP}, we obtain
     $$
       \begin{aligned}
           &\int_{D_2}t^{1-2s}|\nabla\widetilde{\psi}^*_\epsilon|^2\\
           \leq& C\int_{D_1\cap\{t>0\}}t^{1-2s}|\widetilde{\psi}^*_\epsilon|^2|\nabla\varphi|^2 +C\left| \int_{B_{6\delta}(\xi_j)\setminus B_\delta(\xi_j)}\varphi(x,0)^2\psi^*_\epsilon\left(-V(x)\psi^*_\epsilon+(p_s-\epsilon)c_\epsilon(x)\psi^*_\epsilon+R^*\right)\right|
         \\
           = &o(\epsilon^{\frac{1}{s}+\frac{\sigma}{s}}),
       \end{aligned}
       $$
       where $R^*=\sum_{i=1}^5L_i+\sum_{i=1}^5P_i+\sum_{i=1}^4Q_i$, $D_1=\{X=(x,t):\delta<|X-(\xi_j^{(1)},0)|<6\delta\}$ and $D_2=\{X=(x,t):2\delta<|X-(\xi_j^{(1)},0)|<5\delta\}$.
       By the mean value theorem for integrals, there exists $\rho=\rho(\delta)\in (2\delta,\,5\delta)$ such that 
       $$
    \int_{\partial''\mathfrak{B}_\rho^+(\xi_j^{(1)})}t^{1-2s}|\nabla\widetilde{\psi}^*_\epsilon|^2=o(\epsilon^{\frac{1}{s}+\frac{\sigma}{s}}).
    $$
\end{proof}

\noindent{\bf Conflict of interest}
									
The authors have no conflict of interest to disclose.

\vskip 0.1cm
\noindent{\bf Data Availibility}
									
\vskip 0.1cm
									
Data sharing is not applicable to this article as no new data was created or analyzed in this study.
									
\vskip 0.1cm
									
\noindent{\bf Acknowledgements}:\, Z. Liu  was supported by the  Natural Science Foundation of Henan (Grant No. 252300421051) and
the National Natural Science Foundation of China (Grant Nos. 12371111, 12471104). 


\end{document}